\documentclass[a4paper, 11pt]{amsart}

\title{Riemann--Roch isometries in the non-compact orbifold setting}
\author[Freixas i Montplet]{Gerard Freixas i Montplet}
\author[von Pippich]{Anna-Maria von Pippich}
\date{}
\address{CNRS -- Institut de Math\'ematiques de Jussieu - Paris Rive Gauche, 4 Place Jussieu,
   75005 Paris, France}
\email{gerard.freixas@imj-prg.fr}
\address{Fachbereich Mathematik -- Technische Universit\"at Darmstadt, Schlo{\ss}gartenstr. 7, D-64289 Darmstadt, Germany}
\email{pippich@mathematik.tu-darmstadt.de}
\subjclass[2000]{Primary 58J52, 14G40; Secondary: 11F72, 11M36}

\usepackage{amsmath, mathrsfs, amssymb, amscd, amsthm}
\usepackage[]{fontenc}
\usepackage{xy}
\usepackage{enumerate}
\xyoption{all}
\usepackage{hyperref,xcolor,graphicx}

\numberwithin{equation}{section}

\theoremstyle{plain}
\newtheorem{theorem}{Theorem}[section]
\newtheorem{proposition}[theorem]{Proposition}
\newtheorem{lemma}[theorem]{Lemma}
\newtheorem{corollary}[theorem]{Corollary}

\theoremstyle{definition}
\newtheorem{definition}[theorem]{Definition}

\newtheorem{notation}[theorem]{Notation}

\theoremstyle{remark}
\newtheorem{remark}[theorem]{Remark}
\newtheorem{example}[theorem]{Example}
\DeclareMathOperator{\Spec}{Spec}
\DeclareMathOperator{\hyp}{hyp}
\DeclareMathOperator{\ps}{ps}

\DeclareMathOperator{\PSL}{PSL}
\DeclareMathOperator{\SL}{SL}

\DeclareMathOperator{\Real}{Re}
\DeclareMathOperator{\Imag}{Im}
\DeclareMathOperator{\adeg}{\widehat{deg}}

\DeclareMathOperator{\APic}{\widehat{Pic}}

\DeclareMathOperator{\gdiv}{div}
\DeclareMathOperator{\detp}{det^{\prime}}
\DeclareMathOperator{\deta}{det^{\ast}}

\DeclareMathOperator{\Spf}{Spf}

\DeclareMathOperator{\tr}{tr}

\DeclareMathOperator{\cusp}{cusp}
\DeclareMathOperator{\cone}{cone}
\DeclareMathOperator{\vol}{vol}
\DeclareMathOperator{\loc}{loc}
\DeclareMathOperator{\ord}{ord}
\DeclareMathOperator{\arcsinh}{arcsinh}
\DeclareMathOperator{\fin}{fin}
\DeclareMathOperator{\length}{length}

\newcommand{\ccar}{1\hspace{-0.15cm}1}
\newcommand{\OO}{\mathcal{O}}
\newcommand{\XX}{\mathcal{X}}
\newcommand{\BS}{\mathcal{S}}

\newcommand{\CC}{\mathbb{C}}
\newcommand{\Int}{\mathbb{Z}}

\newcommand{\PP}{\mathbb{P}}
\newcommand{\HH}{\mathbb{H}}
\newcommand{\RR}{\mathbb{R}}
\newcommand{\QQ}{\mathbb{Q}}
\newcommand{\Circ}{\mathbb{S}^{1}}

\newcommand{\pd}{\partial}

\newcommand{\C}{\mathcal{C}}
\newcommand{\E}{\mathcal{E}}

\newcommand{\cpd}{\overline{\pd}}

\newcommand{\ov}{\overline}
\newcommand{\NN}{\mathbb{N}}

\oddsidemargin=-5 true mm
\evensidemargin=-5 true mm
\textwidth=6.5 true in
\textheight=215 true mm

\pagestyle{plain}
\begin{document}
\setcounter{tocdepth}{1}
\setcounter{section}{0}
\maketitle

\begin{abstract}
We generalize work of Deligne and Gillet--Soul\'e on a functorial Riemann--Roch type isometry, to the case of the trivial sheaf on cusp compactifications of Riemann surfaces $\Gamma\backslash\HH$, for $\Gamma\subset\PSL_{2}(\RR)$ a fuchsian group of the first kind, equipped with the Poincar\'e metric. This metric is singular at cusps and elliptic fixed points, and the original results of Deligne and Gillet--Soul\'e do not apply to this setting. Our theorem relates the determinant of cohomology of the trivial sheaf, with an explicit Quillen type metric in terms of the Selberg zeta function of $\Gamma$, to a metrized version of the $\psi$ line bundle of the theory of moduli spaces of pointed orbicurves, and the self-intersection bundle of a suitable twist of the canonical sheaf $\omega_{X}$. We make use of surgery techniques through Mayer--Vietoris formulae for determinants of laplacians, in order to reduce to explicit evaluations of such for model hyperbolic cusps and cones. We carry out these computations, that are of independent interest: we provide a rigorous method that fixes incomplete computations in theoretical physics, and that can be adapted to other geometries. We go on to derive an arithmetic Riemann--Roch formula in the realm of Arakelov geometry, that applies in particular to integral models of modular curves with elliptic fixed points. This vastly extends previous work of the first author, whose deformation theoretic methods were limited to the presence only of cusps. As an application, we treat in detail the case of the modular curve $X(1)$, that already reveals the interesting arithmetic content of the metrized $\psi$ line bundles. From this, we solve the longstanding question of evaluating the Selberg zeta special value $Z^{\prime}(1,\PSL_{2}(\Int))$. The result is expressed in terms of logarithmic derivatives of Dirichlet $L$ functions. In the analogy between Selberg zeta functions and Dedekind zeta functions of number fields, this formula can be seen as the analytic class number formula for $Z(s,\PSL_{2}(\Int))$. The methods developed in this article were conceived so that they afford several variants, such as the determinant of cohomology of a flat unitary vector bundle with finite monodromies at cusps. Our work finds its place in the program initiated by Burgos--Kramer--K\"uhn of extending arithmetic intersection theory to singular hermitian vector bundles.
\end{abstract}

\tableofcontents

\section{Introduction}
The Grothendieck--Riemann--Roch theorem in algebraic geometry describes the lack of commutativity of the Chern character with derived direct images of vector bundles under proper morphisms. In different degrees of generality, this fundamental statement has several incarnations, all of them interrelated. For smooth complex algebraic varieties, the theorem can be stated with values in Chow groups and also de Rham cohomology. With values in de Rham cohomology, the equality of characteristic classes can be lifted to the level of closed differential forms, by means of Chern--Weil theory and the introduction of the so-called holomorphic analytic torsion. The identity is then a generalization of the curvature formula of Bismut--Gillet--Soul\'e \cite{BGS1, BGS2, BGS3} for the determinant of cohomology endowed with the Quillen metric (relying on previous work of Bismut-Freed \cite{BF1, BF2}). In an attempt to put all these variants in a unified formalism, Deligne had the idea of lifting the Grothendieck--Riemann--Roch theorem at the level of sheaves (actually to the virtual category). Hence, instead of an equality of characteristic classes, one would ideally have a functorial isomorphism between some ``characteristic sheaves" that incarnate the direct images of characteristic classes. Furthermore, the formalism would refine the Chern--Weil construction: if the vector bundles are endowed with smooth hermitian metrics, these characteristic sheaves should carry some hermitian data as well, and the isomorphism should become an isometry in a suitable sense. Deligne achieved this goal for fibrations in curves, in an influent article \cite{Deligne}, which was a precursor of the arithmetic Riemann--Roch theorem in Arakelov geometry, to be later proven by Gillet--Soul\'e \cite{GS}.\footnote{To be rigorous, Deligne mistakenly applies Bismut--Freed's curvature theorem for Quillen connections. The subtle point is the compatibility of the Quillen connection with the holomorphic structure of the Knudsen--Mumford determinant. Nevertheless, the Riemann--Roch isometry he claims can be established by appealing instead to the results of Bismut--Gillet--Soul\'e, where this compatibility is addressed. Notice however these results came later in time.}
 Extensions to higher dimensions were provided by Franke and later Eriksson, in important works that unfortunately are still unpublished.

The Riemann--Roch isometry and the arithmetic Riemann--Roch theorem for arithmetic surfaces apply only to vector bundles endowed with \emph{smooth} hermitian metrics, and require a choice of \emph{smooth} metric on the  dualizing sheaf. Several relevant arithmetic examples one has in mind don't satisfy these assumptions though. This is already the case of the trivial sheaf and the dualizing sheaf of a modular curve, endowed with the Poincar\'e metric. This metric is singular at the cusps and the elliptic fixed points. In presence only of cusps, hence excluding elliptic fixed points, one can still prove a version of the arithmetic Riemann--Roch theorem for the trivial sheaf. This was accomplished by the first author \cite{GFM}, and was motivated by work of Takhtajan--Zograf on the corresponding curvature formula \cite{ZT}. The approach of \cite{GFM} was a combination of several compatibilities of purely algebraic geometric constructions of ``tautological bundles" on Deligne--Mumford stacks over $\Int$ of moduli spaces of curves, with purely analytic constructions in the realm of Teichm\"uller theory. Difficult results of Wolpert on degenerations of Selberg zeta functions \cite{Wolpert:Selberg}, asymptotics of geodesic length functions and goodness of the family hyperbolic metric \cite{Wolpert:Good} were of central importance. These were combined with delicate estimates of small eigenvalues in terms of geodesic lengths, provided by Burger \cite{Burger}. In presence of elliptic fixed points, these \emph{deformation theoretic} methods (algebraic or analytic) don't carry over. Furthermore, they don't work for more general bundles (for instance, flat unitary bundles) nor in higher dimensions. Therefore there is need to develop other ideas that are better suited to these more general settings.

The aim of this article is to introduce and apply a new method of extending the Riemann--Roch isometry to singular metrics. We deal with smooth complex curves that have a natural structure of orbicurves with cusps. More precisely, we consider cusp compactifcations of quotients $\Gamma\backslash\HH$, where $\HH$ is the upper half-plane and $\Gamma\subset\PSL_{2}(\RR)$ is a fuchsian group of the first kind, admitting both parabolic and elliptic elements. This is the first case that remains out of reach in previous research. Instead of deformation theory, we use analytic surgery techniques and Mayer--Vietoris type formulae for determinants of laplacians, Bost's $L^{2}_{1}$ formalism of arithmetic intersection theory, the Selberg trace formula, and ``asymptotic" evaluations of determinants of laplacians on model cusps and cones.

\subsection{Statement of the main theorem}\label{subsec:statement}
\subsubsection{} Let $\Gamma\subset\PSL_{2}(\RR)$ be a fuchsian group of the first kind. The quotient space $\Gamma\backslash\HH$ can be given a canonical structure of Riemann surface. This Riemann surface is the ``coarse moduli" for the orbifold $[\Gamma\backslash\HH]$. The latter captures the presence of torsion in $\Gamma$. The underlying topological spaces to $[\Gamma\backslash\HH]$ and $\Gamma\backslash\HH$ coincide, and it thus makes sense to confound their points. The points with non-trivial automorphisms are called \emph{elliptic fixed points}. They are in bijective correspondence with conjugacy classes of primitive elliptic elements in $\Gamma$. By adding a finite  number of \emph{cusps}, the Riemann surface $\Gamma\backslash\HH$ can be completed into a compact Riemann surface $X$. Cusps are in bijective correspondence with conjugacy classes of primitive parabolic elements in $\Gamma$. We denote the set of elliptic fixed points and cusps by $p_{1},\ldots,p_{n}$, and assign to them multiplicities $m_{1},\ldots,m_{n}\in\NN_{>1}\cup\lbrace\infty\rbrace$. The multiplicity of a cusp is $\infty$, while for an elliptic fixed point it is the order of its automorphism group. Finally, by $c$ we denote the number of cusps of $\Gamma\backslash\HH$.

\subsubsection{}The Poincar\'e or hyperbolic \emph{riemannian} metric on $\HH$ is given by
\begin{displaymath}
	ds_{\hyp}^{2}=\frac{dx^{2}+dy^{2}}{y^{2}},
\end{displaymath}
where $x+iy$ is the usual parametrization of $\HH$. In terms of the holomorphic coordinate $\tau=x+iy$, this is also written
\begin{equation}\label{eq:riemannian}
	ds_{\hyp}^{2}=\frac{|d\tau|^{2}}{(\Imag\tau)^{2}}.
\end{equation}
This metric is invariant under $\PSL_{2}(\RR)$, and descends to a riemannian metric on $[\Gamma\backslash\HH]$ (in the orbifold sense). As a metric on $\Gamma\backslash\HH$ it has singularities at the elliptic fixed points, of hyperbolic conical type. We can also interpret this metric as a singular metric on the compactified Riemann surface $X$, with additional singularities at the cusps. In a neighborhood of an elliptic fixed point $p$ of order $\ell$, there is a distinguished holomorphic coordinate $w$, unique up to a constant of modulus 1, such that the singular riemannian tensor is locally written as
\begin{displaymath}
	ds_{\hyp}^{2}=\frac{4|dw|^{2}}{\ell^{2}|w|^{2-2/\ell}(1-|w|^{2/\ell})^{2}}.
\end{displaymath}
In a neighborhood of a cusp $q$, there is also a distinguished coordinate $z$, unique up to a constant of modulus $1$, such that the singular riemannian tensor is given by
\begin{displaymath}
	ds_{\hyp}^{2}=\frac{|dz|^{2}}{(|z|\log|z|)^{2}}.
\end{displaymath}
This type of coordinates were called \emph{rs} (rotationally symmetric) by Wolpert \cite{Wolpert:Good, Wolpert:cusp}. Following this author, we define a renormalization of the hyperbolic metric at elliptic fixed points and cusps, by the rules
\begin{displaymath}
	\|dw\|_{W,p}=1,\quad \|dz\|_{W,q}=1.
\end{displaymath}
This actually defines hermitian metrics on the holomorphic cotangent spaces at $p$ and $q$, written $\omega_{X,p}$ and $\omega_{X,q}$. We call them \emph{Wolpert metrics} (see \textsection\ref{subsubsec:truncated} and Definition \ref{definition:Wolpert}). We then formally define
\begin{displaymath}
	\psi_{W}=\sum_{i}\left(1-\frac{1}{m_{i}^{2}}\right)(\omega_{X,p_{i}},\|\cdot\|_{W,p_{i}}).
\end{displaymath}
This object is to be understood as a hermitian $\QQ$-line bundle over $\Spec\CC$. Letting $m=\prod_{m_{i}<\infty}m_{i}$, then what is really defined is the hermitian line bundle
\begin{displaymath}
	\psi_{W}^{\otimes m^{2}}=\bigotimes_{i}(\omega_{X,p_{i}},\|\cdot\|_{W,p_{i}})^{m^{2}(1-m_{i}^{-2})}.
\end{displaymath}
The underlying $\QQ$-line bundle is denoted by $\psi$. Our criterion of use of additive or multiplicative notations for hermitian $\QQ$-line bundles will always be to simplify notations as much as possible. 

\subsubsection{}\label{subsubsec:normal-metric} The singular riemannian metric on $X$ induces a singular hermitian metric on the $\QQ$-line bundle
\begin{displaymath}
	\omega_{X}(D),\quad D:=\sum_{i}\left(1-\frac{1}{m_{i}}\right)p_{i}.
\end{displaymath}
By $\omega_{X}(D)_{\hyp}$ we denote the resulting hermitian $\QQ$-line bundle over $X$. To avoid any confusion, at this point we make a remark on normalizations. While the riemannian metric on $\HH$ is given by \eqref{eq:riemannian}, the corresponding \emph{hermitian} metric on the holomorphic tangent bundle $T_{\HH}$ is given by
\begin{displaymath}
	\frac{|d\tau|^{2}}{2(\Imag\tau)^{2}},
\end{displaymath}
hence the holomorphic vector field $\partial/\partial\tau$ has square norm $1/2(\Imag\tau)^{2}$. With this normalization, the hermitian metric on $T_{\HH}$ has constant Ricci curvature $-1$, and the expected K\"ahler identity between the scalar and the $\ov{\partial}$ laplacians is satisfied. The hermitian metric on $\omega_{X}(D)_{\hyp}$ is obtained by duality and descent from the hermitian metric on $T_{\HH}$.

The expression of the riemannian tensor in \emph{rs} coordinates shows that $\omega_{X}(D)_{\hyp}$ fits the $L_{1}^{2}$ formalism of Bost \cite{Bost}. Namely, as a metric on this line bundle, the singularities are mild enough so that the metrized \emph{Deligne pairing}
\begin{displaymath}
	\langle\omega_{X}(D)_{\hyp},\omega_{X}(D)_{\hyp}\rangle
\end{displaymath}
is defined. This is a hermitian $\QQ$-line bundle over $\Spec\CC$. One can also appeal to the generalized arithmetic intersection theory of K\"uhn \cite{Kuhn}, which was developed to deal with this kind of metrics. We refer to Deligne's article \cite{Deligne} and Soul\'e's survey \cite{Soule} for more information on the Deligne pairing.

\subsubsection{} The determinant of cohomology of the trivial sheaf $\OO_{X}$ on $X$ is the complex line
\begin{displaymath}
	\det H^{\bullet}(X,\OO_{X})=\det H^{0}(X,\OO_{X})\otimes\det H^{1}(X,\OO_{X})^{-1}.
\end{displaymath}
We define a \emph{Quillen metric} on it as follows. First of all, there is no difficulty in defining the $L^{2}$ metric for the singular hyperbolic metric, since it has finite volume (this is discussed in \textsection\ref{subsection:Deligne-isom}). The Quillen metric is a rescaling of the $L^{2}$ metric. Denote by $Z(s,\Gamma)$ the Selberg zeta function of $\Gamma$. For $\Real(s)>1$, it is defined by an absolutely and locally uniformly convergent product
\begin{displaymath}
	Z(s,\Gamma)=\prod_{\gamma}\prod_{k=0}^{\infty}(1-e^{-(s+k)\ell_{\hyp}(\gamma)}),
\end{displaymath} 
where $\gamma$ runs over the closed primitive geodesics in $\Gamma\backslash\HH$ for the 
hyperbolic metric, and $\ell_{\hyp}(\gamma)$ denotes the hyperbolic length of $\gamma$.  The Selberg zeta function has a meromorphic continuation to the whole plane $\CC$, and has a simple zero at $s=1$. Moreover, $Z^{\prime}(1,\Gamma)$ is a positive real number. We then define
\begin{displaymath}
	\|\cdot\|_{Q}=(C(\Gamma)Z^{\prime}(1,\Gamma))^{-1/2}\|\cdot\|_{L^{2}},
\end{displaymath}
where $C(\Gamma)$ is the real positive constant determined by the cumbersome expression
\begin{equation}\label{eq:ctt-Gamma}
	\begin{split}
		\log C(\Gamma)=&\sum_{m_{i}<\infty}\sum_{k=0}^{m_{i}-2}\frac{2k+1-m_{i}}{m_{i}^{2}}
		\log\Gamma\left(\frac{k+1}{m_{i}}\right)+\frac{1}{3}\sum_{m_{i}<\infty}\log m_{i}
		-\frac{1}{6}\sum_{m_{i}<\infty}\frac{\log m_{i}}{m_{i}^{2}}\\
		&-\frac{1}{2}\sum_{m_{i}<\infty}\frac{\log m_{i}}{m_{i}}
		-\frac{1}{6}\sum_{m_{i}<\infty}m_{i}\log m_{i}-\left(\-2\zeta^{\prime}(-1)-\frac{1}{6}\right)\sum_{m_{i}<\infty}m_{i}\\
		&-\left(2\zeta^{\prime}(-1)+\frac{1}{2}\log(2\pi)+\frac{2}{3}\log 2-\frac{\gamma}{6}-\frac{1}{6}\right)
		   \sum_{m_{i}<\infty}\frac{1}{m_{i}}
		-\frac{1}{6}(\log 2)\sum_{m_{i}<\infty}\frac{1}{m_{i}^{2}}\\
		&+g\left(\log(2\pi)+\frac{1}{3}\log 2-\frac{1}{3}\right)+n\left(\frac{1}{2}\log 2-\frac{5}{12}\right)\\
		&-c\left(\log 2-\frac{3}{4}\right)-\log(2\pi)+\frac{2}{3}\log 2+\frac{1}{3}.
	\end{split}
\end{equation}
Here $g$ is the genus of $X$, $\zeta(s)$ is the Riemann zeta function, and $\gamma$ is the Euler--Mascheroni constant. The determinant of cohomology together with this Quillen metric will be denoted $\det H^{\bullet}(X,\OO_{X})_{Q}$.

\subsubsection{} Deligne's isomorphism for the trivial line bundle on $X$ is a canonical, up to sign, isomorphism
\begin{equation}\label{eq:deligne-iso}
	\det H^{\bullet}(X,\OO_{X})^{\otimes 12}\overset{\sim}{\longrightarrow}\langle\omega_{X},\omega_{X}\rangle.
\end{equation}
It can be put in families of curves, and commutes with base change. It is unique with these properties.\footnote{For the trivial sheaf, and with a different presentation, Deligne's isomorphism was actually obtained by Mumford \cite{Mumford}.} Again, we refer to \cite{Deligne, Soule} for more information on this isomorphism. By the bilinearity of the Deligne pairing with respect to tensor product of line bundles, \eqref{eq:deligne-iso} induces a canonical isomorphism of $\QQ$-line bundles
\begin{displaymath}
	\det H^{\bullet}(X,\OO_{X})^{\otimes 12}\otimes\psi\overset{\sim}{\longrightarrow}\langle\omega_{X}(D),\omega_{X}(D)\rangle.
\end{displaymath}

\begin{theorem}\label{theorem:main}
Deligne's isomorphism induces a canonical (up to sign) isometry of hermitian $\QQ$-line bundles
\begin{equation}\label{eq:main}
	\det H^{\bullet}(X,\OO_{X})_{Q}^{\otimes 12}\otimes\psi_{W}\overset{\sim}{\longrightarrow}\langle\omega_{X}(D)_{\hyp},\omega_{X}(D)_{\hyp}\rangle.
\end{equation}
\end{theorem}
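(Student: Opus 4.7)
The plan is to deform the singular hyperbolic metric on $\omega_{X}(D)$ to a smooth reference metric for which the classical Deligne--Gillet--Soul\'e isometry applies, and then to track how every metric in the isomorphism \eqref{eq:deligne-iso} changes under this deformation. Concretely, I fix a small parameter $\varepsilon>0$ and, in each \emph{rs} coordinate disk around an elliptic fixed point $p_{i}$ or a cusp $q_{j}$, I replace the hyperbolic tensor by a smooth hermitian metric on $\omega_{X}$ (no divisor), extended to agree with the hyperbolic metric outside the disk. Call this metric $h_{\varepsilon}$ and the associated line bundle $\omega_{X,\varepsilon}$. Deligne's isomorphism for $h_{\varepsilon}$ is an isometry
\begin{displaymath}
   \det H^{\bullet}(X,\OO_{X})_{Q,\varepsilon}^{\otimes 12}\overset{\sim}{\longrightarrow}\langle\omega_{X,\varepsilon},\omega_{X,\varepsilon}\rangle,
\end{displaymath}
where the Quillen metric on the left is formed from $h_{\varepsilon}$. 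The theorem is then equivalent to a scalar identity comparing the Quillen and Deligne-pairing norms on both sides as $\varepsilon\to 0$, with the correction bundle $\psi_{W}$ accounting for the difference between $\omega_{X,\varepsilon}$ and $\omega_{X}(D)_{\hyp}$.

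\textbf{Analytic side: Mayer--Vietoris for determinants.} The scalar laplacians $\Delta_{\hyp}$ and $\Delta_{\varepsilon}$ agree outside the disks, so one can use surgery techniques to express
\begin{displaymath}
   \log\detp\Delta_{\hyp}-\log\detp\Delta_{\varepsilon}
\end{displaymath}
as a difference of regularized determinants on the disks (with Dirichlet boundary conditions) plus a boundary contribution encoded in a Neumann jump operator, via a Burghelea--Friedlander--Kappeler type Mayer--Vietoris formula. Two computations are then needed. First, the variation on the smooth piece, away from singularities, is a classical anomaly that can be handled by the Polyakov--Alvarez formula; it produces the local integrand of $\langle\omega_{X}(D)_{\hyp},\omega_{X}(D)_{\hyp}\rangle-\langle\omega_{X,\varepsilon},\omega_{X,\varepsilon}\rangle$ together with boundary terms. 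Second, and more crucially, one must evaluate asymptotically, as $\varepsilon\to 0$, the spectral contributions of model \emph{hyperbolic cusps} and \emph{hyperbolic cones of angle $2\pi/\ell$}, with Dirichlet or mixed boundary conditions on the geodesic circle of radius $\varepsilon$. Here the Selberg trace formula, applied to the full compact surface $X$ with its natural orbifold structure, relates the full $\log\detp\Delta_{\hyp}$ to $\log Z^{\prime}(1,\Gamma)$, while separation of variables on the model ends expresses the model determinants in terms of Barnes double gamma functions and Hurwitz zeta values.

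\textbf{Geometric side: Wolpert normalization and Deligne pairings.} The Wolpert metrics entering $\psi_{W}$ are built precisely from the \emph{rs} coordinates $w,z$. The adjunction-type comparison between $\omega_{X}(D)_{\hyp}$ and $\omega_{X,\varepsilon}$ at $p_{i}$ is a smooth metric on $\omega_{X,p_{i}}$ (via Wolpert's result that the hyperbolic metric is ``good'' in these coordinates), and the renormalization $\|dw\|_{W,p_{i}}=1$ together with the factor $(1-1/m_{i}^{2})$ is exactly what is needed to match the polar behavior of the Deligne pairing $\langle\omega_{X}(D)_{\hyp},\omega_{X}(D)_{\hyp}\rangle$ as $\varepsilon\to 0$; the analogous statement holds at cusps. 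Using bilinearity and K\"uhn/Bost's $L^{2}_{1}$ formalism, the difference between the two Deligne self-intersections is expressed as an explicit combination of local contributions at each $p_{i}$, plus the Wolpert lengths of the boundary circles. Collecting these with the analytic contributions above, the remaining constant is a sum indexed by the $p_{i}$, depending only on $m_{i}$, plus global terms in $g,n,c$.

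\textbf{Main obstacle and conclusion.} The hard part is carrying out rigorously the asymptotic evaluation of the determinants of laplacians on the model hyperbolic cusp and on the hyperbolic cone of angle $2\pi/\ell$, with appropriate boundary conditions, and checking that all divergent $\varepsilon\to 0$ contributions cancel between the analytic side (Mayer--Vietoris) and the geometric side (the polar behavior of $\langle\omega_{X}(D)_{\hyp},\omega_{X}(D)_{\hyp}\rangle$ and of the Wolpert metrics). The coefficients of $\log\Gamma(k/m_{i})$, $\log m_{i}$, $\zeta^{\prime}(-1)$, etc., appearing in \eqref{eq:ctt-Gamma}, must emerge from the Barnes double gamma evaluations on the model cone and from the special values entering the Selberg trace formula. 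Once the constant is identified with $\log C(\Gamma)$, the isometry \eqref{eq:main} follows from the smooth Deligne isometry by passing to the limit $\varepsilon\to 0$; the canonicity up to sign is inherited from \eqref{eq:deligne-iso} since the deformation is performed along a real one-parameter family.
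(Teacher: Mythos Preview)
Your overall strategy matches the paper's: smooth the metric near the singular points, apply the classical Deligne isometry, use Mayer--Vietoris surgery to compare determinants, evaluate model cusp and cone determinants asymptotically, and invoke the Selberg trace formula to bring in $Z'(1,\Gamma)$. The geometric side of your argument (Wolpert normalization, bilinearity, Bost's $L^{2}_{1}$ formalism) is also what the paper does.

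There is, however, a genuine gap in your analytic side. You write $\log\detp\Delta_{\hyp}$ as if it is a well-defined number and then compare it with $\log\detp\Delta_{\varepsilon}$. In the presence of cusps the hyperbolic laplacian has continuous spectrum, so no zeta-regularized $\detp\Delta_{\hyp}$ exists; the Selberg trace formula does \emph{not} give you ``the full $\log\detp\Delta_{\hyp}$''. The paper circumvents this by working with a \emph{relative} determinant $\det(\Delta_{\hyp},\Delta_{\hyp,\cusp})$, where $\Delta_{\hyp,\cusp}$ is the Dirichlet laplacian on model cusp neighborhoods (equivalently, one subtracts the one-dimensional operator $\Delta_{a}$ and introduces the pseudo-laplacian $\Delta_{\ps}$ on each cusp). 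This relative object is what the trace formula evaluates in terms of $Z'(1,\Gamma)$, and the model computation you need at a cusp is $\det\Delta_{\ps}$, not a naive Dirichlet determinant. Correspondingly, the Mayer--Vietoris formula on the hyperbolic side is not Burghelea--Friedlander--Kappeler but its non-compact analogue due to Carron, and the bridge between the compact ($\varepsilon$-metric) and non-compact (hyperbolic) formulas is the conformal invariance of $\detp R/\ell(\Sigma)$ for the Dirichlet-to-Neumann jump operator --- this is the key cancellation that makes the comparison work, and it is missing from your sketch.

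A secondary point: the model cone and cusp determinants are not obtained in the paper via Barnes double gamma functions. The paper writes the spectral zeta functions as contour integrals involving modified Bessel functions $K_{\nu}$ (cusp) and associated Legendre functions $P_{\nu}^{-\mu}$ (cone), and extracts the asymptotics by a delicate splitting of the integration range and uniform Bessel/Legendre expansions. Whether a Barnes-function approach could be made rigorous here is unclear; in any case you should not treat this step as routine, since the paper explicitly flags existing physics computations of this type as incomplete.
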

The ``connaisseur" will realize the absence of the exotic $R$-genus of Gillet--Soul\'e in our formulation. To render the presentation more compact, we included it in the constant $C(\Gamma)$ in the definition of the Quillen metric.\footnote{The axiomatic approach proposed by \cite{BFL} shows that a theory of holomorphic analytic torsion is unique up to a topological genus. The normalization we follow corresponds to the \emph{homogenous} theory in \cite{BFL}.} In the sequel we discuss the strategy of proof and the applications of this statement.
\newpage
\subsection{Simplified strategy of the proof}\label{subsec:strategy}
\subsubsection{}\label{subsubsec:divergence} Our proof accomplishes the naive and difficult idea of comparing the Selberg zeta value $Z^{\prime}(1,\Gamma)$ to the determinant of a scalar laplacian on $X$, for an appropriate choice of smooth riemannian metric.\footnote{This comparison is actually equivalent to an anomaly formula of Polyakov type, for non-compactly supported deformations of the hyperbolic metric.} We truncate the hyperbolic metric in $\varepsilon$-neighborhoods of the cusps and elliptic fixed points, and replace it by a smooth flat one in this area. This is done by freezing the value of the hyperbolic metric at height $\varepsilon$ in \emph{rs} coordinates. For instance, in a neighborhood of an elliptic fixed point $p$ of order $\ell$, and for $w$ a \emph{rs} coordinate at $p$, we replace the hyperbolic metric on $|w|\leq\varepsilon$ with the flat metric
\begin{displaymath}
	\frac{4|dw|^{2}}{\ell^{2}\varepsilon^{2-2/\ell}(1-\varepsilon^{2/\ell})^{2}}.
\end{displaymath}
Similarly at the cusps. We refer to those as $\varepsilon$ metrics.

\begin{figure}[h]
\begin{center}
\includegraphics[scale=0.4]{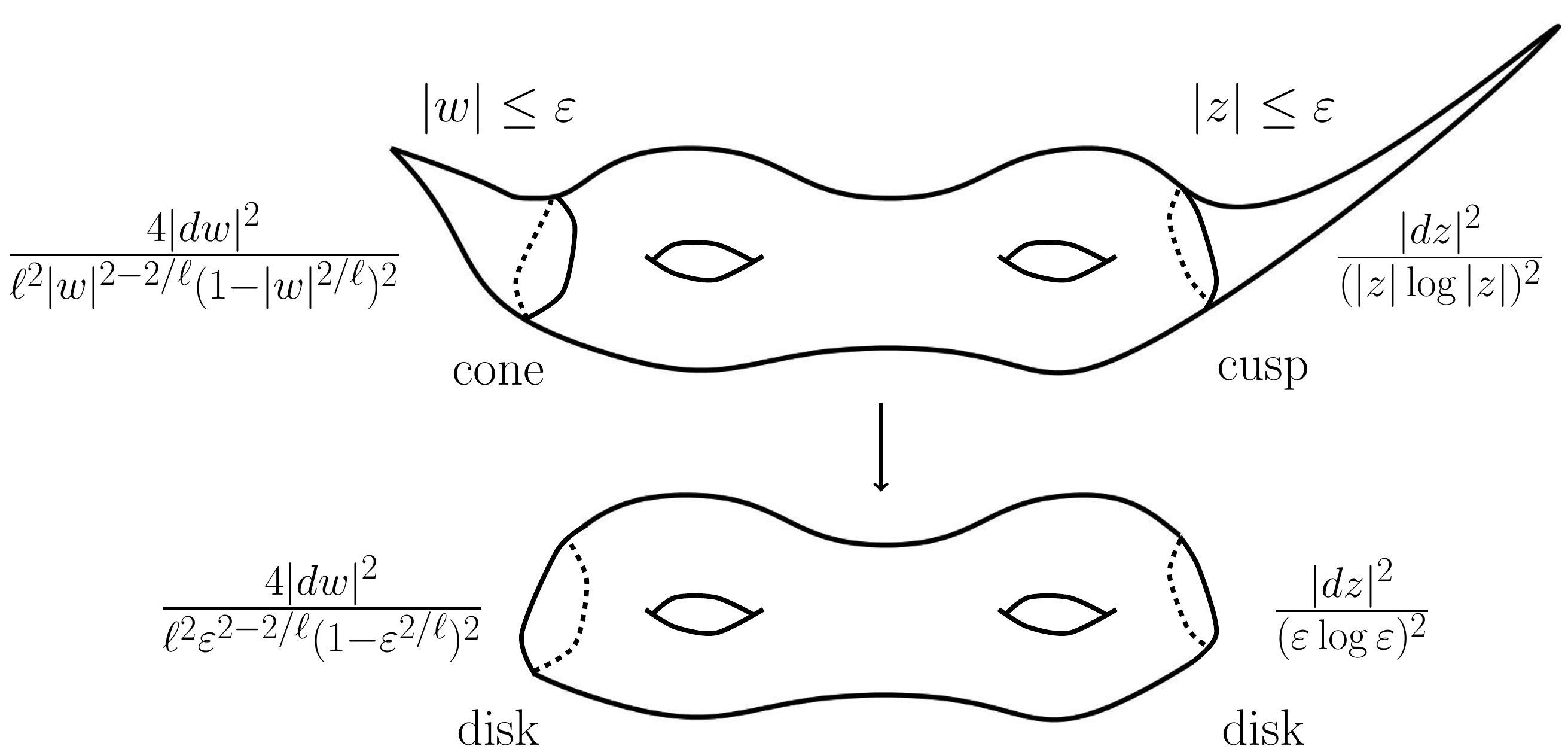}
\end{center}
\caption{The freezing of the hyperbolic metric at height $\varepsilon$.}
\end{figure}

Let us leave aside, for the moment, the fact that the resulting riemannian metric is only continuous and piecewise smooth, and pretend that the Riemann--Roch isometry holds for this metric. We would then like to let $\varepsilon\to 0$. Of course, both sides of \eqref{eq:deligne-iso} (with Quillen and Deligne pairing metrics depending on $\varepsilon$) will blow up in the limit. However, on the Deligne pairing side, easy computations show the exact shape of divergence, and how to correct the Deligne pairing metrics so as to obtain a finite limit. This is responsible for the appearance of the twist by $D$ and the line bundle $\psi$. The divergent quantities one needs to substract to the right hand side, necessarily have to compensate the divergence of the Quillen metrics on the cohomological side. This provides an \emph{ad hoc} definition of a Quillen metric in the limit, which tautologically (\emph{i.e.}~by construction) fits into an isometry of the kind \eqref{eq:main}. The details occupy Section \ref{section:DRR}, and abut to Proposition \ref{prop:naive_laplacian}.

\subsubsection{} The difficulty now is to give a spectral meaning to the so-defined limit Quillen metric. At this point we introduce analytic surgery. Let us suppose, for simplicity, that we only have one cusp and one elliptic fixed point, as illustrated in the figure above. We write $A$ for the $\varepsilon$-neighborhood of the cusp and $B$ for an $\varepsilon$-neighborhood of the elliptic fixed point, with respect to \emph{rs} coordinates. Let $C$ be the complement. By $\Delta_{A,\hyp}$, $\Delta_{B,\hyp}$, $\Delta_{C,\hyp}$ we denote the hyperbolic laplacians, with Dirichlet boundary conditions on these regions. Also, we write $\Delta_{A,\varepsilon}$ resp.~$\Delta_{B,\varepsilon}$ for the laplacians on $A$ resp.~$B$ with respect to the $\varepsilon$ metrics, and with Dirichlet boundary conditions. Pretending that the $\varepsilon$ metric on the whole $X$ is smooth, we also have the laplacian $\Delta_{X,\varepsilon}$ and its zeta regularized determinant $\detp\Delta_{X,\varepsilon}$ (the prime indicates that we remove the zero eigenvalue). We invoke the Mayer--Vietoris formula of Burghelea--Friedlander--Kappeler \cite{BFK}:
\begin{equation}\label{eq:mv-intro-1}
	\frac{\detp\Delta_{X,\varepsilon}}{\det\Delta_{A,\varepsilon}\det\Delta_{B,\varepsilon}\det\Delta_{C,\varepsilon}}=\frac{V_{\varepsilon}}{\ell_{\varepsilon}}\detp R_{\varepsilon}.
\end{equation}
Here, $V_{\varepsilon}$ denotes the volume of $X$, $\ell_{\varepsilon}$ the total length of the boundary of $C$, and $R_{\varepsilon}$ the Dirichlet-to-Neumann jump operator for the harmonic Dirichlet problem. All these are defined with respect to the $\varepsilon$ metric. An analogous formula for the hyperbolic metric holds as well, by an easy adaptation of work of Carron \cite{Carron}. It involves the relative determinant of the couple $(\Delta_{X,\hyp},\Delta_{A,\hyp})$, which plays the role of the quotient of their \emph{undefined} determinants:
\begin{displaymath}
	\det(\Delta_{X,\hyp},\Delta_{A,\hyp})\,``=\frac{\det\Delta_{X,\hyp}}{\det\Delta_{A,\hyp}}\,".
\end{displaymath}
It also involves the determinant $\det\Delta_{B,\hyp}$, that, despite the conical singularities, is well-defined because of their orbifold origin. The formula is then
\begin{equation}\label{eq:mv-intro-2}
	\frac{\det(\Delta_{X,\hyp},\Delta_{A,\hyp})}{\det\Delta_{B,\hyp}\det\Delta_{C,\hyp}}=\frac{V_{\hyp}}{\ell_{\hyp}}\detp R_{\hyp}.
\end{equation}
An elementary but crucial observation is the coincidence
\begin{displaymath}
	\frac{\detp R_{\hyp}}{\ell_{\hyp}}=\frac{\detp R_{\varepsilon}}{\ell_{\varepsilon}}.
\end{displaymath}
Moreover, by construction, $\det\Delta_{C,\varepsilon}=\det\Delta_{C,\hyp}$. Therefore equations \eqref{eq:mv-intro-1}--\eqref{eq:mv-intro-2} imply
\begin{equation}\label{eq:mv-intro-3}
	\det(\Delta_{X,\hyp},\Delta_{A,\hyp})=\frac{V_{\hyp}}{V_{\varepsilon}}\detp\Delta_{X,\varepsilon}\frac{\det\Delta_{B,\hyp}}{\det\Delta_{A,\varepsilon}\det\Delta_{B,\varepsilon}}.
\end{equation}
There is a further step, that consists in writing the relative determinant $\detp(\Delta_{X,\hyp},\Delta_{A,\hyp})$ in terms of the Selberg zeta function. This is done by introducing an auxiliary operator and applying the Selberg trace formula. One can show that
\begin{displaymath}
	\detp(\Delta_{X,\hyp},\Delta_{A,\hyp})=\alpha(\varepsilon)\frac{Z^{\prime}(1,\Gamma)}{\det\Delta_{\ps}},
\end{displaymath}
where $\Delta_{\ps}$ is the so-called pseudo-laplacian for the hyperbolic metric on the $\varepsilon$-neighborhood of the cusp, and $\alpha(\varepsilon)$ is an explicit elementary function of $\varepsilon$. We conclude with
\begin{equation}\label{eq:mv-intro-4}
	Z^{\prime}(1,\Gamma)=\frac{1}{\alpha(\varepsilon)}\frac{V_{\hyp}}{V_{\varepsilon}}\detp\Delta_{X,\varepsilon}\frac{\det\Delta_{\ps}\det\Delta_{B,\hyp}}{\det\Delta_{A,\varepsilon}\det\Delta_{B,\varepsilon}}.
\end{equation}
But the divergence of $\detp\Delta_{X,\varepsilon}$ was already determined in the previous step \textsection\ref{subsubsec:divergence}, and $Z^{\prime}(1,\Gamma)$ does not depend on $\varepsilon$. If we could exactly evaluate all the other quantities in \eqref{eq:mv-intro-4} and compare them with the known divergence of $\detp\Delta_{X,\varepsilon}$, we would infer an expression of the limit Quillen metric in terms of $Z^{\prime}(1,\Gamma)$, with an explicit constant $C(\Gamma)$ as required for Theorem \ref{theorem:main}! The Mayer--Vietoris formulas are given in Section \ref{section:Mayer--Vietoris}. The first step towards the spectral interpretation constitutes Section \ref{section:spectral}. The computations with the Selberg zeta function are collected in Section \ref{section:selberg}.

\subsubsection{} The previous digression reduces our work to the explicit evaluation of $\det\Delta_{\ps}$, $\det\Delta_{B,\hyp}$, $\det\Delta_{A,\varepsilon}$, and $\det\Delta_{B,\varepsilon}$. For some simple geometries, such computations have been considered by several authors. For instance, for euclidean disks the value was obtained by Spreafico \cite{Spreafico}. From this one easily gets the values of $\det\Delta_{A,\varepsilon}$ and $\det\Delta_{B,\varepsilon}$. For $\det\Delta_{\ps}$ and $\det\Delta_{B,\hyp}$, one can find some inspiration in the theoretical physics literature \cite{Barvinsky, Bordag, Fucci, Flachi}. Unfortunately, the various published methods that tackle these kind of determinants suffer from serious gaps, that make them unsuitable for mathematical purposes. One of the aims of our article is to provide a rigorous approach, through a tricky use of asymptotic properties of Bessel and Legendre functions. This part of our work is of independent interest for theoretical physics. The spectral problems for the model cusps and cones are presented in Section \ref{section:models}. The explicit computations of determinants of laplacians are differed to sections \ref{section:evaluation-determinants} and \ref{subsection:determinant-cone}.

\subsubsection{} The simplified strategy reviewed above presents some issues. The operation of freezing the hyperbolic metric near the singularities produces a piecewise smooth riemannian metric on $X$. We thus have to smoothen the metric at the jumps. This is a technical and routine step, but requires some work. Also, for the  determinants $\det\Delta_{\ps}$, $\det\Delta_{B,\hyp}$ we only obtain their asymptotics as $\varepsilon\to 0$. This is enough for our purposes, as well as in theoretical physics.\footnote{It is not even clear that the exact values obtained by the physicists methods are correct.}

To conclude the presentation of the method, we bring the reader's attention to the parallelism between the strategy we follow and the proof of the Selberg trace formula for non cocompact fuchsian groups of the first kind: truncation of fundamental domains, Maass--Selberg relations, and miraculous cancellations when the truncation exhausts the fundamental domain.  

\subsection{Applications: arithmetic Riemann--Roch and the special value $Z^{\prime}(1,\PSL_{2}(\Int))$}
\subsubsection{} The advantage of the Riemann--Roch isometry is that it easily leads to arithmetic versions of the Riemann--Roch formula, in the sense of Arakelov geometry. In Section \ref{section:arithmetic} we discuss arithmetic applications of Theorem \ref{theorem:main}, that we next summarize. Let $K$ be a number field and $\XX\to\BS=\Spec\OO_{K}$ a flat and projective regular arithmetic surface. We suppose given sections $\sigma_{1},\ldots,\sigma_{n}$, that are generically disjoint. We also assume that for every complex embedding $\tau\colon K\hookrightarrow\CC$, the compact Riemann surface $\XX_{\tau}(\CC)$ arises as the compactification of a quotient $\Gamma_{\tau}\backslash\HH$, and that the set of elliptic fixed points and cusps is precisely given by the sections. The constructions of \textsection\ref{subsec:statement} combined with the arithmetic structure of $\XX$, produce hermitian $\QQ$-line bundles over $\BS$, with classes in the arithmetic Picard group (up to torsion) $\APic(\BS)\otimes_{\Int}\QQ$. We use similar notations as in the complex case. Through the arithmetic degree map
\begin{displaymath}
	\adeg\colon\APic(\BS)\otimes_{\Int}\QQ\longrightarrow\RR
\end{displaymath}
we build numerical invariants, such as the arithmetic self-intersection number
\begin{displaymath}
	(\omega_{\XX/\BS}(D)_{\hyp},\omega_{\XX/\BS}(D)_{\hyp})=\adeg\,\langle\omega_{\XX/\BS}(D)_{\hyp},\omega_{\XX/\BS}(D)_{\hyp}\rangle\in\RR.
\end{displaymath}
The arithmetic degrees of the \emph{psi} classes encapsulate interesting information. Loosely speaking, they measure how far the \emph{rs} coordinates are from being (formal) algebraic. For instance, on moduli spaces of genus $0$ curves with $n$ cusps, and as a consequence of the arithmetic Riemann--Roch formula of the first author, they define heights with good finiteness properties \cite[Sec.~7]{GFM2}. Also, it is a striking coincidence that this kind of invariants play a prominent role in the work of Bost on Lefschetz theorems on arithmetic surfaces \cite[Thm.~1.2]{Bost}.

\subsubsection{} A straightforward application of Theorem \ref{theorem:main} produces the following arithmetic Riemann--Roch formula (Theorem \ref{theorem:ARR}):
\begin{equation}\label{eq:arr-intro}
	\begin{split}
		12\adeg H^{\bullet}(\XX,\OO_{\XX})_{Q}-\delta+\adeg\psi_{W}=&
		(\omega_{\XX/\BS}(D)_{\hyp},\omega_{\XX/\BS}(D)_{\hyp})\\
		&-\sum_{i\neq j}\left(1-\frac{1}{m_{i}}\right)\left(1-\frac{1}{m_{j}}\right)(\sigma_{i},\sigma_{j})_{\fin},
	\end{split}
\end{equation}
where $\delta$ is a suitable measure of the bad reduction of the structure morphism $\XX\to\BS$ (in terms of Artin conductors), and the right most intersection numbers account for the intersections of the sections happening at finite places.

\subsubsection{} Because our results cover arbitrary fuchsian groups, they apply to situations where dramatic simplifications occur. This is the case of $\PP^{1}_{\Int}$, seen as an integral model of $\PSL_{2}(\Int)\backslash\HH\cup\lbrace\infty\rbrace$. That is, the coarse modular curve $X(1)\to\Spec\Int$. The cusp and the elliptic fixed points $i$ and $\rho=e^{2\pi i/3}$ give raise to sections $\sigma_{\infty}$, $\sigma_{i}$, and $\sigma_{\rho}$, and the arithmetic Riemann--Roch formula \eqref{eq:arr-intro} applies with these data. This case provides a beautiful example of the arithmetic significance of $\psi_{W}$. Indeed, we show that the contribution of $\sigma_{\infty}$ is equal to zero, as a consequence of the compatibility between the theory of the Tate curve and the Fourier expansions of modular forms (known as the $q$-expansion principle). We also show that $\sigma_{i}$ and $\sigma_{\rho}$ contribute with the Faltings heights of the corresponding CM elliptic curves. These can be evaluated by the Chowla-Selberg formula, in terms of logarithmic derivatives of Dirichlet $L$ functions (see Lemma \ref{lemma:selberg-2} \emph{infra} for the precise formulation). Finally, the arithmetic self-intersection number was computed by Bost and K\"uhn \cite{Kuhn}, and relies on the first Kronecker limit formula. Because the determinant of cohomology of the trivial sheaf of $\PP^{1}_{\Int}$ reduces to $\Int$, this example leads to an explicit evaluation of the special value $Z^{\prime}(1,\PSL_{2}(\Int))$ (Theorem \ref{thm:selberg-zeta}). In particular, we obtain
\begin{equation}\label{eq:selberg-intro}
	\log Z^{\prime}(1,\PSL_{2}(\Int))\in\QQ\left\langle\frac{L^{\prime}(0,\chi_{i})}{L(0,\chi_{i})},\frac{L^{\prime}(0,\chi_{\rho})}{L(0,\chi_{\rho})},\frac{\zeta^{\prime}(0)}{\zeta(0)},
	\frac{\zeta^{\prime}(-1)}{\zeta(-1)},\gamma,\log 3, \log 2\right\rangle,
\end{equation}
where $\chi_{i}$ is the quadratic character of $\QQ(i)$ and $\chi_{\rho}$ is the quadratic character of $\QQ(\rho)$. This is an intriguing relation, especially in view of the expression of $Z(s,\PSL_{2}(\Int))$ obtained by Sarnak 
\cite[Cor.~1.5]{Sarnak82}:
\begin{displaymath}
	Z(s,\PSL_{2}(\Int))=\prod_{d\in\mathscr{D}}\prod_{k=0}^{\infty}(1-\varepsilon_{d}^{-2(s+k)})^{h(d)}.
\end{displaymath}
Here $\mathscr{D}$ is the set of square-free integers $d>0$, with $d\equiv 0,1\mod 4$, $\varepsilon_{d}>1$ is the fundamental solution of the Pell equation $x^{2}-dy^{2}=4$ and $h(d)$ is the class number of binary integral quadratic forms of discriminant $d$. To our knowledge, the evaluation of $Z^{\prime}(1,\PSL_{2}(\Int))$ was a longstanding question among specialists, and there is currently no other approach to this kind of computations. It would be interesting to have a direct ``analytic number theoretic" evaluation, and differently see how the special values of $L$ functions above arise. The advantage of the Arakelov theoretic strategy is that the result has a geometric interpretation. 

A more general but anecdotic remark, is the formal resemblance between \eqref{eq:arr-intro} to the analytic class number formula of Dedekind zeta functions. For fuchsian groups and their Selberg zeta functions, the isometry theorem and its arithmetic counterpart, can thus be seen as providing the analogue to the analytic class number formula. In particular, we call the explicit expression for $\log Z^{\prime}(s,\PSL_{2}(\Int))$ the analytic class number formula for $\PSL_{2}(\Int)$.

\newpage
\subsection{Other implications and perspectives}
\subsubsection{} The main theorem \ref{theorem:main} has other consequences that we don't discuss in this article. Nevertheless, we would like to comment on them:
\begin{enumerate}
	\item[--] the theorem vastly generalizes the Takhtajan--Zograf local index theorem \cite{ZT} on moduli spaces of punctured Riemann surfaces, by including elliptic fixed points and refining their curvature equality to an isometry of line bundles (the isometry can be put in family, on the moduli space of curves with marked points with weights, then one can take Chern forms). For an elliptic fixed point $p$, the role of the parabolic Eisenstein series $E_{q}(z,2)$ associated to the cusp $q$ in the definition of the Takhtajan--Zograf K\"ahler metric, is now played by the automorphic Green's function $G_{s}(z,p)$ at $s=2$. At the time of typing this article, Takhtajan and Zograf were indeed able to extend \cite{ZT} in presence of elliptic fixed points too. We refer the interested reader to their forthcoming work.
		\item[--] as we already explained, the main theorem also generalizes the Riemann--Roch isometry obtained by the first author \cite{GFM} in presence only of cusps. Even in this case, our methods are completely different and don't make use of deformation theory, nor of delicate properties of small eigenvalues, geodesic lengths, etc. Actually, by reversing the reasoning followed in \cite{GFM}, one can recover Wolpert's results on the degeneracy of Selberg zeta functions \cite{Wolpert:Selberg}. Our theorem even covers the orbifold setting.
	\item[--] by combining our isometry with the work of Schumacher--Trapani \cite{Sch-Trap}, it seems possible to deduce a result of Garbin--Jorgenson \cite[Cor.~7.2]{GJ} on the convergence of Selberg zeta values under elliptic degeneration.
\end{enumerate}
These implications of the main theorem, and the fact that the cited results themselves are difficult, provide further justification of the intricacy of the computations we carry out.

\subsubsection{} The strategy of proof that we follow can be adapted to other interesting situations, for which the present work is actually a ``toy example". This is the case of flat vector bundles corresponding to unitary representations of a fuchsian group, at least under a finite monodromy condition at the cusps.\footnote{This was the origin of the present article.} It is then plausible that the isometry, applied to dual pairs of flat vector bundles, implies relevant cases of the Saito--Terasoma theorem \cite{ST} on periods of connections on arithmetic varieties, in the dimension 1 case (to which they ultimately reduce by use of Lefschetz pencils). This will be a theme of our future research.

\subsection{Related works}
Other authors have studied neighbouring questions, and we would like to say some words about their work.

\subsubsection{} In absence of elliptic fixed points, hence only cusps, there is a different approach by Weng \cite{Weng} to Riemann--Roch type isometries. However the proof of the main theorem in \emph{loc.~cit.} is wrong. Also in T.~Hahn's PhD. thesis, the author worked with metric degenerations and heat kernel methods, and proved an arithmetic Riemann--Roch theorem in presence of cusps. The latter holds only up to an undetermined topological constant. Notice that in \cite{GFM} as well as the present article, the main difficulty consists in precisely pinning down this constant.

\subsubsection{} Recently, Friedman--Jorgenson--Smajlovic \cite{FJS} propose the use of scattering theory to define analytic torsion for Riemann surfaces with cusps. While it is conceptually satisfactory to have a systematic approach to defining analytic torsion, this is unfortunately not enough to establish an arithmetic Riemann--Roch formula (or a Riemann--Roch isometry).

\subsubsection{} In higher dimensions, there is work in progress of the first author with D. Eriksson and S. Sankaran, on an arithmetic Riemann-Roch theorem on Hilbert modular surfaces. The approach heavily depends on the theory of authomorphic representations, and can't be transposed to other settings. 

\subsubsection{} Recently J.-M. Bismut informed us on his results comparing orbifold holomorphic analytic torsion (introduced and studied by Ma \cite{Ma}) with usual holomorphic analytic torsion, when they are both defined. More precisely, his geometric setting consists in the quotient of a compact complex variety by an involution. The techniques should extend to a quotient by the action of any cyclic group. The result is compatible with the arithmetic Lefschetz fixed point formula of K\"ohler--R\"ossler \cite{KR}. Actually, Theorem \ref{theorem:main} for cocompact fuchsian groups (hence only elliptic fixed points) is also compatible with the theorem of K\"ohler--R\"ossler.

\subsection{Structure of the article}
Let us briefly review the organization and contents of this article. In Section \ref{section:models}, we discuss the model cases of the geometries and degenerate metrics we will have to tackle (hyperbolic cusps and cones), and the spectral theory of the corresponding laplacians. We state the evaluations of the determinants of laplacians. Section \ref{section:Mayer--Vietoris} introduces the global situation (our compactified orbicurves) and explains how the local models serve to the study of determinants of global laplacians. Here is where we use surgery techniques, through the Mayer--Vietoris formulae that we recall. Next, in Section \ref{section:DRR}, we work on the Riemann--Roch isometry (for smooth hermitian metrics) and derive from it a first weak version of the main theorem, with a naive Quillen metric that lacks of spectral interpretation. We will naturally see the appearance of the \emph{psi} line bundle and the Wolpert metric. The aim of Section \ref{section:spectral} is to explain how the Mayer--Vietoris formula will intervene in giving a spectral interpretation of the naive Quillen metric. The argument will reduce to the explicit evaluation of determinants of the (pseudo-)laplacians on the model cusp and the model cone, a \emph{tour de force} that occupies sections \ref{section:evaluation-determinants} and \ref{subsection:determinant-cone}. In Section \ref{section:selberg} we apply the Selberg trace formula to express a suitable relative determinant in terms of the Selberg zeta function, following a well-known method of d'Hoker--Phong, Sarnak, and Efrat. The proof of the main theorem is the content of Section \ref{section:proof-main}. Finally, the arithmetic applications conclude the article in Section \ref{section:arithmetic}.

A preliminary form of our work has been presented in several seminars and conferences since 2011. It took us several years to develop rigorous methods for the evaluation of determinants of laplacians. We apologize for the important delay since the first announcements were made.

\section{Hyperbolic cusps, hyperbolic cones, and laplacians}\label{section:models}
In this section we describe the local geometry of a hyperbolic cusp and a hyperbolic cone, as well as the spectral theory of their scalar laplacians. We also state explicit evaluations of their zeta regularized determinants, whose proves are differed to sections \ref{section:evaluation-determinants} and \ref{subsection:determinant-cone}.

\subsection{The model cusp}\label{subsec:model-cusp}
We define the model hyperbolic cusp and introduce several related Laplace type operators acting on suitable functional spaces.

\subsubsection{}\label{subsubsec:model-cusp} Let $\Circ$ be the unit circle parametrized by the coordinate $x\in [0,1]$ and $a>0$ a positive real number. The hyperbolic cusp, with apex at infinity and horocycle at height $a$, is the non-compact surface with boundary $\C_{a}:=\Circ\times [a,+\infty)$ endowed with the Poincar\'e riemannian metric
\begin{displaymath}
	ds^{2}_{\cusp}=\frac{dx^{2}+dy^{2}}{y^{2}}.
\end{displaymath}
Observe this is a complete metric of gaussian curvature $-1$ and finite volume. 
 
The hyperbolic cusp $\C_{a}$ can equivalently be presented as a Riemann surface with boundary, parame\-trized by the complex coordinate $z=e^{2\pi i (x+iy)}$, valued in the punctured disk $D^{\times}(0,e^{-2\pi a})$. The hyperbolic metric can then be written as
\begin{displaymath}
	ds_{\cusp}^{2}=\frac{|dz|^{2}}{(|z|\log |z|)^{2}}.
\end{displaymath}
The coordinate $z$ is uniquely determined by this condition, up to a factor of modulus 1. We call it a \emph{rs} coordinate.
\subsubsection{} The hyperbolic laplacian $\Delta_{D}$ on the cusp $\C_{a}$ with Dirichlet boundary condition at height $a$ 
is constructed as follows. Consider the scalar laplacian as a densely defined positive symmetric operator in $L^{2}(\C_{a}):=L^{2}(\C_{a},ds_{\cusp}^{2})$
\begin{displaymath}
	\Delta=-y^{2}\left(\frac{\pd^{2}}{\pd x^{2}}+\frac{\pd^{2}}{\pd y^{2}}\right):\mathcal{C}_{0}^{\infty}(\overset{\circ}{\C}_{a})\to \mathcal{C}_{0}^{\infty}(\overset{\circ}{\C}_{a})\subset{L}^{2}(\C_{a}).
\end{displaymath}
We define $\Delta_{D}$ to be the Friedrichs extension of $\Delta$. By the completeness of the hyperbolic metric at the cusp, this is the unique closed positive self-adjoint extension. Its domain consists of functions $f\in L^{2}(\C_{a})$ such that the distribution $\Delta f$ is again in $L^{2}(\C_{a})$ and $f$ vanishes on the boundary $y=a$.

\subsubsection{}\label{subsubsec:pseudo-laplacian} The pseudo-laplacian $\Delta_{\ps}$ is the Friedrichs extension of $\Delta$ restricted to the space $L^{2}_{0}(\C_{a})$ plus the Dirichlet boundary condition at height $a$, where
\begin{displaymath}
	L^{2}_{0}(\C_a):=\left\lbrace f\in L^{2}(\C_{a})\mid f_{0}=0\quad a.e.\right\rbrace.
\end{displaymath}
Here, $f_{0}$ denotes the constant Fourier coefficient of $f_{\mid \Circ\times (a,+\infty)}$ thought as a distribution. Namely, if $\pi(x,y)=y$ is the projection to the second factor, then $f_{0}:=\pi_{\ast}(f)$. We refer to \cite{CdVII} for a detailed study of such operators. In particular, we quote from \emph{loc.~cit.}~that $\Delta_{\ps}$ has pure point spectrum, the eigenvalues have finite multiplicity and satisfy a Weyl type law. This guarantees that the heat operator $e^{-t\Delta_{\ps}}$ is trace class and that the spectral zeta function is defined for $\Real(s)>1$. 

By a Fourier decomposition argument, one easily sees that the orthogonal complement to $L^{2}_{0}(\C_{a})$ in $L^{2}(\C_{a})$ consists of functions which do not depend on the variable $x$. In its turn, this last space can be seen to be isometric to $L^{2}([a,+\infty),y^{-2}dy)$. Therefore, we have
\begin{displaymath}
	L^{2}(\C_{a})=L^{2}_{0}(\C_a)\overset{\perp}{\oplus} L^{2}([a,+\infty),y^{-2}dy).
\end{displaymath}
We denote by $\Delta_{a}$ the restriction of $\Delta_{D}$ to $L^{2}([a,+\infty),y^{-2}dy)$, namely the Friedrichs extension of 
the operator $-y^{2}\frac{d^{2}}{dy^{2}}$, with Dirichlet boundary condition at $y=a$.

\subsubsection{}\label{subsubsec:eigen_problem_cusp} We consider the eigenvalue problem for the pseudo-laplacian $\Delta_{\ps}$. By the preceding description, it amounts to study the differential equation
\begin{displaymath}
	-y^{2}\left(\frac{\pd^{2}}{\pd x^{2}}+\frac{\pd^{2}}{\pd y^{2}}\right)\psi(x,y)=\lambda\psi(x,y),
\end{displaymath}
where $\psi$ is smooth and square integrable on $\Circ\times [a,+\infty)$, with $\psi(x,a)=0$, and with vanishing constant Fourier coefficient
\begin{equation}\label{eq:1}
	\int_{\Circ}\psi(x,y)dx=0,\quad y>a.
\end{equation}
The space of solutions to this problem is spanned by functions with separate variables $\psi(x,y)=g(y)h(x)$. Moreover, one can take $h$ of the form $h_{k}(x)=e^{2\pi i k x}$, for some integer $k$, with the only restriction $k\neq 0$ by \eqref{eq:1}. For $k$ being fixed, we find for $g$ the differential equation
\begin{equation}\label{eq:2}
	-\left(y^{2}\frac{d^{2}}{dy^{2}}+\lambda - y^{2}(2\pi k)^{2}\right)g(y)=0.
\end{equation}
In particular $k$ and $-k$ give raise to the same equation. The change of variables $Z=2\pi |k|y$ transforms \eqref{eq:2} into a modified Bessel differential equation. The solutions to this equation are the well-known Bessel functions of the first and second kind. 
For the definition and standard properties of these Bessel functions, we refer the reader to numerous references in the literature, e.g., \cite{GR07}, which study in depth the many known properties of these special functions.
Taking into account the integrability condition, one sees that for $g$ one can take
\begin{displaymath}
	g(y)=g_{s}(y):=y^{1/2}K_{s-1/2}(2\pi |k|y)
\end{displaymath}
with the modified Bessel function $K_{\nu}$ of the second kind, where, as it is customary, we wrote $\lambda=s(1-s)$. 
The Dirichlet boundary condition implicitly determines the possible values of $s$:
\begin{equation}\label{eq:3}
	K_{s-1/2}(2\pi|k| a)=0.
\end{equation}
By the properties of Bessel functions, this condition imposes that $s=\frac{1}{2}+ir$ for $r$ real and non-zero, so that $\lambda=\frac{1}{4}+r^{2}>\frac{1}{4}$. Moreover, we may restrict to strictly positive $r$, because of the symmetry property $K_{\nu}=K_{-\nu}$. For every integer $k\neq 0$, we will denote by $\lambda_{k,j}$ the eigenvalues obtained in this way. 
\begin{proposition}
 i. The eigenvalues of $\Delta_{\ps}$ are of multiplicity $2$ and $>1/4$.\\
 ii. The zeros of $K_{s-1/2}(2\pi |k|a)$, as a function of $s$, are simple.
\end{proposition}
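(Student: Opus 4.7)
I would proceed via a Fourier decomposition in the angular variable $x$. The rotation action of $\Circ$ on $\C_{a}$ preserves both the hyperbolic metric and the subspace $L^{2}_{0}(\C_{a})$, so it commutes with $\Delta_{\ps}$. The resulting isotypic decomposition reads
\begin{displaymath}
  L^{2}_{0}(\C_{a}) \;=\; \bigoplus_{k\in\Int\setminus\{0\}} e^{2\pi i k x}\otimes L^{2}\bigl([a,+\infty),\,y^{-2}\,dy\bigr),
\end{displaymath}
and on the $k$-th summand $\Delta_{\ps}$ acts as the one-dimensional Sturm--Liouville operator
\begin{displaymath}
  L_{k} \;=\; -y^{2}\frac{d^{2}}{dy^{2}}+(2\pi k)^{2}\,y^{2},\qquad g(a)=0,
\end{displaymath}
together with the natural $L^{2}$-decay condition at $y=+\infty$. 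This reduces the proposition to a family of one-dimensional spectral problems indexed by $k$.

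For each $k\neq 0$, the operator $L_{k}$ is positive and self-adjoint. The endpoint $y=+\infty$ is in the limit-point case: among the two independent solutions $y^{1/2}I_{s-1/2}(2\pi|k|y)$ and $y^{1/2}K_{s-1/2}(2\pi|k|y)$, only the latter is square-integrable for the measure $y^{-2}\,dy$. Classical Sturm--Liouville theory then yields pure point spectrum with simple one-dimensional eigenspaces, parametrized by the zeros $s$ of $K_{s-1/2}(2\pi|k|a)$ via $\lambda=s(1-s)$. Since $L_{k}=L_{-k}$, the two summands indexed by $k$ and $-k$ combine to yield a two-dimensional eigenspace in $L^{2}_{0}(\C_{a})$ for each such $\lambda$, which delivers the multiplicity $2$ statement of (i).

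For the strict bound $\lambda>1/4$: positivity of $L_{k}$ together with the $L^{2}$-decay requirement forces $s-1/2=ir$ with $r\in\RR$, hence $\lambda=1/4+r^{2}$. The value $r=0$ is excluded by the explicit formula $K_{-1/2}(z)=\sqrt{\pi/(2z)}\,e^{-z}$, which has no positive real zero, so the strict inequality $\lambda>1/4$ holds.

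Statement (ii) is the analytic reflection of algebraic simplicity of the eigenvalues of $L_{k}$: since $L_{k}$ is self-adjoint with simple geometric spectrum, no Jordan chain appears, and the analytic function $s\mapsto K_{s-1/2}(2\pi|k|a)$ must therefore vanish to order exactly one at each eigenvalue parameter. A hands-on verification differentiates the Bessel equation with respect to the order, multiplies by $g_{s_{0}}(y)=y^{1/2}K_{s_{0}-1/2}(2\pi|k|y)$, integrates over $[a,+\infty)$ against $y^{-2}\,dy$, and uses integration by parts together with $g_{s_{0}}(a)=0$ and exponential decay at infinity to express $\partial_{s}K_{s-1/2}(2\pi|k|a)|_{s=s_{0}}$ as a nonzero multiple of $\|g_{s_{0}}\|_{L^{2}}^{2}$. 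The main delicacy to monitor is the limit-point analysis at infinity with the singular weight $y^{-2}\,dy$ and the careful identification of the Friedrichs domain with the subspace defined by $f_{0}=0$; once these standard spectral-theoretic foundations are in place, the proposition is essentially a packaging of Sturm--Liouville theory together with the non-vanishing of $K_{-1/2}$.
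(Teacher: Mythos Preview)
Your approach is essentially the paper's: the Fourier decomposition, the reduction to the one-dimensional operators $L_{k}$, the $\pm k$ pairing for multiplicity $2$, and for (ii) the Wronskian-type identity obtained by differentiating the Bessel equation in the order parameter is exactly the computation the paper points to in \cite[App.~A]{Saharian}.

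Two points in part (i) need tightening. First, an index slip: when $r=0$ you have $s-\tfrac{1}{2}=0$, so the relevant function is $K_{0}$, not $K_{-1/2}$; the conclusion survives since $K_{0}(z)>0$ for $z>0$ as well. Second, and more substantively, ``positivity of $L_{k}$'' only gives $\lambda\geq 0$, not $\lambda>1/4$, so it does not by itself force $s-\tfrac{1}{2}$ to be purely imaginary. You need an extra ingredient: either the classical fact that $K_{\nu}(z)>0$ for all \emph{real} $\nu$ and $z>0$ (ruling out real $s-\tfrac{1}{2}$, i.e.\ $\lambda\leq 1/4$), or the substitution $y=e^{t}$, $g(y)=e^{t/2}v(t)$, which turns $L_{k}g=\lambda g$ into $-v''+(2\pi k)^{2}e^{2t}v=(\lambda-\tfrac{1}{4})v$ on a half-line with Dirichlet boundary and strictly positive potential, yielding $\lambda>1/4$ directly. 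With this fix your argument is complete and matches the paper's.
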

\begin{proof}
The first property has already been shown. We just observe that with the preceding notations we have $\lambda_{k,j}=\lambda_{-k,j}$, while the corresponding eigenfunctions $g_{s_j}(y)h_{k}(x)$ and $g_{s_j}(y)h_{-k}(x)$ are linearly independent. The second property can be proven analogously to \cite[App.~A]{Saharian}.
\end{proof}

\subsubsection{} We end this subsection by considering the determinant of the preceding laplacians. Observe that the usual zeta function regularization technique does not apply to $\Delta_{D}$ and $\Delta_{a}$, since they have continuous spectrum. For instance, when $a=1$, the function $y^{s}-y^{1-s}$ is clearly an eigenfunction of both operators (even satisfying the Dirichlet boundary condition) of eigenvalue $s(1-s)$. However, in Section \ref{section:evaluation-determinants} we will prove that the zeta regularized determinant $\det\Delta_{\ps}$ is  
well-defined. This amounts to showing suitable asymptotics of the trace $\tr(e^{-t\Delta_{\ps}})$ as $t\to 0$ or, equivalently, the meromorphic continuation of the spectral zeta function and its regularity at the origin. This can be reformulated by saying that M\"uller's formalism of relative determinants \cite{Muller:relative} applies to the couple $(\Delta_{D},\Delta_{a})$. By construction, we have
\begin{displaymath}
	\det(\Delta_{D},\Delta_{a})=\det\Delta_{\ps}.
\end{displaymath}  
Computing $\det\Delta_{\ps}$ explicitly, we thus deduce the following asymptotics. 
\begin{theorem}\label{thm:det-pseudo-laplacian}
For the model cusp $\C_{a}$ of height $a$, we have the equality
\begin{displaymath}
	\log\det(\Delta_{D},\Delta_{a})=-4\pi\zeta(-1)a-\zeta(0)\log(a)+o(1),
\end{displaymath}
as $a\to +\infty$.
\end{theorem}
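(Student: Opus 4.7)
The strategy is to compute $\det \Delta_{\ps}$ directly via its spectral zeta function, and read off the asymptotic behavior in $a$ from the explicit form of the answer. The identification $\det(\Delta_D, \Delta_a) = \det \Delta_{\ps}$ in the statement follows from the orthogonal splitting $L^2(\C_a) = L^2_0(\C_a) \oplus L^2([a,+\infty), y^{-2}dy)$ together with M\"uller's relative determinant formalism \cite{Muller:relative}. By \textsection\ref{subsubsec:eigen_problem_cusp} the eigenvalues of $\Delta_{\ps}$ are $\lambda_{k,j} = \tfrac{1}{4} + r_{k,j}^2$ with multiplicity two, where for each integer $k \geq 1$ the $r_{k,j} > 0$ are the simple positive zeros of $r \mapsto K_{ir}(2\pi k a)$. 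The first task is then to meromorphically continue
\begin{equation*}
\zeta_{\ps}(s) = 2 \sum_{k \geq 1} \sum_{j \geq 1} \bigl(\tfrac{1}{4} + r_{k,j}^2\bigr)^{-s}
\end{equation*}
past $s = 0$ and show its regularity there, so that $\log \det \Delta_{\ps} = -\zeta_{\ps}'(0)$ makes sense.

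The main analytic tool is a contour integral representation of the inner sums. Because the zeros of $\nu \mapsto K_\nu(2\pi k a)$ are simple, the argument principle yields, for $\Real(s)$ large,
\begin{equation*}
\sum_j \bigl(\tfrac{1}{4} + r_{k,j}^2\bigr)^{-s} = \frac{1}{2\pi i} \int_{\Gamma} \lambda^{-s} \, \frac{d}{d\lambda} \log K_{\sqrt{1/4 - \lambda}}(2\pi k a) \, d\lambda,
\end{equation*}
where $\Gamma$ encircles the positive values $\lambda_{k,j}$ and avoids the branch cut of the square root. Deforming $\Gamma$ to a Hankel-type contour around the negative $\lambda$-axis (equivalently, substituting $\nu = s - 1/2$ and rotating to a vertical line) yields an integral amenable to analytic continuation, into which I would substitute the Debye uniform asymptotic expansion of $K_\nu(\nu z)$. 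Truncating that expansion at order $N$ separates the integrand into elementary pieces, whose $k$-sums assemble into explicit combinations of values and derivatives of the Riemann zeta function via $\sum_k k^{-2s}$, together with a uniformly controlled remainder that is regular at $s=0$ and small in $a$.

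From this representation the asymptotic as $a \to +\infty$ is extracted by identifying which Debye terms survive in the limit. The leading (linear in $\nu$) Debye term produces the coefficient $-4\pi \zeta(-1)\, a$, essentially because the Riemann zeta value $\zeta(-1)$ emerges from the $k$-summation at a shifted argument; the next Debye term (logarithmic in $\nu$) contributes the $-\zeta(0) \log a$ piece. The remaining Debye corrections decay in $a$ after $k$-summation and are therefore absorbed in the $o(1)$ error. The principal obstacle is the rigorous justification of the interchange of the $k$-summation, the contour integration, and the differentiation at $s = 0$, together with sharp uniform estimates on the Debye remainder good enough to guarantee $o(1)$ behavior as $a \to +\infty$. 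This is precisely the ``tricky use of asymptotic properties of Bessel functions'' alluded to in \textsection\ref{subsec:strategy}; similar computations appear heuristically in the theoretical physics literature \cite{Bordag, Fucci, Flachi}, but with gaps that our approach, modeled on the estimates of \cite{Saharian}, is designed to close.
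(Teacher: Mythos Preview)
Your overall architecture matches the paper's: spectral zeta function, contour integral via the argument principle, Debye uniform asymptotics of $K_\nu(\nu z)$, and identification of the surviving terms with $\zeta(-1)$ and $\zeta(0)$. Two concrete ingredients are missing from your sketch, and both are precisely where the physics computations you cite break down.

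First, the contour deformation. After rotating to the real $t$-axis (your ``Hankel-type contour''), the factor $(1/4-t^2)^{-s}$ is singular at $t=\pm 1/2$, and the naive integrand $\partial_t \log K_t(2\pi k a)$ does not vanish there. The paper fixes this by subtracting the first-order Taylor term at $t=1/2$, replacing the integrand by the odd function
\[
f_k(t)=\partial_t \log K_t(2\pi k a)-2t\,\mathbb{E}_1(4\pi k a)e^{4\pi k a},
\]
which integrates to zero against $(1/4-t^2)^{-s}$ on the original contour and makes the limit $\theta\to\pi/2$ legitimate. Without this step the deformation is not justified.

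Second, and more seriously, the Debye expansion $K_\nu(\nu z)\sim\ldots$ is an asymptotic in $\nu\to\infty$; it says nothing when $t$ stays bounded while $k\to\infty$. The paper's key device is to split each integral at the $k$-dependent point $t=k^\delta$ for a small fixed $0<\delta<1/8$. On $[k^\delta,\infty)$ the Debye expansion applies uniformly and produces your $\zeta(-1)$ and $\zeta(0)$ terms after summing over $k$. On $[1/2,k^\delta]$ a completely different estimate is needed: the paper uses the large-argument expansion of $K_\nu(z)$ in $z$ (with $\nu$ in a shrinking window relative to $z$) together with a Cauchy-integral trick to control $\partial_t\log K_t$, showing this piece contributes $O(1/a^2)$. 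The physics literature ignores this bounded-$t$ regime, and your proposal does too; ``sharp uniform estimates on the Debye remainder'' cannot save you here because the Debye expansion is simply not valid on $[1/2,k^\delta]$. The $k$-dependent cut is the idea you are missing.
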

The proof of Theorem \ref{thm:det-pseudo-laplacian} finds inspiration in the physics literature (see for instance \cite{Barvinsky, Bordag, Flachi, Fucci}). Unfortunately, these references lack of mathematical rigor and present important gaps. After a \emph{tour de force}, we were able to address the various difficulties. For the sake of exposition, we postpone the details until Section \ref{section:evaluation-determinants}.

\subsection{The model cone}\label{subsec:model-cone}
\subsubsection{}\label{subsubsec:model-cone} 
Let $\Circ$ be the unit circle parametrized by the coordinate $x\in [0,1]$ and $a>0$ a positive real number.
Further, let $\omega$ be a positive integer. The hyperbolic cone of angle $\alpha=2\pi/\omega$ and boundary at height $a$ is
the non-compact surface with boundary $\E_{a}:=\Circ\times [a,+\infty)$, equipped with the constant curvature $-1$ metric
\begin{displaymath}
	ds_{\cone}^{2}=\alpha^{2}\frac{dx^{2}+dy^{2}}{\sinh(\alpha y)^{2}}.
\end{displaymath}
In contrast with the cusp case, this metric is not complete. A suitable change of variables provides a parametrization 
of the hyperbolic cone by $\E_{\eta}:=(0,\eta]\times[0,2\pi]$ with coordinates $(\rho,\theta)$. The metric tensor becomes
\begin{displaymath}
	ds_{\cone}^{2}=d\rho^{2}+\omega^{-2}\sinh(\rho)^{2}d\theta^{2}.
\end{displaymath}
Finally, the hyperbolic cone can also be seen as a Riemann surface with boundary, parametrized by the complex coordinate $z\in D^{\times}(0,R)$, such that
\begin{displaymath}
	ds_{\cone}^{2}=\frac{4|dz|^{2}}{\omega^{2}|z|^{2-2/\omega}(1-|z|^{2/\omega})^{2}}.
\end{displaymath}
A coordinate $z$ with this property is unique up to a factor of modulus 1 and called \emph{rs} coordinate after Wolpert \cite{Wolpert:Good, Wolpert:cusp}. The conversion relating the parameters $\eta$ and $R$ can easily be obtained by computing and comparing the volumes. The link between $\eta$ and $R$ is such that, as $\eta\to 0$, there are the following asymptotics
\begin{align}
	&\eta\sim 2 R^{1/\omega},\notag \\
	&\log\eta\sim\log(2R^{1/\omega}).\label{eq:relation-eps-R}
\end{align}
\subsubsection{} As for the model cusp, we will consider the corresponding scalar laplacian $\Delta$ on 
$L^{2}(\E_{\eta}):=L^{2}(\E_{\eta},ds_{\cone}^{2})$, with Dirichlet boundary condition. The expression of $\Delta$ in coordinates $(\rho,\theta)$ is given by
\begin{displaymath}
	\Delta=-\frac{\pd^{2}}{\pd\rho^{2}}\frac{1}{\tanh(\rho)}\frac{\pd}{\pd\rho}-\frac{\omega^{2}}{\sinh^{2}(\rho)}\frac{\pd^{2}}{\pd\theta^{2}},
\end{displaymath}
with domain $\C_{0}^{\infty}((0,\eta)\times [0,2\pi])$ (we actually have to impose further periodicity in $\theta$). Let $\Delta_{D}$ be the Friedrichs extension of $\Delta$.\footnote{Actually, the operator $\Delta_{D}$ is essentially self-adjoint, and hence there is a unique closed self-adjoint extension.} The operator $\Delta_{D}$ has pure point spectrum, the eigenvalues have finite multiplicity and they satisfy a Weyl type law. This can be seen by reduction to the case $\omega=1$: just pull-back $\Delta$ to $D(0,R^{1/\omega})$ under the map $z\to z^{\omega}$. As will follow from Section \ref{subsection:determinant-cone}, the zeta regularized determinant $\det\Delta_{D}$ is defined, although this can also be deduced easily from the desingularisation $z\mapsto z^{\omega}$ (and is even a well-known fact). 

\subsubsection{} For the computation of the determinant $\det\Delta_{D}$, we will need the following lower bound for the eigenvalues.
\begin{lemma}\label{lemma:min-eigen-cone}
The smallest eigenvalue of the operator $\Delta_{D}$ on the hyperbolic cone is strictly bigger than $1/4$.
\end{lemma}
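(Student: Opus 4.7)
\emph{Proof plan.} The strategy is to lift the Dirichlet problem from the cone to the full hyperbolic disk via the desingularisation, and then invoke McKean's sharp lower bound for the bottom of the spectrum of the hyperbolic Laplacian. The map $w\mapsto z=w^{\omega}$ realizes a branched local isometry from the hyperbolic disk $D(0,R^{1/\omega})\subset\HH^{2}$, equipped with its standard metric $4|dw|^{2}/(1-|w|^{2})^{2}$, onto $\E_{\eta}$. Under pull-back, elements of the form domain of $\Delta_{D}$ on $\E_{\eta}$ correspond bijectively to elements of $H^{1}_{0}(D(0,R^{1/\omega}))$ invariant under the cyclic rotation group of order $\omega$, and the two Dirichlet Laplacians are intertwined. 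Hence every eigenvalue of $\Delta_{D}$ on $\E_{\eta}$ is also a Dirichlet eigenvalue of the hyperbolic Laplacian on $D(0,R^{1/\omega})$, and it suffices to bound the first such eigenvalue from below by a number strictly greater than $1/4$.

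For this I would invoke McKean's classical theorem: on $\HH^{2}$ one has the Poincar\'e-type inequality
\begin{displaymath}
\int_{\HH^{2}}|\nabla u|^{2}\,dV\;\geq\;\tfrac{1}{4}\int_{\HH^{2}}u^{2}\,dV\qquad\text{for every } u\in H^{1}(\HH^{2}),
\end{displaymath}
and the spectrum of the self-adjoint Laplacian on $\HH^{2}$ is the purely absolutely continuous half-line $[1/4,+\infty)$, with no $L^{2}$-eigenvalue at the bottom. Given any Dirichlet eigenfunction $f$ on $D(0,R^{1/\omega})$ of eigenvalue $\lambda$, extension by zero yields $\tilde f\in H^{1}(\HH^{2})$ whose Rayleigh quotient equals $\lambda$, so McKean's inequality already gives $\lambda\geq 1/4$.

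The main obstacle is upgrading this to a \emph{strict} inequality. If $\lambda=1/4$ were attained, $\tilde f$ would saturate the Rayleigh quotient of the non-negative self-adjoint operator $\Delta_{\HH^{2}}-1/4$; by the functional calculus this forces $\tilde f$ to lie in the spectral projection of $\Delta_{\HH^{2}}$ at the singleton $\{1/4\}$, which is trivial because $1/4$ is not an $L^{2}$-eigenvalue of $\Delta_{\HH^{2}}$. Hence $\tilde f\equiv 0$ and so $f\equiv 0$, contradicting the fact that $f$ was an eigenfunction. The heart of the argument is this last step: it is precisely the absence of an embedded eigenvalue at the bottom of the continuous spectrum of $\HH^{2}$ that promotes McKean's weak inequality into a strict one on every bounded subdomain, and a fortiori on $\E_{\eta}$.
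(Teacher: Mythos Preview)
Your argument is correct and complete: the pull-back via $w\mapsto w^{\omega}$ identifies the Friedrichs extension on the cone with the Dirichlet Laplacian on the geodesic disk $D(0,R^{1/\omega})\subset\HH^{2}$ restricted to $\mu_{\omega}$-invariant functions; extension by zero sends a Dirichlet eigenfunction into the form domain $H^{1}(\HH^{2})$; and the spectral-measure argument that saturation of the Rayleigh quotient forces $\tilde f\in\mathrm{Ran}\,E_{\{1/4\}}$ is valid because the form domain coincides with $H^{1}$ on the complete manifold $\HH^{2}$. Since $\Delta_{\HH^{2}}$ has purely absolutely continuous spectrum $[1/4,\infty)$, you conclude.

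The paper proceeds differently and more elementarily. Working directly in the cone coordinates $(\rho,\theta)$, it exploits the strict inequality $\sinh\rho<\cosh\rho$, an integration by parts against $\cosh\rho\,d\rho\,d\theta$ (using the Dirichlet condition at $\rho=\eta$ and boundedness at the apex), and Cauchy--Schwarz to obtain the Hardy-type bound $\tfrac14\|f\|^{2}<\|\partial_{\rho}f\|^{2}\leq\langle df,df\rangle=\lambda\|f\|^{2}$. This is a self-contained half-page computation with no external input. Your route is more conceptual and makes the geometric origin of the constant $1/4$ transparent, but it imports the global spectral analysis of $\HH^{2}$ (McKean's theorem and the absence of embedded eigenvalues); the paper's argument trades this for an explicit local inequality, which is arguably preferable in a paper that otherwise avoids relying on the fine spectral theory of model spaces.
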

\begin{proof}
We will work in coordinates $(\rho,\theta)$. Let $f$ be an eigenfunction of $\Delta_{D}$ with eigenvalue $\lambda$. First of all, we have
\begin{equation}\label{eq:4}
	\langle f,f\rangle=\int_{[0,\eta]\times[0,2\pi]}f^{2}d\vol< \int_{[0,\eta]\times[0,2\pi]}f^{2}\frac{1}{\omega}\cosh(\rho)d\rho d\theta,
\end{equation}
where we put $d\vol=\frac{1}{\omega}\sinh(\rho)d\rho d\theta$ for the volume form. This inequality is strict because $f$ is not identically 0 and $\sinh(\rho)<\cosh(\rho)$. We treat the last integral in \eqref{eq:4} by integration by parts. For this, we first recall that $f$ satisfies the Dirichlet boundary condition at $\rho=\eta$ and observe it is bounded at $\rho=0$ by the Friedrichs extension condition. Hence
\begin{displaymath}
	f(\eta)=0,\quad\lim_{\rho\to 0}f(\rho)\sinh(\rho)=0.
\end{displaymath}
We thus find
\begin{equation}\label{eq:5}
	\int_{[0,\eta]\times[0,2\pi]}f^{2}\frac{1}{\omega}\cosh(\rho)d\rho d\theta=
	-\int_{[0,\eta]\times[0,2\pi]}2f\frac{\pd f}{\pd\rho}\frac{1}{w}\sinh(\rho)d\rho d\theta.
\end{equation}
We apply the Cauchy--Schwartz inequality to the right hand side of \eqref{eq:5} and combine it with \eqref{eq:4}. We derive
\begin{displaymath}
	\int_{[0,\eta]\times[0,2\pi]}f^{2}d\vol<\left (\int_{[0,\eta]\times[0,2\pi]}4f^{2}d\vol\right)^{1/2}
	\left(\int_{[0,\eta]\times[0,2\pi]}\left(\frac{\pd f}{\pd\rho}\right)^{2}d\vol\right)^{1/2},
\end{displaymath}
and hence
\begin{equation}\label{eq:6}
	\frac{1}{4}\int_{[0,\eta]\times[0,2\pi]}f^{2}d\vol<\int_{[0,\eta]\times[0,2\pi]}\left(\frac{\pd f}{\pd\rho}\right)^{2}d\vol.
\end{equation}
To conclude, recall that $\Delta=d^{\ast}d$ and $\Delta f=\lambda f$, so we find
\begin{equation}\label{eq:7}
	\begin{split}
	\lambda\langle f,f\rangle=\langle\Delta f, f\rangle=\langle df,df\rangle=&\int_{[0,\eta]\times[0,2\pi]}\left[\left(\frac{\pd f}{\pd\rho}\right)^{2}
	+\frac{1}{\omega^{2}}\left(\frac{\pd f}{\pd\theta}\right)^{2}\right]d\vol\\
	&\geq\int_{[0,\eta]\times[0,2\pi]}\left(\frac{\pd f}{\pd\rho}\right)^{2}d\vol.
	\end{split}
\end{equation}
From equations \eqref{eq:6}--\eqref{eq:7}, we finally arrive at $\lambda>1/4$, as was to be shown.
\end{proof}
Let us now consider the eigenvalue problem for $\Delta_{D}$. The spaces of eigenfunctions are spanned by functions with separate variables. Let $\psi(\rho,\theta)=g(\rho)h(\theta)$ be an eigenfunction with eigenvalue $\lambda$ $(>1/4)$. The function $h(\theta)$ can be supposed to be of the form $h_{k}(\theta)=e^{ik\theta}$ with $k\in\Int$. Then, for $k$ being fixed, we find for $g$ the differential equation
\begin{displaymath}
	-\frac{\partial^{2}}{\partial\rho^{2}}g(\rho)
- \frac{1}{\tanh(\rho)}\frac{\partial}{\partial\rho} g(\rho)
-\left(\lambda-\frac{k^2 \omega^{2}}{ \sinh^{2}(\rho)}\right)  g(\rho)=0.
\end{displaymath}
We put $\lambda=s(1-s)$ with $s=\frac{1}{2}+ir$ and $r>0$, and make the change of variables $Z=\cosh(\rho)$. Then, if $g(\rho)=G(Z)$, we arrive at the following associated Legendre differential equation
\begin{displaymath}
(1-Z^2) \frac{\partial^2}{\partial Z^2}G(Z)
-2Z \frac{\partial}{\partial Z}G(Z)
+\left((s-1)s-\frac{ k^2 \omega^2}{ 1-Z^2}\right) G(Z)=0.
\end{displaymath}
The solutions to this equation are the well-known Legendre functions of the first and second kind.
For the definition and standard properties of these Legendre functions, we refer the reader to numerous references in the literatur, e.g., \cite{GR07}.
The Friedrichs condition amounts in this case to require $G$ to be bounded, and the Dirichlet boundary condition is $G(\cosh(\eta))=0$. The only possible solution is then expressed in terms of the associated Legendre function 
$P$ of the first kind, namely
\begin{displaymath}
	G(Z)=P_{\nu}^{-\mu}(\cosh(\rho)),\quad \nu:=-\frac{1}{2}+ir,\quad\mu:=|k|\omega.
\end{displaymath}
The possible values of $\nu$ are implicitly determined by the boundary condition
\begin{displaymath}
	P_{\nu}^{-\mu}(\cosh(\eta))=0.
\end{displaymath}
For every $k\in\Int$, we enumerate the possible values of $\nu$ (resp.~$\lambda$) by $\nu_{k,n}$ (resp.~$\lambda_{k,n}$), $n\geq 0$. Observe that $\nu_{k,n}=\nu_{-k,n}$ (resp.~$\lambda_{k,n}=\lambda_{-k,n}$).
\begin{proposition}\label{prop:eigen-cone}
The spectrum of $\Delta_{D}$ is formed by the $\lambda_{0,n}$ with multiplicity 1, and the $\lambda_{k,n}$, $k>0$, with multiplicity 2.
\end{proposition}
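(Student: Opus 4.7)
The plan is to exploit the rotational symmetry of the cone and reduce to a family of one-dimensional Sturm--Liouville problems indexed by the angular Fourier mode. Since the rotations $\theta\mapsto\theta+c$ preserve the metric, the boundary $\rho=\eta$, and the Friedrichs domain of $\Delta$, the operator $\Delta_{D}$ commutes with the induced unitary $\Circ$-action on $L^{2}(\E_{\eta})$. This yields a decomposition into closed invariant subspaces
\begin{displaymath}
L^{2}(\E_{\eta}) = \widehat{\bigoplus}_{k\in\Int} H_{k}, \qquad H_{k} = \{f(\rho)e^{ik\theta} : f\in L^{2}((0,\eta], \omega^{-1}\sinh(\rho)\,d\rho)\},
\end{displaymath}
preserved by $\Delta_{D}$, so the spectrum of $\Delta_{D}$ is the disjoint union, counted with multiplicities, of the spectra of the restrictions $\Delta_{D}|_{H_{k}}$.

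Next I would analyze the eigenvalue problem on each $H_{k}$. Writing $\psi = g(\rho) e^{ik\theta}$ reduces it to the associated Legendre ODE already displayed in the text, with parameters $\mu=|k|\omega$ and $\nu = -\tfrac{1}{2} + ir$. The Dirichlet condition becomes $g(\eta)=0$, while the Friedrichs condition at $\rho=0$ amounts to boundedness of $g$ there. Of the two linearly independent Legendre solutions $P^{-\mu}_{\nu}(\cosh\rho)$ and $Q^{-\mu}_{\nu}(\cosh\rho)$, only the first is bounded at $\rho=0$: the second blows up like a negative power when $\mu>0$ and logarithmically when $\mu=0$. Hence $g$ is uniquely determined up to a scalar, and the outer condition $P^{-\mu}_{\nu}(\cosh\eta)=0$ cuts out a discrete sequence $\nu_{k,n}$, $n\geq 0$, producing a one-dimensional eigenspace in $H_{k}$ for each eigenvalue $\lambda_{k,n}$.

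Finally I would combine the modes. Because the radial equation depends on $k$ only through $\mu=|k|\omega$, one has $\lambda_{k,n}=\lambda_{-k,n}$ and the spectra of $\Delta_{D}|_{H_{k}}$ and $\Delta_{D}|_{H_{-k}}$ coincide. The $k=0$ sector therefore contributes each $\lambda_{0,n}$ with multiplicity one, whereas for $k>0$ the two sectors $H_{\pm k}$ each contribute a one-dimensional eigenspace, yielding a two-dimensional eigenspace spanned by $P^{-\mu}_{\nu_{k,n}}(\cosh\rho)\,e^{\pm ik\theta}$, which are linearly independent since they lie in orthogonal Fourier components. This matches the statement of the proposition. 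The one nontrivial point to verify is that the Friedrichs extension at the conical singularity really amounts to imposing boundedness of $g$ at $\rho=0$; this is best handled by pulling back through the unramified cover $z\mapsto z^{\omega}$ to reduce to the smooth Dirichlet Laplacian on the disk $D(0,R^{1/\omega})$, as already alluded to in the text, and is the only place where the ``orbifold'' nature of the cone enters in a nontrivial way.
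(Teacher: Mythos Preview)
Your argument is essentially the same as the paper's: the ``preceding discussion'' referred to in the paper's proof is exactly the Fourier decomposition and the reduction to the Legendre ODE that you spell out, and the multiplicity pattern (one for $k=0$, two for $k>0$) then follows from the one-dimensionality of the bounded solution in each angular sector together with the symmetry $\lambda_{k,n}=\lambda_{-k,n}$. So on the multiplicity statement itself your proof is correct and coincides with the paper's.

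There is, however, one point the paper singles out that you do not address: the zeros of $\nu\mapsto P_{\nu}^{-|k|\omega}(\cosh\eta)$ are \emph{simple}, which the paper justifies by the same Wronskian-type argument as in \cite[App.~A]{Saharian} (used already for the Bessel zeros in the cusp case). This is not needed to establish the multiplicities you prove, but it is essential later in Section~\ref{subsection:determinant-cone}, where the spectral zeta function is rewritten as a sum of contour integrals of $(t^{2}-1/4)^{-s}\,\partial_{t}\log P_{-1/2+t}^{-k\omega}(\cosh\eta)$ via the residue theorem; if a zero were not simple, the logarithmic derivative would have a higher-order pole and the residue bookkeeping would change. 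So while your proof of the proposition is complete as stated, you should be aware that the paper packages the simplicity of zeros into the same proof because it is the one nontrivial analytic fact about these Legendre zeros that the subsequent determinant computation relies on.
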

\begin{proof}
By the preceding discussion we just have to justify that the zeros of the function of $\nu$, $P_{\nu}^{-|k|\omega}(\cosh(\eta))$ are simple. This is again proven as in \cite[App.~A]{Saharian}.
\end{proof}
With these results at hand, one can then show the next statement.
\begin{theorem}\label{thm:det_cone}
For the model cone $\E_{\eta}$ of angle $2\pi/\omega$,
we have the equality
\begin{displaymath}
	\begin{split} 
		\log\det\Delta_{D}=&-\left(\frac{\omega}{6}+\frac{1}{6\omega}\right)\log(\eta)
	    -\omega\left(-2\zeta^{\prime}(-1)+\frac{1}{6}-\frac{1}{6}\log 2\right)
		-\frac{1}{\omega}\left(\frac{5}{12}-\frac{1}{6}\log 2+\frac{\gamma}{6}\right)\\
		&+\frac{1}{2}\log\omega+\frac{1}{6}\omega\log\omega+\frac{1}{6}\frac{\log\omega}{\omega}+\frac{1}{4}+o(1),
\end{split}	
\end{displaymath}
as $\eta\to 0$.
\end{theorem}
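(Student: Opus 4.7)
The strategy mirrors that of Theorem \ref{thm:det-pseudo-laplacian}, with the Bessel functions replaced by associated Legendre functions. By Proposition \ref{prop:eigen-cone} and Lemma \ref{lemma:min-eigen-cone}, the eigenvalues $\lambda_{k,n} = \tfrac{1}{4} + r_{k,n}^{2} > \tfrac{1}{4}$ of $\Delta_{D}$ are recovered, with simple zeros, from $\nu = -\tfrac{1}{2} + ir_{k,n}$ being a zero of $\nu \mapsto P_{\nu}^{-|k|\omega}(\cosh\eta)$. The plan is to split
$$
\zeta_{\Delta}(s,\eta) \;=\; \sum_{n\geq 0} \lambda_{0,n}^{-s} + 2\sum_{k\geq 1}\sum_{n\geq 0}\lambda_{k,n}^{-s},
$$
and represent each inner sum as a contour integral
$$
\sum_{n\geq 0} \lambda_{k,n}^{-s} \;=\; \frac{1}{2\pi i}\int_{\mathcal{C}}\lambda^{-s}\,\frac{\partial}{\partial\lambda}\log P_{\nu(\lambda)}^{-|k|\omega}(\cosh\eta)\,d\lambda,
$$
where $\nu(\lambda) = -\tfrac{1}{2} + i\sqrt{\lambda - \tfrac{1}{4}}$ and $\mathcal{C}$ encloses the positive spectrum. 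Collapsing $\mathcal{C}$ onto the cut $[\tfrac{1}{4},\infty)$ converts these integrals into Mellin-type integrals suitable for meromorphic continuation in $s$.

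The first technical input is a set of uniform asymptotic expansions of $\log P_{-1/2+ir}^{-\mu}(\cosh\eta)$ jointly in $r$, $\mu = |k|\omega$, and the small parameter $\eta$. Two regimes must be controlled: a \emph{Bessel regime} where $r\eta,\mu\eta$ stay bounded, in which $P_{\nu}^{-\mu}(\cosh\eta)$ is well approximated by a rescaled modified Bessel function $I_{\mu}$; and a \emph{Debye regime} $\mu/r$ large, for which the Olver-type uniform expansions apply. The Bessel approximation reproduces exactly the cusp computation already carried out for Theorem \ref{thm:det-pseudo-laplacian}, and supplies the universal constants shared with that statement; the higher-order corrections encode the negative curvature contribution specific to the hyperbolic cone.

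The second step is to carry out the analytic continuation to $s=0$ and to compute the derivative. One subtracts from the integrand finitely many principal terms coming from the uniform expansion, each of which has an explicit Mellin transform. The remaining regularised integral defines an analytic function at $s=0$ whose derivative tends to zero as $\eta \to 0$. The subtracted terms re-sum over $k$ via identities of the form $\sum_{k \geq 1}(k\omega)^{-s} = \omega^{-s}\zeta(s)$, together with its derivatives at $s=-1$ and $s=0$; this is where the constants $\zeta'(-1)$, $\gamma$, $\log 2$ and the explicit $\omega^{\pm 1}$ dependencies enter. The $k = 0$ mode, involving $P_{\nu}^{0}(\cosh\eta)$, is handled by a separate direct contour analysis. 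The coefficient $-(\omega/6 + 1/(6\omega))$ of $\log\eta$ reflects the heat-kernel coefficient $a_{1}$ of $\Delta_{D}$, with the $\omega$-part arising from the conical defect at the tip and the $\omega^{-1}$-part from the term $-\tfrac{1}{6}\int_{\E_{\eta}}R\,d\mathrm{vol}$.

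The principal difficulty will be the rigorous control of the joint uniform asymptotics of $P_{\nu}^{-\mu}(\cosh\eta)$ in both large parameters and the small parameter $\eta$, together with the justification of the three successive interchanges (contour deformation, summation over $k$, analytic continuation in $s$). This is the Legendre analogue of the Bessel analysis of Section \ref{section:evaluation-determinants}, and is meant to be carried out in full in Section \ref{subsection:determinant-cone}. The final additive constant $+\tfrac{1}{4}$ is expected to emerge as the Dirichlet boundary contribution from $\rho = \eta$ once all Mellin residues have been tallied, and the exact matching of the $o(1)$ remainders provides a nontrivial internal consistency check on sign conventions and on the absence of double counting in the subtraction scheme.
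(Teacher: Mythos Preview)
Your overall architecture matches the paper's: contour integral representation of each partial zeta function over the zeros of $\nu\mapsto P_{\nu}^{-|k|\omega}(\cosh\eta)$, collapse to a real integral on $(1/2,\infty)$ after a regularising subtraction, splitting at $t=k^{\delta}$, subtraction of a finite jet coming from a uniform large-order expansion, and final resummation over $k$ via $\sum_{k\ge1}(k\omega)^{-s}=\omega^{-s}\zeta(s)$. The $k=0$ mode is indeed handled separately, and the paper organises the contributions into pieces $\zeta_{L},\zeta_{R},\zeta_{M^{\ast}},\zeta_{E_{0}},\zeta_{E_{1}},\zeta_{E_{2}},\zeta_{\widetilde{M}_{0}}$ exactly parallel to the cusp case.

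There is, however, one substantive point where your outline diverges from what actually works. The paper does \emph{not} proceed via a Bessel $I_{\mu}$ approximation of $P_{\nu}^{-\mu}(\cosh\eta)$ in a ``Bessel regime'', and the claim that such an approximation ``reproduces exactly the cusp computation'' is wrong: the cusp model involves $K_{\nu}$, the cone's flat degeneration would involve $I_{\mu}$ or $J_{\mu}$, and neither the eigenvalue distribution nor the resulting constants coincide. The correct asymptotic input for the $[k^{\delta},\infty)$ range is a \emph{direct} uniform-in-$u$ expansion of $P_{-1/2+k\omega u}^{-k\omega}(\cosh\eta)$ of Olver type, adapted from Khusnutdinov, of the shape
\[
P_{-1/2+k\omega u}^{-k\omega}(\cosh\eta)\sim\frac{1}{\Gamma(k\omega+1)}\frac{(1-u^{2}v^{2})^{1/4}}{(1-u^{2})^{1/4}}\,e^{k\omega\,S_{-1}(v)}\sum_{n\ge0}\frac{\psi_{n}(v)}{(k\omega)^{n}},
\]
with $v=\cosh\eta/\sqrt{1+u^{2}\sinh^{2}\eta}$; this is the Legendre analogue of the Bessel Debye expansion used in the cusp case, but it is not obtained by passing through Bessel functions. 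For the bounded interval $[1/2,k^{\delta}]$ the relevant input is Wagner's asymptotics for $F(1/2+t,1/2-t;k\omega+1;\xi)$ with $t^{2}=o(k)$, again a direct hypergeometric estimate rather than a Bessel approximation. If you try to push the $I_{\mu}$ route, you will not get uniform control in the transition zone and the subtraction scheme will leak divergent terms; the paper's choice of asymptotics is what makes the remainders genuinely $o(1)$ as $\eta\to0$.
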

As in the case for cusps, the proof of Theorem \ref{thm:det_cone} 
is very technical and lengthy. To ease the exposition, we postpone it to Section \ref{subsection:determinant-cone}.

\section{Mayer--Vietoris formulas for determinants of laplacians}\label{section:Mayer--Vietoris}
The strategy depicted in the introduction of the article, ultimately leading to Theorem \ref{theorem:main}, is based on surgery techniques for determinants of laplacians. The idea is to replace neighborhoods of the cusps and elliptic fixed points by flat disks, and keep track of the change of the determinant of the laplacian. The change involves the explicit evaluations of laplacians stated in the previous section. In these ``cut and paste" operations, known as Mayer--Vietoris formulas, the ``glue" is given by the determinant of the so-called Dirichlet-to-Neumann jump operators. In this section we review these surgery techniques, after work of Burghelea--Friedlander--Kappeler \cite{BFK} (compact case) and Carron \cite{Carron} (non-compact  case). The bridge between both is provided by Proposition \ref{prop:DN_invariant}: suitably normalized, the determinants of Dirichlet-to-Neumann jump operators, for both of our compact and non-compact settings, coincide. This observation is the key to deduce a Riemann--Roch isometry in the non-compact orbifold case from the compact case.

\subsection{The case of smooth riemannian metrics}
Let $(X,g)$ be a compact riemannian surface. Let $\Sigma$ be a finite disjoint union of smooth closed curves embedded in $X$. Then $\Sigma$ delimitates compact surfaces with boundary $X_{0},\ldots,X_{n}$ in $X$. The Mayer--Vietoris type formula of Burghelea--Friedlander--Kappeler \cite{BFK} relates the determinant of the scalar laplacian of $(X,g)$ to the determinants of the Dirichlet laplacians of the riemannian surfaces with boundary $(X_{i},g_{\mid {X_i}})$. It involves a contribution from the boundary $\Sigma$, that is the determinant of the Dirichlet-to-Neumann jump operator.

\subsubsection{}\label{subsubsec:Dirichlet-smooth} We will make the following simplifying hypothesis on $\Sigma$ and $X_{0},\ldots,X_{n}$. We assume that the surfaces $X_{1},\ldots, X_{n}$ are isomorphic to closed disjoint disks with boundaries $\Sigma_{1},\ldots,\Sigma_{n}$, and that $X_{0}$ is the complement of  $\overset{\circ}{X}_{1}\cup\ldots\cup \overset{\circ}{X}_{n}$ in $X$. Then, we have $\Sigma=\Sigma_{1}\cup\ldots\cup\Sigma_{n}$. We denote by $\Delta$ the Laplace--Beltrami operator attached to $g$ on $X$, and by $\Delta_{i}$ the restriction of $\Delta$ on $X_{i}$ with Dirichlet boundary condition. This is the situation we will encounter later.

Next we recall the construction of the Dirichlet-to-Neumann jump operator \cite[Sec.~3]{BFK}. Let $f$ be a smooth function on $\Sigma$. We consider the Dirichlet problem
\begin{equation}\label{eq:9}
	\begin{cases} &\Delta_{0}\varphi_{0}=0,\quad\varphi_{0}\in\C^{\infty}(X_{0}),\quad\varphi_{0\mid\Sigma_{i}}=f_{\mid\Sigma_{i}}\text{ for all } i, \\
			&\Delta_{i}\varphi_{i}=0,\quad\varphi_{i}\in\C^{\infty}(X_{i}),\quad\varphi_{i\mid\Sigma_i}=f_{\mid\Sigma_i}\text{ for all }i.
	\end{cases}
\end{equation}
To be rigorous, the formulation should be $\varphi_{i}\in\C^{\infty}(\overset{\circ}{X}_{i})\cap\C^{0}(X_{i})$, but we will allow the abuse of notation $\varphi_{i}\in\C^{\infty}(X_{i})$. The solutions to the Dirichlet problem are known to exist and are unique. We then define a function $Rf$ on $\Sigma$ by:
\begin{equation}\label{eq:10}
	Rf_{\mid\Sigma_{i}}=-\left(\frac{\pd\varphi_{0}}{\pd n_{i}^{+}}\Big |_{\Sigma_{i}}+\frac{\pd\varphi_{i}}{\pd n_{i}^{-}}\Big |_{\Sigma_{i}}\right).
\end{equation}
In this expression, $n_{i}^{+}$ (resp.~$n_{i}^{-}$) is the unitary normal vector field to $\Sigma_{i}$ pointing into the interior of $X_{0}$ (resp.~$X_{i}$). The functions $\varphi_i$ glue into a continuous function $\varphi$ on $X$. Then $Rf$ measures how far $\varphi$ is from being differentiable. Observe this is the case if, and only if, $Rf=0$. But then $\varphi$ is harmonic on the whole compact surface $X$, hence constant. This shows that $0$ will be an eigenvalue of multiplicity 1. 

The operator $R:\C^{\infty}(\Sigma)\to\C^{\infty}(\Sigma)$ is known to be an elliptic pseudo-differential operator of order $1$ \cite[Prop.~3.2]{BFK}. Its zeta regularized determinant can be defined by the theory of Seeley \cite{Seeley}. As usual we write by $\det^{\prime}R$ for the determinant with the zero eigenvalue removed (this is the meaning of the prime symbol). We then have the following Mayer--Vietoris type formula.
\begin{theorem}[Mayer--Vietoris type formula]\label{thm:MV-smooth}
We have the equality of real numbers
\begin{equation}\label{eq:8}
	\frac{\detp\Delta}{\prod_{i=0}^{n}\det\Delta_{i}}=\frac{\vol(X,g)}{\ell(\Sigma,g)}\detp R,
\end{equation}
where $\vol(X,g)$ is the volume of $X$ and $\ell(\Sigma,g)$ is the total length of $\Sigma$, with respect to $g$.
\end{theorem}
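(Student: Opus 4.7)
The plan is to adapt the spectral-parameter strategy of Burghelea--Friedlander--Kappeler. For a parameter $z$ in the common resolvent set of $\Delta$ and of every $\Delta_i$, I introduce the family of Dirichlet-to-Neumann operators $R(z)$ constructed exactly as in \eqref{eq:9}--\eqref{eq:10} but with $\Delta - z$ in place of $\Delta$. Each $R(z)$ is an invertible first order elliptic pseudo-differential operator on $\Sigma$, so $\det R(z)$ is well-defined via Seeley's theory, and $R(0) = R$ has a one-dimensional kernel spanned by the constant function $\mathbf{1}_{\Sigma}$.

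The central step is to establish the auxiliary identity
$$\log\det(\Delta - z) - \sum_{i=0}^{n}\log\det(\Delta_i - z) - \log\det R(z) \equiv \text{const},$$
valid on any connected component of the common resolvent set. I would prove this by differentiating in $z$ and checking that the derivative vanishes. The derivatives of the log-determinants are regularized traces of resolvents, and the key tool is a Krein-type resolvent formula
$$(\Delta - z)^{-1} = \bigoplus_{i=0}^{n}(\Delta_i - z)^{-1} - P(z)\,R(z)^{-1}\,P(\bar z)^{\ast},$$
where $P(z)$ is the Poisson operator sending boundary data $f \in C^{\infty}(\Sigma)$ to its piecewise $(\Delta - z)$-harmonic extension to $X$. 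Combined with the derivative identity $\partial_z R(z) = -P(\bar z)^{\ast}P(z)$ and the cyclic property of the (suitably regularized) trace, this reduces the $z$-derivative of the left-hand side to zero.

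The second step is to fix the constant and extract the formula by analyzing the limit $z \to 0$. Since $\Delta$ has a simple zero eigenvalue (spanned by constants) while each $\Delta_i$ is invertible, the left-hand side behaves like $-z \cdot \detp\Delta / \prod_i \det\Delta_i + O(z^{2})$. On the $R$-side, exactly one eigenvalue $\mu(z)$ of $R(z)$ tends to $0$, while the remaining spectrum contributes $\detp R$ in the limit. A first-order perturbation computation gives $\mu(z)$ explicitly: solving $(\Delta - z)\varphi_i = 0$ on $X_i$ with $\varphi_i|_{\Sigma} = 1$, expanding $\varphi_i = 1 + z\,\psi_i + O(z^{2})$ with $\Delta\psi_i = 1$ on $X_i$ and $\psi_i|_{\Sigma} = 0$, and applying Green's identity to compute the total jump of $\partial_n \psi_i$ along $\Sigma$, yields
$$\mu(z) = -\frac{\vol(X,g)}{\ell(\Sigma,g)}\,z + O(z^{2}).$$
Matching leading coefficients in $z$ on both sides of the auxiliary identity fixes the constant and produces exactly the claimed equality \eqref{eq:8}.

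The main technical obstacle lies in the first step, specifically in making rigorous sense of the formula $\partial_z\log\det(\Delta - z) = -\tr(\Delta - z)^{-1}$ when the resolvent on a surface is Hilbert--Schmidt but not trace class, and in justifying the cyclic trace manipulation $\tr(P(z) R(z)^{-1} P(\bar z)^{\ast} \partial_z(\Delta-z)) = \tr(R(z)^{-1}\partial_z R(z))$ at the level of pseudo-differential operators of non-integer order. The standard remedy is to work with differences of resolvents (which are trace class) or with sufficiently high powers $(\Delta - z)^{-k}$, together with a careful symbolic analysis of $R(z)$ at infinity to control the anomaly in the cyclic trace; this is where the bulk of the work in \cite{BFK} is spent, and everything past this surgery identity is essentially mechanical.
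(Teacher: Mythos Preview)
Your sketch is the Burghelea--Friedlander--Kappeler argument, and that is exactly what the paper does: its proof is a one-line citation of \cite[Thm.~B*]{BFK} for connected $\Sigma$, plus the remark that the disconnected case goes through once one knows $\ker R$ is one-dimensional (the constant harmonic extension). So you are reconstructing what the paper delegates to the reference.

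There is, however, a logical slip in your second step. The $z\to 0$ analysis does \emph{not} by itself yield the formula. Taking $z\to 0$ in your auxiliary identity, with $\det(\Delta-z)\sim -z\,\detp\Delta$ and $\det R(z)\sim \mu(z)\,\detp R$, gives
\[
\text{const}=\log\detp\Delta-\sum_{i}\log\det\Delta_{i}-\log\!\left(\frac{\vol(X,g)}{\ell(\Sigma,g)}\detp R\right),
\]
which merely \emph{expresses} the constant in terms of the quantities appearing in \eqref{eq:8}; the theorem is precisely the assertion that this constant vanishes. Showing $\text{const}=0$ requires a separate argument --- the asymptotic analysis of all three log-determinants as $|z|\to\infty$ in a sector, where one checks that their expansions match (this is the ``symbolic analysis of $R(z)$ at infinity'' you mention, but you misattribute its role to a trace-class technicality in step~1 rather than to pinning down the constant). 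In the BFK scheme this is Theorem~B (the constant-free identity for $\Delta+\lambda$ with $\lambda>0$); your $z\to 0$ perturbation computation of $\mu(z)$ --- which is correct --- is then exactly the passage from Theorem~B to Theorem~B*.
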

\begin{proof}
We refer to \cite[Thm.~B*]{BFK} for the case when $\Sigma$ is connected. The general case is similarly treated (for this recall that $0$ is an eigenvalue of multiplicity 1).
\end{proof}
\subsection{The case of singular hyperbolic metrics}\label{subsec:case_singular_hyp}
We will need a version of the Mayer--Vietoris type formula that applies to compact Riemann surfaces with possibly singular Poincar\'e metrics. These metrics arise from fuchsian uniformizations.
\subsubsection{}\label{subsubsec:discussion-fuchsian} Let $\Gamma\subset\PSL_{2}(\RR)$ be a fuchsian group. The quotient space $\Gamma\backslash\HH$ can be given a structure of Riemann surface. It can be compactified by addition of a finite number of cusps, bijectively corresponding to conjugacy classes of primitive parabolic elements of $\Gamma$. Let $X$ be the compact Riemann surface thus obtained. We denote by $p_{1},\ldots,p_{n}\in X$ the distinct points constituting the cusps and elliptic fixed points (the later are in bijective correspondence to conjugacy classes of primitive elliptic elements of $\Gamma$). For every $p_{i}$, we denote by $m_{i}$ its order or multiplicity, namely $\infty$ for cusps, or the order of the stabilizers in $\Gamma$ for elliptic fixed points. The hyperbolic metric on $\HH$ descends to a singular metric on $X$, that we still call Poincar\'e or hyperbolic metric.\footnote{The terminology \emph{hyperbolic} is somewhat abusive, since the metric is not complete at the elliptic fixed points.} We write $ds_{\hyp}^{2}$ for the riemannian metric tensor. A small neighborhood of a cusp (resp.~elliptic fixed point of order $m_{i}$) is isometric to the model cusp \textsection \ref{subsubsec:model-cusp} (resp.~the model cone of order $m_{i}$ \textsection \ref{subsubsec:model-cone}). We fix closed disjoint disks around the points $p_{i}$, to be denoted $X_{i}$, isometric to either model cusps or model cones of order $m_{i}$. We denote by $X_{0}$ the complement of $\overset{\circ}{X}_{1},\ldots,\overset{\circ}{X}_{n}$. We introduce $\Delta_{\hyp}$ the scalar hyperbolic laplacian acting on $\mathcal{C}_{0}^{\infty}(X\setminus\lbrace p_{i}\rbrace_{i})$. It is known to uniquely extend to an unbounded closed positive self-adjoint operator on $L^{2}(X,ds^{2}_{\hyp})$ (see for instance \cite{Iwaniec}). We define $\Delta_{\hyp, i}$ as the restriction of $\Delta_{\hyp}$ to $X_{i}$, with the Dirichlet boundary condition. These are Friedrichs extensions. Finally, we put $\Delta_{\hyp,\cusp}=\oplus_{m_{i}=\infty}\Delta_{\hyp,i}$ for the orthogonal sum of operators, and let it act trivially on the orthogonal complement of $\oplus_{m_{i}=\infty}L^{2}(X_{i},ds^{2}_{\hyp\mid X_{i}})$ naturally embedded into $L^{2}(X,ds^{2}_{\hyp})$. The following statement is covered by Section \ref{section:evaluation-determinants}, to which the reader is referred. We give it now for a clearer exposition.
\begin{proposition}
i. The zeta regularized relative determinant $\det(\Delta_{\hyp},\Delta_{\hyp,\cusp})$ is well-defined.

ii. For $i=0$ or $m_{i}<\infty$, the zeta regularized determinant $\det(\Delta_{\hyp,i})$ is well-defined.
\end{proposition}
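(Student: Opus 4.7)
The plan is to separate the two statements. Part (ii) will be handled by classical elliptic theory on smooth compact manifolds with boundary, with an orbifold covering trick for cones, while part (i) requires M\"uller's relative determinant formalism \cite{Muller:relative} and ultimately hinges on the detailed model-cusp computations deferred to Section \ref{section:evaluation-determinants}.

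For part (ii), I begin with $i=0$. The region $X_0$ is a compact Riemann surface with smooth boundary, and the hyperbolic metric is smooth and non-degenerate up to $\partial X_0$ because the boundary circles lie strictly away from the singular points $p_1,\ldots,p_n$. Thus $\Delta_{\hyp,0}$ is the Dirichlet laplacian of a smooth Riemannian metric on a smooth compact manifold with boundary, and Seeley's heat kernel expansion yields the meromorphic continuation of the spectral zeta function and its regularity at $s=0$. For an elliptic fixed point $p_i$ of order $m_i<\infty$, the region $X_i$ is isometric to the model cone $\E_\eta$ of angle $2\pi/m_i$. Using the \emph{rs} coordinate and the $m_i$-fold cover $z\mapsto z^{m_i}$, one pulls $\Delta_{\hyp,i}$ back to the Dirichlet laplacian of a smooth hyperbolic metric on a disk carrying a natural $\Int/m_i\Int$-action; decomposing into isotypic components reduces matters to ordinary elliptic boundary value problems on a smooth manifold with boundary, for which the zeta regularized determinant is defined. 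The direct spectral approach of Section \ref{subsection:determinant-cone}, based on Legendre functions, provides an alternative route.

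For part (i), let me write $X_\infty=\bigsqcup_{m_i=\infty}X_i$, so that $\Delta_{\hyp,\cusp}$ is the Dirichlet hyperbolic laplacian on $X_\infty$ extended by zero on the orthogonal complement. Both $\Delta_\hyp$ and $\Delta_{\hyp,\cusp}$ share the same absolutely continuous spectrum $[1/4,\infty)$ coming from the zero Fourier mode along each horocycle, and this is precisely what makes the pair amenable to M\"uller's relative determinant formalism. The plan is first to show that $e^{-t\Delta_\hyp}-e^{-t\Delta_{\hyp,\cusp}}$ is trace class for every $t>0$; this can be done by a Duhamel argument exploiting the fact that on each cuspidal neighborhood $X_i$ the two operators are genuinely close (both equal to $\Delta_\hyp$ up to a Dirichlet extension and a zero-Fourier-mode projection), combined with off-diagonal exponential decay of the hyperbolic heat kernel on a model cusp, which localizes the difference near the compact core.

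The remaining and decisive step is the existence of a short-time expansion of the relative heat trace $\tr(e^{-t\Delta_\hyp}-e^{-t\Delta_{\hyp,\cusp}})$ of the form $\sum a_k t^{\alpha_k}+\sum b_k t^{\beta_k}\log t$ with finitely many exponents bounded below by $-1$; this is the main obstacle of the proposition. The difficulty is that the individual short-time traces of $e^{-t\Delta_\hyp}$ and of $e^{-t\Delta_{\hyp,\cusp}}$ are ill-defined because of the continuous spectrum, so the cancellation must be extracted by simultaneously controlling the interior Weyl-type singularities (which cancel pointwise, as the two operators have the same local symbol on each cusp) and the continuous-spectrum contributions at the cusps, dictated by the scattering theory of the full surface versus that of the model cusp. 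The former comparison is furnished by the Selberg trace formula treatment of Section \ref{section:selberg}, while the latter relies on the Bessel-function analysis of the pseudo-laplacian in Section \ref{section:evaluation-determinants}. Once such an expansion is in hand, combined with the trace-class control at small $t$ and the exponential decay at large $t$ granted by the positive spectral gap of $\Delta_\hyp$ away from its one-dimensional kernel, a standard Mellin transform gives the meromorphic continuation of the relative zeta function and its regularity at $s=0$, so that $\det(\Delta_\hyp,\Delta_{\hyp,\cusp})$ is well-defined.
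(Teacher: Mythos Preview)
Your proposal is correct. The paper itself gives no inline proof and only points forward to the model-cusp section, so your outline is already more detailed than what the paper supplies. For part (ii) your argument matches the paper's own remarks exactly: classical Seeley theory on a smooth compact manifold with boundary for $X_0$, and the desingularization $z\mapsto z^{m_i}$ for cones (cf.\ \textsection\ref{subsec:model-cone}, where this is stated explicitly).

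For part (i) there is a mild difference of packaging worth noting. Rather than a direct Duhamel comparison for the pair $(\Delta_{\hyp},\Delta_{\hyp,\cusp})$ as you propose, the route implicit in the paper interposes the one-dimensional operator $\Delta_a$ and uses additivity of relative heat traces and zeta functions,
\[
\zeta_{(\Delta_{\hyp},\,\Delta_{\hyp,\cusp})}(s)=\zeta_{(\Delta_{\hyp},\,\Delta_a)}(s)-\sum_{m_i=\infty}\zeta_{(\Delta_{D,i},\,\Delta_{a,i})}(s),
\]
with the first term handled by the Selberg trace formula in Section~\ref{section:selberg} and each summand on the right equal to the spectral zeta of a model pseudo-laplacian treated in Section~\ref{section:evaluation-determinants}. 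Both routes rest on exactly the same analytic input (short-time expansions from Sections~\ref{section:evaluation-determinants} and~\ref{section:selberg}); the additive decomposition is the more economical one given what is proved later anyway, while your direct approach has the virtue of making the trace-class and small-time-expansion mechanisms explicit without passing through an auxiliary operator.
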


\subsubsection{}\label{subsubsec:Dirichlet-singular} As in Theorem \ref{thm:MV-smooth}, the relation between the determinants $\det(\Delta_{\hyp},\Delta_{\hyp,\cusp})$, $\det(\Delta_{\hyp,i})$ can be expressed in terms of the determinant of a jump operator, that we now introduce. Let $f$ be a smooth function on $\Sigma$. We consider the Dirichlet problem
\begin{displaymath}
	\begin{cases}
	&\Delta_{\hyp,0}\varphi_{0}=0,\quad\varphi_{0}\in\C^{\infty}(X_{0}),\quad\varphi_{0\mid\Sigma_{i}}=f_{\mid\Sigma_{i}}\text{ for all } i, \\
			&\Delta_{\hyp,i}\varphi_{i}=0,\quad\varphi_{i}\in\C^{\infty}(X_{i}\setminus\lbrace p_{i}\rbrace)\cap L^{\infty}(X_{i}),\quad\varphi_{i\mid\Sigma_i}=f_{\mid\Sigma_i}\text{ if } m_{i}<\infty,\\
			&\Delta_{\hyp,i}\varphi_{i}=0,\quad\varphi_{i}\in\C^{\infty}(X_{i}\setminus\lbrace p_{i}\rbrace)\cap L^{\infty}(X_{i}),\quad\varphi_{i\mid\Sigma_i}=f_{\mid\Sigma_i}\text{ if } m_{i}=\infty.
	\end{cases}
\end{displaymath}
The growth conditions imposed to the solutions $\varphi_{i}$ near the points $p_{i}$ correspond to the Friedrichs extension. We made the same abuse of notations as in \eqref{eq:9}. 
\begin{lemma}\label{lemma:harmonic-extension}
The Dirichlet problem \ref{subsubsec:Dirichlet-singular} has a unique solution. Moreover, the solution satisfies:
\begin{displaymath}
	\varphi_{i}\in\C^{\infty}(X_{i})\text{ and }dd^{c}\varphi_{i}=0\text{ for all }i=0,\ldots,n.
\end{displaymath}
\end{lemma}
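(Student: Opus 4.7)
The plan is to solve the Dirichlet problem piece by piece, exploiting the local structure of each of the three types of components ($X_0$ smooth, $X_i$ a model cone with $m_i<\infty$, $X_i$ a model cusp with $m_i=\infty$). A preliminary observation is that on a Riemann surface the (possibly singular) hyperbolic Laplacian locally equals $-4e^{-2\phi}\partial_z\partial_{\bar z}$ where $e^{2\phi}|dz|^2$ is the metric tensor, so $\Delta_{\hyp}\varphi=0$ is equivalent to $dd^c\varphi=0$. Thus once the PDE is solved the second assertion of the lemma comes for free.

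On $X_0$ the metric is smooth up to the boundary and classical Dirichlet theory yields a unique smooth solution $\varphi_0$. For a cone piece $X_i$ with $m_i<\infty$, I would uniformize by the branched $m_i$-fold cover $\pi\colon\tilde X_i\to X_i$, $w\mapsto z=w^{m_i}$; a direct substitution shows that $\pi^\ast ds_{\hyp}^2$ is the \emph{smooth} hyperbolic metric on the disk $\tilde X_i=\{|w|<R^{1/m_i}\}$. The pulled-back boundary datum $\tilde f:=f\circ\pi$ is Galois-invariant under $w\mapsto e^{2\pi i/m_i}w$, so only Fourier modes $e^{in m_i\tilde\theta}$ appear. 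Standard Dirichlet theory on the smooth disk produces a unique bounded smooth $\tilde\varphi_i$, Galois-invariant by uniqueness, given explicitly by the Poisson formula
\begin{displaymath}
\tilde\varphi_i(w)=a_0+\sum_{n>0}R^{-n}\bigl(a_n w^{n m_i}+a_{-n}\bar w^{n m_i}\bigr).
\end{displaymath}
Since this is a convergent series in $z=w^{m_i}$ and $\bar z$, it descends to a bounded $\varphi_i\in C^\infty(X_i)$ in the rs coordinate.

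For a cusp piece $X_i\cong\Circ\times[a,\infty)$ I would use separation of variables for $\Delta_{\hyp,i}=-y^2(\partial_x^2+\partial_y^2)$. Writing $f(x)=\sum_n a_n e^{2\pi inx}$, harmonic functions of the form $g(y)e^{2\pi inx}$ satisfy $g''=(2\pi n)^2g$, giving $g=A+By$ for $n=0$ and $g=Ae^{2\pi|n|y}+Be^{-2\pi|n|y}$ for $n\neq 0$. The $L^\infty$ requirement kills $B$ and $A$ respectively, and the Dirichlet condition at $y=a$ fixes the remaining constants, producing the unique bounded harmonic extension
\begin{displaymath}
\varphi_i(x,y)=a_0+\sum_{n\neq 0}a_n e^{-2\pi|n|(y-a)}e^{2\pi i n x},
\end{displaymath}
which is smooth on the interior by exponential decay of each term in $y$.

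The local pieces glue to a global continuous $\varphi$ on $X$ agreeing with $f$ on $\Sigma$, and uniqueness follows piece by piece by applying the same arguments to the zero boundary datum (maximum principle on $X_0$, explicit Poisson/Fourier computations on the cones and cusps). I expect the main technical obstacle to be the smoothness of $\varphi_i$ at an elliptic fixed point in the ambient rs coordinate (as opposed to as a function on the orbifold), which is precisely what forces the Galois-invariant Poisson expansion above: one must verify that only integer powers of $z$ and $\bar z$ occur, and no fractional powers of $|w|^2$ survive the averaging.
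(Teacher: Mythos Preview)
Your argument is correct, but it is considerably more laborious than the paper's. Both proofs begin with the same key observation: on a Riemann surface, $\Delta_{\hyp}\varphi=0$ is equivalent to $dd^{c}\varphi=0$, since the conformal factor drops out. From there the two routes diverge.

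The paper exploits this conformal invariance globally rather than case by case. Once one knows that a solution $\varphi_{i}$ on $X_{i}\setminus\{p_{i}\}$ is bounded and satisfies $dd^{c}\varphi_{i}=0$, Riemann's removable singularity theorem for bounded harmonic functions immediately gives a smooth extension across $p_{i}$, regardless of whether $p_{i}$ is a cone point or a cusp. Conversely, the problem ``$dd^{c}\varphi_{i}=0$ on $X_{i}$, smooth up to $p_{i}$, with prescribed boundary values'' is exactly the classical Dirichlet problem for \emph{any} smooth conformal metric on $X$, whose existence and uniqueness are standard. Thus the paper sidesteps your separate analyses of the three geometries entirely.

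Your approach, by contrast, is constructive: you produce explicit Poisson and Fourier formulas on each piece. This has the merit of giving concrete expressions (potentially useful elsewhere), but for the lemma at hand it is overkill. Two small remarks on your execution: first, your cusp solution should be checked for smoothness at $p_{i}$ in the rs coordinate $z=e^{2\pi i(x+iy)}$, not merely on $\Circ\times(a,\infty)$; a quick substitution shows your series becomes $a_{0}+\sum_{n>0}(c_{n}z^{n}+c_{-n}\bar z^{n})$, which is indeed smooth at $z=0$, but you should say so. Second, your worry about fractional powers surviving at cone points is misplaced: the Poisson kernel on the smooth covering disk already produces only integer powers of $w,\bar w$, and Galois invariance (which follows from uniqueness) forces the exponents to be multiples of $m_{i}$, so descent is automatic.
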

\begin{proof}
Let $z$ be a local holomorphic coordinate on $X$. Then the Laplace--Beltrami operator in coordinate $z$ is of the form
\begin{displaymath}
	\lambda\frac{\pd^{2}}{\pd z\cpd z},
\end{displaymath}
where $\lambda$ is a non-vanishing smooth function away from the points $p_{i}$. The harmonicity condition $\Delta_{\hyp,i}\varphi_{i}=0$ is therefore equivalent to $dd^{c}\varphi_{i}=0$. Because the Friedrichs condition imposes boundedness of $\varphi_{i}$, by Riemann's extension theorem $\varphi_{i}$ extends to a harmonic function at $p_{i}$, in particular smooth. 

Conversely, any solution to the problem
\begin{displaymath}
	(D): \begin{cases}
		dd^{c}\varphi_{0}=0,\quad\varphi_{0}\in\C^{\infty}(X_{0}),\quad \varphi_{0\mid\Sigma_{i}}=f_{\mid\Sigma_{i}},\\
		dd^{c}\varphi_{i}=0,\quad\varphi_{i}\in\C^{\infty}(X_{i}),\quad\varphi_{i\mid\Sigma_{i}}=f_{\mid\Sigma_{i}}.
	\end{cases}
\end{displaymath}
will clearly be a solution to the Dirichlet problem \ref{subsubsec:Dirichlet-singular}. A solution to $(D)$ is known to exist and is unique. For instance, by the same argument as above, this problem is equivalent to a Dirichlet problem with respect to a Laplace--Beltrami operator for any smooth \emph{conformal} metric on $X$. This concludes the proof.
\end{proof}
Let $\varphi_{i}$, $i=0,\ldots,n$, be the solution provided by Lemma \ref{lemma:harmonic-extension}. Then we define the jump operator $R_{\hyp}f$ by the same equation as in the smooth case \eqref{eq:10}. Observe that 
the normal unit vectors $n_{i}^{+}$ and $n_{i}^{-}$ are well-defined because the hyperbolic metric is smooth at the boundaries $\Sigma_{i}$. The following proposition allows to compare jump operators with respect to the hyperbolic and a suitably chosen smooth metric.
\begin{proposition}\label{prop:DN_invariant}
i. The operator $R_{\hyp}$ is an elliptic pseudo-differential operator of order 1.

ii. Let $h$ be a smooth conformal metric on $X$ and $R_{h}$ its jump operator. Then
\begin{displaymath}
	\frac{\detp R_{\hyp}}{\ell(\Sigma, ds_{\hyp}^{2})}=\frac{\detp R_{h}}{\ell(\Sigma, h)};
\end{displaymath} 
here, $\ell(\Sigma,ds_{\hyp}^{2})$ resp.~$\ell(\Sigma, h)$ denotes the total length of $\Sigma$ with respect to 
$ds_{\hyp}^{2}$ and $h$, respectively.
\end{proposition}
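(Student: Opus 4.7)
\medskip

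\emph{Plan of proof of Proposition \ref{prop:DN_invariant}.} Let $h$ be any smooth conformal metric on $X$ and write $ds_{\hyp}^{2}=e^{2u}h$ for a function $u\in\C^{\infty}(X\setminus\{p_{1},\ldots,p_{n}\})$ which, crucially, is smooth in a neighborhood of $\Sigma$ because $\Sigma\subset X\setminus\{p_{i}\}$ lies in the locus where $ds_{\hyp}^{2}$ is smooth. By Lemma \ref{lemma:harmonic-extension}, the Dirichlet problem \ref{subsubsec:Dirichlet-singular} for $ds_{\hyp}^{2}$ is equivalent to the purely conformal problem $dd^{c}\varphi_{i}=0$, $\varphi_{i}|_{\Sigma_{i}}=f|_{\Sigma_{i}}$, which is also the defining problem for $R_{h}$ applied to $f$. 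Consequently the harmonic extensions $\varphi_{0},\varphi_{i}$ are identical for the two metrics. Only the normal unit vectors change: since $n_{\hyp}=e^{-u|_{\Sigma}}n_{h}$, one obtains the pointwise relation
\begin{equation*}
	R_{\hyp}f=e^{-u|_{\Sigma}}\,R_{h}f,\qquad f\in\C^{\infty}(\Sigma).
\end{equation*}
In other words $R_{\hyp}=M_{\rho}\circ R_{h}$ with $\rho=e^{-u|_{\Sigma}}\in\C^{\infty}(\Sigma)$ positive. As $R_{h}$ is an elliptic pseudo-differential operator of order $1$ by \cite[Prop.~3.2]{BFK} and $M_{\rho}$ is a smooth multiplication operator of order $0$, this establishes (i) and reduces the proof of (ii) to a purely smooth, conformal statement on $\Sigma$.

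For (ii), the assertion becomes: if $h_{0},h_{1}$ are any two smooth conformal metrics on $X$, the ratio $\detp R_{h}/\ell(\Sigma,h)$ is independent of the representative. Consider the one-parameter family $h_{t}=e^{2tu}h_{0}$, $t\in[0,1]$. The previous paragraph (applied in the purely smooth setting) gives $R_{h_{t}}=M_{e^{-tu|_{\Sigma}}}R_{h_{0}}$, so $\dot R_{h_{t}}=-M_{u|_{\Sigma}}R_{h_{t}}$ and $\dot R_{h_{t}}R_{h_{t}}^{-1}=-M_{u|_{\Sigma}}$ on the orthogonal complement of $\ker R_{h_{t}}=\CC\cdot 1$, which is $t$-independent. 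The standard variational formula for zeta regularized determinants of elliptic pseudo-differential operators then yields
\begin{equation*}
	\frac{d}{dt}\log\detp R_{h_{t}}=-\operatorname{FP}_{s=0}\,s\,\tr\bigl(R_{h_{t}}^{-s}\cdot(-M_{u|_{\Sigma}})\bigr)=\operatorname{Res}_{s=0}\tr\bigl(u|_{\Sigma}\,R_{h_{t}}^{-s}\bigr),
\end{equation*}
which is a Wodzicki-type residue localized on $\Sigma$. The simultaneous variation of the length is $\frac{d}{dt}\log\ell(\Sigma,h_{t})=\ell(\Sigma,h_{t})^{-1}\int_{\Sigma}u\,d\ell_{h_{t}}$.

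The final step is to show that the two right-hand sides coincide. This is where the geometric input enters: $R_{h_{t}}$ is an elliptic self-adjoint first order operator on the $1$-dimensional manifold $\Sigma$ with principal symbol $|\xi|_{h_{t}}$ (the standard symbol of the Dirichlet-to-Neumann operator on a surface), so its Wodzicki residue density is proportional to $d\ell_{h_{t}}$ up to the universal constant that also appears in $\zeta_{R_{h_{t}}}(0)$. A direct comparison of constants---or, alternatively, a testing on the model case where $u$ is locally constant, which reduces to the elementary behavior $\detp(cA)=c^{\zeta_{A}(0)}\detp A$---identifies the two variations and gives
\begin{equation*}
	\frac{d}{dt}\log\!\left(\frac{\detp R_{h_{t}}}{\ell(\Sigma,h_{t})}\right)=0.
\end{equation*}
Integrating from $t=0$ to $t=1$ yields the desired equality. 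The main technical obstacle is the last matching of constants, which requires a careful book-keeping of the subprincipal symbol of $R_{h}$ on each circle component of $\Sigma$; this can be circumvented by combining the BFK identity of Theorem \ref{thm:MV-smooth} applied to $h_{0}$ and $h_{1}$ with the Polyakov anomaly formula for $\log\detp\Delta_{h}$ under conformal change, noticing that the bulk Polyakov contributions from $X_{0}$ and from the $X_{i}$ telescope exactly when $u$ is smooth across $\Sigma$, leaving only the comparison of $\detp R/\ell(\Sigma)$ on the two sides.
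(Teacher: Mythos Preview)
Your argument for (i) is correct: writing $ds_{\hyp}^{2}=e^{2u}h$ with $u$ smooth near $\Sigma$, Lemma \ref{lemma:harmonic-extension} identifies the harmonic extensions, and the rescaling of unit normals gives $R_{\hyp}=e^{-u|_{\Sigma}}R_{h}$, hence $R_{\hyp}$ is elliptic of order $1$. For (ii), your variational computation is on the right track but, as you yourself note, the matching of constants via the subprincipal symbol is not carried out; the alternative you sketch through Theorem \ref{thm:MV-smooth} and the boundary Polyakov formula does work (the bulk integrals over $X_{0},\ldots,X_{n}$ reassemble to the closed-surface integral, and the boundary terms $\int_{\partial X_{i}}k_{g}u\,d\ell$ and $\int_{\partial X_{i}}\partial_{n}u\,d\ell$ cancel in pairs because $u$ is smooth across $\Sigma$), but it amounts to reproving a known fact.

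The paper's proof is much shorter and avoids all of this. The key observation you are missing is that one may choose the auxiliary smooth conformal metric $g$ to \emph{coincide} with $ds_{\hyp}^{2}$ on a neighborhood of $\Sigma$ (this is possible precisely because $ds_{\hyp}^{2}$ is smooth away from the $p_{i}$). With this choice, not only are the harmonic extensions identical, but the unit normals agree exactly, so $R_{\hyp}=R_{g}$ on the nose and $\ell(\Sigma,ds_{\hyp}^{2})=\ell(\Sigma,g)$. Part (ii) then reduces immediately to the conformal invariance of $\detp R/\ell(\Sigma)$ for \emph{smooth} conformal metrics, which the paper simply invokes as known. Your route, by contrast, carries the multiplicative factor $e^{-u|_{\Sigma}}$ throughout and is forced to reprove that invariance from scratch. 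Both approaches are valid, but the paper's choice of $g$ buys a one-line reduction where you spend a page.
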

\begin{proof}
First of all, choose a smooth hermitian metric $g$ which coincides with $ds_{\hyp}^{2}$ in a neighborhood of $\Sigma$. Let $R_{g}$ be its jump operator. Then we have $R_{\hyp}=R_{g}$. Indeed, this is consequence of Lemma \ref{lemma:harmonic-extension}, because the metric $g$ is conformal, by uniqueness of (smooth) solutions, and by the comparison of normal vectors with respect to $ds_{\hyp}^{2}$ and $g$: $n_{i,\hyp}^{\pm}=n_{i,g}^{\pm}$ (with self-explanatory notations). This proves the first point, because $R_{g}$ is known to be an elliptic pseudo-differential operator of order 1. For the second assertion, we use that for two conformal smooth metrics such as $g$ and $h$, we have
\begin{displaymath}
	\frac{\detp R_{g}}{\ell(\Sigma,g)}=\frac{\detp R_{h}}{\ell(\Sigma,h)}.
\end{displaymath}
Namely, in the smooth case this quotient is a conformal invariant. By the choice of $g$, we clearly have $\ell(\Sigma,ds_{\hyp}^{2})=\ell(\Sigma,g)$. We conclude because $R_{\hyp}=R_{g}$.
The proof is complete.
\end{proof}
We can finally state the Mayer--Vietoris type formula for the possibly singular hyperbolic metric $ds_{\hyp}^{2}$.
\begin{theorem}\label{thm:MV_hyp}
We have the equality of real numbers
\begin{displaymath}
	\frac{\det(\Delta_{\hyp},\Delta_{\hyp,\cusp})}{\detp\Delta_{\hyp,0}\prod_{m_{i}<\infty}\detp\Delta_{\hyp,i}}=\frac{\vol(X,ds_{\hyp}^{2})}{\ell(\Sigma,ds_{\hyp}^{2})}\detp R_{\hyp},
\end{displaymath}
where $\vol(X,ds_{\hyp}^{2})=2\pi(2g_{X}-2+\sum(1-\frac{1}{m_i}))$ is the volume of $X$ with respect to the hyperbolic metric and $\ell(\Sigma,ds_{\hyp}^{2})$ is the total length of $\Sigma$.
\end{theorem}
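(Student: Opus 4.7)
The strategy I would follow is to reduce to the smooth compact setting (Theorem \ref{thm:MV-smooth}) through an auxiliary smooth conformal metric, then account separately for the two sources of singularity of $ds_{\hyp}^{2}$: the conical points $p_{i}$ with $m_{i}<\infty$, and the cuspidal ends $p_{i}$ with $m_{i}=\infty$. More precisely, pick a smooth hermitian metric $g$ on $X$, conformal to $ds_{\hyp}^{2}$ and equal to it in a collar neighborhood of $\Sigma$. Applying Theorem \ref{thm:MV-smooth} to $(X,g)$ yields the smooth identity
\begin{displaymath}
    \frac{\detp\Delta_{g}}{\prod_{i=0}^{n}\det\Delta_{g,i}}=\frac{\vol(X,g)}{\ell(\Sigma,g)}\,\detp R_{g}.
\end{displaymath}
By the choice of $g$ and Proposition \ref{prop:DN_invariant} one has $\detp R_{g}/\ell(\Sigma,g)=\detp R_{\hyp}/\ell(\Sigma,ds_{\hyp}^{2})$, so the boundary contribution on the right needs no further adjustment; the volume difference $\vol(X,g)-\vol(X,ds_{\hyp}^{2})$ is supported inside the $X_{i}$ and will be absorbed into the bulk comparisons below, where $\vol(X,ds_{\hyp}^{2})=2\pi(2g_{X}-2+\sum(1-1/m_{i}))$ is the Gauss--Bonnet value for the hyperbolic orbifold.

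For the bulk, I would treat each piece separately. On $X_{0}$ and on every $X_{i}$ with $m_{i}<\infty$, both $\Delta_{g,\bullet}$ and $\Delta_{\hyp,\bullet}$ have purely discrete spectrum and well-defined zeta regularized determinants; for a cone, this follows via the ramified cover $z\mapsto z^{m_{i}}$ which smooths the metric. Since $g$ and $ds_{\hyp}^{2}$ are conformal and coincide near $\Sigma$, the ratio $\det\Delta_{g,\bullet}/\det\Delta_{\hyp,\bullet}$ is governed by the Polyakov--Alvarez anomaly formula (equivariant version in the conical case), whose boundary contributions vanish because the conformal factor is trivial near $\partial X_{i}$. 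On the non-compact side, the formal quotient ``$\det\Delta_{\hyp}/\prod_{m_{i}=\infty}\det\Delta_{\hyp,i}$'' is precisely the content of the relative determinant $\det(\Delta_{\hyp},\Delta_{\hyp,\cusp})$ in M\"uller's formalism: the continuous spectra and scattering contributions of $\Delta_{\hyp}$ and $\Delta_{\hyp,\cusp}$ cancel in the difference of the heat traces, making the relative zeta function regular at $s=0$.

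To turn this formal picture into the claimed Mayer--Vietoris identity, I would adapt Carron's surgery argument \cite{Carron} for manifolds with cylindrical ends to the hyperbolic cuspidal setting (the change of variables $z=e^{2\pi i(x+iy)}$ shows the two geometries are quasi-isometric at infinity, and the spectral model in \textsection\ref{subsec:model-cusp} based on modified Bessel functions provides the required long-time control). The short-time heat kernel asymptotics near $\Sigma$ are unaffected by what happens inside the cusps, so the BFK-type identity relating heat traces to the Dirichlet-to-Neumann determinant transfers to the relative setting unchanged; the long-time analysis must be handled via the cancellation between $\Delta_{\hyp}$ and $\oplus_{m_{i}=\infty}\Delta_{\hyp,i}$ on each cuspidal end. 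Assembling the three pieces and invoking Proposition \ref{prop:DN_invariant} one last time yields the stated identity.

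The main obstacle, and the step that demands the most care, is the last one: verifying that M\"uller's relative determinant for $(\Delta_{\hyp},\Delta_{\hyp,\cusp})$ is compatible with the BFK surgery along the \emph{compact} transversal $\Sigma$, including a precise matching of the small-time heat trace expansions and a rigorous justification that the contour integral representation of $\log\det(\Delta_{\hyp},\Delta_{\hyp,\cusp})$ splits along $\Sigma$ into contributions from $X_{0}$ and from the cuspidal ends up to the single factor $\detp R_{\hyp}$. Once this compatibility is established, the remainder of the proof is essentially bookkeeping: the smooth identity on $(X,g)$ plus the conformal Polyakov corrections on the conical and boundary pieces exactly produce the right-hand side in terms of $ds_{\hyp}^{2}$.
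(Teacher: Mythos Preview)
Your proposal takes a detour that does not close. The paper's argument is essentially a one-liner: Carron's Mayer--Vietoris theorem \cite[Thm.~1.4]{Carron} is already formulated for non-compact manifolds in terms of relative determinants, so in the absence of elliptic fixed points it applies verbatim; the presence of conical points requires only a routine adaptation, since the Dirichlet laplacian on a hyperbolic cone has discrete spectrum and standard heat asymptotics (via the cover $z\mapsto z^{m_i}$, as noted in \textsection\ref{subsec:model-cone}). No auxiliary smooth metric and no Polyakov--Alvarez step is needed.

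Your strategy of starting from the smooth compact identity for $(X,g)$ and correcting piece by piece runs into trouble on the global numerator. The smooth identity hands you $\detp\Delta_{g}$, a determinant on a \emph{compact} manifold; the target is $\det(\Delta_{\hyp},\Delta_{\hyp,\cusp})$, a relative determinant on a \emph{non-compact} one. No anomaly formula bridges these two objects: the conformal factor relating $g$ to $ds_{\hyp}^{2}$ blows up at each cusp, so the Polyakov integral diverges, and the volume correction you mention (``absorbed into the bulk comparisons'') cannot be made precise for the same reason. You implicitly concede this when you say you would ``adapt Carron's surgery argument'' for the cuspidal ends---but once Carron's argument is in place it delivers the full identity directly, because $X_{0}$ and the conical $X_{i}$ enter his framework simply as domains with compact closure carrying Dirichlet laplacians with the usual heat asymptotics. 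The smooth-metric scaffolding and the anomaly corrections are then redundant. The viable core of your proposal is the last paragraph; the first two should be discarded.
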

\begin{proof}
In the absence of elliptic fixed points, the theorem is a particular application of \cite[Thm.~1.4]{Carron}. The prove of the general case is a straightforward adaptation.
\end{proof}

\section{Bost's $L_{1}^{2}$ formalism and Riemann--Roch isometry}\label{section:DRR}
The statement of the Riemann--Roch isometry we aim to prove, as well as the surgery methods we follow, require an ``arithmetic" intersection formalism well-suited to hermitian line bundles whose metrics are not smooth, but with some mild singularities. Precisely, we seek a formalism that includes both the singularities of Poincar\'e metrics and ``piecewise" smooth metrics. These are particular instances of $L_{1}^{2}$ hermitian metrics. Bost extended the arithmetic intersection theory to this setting, and it turns out to provide exactly what we need. In this section we discuss the theory and combine it with the language of Deligne pairings. We briefly recall the Riemann--Roch isometry as proven by Deligne \cite{Deligne} and Gillet--Soul\'e \cite{GS}. With these preliminaries at hand, we then establish Proposition \ref{prop:naive_laplacian}, that is a first naive version of Theorem \ref{theorem:main}. The result is not satisfactory in that it lacks of a spectral interpretation. This will be remedied later by using the Mayer--Vietoris formulas and the explicit evaluations of determinants of laplacians.

\subsection{Deligne's pairing and Bost's $L_{1}^{2}$ theory}\label{subsection:Deligne-Bost}
In what follows, we work on the Deligne pairing of two line bundles on a smooth proper complex curve \cite[Sec.~1.4, 6]{Deligne} and the theory of $L_{1}^{2}$ equilibrium potentials, Green currents, and their $\ast$-products, developed by Bost \cite{Bost}. For reasons of space, we refer the reader to these references for the main definitions and statements.

\subsubsection{} Let $X$ be a compact Riemann surface and $L$ a line bundle on $X$.
\begin{definition}
A $L_{1}^{2}$ hermitian metric on $L$ consists in giving, for every local holomorphic trivializing section $e\in\Gamma(U,L)$, a $L_{1}^{2}(U)_{\loc}$ function	$g=g_{(U,e)}$ satisfying the compatibility property
\begin{displaymath}
	g_{(V,fe)}=-\log|f|^{2}+g_{(U,e)\mid V}
\end{displaymath}
whenever $V$ is open in $U$ and $f$ is a non-vanishing holomorphic function on $V$. As is customary, we write $g=-\log\|\cdot\|^{2}$, so that $\|\cdot\|$ satisfies the usual rules of a hermitian norm (where defined). We say that $\overline{L}:=(L,\|\cdot\|)$ is a $L_{1}^{2}$ hermitian line bundle.
\end{definition}
\begin{remark}
\emph{i}. Fix a smooth hermitian metric on $\|\cdot\|_{0}$ on $L$. Then the set of $L_{1}^{2}$ hermitian metrics on $L$ is in bijective correspondence with $L_{1}^{2}(X)$. Indeed, by definition any $L_{1}^{2}$ metric can be uniquely written $e^{-f/2}\|\cdot\|_{0}$, where $f\in L_{1}^{2}(X)$.

\emph{ii}. Let $\ov{L}$ be a $L_{1}^{2}$ hermitian line bundle on $X$ and $s$ a rational section of $L$. Then the function $\log\|s\|^{-2}$ is locally integrable on $X$ and defines a current denoted $[\log\|s\|^{-2}]$.
\end{remark}
\begin{notation}
Let $\overline{L}$ be a $L_{1}^{2}$ hermitian line bundle on $X$. Then $c_{1}(\overline{L})$ denotes the first Chern current of $\ov{L}$ on $X$. That is, if $s$ is a meromorphic section of $L$, not identically vanishing on any connected component of $X$, then $c_{1}(\overline{L})$ is the current
\begin{displaymath}
	c_{1}(\overline{L}):=dd^{c}[\log\|s\|^{-2}]+\delta_{\gdiv s}.
\end{displaymath}
By the Poincar\'e--Lelong formula, the current $c_{1}(\overline{L})$ does not depend on the particular choice of section $s$.
\end{notation}
Let $\ov{L}_{0}$ and $\ov{L}_{1}$ be $L_{1}^{2}$ hermitian line bundles on $X$. Let $l_{0},l_{1}$ be rational sections of $L_0$ and $L_1$, respectively, with disjoint divisors. Then there is a corresponding trivialization of the Deligne pairing $\langle L_{0},L_{1}\rangle$ given by a symbol $\langle l_{0},l_{1}\rangle$. The functions $g_{0}=\log\|\l_{0}\|_{0}^{-2}$ and $g_{1}=\log\|l_{1}\|_{1}^{-2}$ are $L_{1}^{2}$ Green functions for $\gdiv l_{0}$ and $\gdiv l_{1}$ \cite[Sec.~3.1.3]{Bost}. By Bost's theory, one can define the integral of the $\ast$-product \cite[Sec.~5.1]{Bost}: $\int_{X}g_{0}\ast g_{1}$.
\begin{definition}\label{definition:Deligne-Bost}
We define the Deligne--Bost norm on $\langle L_{0},L_{1}\rangle$ by the rule
\begin{displaymath}
	\log\|\langle l_{0},l_{1}\rangle\|^{-2}=\int_{X} g_{0}\ast g_{1}.
\end{displaymath}
\end{definition}
It is easily seen that the previous rule is compatible with the relations defining the Deligne pairing, and therefore the construction makes sense. We record in the following lemma several useful properties that will simplify the computation of Deligne--Bost norms.

\begin{lemma}\label{lemma:computation_star-products}
i. Consider sequences $\|\cdot\|_{0}^{n}$ and $\|\cdot\|_{1}^{n}$ of $L_{1}^{2}$ metrics on $L_{0}$, $L_{1}$, respectively, such that the functions $\log(\|\cdot\|_{0}^{n}/\|\cdot\|_{0})$ and $\log(\|\cdot\|_{1}^{n}/\|\cdot\|_{1})$ converge to $0$ in $L_{1}^{2}$ norm. For the norms $\|\cdot\|_{n}$ on $\langle \overline{L}_{0}^{n},\overline{L}_{1}^{n}\rangle$, we have
\begin{displaymath}
	\|\langle l_{0},l_{1}\rangle\|_{n}\to\|\langle l_{0}, l_{1}\rangle\|,\,\,\, n\to +\infty.
\end{displaymath}

ii. Suppose that $c_{1}(\overline{L}_{0})$ is locally $L^{\infty}$ on some open subset $\Omega$ of $X$ and $\gdiv l_{1}$ is contained in $\Omega$. Then $\log\| l_{0}\|_{0}^{-2}$ is continuous on a neighborhood of $\gdiv l_{1}$, $\log\| l_{0}\|_{0}^{-2}\delta_{\gdiv l_{1}}$ and $\log\|l_{1}\|_{1}^{-2}c_{1}(\overline{L}_{0})$ are well-defined currents and we have
\begin{displaymath}
	\log\|\langle l_{0},l_{1}\rangle\|^{-2}=\int_{X}(\log\| l_{0}\|_{0}^{-2}\delta_{\gdiv l_{1}}
	+\log\| l_{1}\|_{1}^{-2}c_{1}(\overline{L}_{0})).
\end{displaymath}
In particular, if $\int_{X}\log\|l_{1}\|^{-2}c_{1}(\overline{L}_{0})=0$, then
\begin{displaymath}
	\begin{split}
		\log\|\langle l_{0}, l_{1}\rangle\|^{-2}&=\int_{X}\log\| l_{0}\|_{0}^{-2}\delta_{\gdiv l_{1}}\\
		&=\sum_{p\in X}(\ord_{p} l_{1})\log\| l_{0}\|^{-2}(p).
	\end{split}
\end{displaymath}
\end{lemma}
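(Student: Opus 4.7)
The plan is to reduce both parts to basic features of Bost's $\ast$-product from \cite[Sec.~5]{Bost}. Part \emph{i} will follow from the continuity of the $\ast$-product with respect to $L_{1}^{2}$ perturbations. For part \emph{ii}, the regularity hypothesis on $c_{1}(\overline{L}_{0})$ forces $\log\|l_{0}\|_{0}^{-2}$ to be continuous on a neighborhood of $\gdiv l_{1}$, and the formula is then obtained by smoothing and passing to the limit from the classical formula for smooth metrics.

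For part \emph{i}, set $g_{j}=\log\|l_{j}\|_{j}^{-2}$ and write $g_{j}^{n}=g_{j}+\varphi_{j}^{n}$ with $\varphi_{j}^{n}\in L_{1}^{2}(X)$ tending to $0$ in $L_{1}^{2}$ norm by hypothesis. Each $\varphi_{j}^{n}$ is then a $L_{1}^{2}$ Green function for the empty divisor. By multilinearity of the $\ast$-product,
\begin{displaymath}
\int_{X}g_{0}^{n}\ast g_{1}^{n}-\int_{X}g_{0}\ast g_{1}=\int_{X}\varphi_{0}^{n}\ast g_{1}+\int_{X}g_{0}\ast\varphi_{1}^{n}+\int_{X}\varphi_{0}^{n}\ast\varphi_{1}^{n}.
\end{displaymath}
Each summand pairs a $L_{1}^{2}$ Green function of the empty divisor against a $L_{1}^{2}$ Green function, a setting in which Bost's continuity theorem applies and controls the pairing by the $L_{1}^{2}$ norms of its arguments. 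Since $\varphi_{j}^{n}\to 0$ in $L_{1}^{2}$, the right hand side vanishes in the limit, which is the asserted convergence of norms on $\langle\overline{L}_{0}^{n},\overline{L}_{1}^{n}\rangle$.

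For part \emph{ii}, on the open set $\Omega$ the section $l_{0}$ has no divisor, hence $dd^{c}g_{0}=c_{1}(\overline{L}_{0})\in L^{\infty}_{\loc}(\Omega)$ while $g_{0}\in L_{1}^{2}(X)$. Standard elliptic regularity promotes $g_{0}$ to $W^{2,p}_{\loc}(\Omega)$ for every $p<\infty$, and in particular to a continuous function on $\Omega$. The currents $g_{0}\delta_{\gdiv l_{1}}$ and $g_{1}c_{1}(\overline{L}_{0})$ are therefore well defined: the first by continuity of $g_{0}$ at the support of $\gdiv l_{1}$, the second because $g_{1}$ is locally integrable with logarithmic singularities only at $\gdiv l_{1}\subset\Omega$, where $c_{1}(\overline{L}_{0})$ is $L^{\infty}$. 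To derive the formula, fix a relatively compact open $U\Subset\Omega$ containing $\gdiv l_{1}$ and regularize $g_{0}$ by convolution in local coordinates on $U$, glued to $g_{0}$ itself away from $U$, producing smooth $g_{0}^{\epsilon}\to g_{0}$ in $L_{1}^{2}$ with $dd^{c}g_{0}^{\epsilon}$ uniformly bounded in $L^{\infty}(U)$; regularize $g_{1}$ analogously. In the smooth case the Deligne--Bost norm satisfies the classical identity
\begin{displaymath}
\int_{X}g_{0}^{\epsilon}\ast g_{1}^{\epsilon}=\int_{X}g_{0}^{\epsilon}\delta_{\gdiv l_{1}}+\int_{X}g_{1}^{\epsilon}c_{1}(\overline{L}_{0}^{\epsilon}),
\end{displaymath}
a direct Poincar\'e--Lelong and integration by parts computation. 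Part \emph{i} yields convergence of the left hand side; uniform equicontinuity of $g_{0}^{\epsilon}$ near $\gdiv l_{1}$ and the uniform $L^{\infty}(U)$ bound on $c_{1}(\overline{L}_{0}^{\epsilon})$ allow us to pass to the limit on the right. The final consequence stated in the lemma is then immediate.

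The main technical obstacle will be arranging the regularizations $g_{j}^{\epsilon}$ so that the Chern forms remain uniformly bounded on a fixed neighborhood of $\gdiv l_{1}$ while still converging in $L_{1}^{2}$ globally; this is standard, relying on partitions of unity and coordinate convolution inside $\Omega$, but must be set up carefully so that every term in the smooth formula passes to the limit.
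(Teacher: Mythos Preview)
Your proposal is essentially correct and aligns with the paper's approach: the paper does not give an independent argument but simply cites \cite[Sec.~5.1]{Bost} for part \emph{i} and \cite[Lemma~5.2]{Bost} for part \emph{ii}, and your sketch is precisely an unpacking of what those references contain (continuity of the $\ast$-product in $L_{1}^{2}$, and a regularity/approximation argument for the explicit formula).

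One small imprecision worth flagging: in part \emph{ii} you write ``on the open set $\Omega$ the section $l_{0}$ has no divisor,'' but the hypothesis only places $\gdiv l_{1}$ inside $\Omega$, not $\gdiv l_{0}$ outside it. What you actually need, and what is available, is that $\gdiv l_{0}$ and $\gdiv l_{1}$ are disjoint (this is built into the setup of the Deligne pairing symbol $\langle l_{0},l_{1}\rangle$), so after shrinking $\Omega$ to a smaller neighborhood of $\gdiv l_{1}$ you may assume $l_{0}$ is nonvanishing there. With that adjustment your elliptic regularity step goes through as written.
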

\begin{proof}
We refer to \cite[Sec.~5.1]{Bost} for the first property. The second point follows from \cite[Lemma 5.2]{Bost}.
\end{proof}
\begin{definition}
Under the assumptions of Lemma \ref{lemma:computation_star-products} \textit{i}, we say that $\|\cdot\|_{0}^{n}$, $\|\cdot\|_{1}^{n}$ converge to $\|\cdot\|_{0}$, $\|\cdot\|_{1}$ in $L_{1}^{2}$, respectively, and we indicate the convergence of Deligne--Bost norms by
\begin{displaymath}
	\langle \overline{L}_{0}^{n},\overline{L}_{1}^{n}\rangle\to\langle\overline{L}_{0},\overline{L}_{1}\rangle\,\,\,\text{as}\,\,\,
	n\to +\infty.
\end{displaymath}
\end{definition}
\subsubsection{}\label{subsubsec:truncated} We now discuss several examples of $L_{1}^{2}$ hermitian line bundles and Deligne pairings needed for later developments.
\begin{example}[Truncated trivial metrics]\label{example:truncated_trivial}
Let $P$ be a point in $X$. Consider the line bundle $\OO(-P)$ over $X$. Fix $\gamma=(U,z)$ an analytic chart in a neighborhood of $P$, with $z(P)=0$. Suppose that the closed disk $\overline{D}(0,\varepsilon)$ is contained in the image of $z$. Observe that $e:=z$ is a trivialization of $\OO(-P)\mid_{U}$. We define a metric $\|\cdot\|_{\gamma,\varepsilon}$ on $\OO(-P)\mid_{U}$ by
\begin{displaymath}
	\|e\|_{\gamma,\varepsilon,x}=\begin{cases}
		|z(x)|, &\text{if } |z(x)|\geq\varepsilon,\\
		\varepsilon, &\text{if } |z(x)|\leq\varepsilon.
	\end{cases}
\end{displaymath}
The metric $\|\cdot\|_{\gamma,\varepsilon}$ on $U\setminus z^{-1}(\overline{D}(0,\varepsilon))$ coincides with the trivial metric on $\OO(-P)$ induced from the inclusion $\OO(-P)\hookrightarrow\OO_{X}$ and the trivial metric on $\OO_{X}$ (\textit{i.e.}~the absolute value). Therefore we can extend $\|\cdot\|_{\gamma,\varepsilon}$ to all of $X$ by letting it coincide with the trivial metric on $X\setminus z^{-1}(\overline{D}(0,\varepsilon))$. We employ the same notation $\|\cdot\|_{\gamma,\varepsilon}$ for the resulting metric. We will skip the reference to $\gamma$ and use the same notation for the dual metric on $\OO(P)$. Also we put $\OO(P)_{\varepsilon}$ for the couple $(\OO(P),\|\cdot\|_{\varepsilon})$.
\end{example}
A truncated trivial metric $\|\cdot\|_{\varepsilon}$ on $\OO(P)$ is easily seen to be $L_{1}^{2}$. It is routine to check the following identity for the first Chern current:
\begin{equation}\label{eq:12}
	c_{1}(\OO(P)_{\varepsilon})=\frac{1}{2\pi}\delta_{\pd z^{-1}(D(0,\varepsilon))},
\end{equation}
namely the current of integration along the positively oriented boundary of $z^{-1}(\ov{D}(0,\varepsilon))$.
\begin{lemma}\label{lemma:equilibrium-point}
Let $\ccar_{P}:\OO_{X}\hookrightarrow\OO(P)$ be the canonical section with constant value $1$ and $\gdiv\ccar_{P}=P$. 

i. The function $\log\|\ccar_{P}\|^{-2}_{\varepsilon}$ is the equilibrium potential of the non-polar compact subset $X\setminus z^{-1}(D(0,\varepsilon))$ at $P$.

ii. The following vanishing property holds:
\begin{displaymath}
	\int_{X}\log\|\ccar_{P}\|_{\varepsilon}^{-2}c_{1}(\OO(P)_{\varepsilon})=0.
\end{displaymath}
\end{lemma}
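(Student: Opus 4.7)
My plan is to produce an explicit formula for $\|\ccar_{P}\|_{\varepsilon}$ by dualizing the construction in Example \ref{example:truncated_trivial}, and then to read off both assertions. Let $e=z$ be the trivialization of $\OO(-P)|_{U}$ used there, and let $e^{\vee}$ denote the dual trivialization of $\OO(P)|_{U}$, so that $\ccar_{P}=z\cdot e^{\vee}$ on $U$. Outside $z^{-1}(\overline{D}(0,\varepsilon))$, the dual of the truncated metric agrees with the metric on $\OO(P)$ transferred from the trivial metric on $\OO_{X}$ via the canonical embedding, under which $\ccar_{P}$ corresponds to $1\in\OO_{X}$. A short computation then gives
\begin{equation*}
\|\ccar_{P}\|_{\varepsilon}(x)=\begin{cases}1, & x\notin z^{-1}(D(0,\varepsilon)),\\ |z(x)|/\varepsilon, & x\in z^{-1}(\overline{D}(0,\varepsilon)),\end{cases}
\end{equation*}
with the two pieces matching continuously on the circle $|z|=\varepsilon$. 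Equivalently, the potential $u_{\varepsilon}:=\log\|\ccar_{P}\|_{\varepsilon}^{-2}$ vanishes off the disk and equals $2\log(\varepsilon/|z|)$ inside it.

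For i, I would verify directly that $u_{\varepsilon}$ is continuous on $X$, vanishes identically on the non-polar compact set $K:=X\setminus z^{-1}(D(0,\varepsilon))$, is harmonic on $X\setminus(K\cup\{P\})=z^{-1}(D(0,\varepsilon))\setminus\{P\}$ (where it coincides with the explicit harmonic function $-2\log|z|+2\log\varepsilon$), and exhibits the prescribed logarithmic singularity $-2\log|z|+O(1)$ at $P$. These properties characterize the Green's function of $X\setminus K$ with pole at $P$, which in Bost's $L_{1}^{2}$ language (\cite[Sec.~2]{Bost}) is precisely the equilibrium potential of $K$ at $P$.

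For ii, I would invoke the identity \eqref{eq:12}: the Chern current $c_{1}(\OO(P)_{\varepsilon})=\frac{1}{2\pi}\delta_{\partial z^{-1}(D(0,\varepsilon))}$ is concentrated on the circle $|z|=\varepsilon$, and on this circle both branches of the formula above give $\|\ccar_{P}\|_{\varepsilon}=1$, so that $u_{\varepsilon}$ vanishes identically there. The pairing is therefore $0$. I do not anticipate a significant obstacle: the whole argument reduces to writing out a piecewise dualization and observing that the support of $c_{1}(\OO(P)_{\varepsilon})$ sits inside the zero locus of the potential $u_{\varepsilon}$, a feature deliberately built into the truncated construction so that these $\ast$-product manipulations and later surgery computations go through cleanly.
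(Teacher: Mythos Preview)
Your proof is correct and follows essentially the same approach as the paper: you verify directly that $u_{\varepsilon}=\log\|\ccar_{P}\|_{\varepsilon}^{-2}$ satisfies Bost's characterization of the equilibrium potential (the paper cites \cite[Thm.~3.1]{Bost} for this), and for (ii) you use that $c_{1}(\OO(P)_{\varepsilon})$ is supported on the circle $|z|=\varepsilon$ where $u_{\varepsilon}$ vanishes, which is exactly the content of \cite[eq.~(5.12)]{Bost} invoked by the paper. Your version is simply more explicit, writing out the piecewise formula for $\|\ccar_{P}\|_{\varepsilon}$ rather than leaving it as ``readily checked''.
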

\begin{proof}
It is readily checked that $\log\|\ccar_{P}\|_{\varepsilon}^{-2}$ fulfills the characterization of equilibrium potentials given in \cite[Thm.~3.1]{Bost}. For instance, equation \eqref{eq:12} shows that the current $dd^{c}[\log\|\ccar_{P}\|_{\varepsilon}^{-2}]+\delta_{P}$ is a probability measure supported on $\pd z^{-1}(D(0,\varepsilon))$. The vanishing of the integral is then obtained as for \cite[eq.~(5.12)]{Bost}.
\end{proof}

\begin{proposition}\label{prop:appearance_wolpert_metric_1}
Let $P_{0}$ and $P_{1}$ be distinct points and consider truncated hermitian line bundles $\ov{\OO(P_{0})}_{\varepsilon}$ and $\ov{\OO(P_{1})}_{\varepsilon}$, attached to disjoint coordinate neighborhoods $(U_0, z_0), (U_1, z_1)$, respectively, and small $\varepsilon$. Then, restriction to $P_{0}$ and residue at $P_{0}$ induce canonical isometries
\begin{align*}
	&\langle\ov{\OO(P_{0})}_{\varepsilon},\ov{\OO(P_{1})}_{\varepsilon}\rangle\simeq (\CC,|\cdot|),\\
	&\langle\ov{\OO(P_{0})}_{\varepsilon},\ov{\OO(P_{0})}_{\varepsilon}\rangle\simeq (\omega_{X,P_{0}}, \varepsilon|\cdot|_{P_{0}})^{-1},
\end{align*}
where $(\CC,|\cdot|)$ is the trivial hermitian line bundle (on a point) and $|\cdot|_{P_{0}}$ is defined by the rule
\begin{displaymath}
	\left| dz_{0}\right|_{P_{0}}=1.
\end{displaymath}
\end{proposition}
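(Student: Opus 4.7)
The plan is to apply Lemma \ref{lemma:computation_star-products}(ii) in each case, for $\varepsilon$ small enough that the disks $z_{j}^{-1}(\ov{D}(0,\varepsilon))$, $j=0,1$, are disjoint. Throughout, near $P_{0}$ let $e^{\ast}=z_{0}^{-1}$ be the local generator of $\OO(P_{0})$, so that by Example \ref{example:truncated_trivial} and duality, $\|\ccar_{P_{0}}\|_{\varepsilon}=1$ on $X\setminus z_{0}^{-1}(D(0,\varepsilon))$ and $\|\ccar_{P_{0}}\|_{\varepsilon,x}=|z_{0}(x)|/\varepsilon$ inside the disk; the analogous statements hold for $\ccar_{P_{1}}$.

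For the first isometry, apply Lemma \ref{lemma:computation_star-products}(ii) with $l_{0}=\ccar_{P_{0}}$, $l_{1}=\ccar_{P_{1}}$, whose divisors are disjoint. The $\delta$-term equals $\log\|\ccar_{P_{0}}\|_{\varepsilon}^{-2}(P_{1})=0$ since $P_{1}$ lies in the region where $\|\ccar_{P_{0}}\|_{\varepsilon}=1$. By \eqref{eq:12}, $c_{1}(\ov{\OO(P_{0})}_{\varepsilon})$ is supported on the circle $\pd z_{0}^{-1}(D(0,\varepsilon))$, on which $\|\ccar_{P_{1}}\|_{\varepsilon}=1$, so the Chern current term also vanishes. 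Hence $\|\langle\ccar_{P_{0}},\ccar_{P_{1}}\rangle\|=1$, and Deligne's canonical symbol identifies the Deligne--Bost pairing isometrically with $(\CC,|\cdot|)$.

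For the second isometry, fix any rational section $s$ of $\OO(P_{0})$ with $\gdiv s$ disjoint from $P_{0}$; near $P_{0}$, write $s=f\cdot e^{\ast}$ with $f$ holomorphic and $f(P_{0})\neq 0$. Applying Lemma \ref{lemma:computation_star-products}(ii) to $l_{0}=\ccar_{P_{0}}$ and $l_{1}=s$, the $\delta$-term again vanishes since $\gdiv s$ lies where $\|\ccar_{P_{0}}\|_{\varepsilon}=1$. Using $c_{1}(\ov{\OO(P_{0})}_{\varepsilon})=\frac{1}{2\pi}d\theta$ on the circle $\{|z_{0}|=\varepsilon\}$ and the identity $\|s\|_{\varepsilon}=|f|\cdot\varepsilon^{-1}$ there,
\[
\log\|\langle\ccar_{P_{0}},s\rangle\|^{-2}=\frac{1}{2\pi}\int_{0}^{2\pi}\bigl(-2\log|f(\varepsilon e^{i\theta})|+2\log\varepsilon\bigr)\,d\theta=-2\log|f(P_{0})|+2\log\varepsilon,
\]
the last step by the mean value property for the harmonic function $\log|f|$, valid once $\varepsilon$ is small enough that $f$ is holomorphic and non-vanishing on the disk. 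Thus $\|\langle\ccar_{P_{0}},s\rangle\|=|f(P_{0})|/\varepsilon$.

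To conclude, recall that restriction to $P_{0}$ followed by the Poincar\'e residue provides the canonical isomorphism $\langle\OO(P_{0}),\OO(P_{0})\rangle\simeq\OO(P_{0})|_{P_{0}}\simeq\omega_{X,P_{0}}^{-1}=T_{X,P_{0}}$, sending $\langle\ccar_{P_{0}},s\rangle\mapsto s(P_{0})=f(P_{0})\,\pd/\pd z_{0}$, since $[z_{0}^{-1}]\leftrightarrow\pd/\pd z_{0}$ under normal--conormal duality. In $(\omega_{X,P_{0}},\varepsilon|\cdot|_{P_{0}})^{-1}$ one has $\|\pd/\pd z_{0}\|=1/\varepsilon$ dually to $\|dz_{0}\|=\varepsilon$, hence $\|s(P_{0})\|=|f(P_{0})|/\varepsilon$, matching the Deligne--Bost norm computed above. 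Independence of the choice of $s$ follows from bilinearity and the defining relations of the Deligne pairing, after routine verification. The main technical point is bookkeeping of conventions, namely the normalization of $c_{1}(\ov{\OO(P_{0})}_{\varepsilon})$ as the uniform probability measure on the circle and the sign normalization of the Poincar\'e residue; once these are pinned down, the mean value integration delivers precisely the factor $\varepsilon^{-1}$ that defines the Wolpert-type metric on $\omega_{X,P_{0}}$.
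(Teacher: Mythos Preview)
Your argument is correct and follows essentially the same route as the paper, which simply cites Lemma~\ref{lemma:computation_star-products}\,(ii) together with Lemma~\ref{lemma:equilibrium-point}. The only difference is one of packaging: by taking $l_{1}=\ccar_{P_{0}}$ (rather than $l_{0}=\ccar_{P_{0}}$ as you do), the paper can invoke the vanishing $\int_{X}\log\|\ccar_{P_{0}}\|_{\varepsilon}^{-2}\,c_{1}(\ov{\OO(P_{0})}_{\varepsilon})=0$ from Lemma~\ref{lemma:equilibrium-point}\,(ii) and read off $\|\langle s,\ccar_{P_{0}}\rangle\|^{-2}=\log\|s\|_{\varepsilon}^{-2}(P_{0})$ directly from the ``In particular'' clause of Lemma~\ref{lemma:computation_star-products}\,(ii), without any integration. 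Your choice of roles forces you to compute the circle integral explicitly via the mean value property of $\log|f|$, which is precisely the harmonic-analysis content underlying Lemma~\ref{lemma:equilibrium-point}; so you are in effect reproving that lemma on the fly rather than citing it.
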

\begin{proof}
The proof follows easily from Lemma  \ref{lemma:computation_star-products} \emph{ii} and Lemma \ref{lemma:equilibrium-point}.
\end{proof}

\begin{example}[Truncated hyperbolic metrics]\label{example:truncated_hyperbolic}
Let $X$ be a cusp compactification of a Riemann surface $\Gamma\backslash\HH$, for some fuchsian group $\Gamma\subset\PSL_{2}(\RR)$. We follow the notations in \textsection\ref{subsubsec:discussion-fuchsian}. Let $\|\cdot\|_{\hyp}$ be the \emph{hermitian norm} on $T_{X}$ associated to $ds_{\hyp}^{2}$. Recall from \textsection\ref{subsubsec:normal-metric} that the hermitian metric tensor is $1/2$ the riemannian one. For every $p_{i}$ we choose a \emph{rs} coordinate $\zeta_{i}$, containing $\ov{D}(0,\varepsilon)$ in its image ($\varepsilon>0$ small enough).  We suppose that the neighborhoods of the $p_i$ so defined are disjoint. We construct a norm $\|\cdot\|_{\hyp,\varepsilon}$ on $T_{X}$ by the following assignments:
\begin{itemize}
	\item if $p\in X\setminus (\cup_{i}\zeta_{i}^{-1}(D(0,\varepsilon)))$, then for every $e\in T_{X,p}$ we put
	\begin{displaymath}
		\|e\|_{\hyp,\varepsilon,p}=\|e\|_{\hyp,p}.
	\end{displaymath}
	\item if $m_{i}=\infty$ and $p\in\zeta_{i}^{-1}(\ov{D}(0,\varepsilon))$, then we declare
	\begin{displaymath}
		\left\|\frac{\pd}{\pd\zeta_{i}}\right\|_{\hyp,\varepsilon,p}^{2}=\begin{cases}
			\left\|\frac{\pd}{\pd\zeta_{i}}\right\|_{\hyp,p}^{2} &\text{if } |\zeta_{i}(p)|\geq\varepsilon,\\
			\frac{1}{2}\frac{1}{\varepsilon^{2}(\log\varepsilon^{-1})^{2}} &\text{if }|\zeta_{i}(p)|\leq\varepsilon.
		\end{cases}
	\end{displaymath}
	\item if $m_{i}<\infty$ and $p\in\zeta_{i}^{-1}(\ov{D}(0,\varepsilon))$, then we put
	\begin{displaymath}
		\left\|\frac{\pd}{\pd\zeta_{i}}\right\|_{\hyp,\varepsilon,p}^{2}=\begin{cases}
			\left\|\frac{\pd}{\pd\zeta_{i}}\right\|_{\hyp,p}^{2} &\text{if } |\zeta_{i}(p)|\geq\varepsilon,\\
			\frac{2}{m_{i}^{2}\varepsilon^{2-2/m_{i}}(1-\varepsilon^{2/m_{i}})^{2}} &\text{if }|\zeta_{i}(p)|\leq\varepsilon.
		\end{cases}
	\end{displaymath}
\end{itemize}
\end{example}
A truncated hyperbolic metric is an example of $L_1^2$ metric on the tangent bundle $T_{X}$. We will employ the same notation $\|\cdot\|_{\hyp,\varepsilon}$ for the dual metric on $\omega_{X}$, since no confusion can arise. Also we may write $\ov{\omega}_{X,\hyp,\varepsilon}$ to refer to the hermitian line bundle $(\omega_{X},\|\cdot\|_{\hyp,\varepsilon})$. For the first Chern current one can check the identity
\begin{displaymath}
	\begin{split}
		c_{1}(\ov{\omega}_{X,\hyp,\varepsilon})=&c_{1}(\omega_{X\setminus\lbrace p_{1},\ldots, p_{n}\rbrace},\|\cdot\|_{\hyp})\chi_{X_{\varepsilon}}\\
		&+\frac{1}{2\pi}\left(1-\frac{1}{\log\varepsilon^{-1}}\right)\sum_{m_{i}=\infty}\delta_{\pd D_{i,\varepsilon}}\\
		&\;\;\;+\frac{1}{2\pi}\sum_{m_{i}<\infty}\left(1-\frac{1}{m_{i}}-\frac{2}{m_{i}}\frac{\varepsilon^{2/m_{i}}}{1-\varepsilon^{2/m_{i}}}\right)
		\delta_{\pd D_{i,\varepsilon}}
	\end{split}
\end{displaymath}
where we wrote $X_{\varepsilon}=X\setminus (\cup_{i} \ov{D}_{i,\varepsilon})$, $D_{i,\varepsilon}=\zeta_{i}^{-1}(D(0,\varepsilon))$ and $\chi_{X_{\varepsilon}}$ for the characteristic function of $X_{\varepsilon}$.
\begin{lemma}\label{lemma:vanishing-integral}
Let $\ov{\OO(p_{i})}_{\varepsilon}$ be endowed with the truncated trivial metric in the rs coordinate $\zeta_{i}$, and $\ccar_{i}:\OO_{X}\hookrightarrow\OO(p_{i})$ the trivial section with constant value $1$. Then we have
\begin{displaymath}
	\int_{X}\log\|\ccar_{i}\|_{\varepsilon}^{-2}c_{1}(\ov{\omega}_{X,\hyp,\varepsilon})=0.
\end{displaymath}
\end{lemma}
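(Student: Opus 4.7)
My plan is to observe that the function $\log\|\ccar_i\|_\varepsilon^{-2}$ is supported in the closed disk $\ov D_{i,\varepsilon}$ and vanishes on its boundary, and then to exploit the fact that the current $c_1(\ov{\omega}_{X,\hyp,\varepsilon})$ has no interior contribution in $D_{i,\varepsilon}$ (since the truncated hyperbolic metric is flat there), so only a boundary term survives, weighted by zero.

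First I would unravel the definition of $\|\ccar_i\|_\varepsilon$. The section $\ccar_i$ is the image of $1\in\OO_X$ under the inclusion $\OO_X\hookrightarrow\OO(p_i)$, and the truncated trivial metric on $\OO(p_i)$ is the dual of the one on $\OO(-p_i)$ defined in Example \ref{example:truncated_trivial}. A direct computation, using that $e=\zeta_i$ is the local generator of $\OO(-p_i)$ with $\|e\|_\varepsilon=\max(|\zeta_i|,\varepsilon)$, yields
\begin{displaymath}
\|\ccar_i\|_\varepsilon(x)=\frac{|\zeta_i(x)|}{\max(|\zeta_i(x)|,\varepsilon)}=\min\bigl(1,\,|\zeta_i(x)|/\varepsilon\bigr).
\end{displaymath}
Hence $\log\|\ccar_i\|_\varepsilon^{-2}$ is a continuous, non-negative function on $X$, identically zero on $X\setminus D_{i,\varepsilon}$, equal to $2\log(\varepsilon/|\zeta_i|)$ on $D_{i,\varepsilon}$, and tending to $0$ as $|\zeta_i|\to\varepsilon$, so in particular vanishing on $\pd D_{i,\varepsilon}$.

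Next I would substitute into the integral using the explicit expression for $c_1(\ov{\omega}_{X,\hyp,\varepsilon})$ recalled just above the lemma. That current decomposes as a smooth piece supported on $X_\varepsilon$ plus a finite sum of delta currents on the circles $\pd D_{j,\varepsilon}$, $j=1,\ldots,n$. No term is supported in the open disks $D_{j,\varepsilon}$, because the truncated hyperbolic metric is frozen to a flat model inside each disk and a flat metric has vanishing Chern form. On $X_\varepsilon$ and on each $\pd D_{j,\varepsilon}$ with $j\neq i$ the function $\log\|\ccar_i\|_\varepsilon^{-2}$ vanishes by the disjointness of the chosen coordinate neighborhoods; on $\pd D_{i,\varepsilon}$ it vanishes because $|\zeta_i|=\varepsilon$ there. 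Pairing a continuous function vanishing on the support of each component of the current with that current produces zero, and the lemma follows.

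I expect no serious obstacle. The argument is essentially a boundary matching check, made possible by the fact that both families of metrics, truncated trivial on $\OO(p_i)$ and truncated hyperbolic on $\omega_X$, are engineered to be continuous across $\pd D_{i,\varepsilon}$ and to agree with their un-truncated counterparts outside the disks. The only point requiring a brief justification is the legitimacy of evaluating the Chern current against the continuous (rather than smooth) function $\log\|\ccar_i\|_\varepsilon^{-2}$, which is clear since the current is a sum of an $L^1$ density on $X_\varepsilon$ and a finite combination of arc-length measures on smooth curves.
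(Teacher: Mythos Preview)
Your proof is correct and rests on the same observation as the paper's: the function $\log\|\ccar_i\|_\varepsilon^{-2}$ vanishes identically on the support of $c_1(\ov{\omega}_{X,\hyp,\varepsilon})$, which is contained in $X\setminus\bigcup_j D_{j,\varepsilon}$. The paper phrases this more abstractly, invoking Lemma~\ref{lemma:equilibrium-point} to identify $\log\|\ccar_i\|_\varepsilon^{-2}$ as the equilibrium potential of $X\setminus D_{i,\varepsilon}$ at $p_i$ and then deferring to \cite[eq.~(5.12)]{Bost} for the vanishing; you instead unwind that reference by computing $\|\ccar_i\|_\varepsilon$ explicitly and checking directly that it equals~$1$ on the closed complement of $D_{i,\varepsilon}$. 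Your route is slightly more elementary and self-contained, while the paper's keeps the argument parallel to the proof of Lemma~\ref{lemma:equilibrium-point}~\emph{ii} and stays within Bost's formalism, but the content is the same.
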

\begin{proof}
With the previous notations, the Chern current $c_{1}(\omega_{X,\hyp,\varepsilon})$ is supported on $X\setminus(\cup_{i}D_{i,\varepsilon})$. Moreover, by Lemma \ref{lemma:equilibrium-point} we know that $\log\|\ccar_{i}\|_{\varepsilon}^{-2}$ is the equilibrium potential of $X\setminus D_{i,\varepsilon}$ at $p_{i}$. Again, one concludes as in \cite[eq.~(5.12)]{Bost}.
\end{proof}
\begin{proposition}\label{prop:appearance_wolpert_metric_2}
Restriction at $p_{i}$ induces the following isometries:
\begin{align*}
	&\langle \omega_{X,\hyp,\varepsilon},\ov{\OO(p_{i})}_{\varepsilon}\rangle\simeq( \omega_{X,p_{i}},2^{1/2}(\varepsilon\log\varepsilon^{-1})\|\cdot\|_{W,p_{i}}),\quad\text{if } m_{i}=\infty,\\
	&\langle \omega_{X,\hyp,\varepsilon},\ov{\OO(p_{i})}_{\varepsilon}\rangle\simeq (\omega_{X,p_{i}},2^{-1/2}m_{i}\varepsilon^{1-1/m_{i}}(1-\varepsilon^{2/m_{i}})\|\cdot\|_{W,p_{i}}),\quad\text{if } m_{i}<\infty,
\end{align*}
where $\|\cdot\|_{W_{p_i}}$ is defined by $\|d\zeta_{i}\|_{W,p_{i}}=1$.
\end{proposition}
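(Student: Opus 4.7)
The plan is to invoke Lemma \ref{lemma:computation_star-products}(ii) in the same spirit as the proof of Proposition \ref{prop:appearance_wolpert_metric_1}, exploiting two facts made available by the preceding discussion: (a) the truncated hyperbolic metric is \emph{flat} inside each disk $D_{i,\varepsilon}=\zeta_{i}^{-1}(D(0,\varepsilon))$, so its Chern current vanishes there; (b) Lemma \ref{lemma:vanishing-integral} kills the only potentially nontrivial integral that would otherwise appear.

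More precisely, I pick any meromorphic section $\alpha$ of $\omega_{X}$ whose divisor avoids $p_{i}$, and pair it with the canonical section $\ccar_{i}$ of $\OO(p_{i})$, which satisfies $\gdiv\ccar_{i}=p_{i}$. Under the canonical restriction isomorphism $\langle\omega_{X},\OO(p_{i})\rangle\xrightarrow{\sim}\omega_{X,p_{i}}$ one has $\langle\alpha,\ccar_{i}\rangle\mapsto\alpha(p_{i})$. The explicit formula for $c_{1}(\ov{\omega}_{X,\hyp,\varepsilon})$ displayed just after Example \ref{example:truncated_hyperbolic} shows that this Chern current is supported in $X_{\varepsilon}\cup\bigcup_{j}\pd D_{j,\varepsilon}$; in particular it vanishes on the open neighborhood $D_{i,\varepsilon}$ of $\gdiv\ccar_{i}=p_{i}$, so the hypothesis of Lemma \ref{lemma:computation_star-products}(ii) is fulfilled. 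That lemma then yields
\begin{displaymath}
	\log\|\langle\alpha,\ccar_{i}\rangle\|^{-2}
	=\log\|\alpha(p_{i})\|_{\hyp,\varepsilon,p_{i}}^{-2}
	+\int_{X}\log\|\ccar_{i}\|_{\varepsilon}^{-2}\,c_{1}(\ov{\omega}_{X,\hyp,\varepsilon}),
\end{displaymath}
and the second term vanishes by Lemma \ref{lemma:vanishing-integral}. Hence the restriction isomorphism becomes an isometry once $\omega_{X,p_{i}}$ is equipped with the metric $\|\cdot\|_{\hyp,\varepsilon,p_{i}}$ evaluated at $p_{i}$.

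The remaining step is to rewrite this fibre metric in terms of the Wolpert metric. At $p_{i}$ one has $|\zeta_{i}(p_{i})|=0\leq\varepsilon$, so one is in the frozen regime of Example \ref{example:truncated_hyperbolic}. Dualizing the formulas given there for $\|\pd/\pd\zeta_{i}\|_{\hyp,\varepsilon,p_{i}}^{2}$, one obtains
\begin{displaymath}
	\|d\zeta_{i}\|_{\hyp,\varepsilon,p_{i}}^{2}=2\varepsilon^{2}(\log\varepsilon^{-1})^{2}\quad(m_{i}=\infty),\qquad
	\|d\zeta_{i}\|_{\hyp,\varepsilon,p_{i}}^{2}=\frac{m_{i}^{2}}{2}\varepsilon^{2-2/m_{i}}(1-\varepsilon^{2/m_{i}})^{2}\quad(m_{i}<\infty).
\end{displaymath}
Since by definition $\|d\zeta_{i}\|_{W,p_{i}}=1$, comparing these expressions with the Wolpert norm gives exactly the scaling factors $2^{1/2}\varepsilon\log\varepsilon^{-1}$ and $2^{-1/2}m_{i}\varepsilon^{1-1/m_{i}}(1-\varepsilon^{2/m_{i}})$ asserted in the statement.

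The argument is essentially formal once the right preparatory lemmas are in place; there is no real obstacle. The only point that requires care is the bookkeeping of the factor $1/2$ coming from the convention of \textsection\ref{subsubsec:normal-metric}, which relates the riemannian and hermitian tensors and which is the source of the $2^{\pm 1/2}$ in the two cases.
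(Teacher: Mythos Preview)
Your proof is correct and follows exactly the approach indicated in the paper, which simply cites Lemma \ref{lemma:computation_star-products}(ii) together with Lemma \ref{lemma:vanishing-integral}. You have merely spelled out the details: the local $L^{\infty}$ hypothesis on $c_{1}(\ov{\omega}_{X,\hyp,\varepsilon})$ near $p_{i}$, the vanishing of the integral term, and the explicit dualization of the frozen metric values from Example \ref{example:truncated_hyperbolic} to recover the stated scaling factors.
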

\begin{proof}
The proof is a straightforward consequence of Lemma \ref{lemma:computation_star-products} \emph{ii} and Lemma \ref{lemma:vanishing-integral}.
\end{proof}
\begin{definition}[Wolpert metric]\label{definition:Wolpert}
Let $z$ be a \emph{rs} coordinate on a hyperbolic cone or cusp $p\in X$. The Wolpert metric on the cotangent space at $p$, $\omega_{X,p}$, is defined by
\begin{displaymath}
	\|dz\|_{W,p}=1.
\end{displaymath}
\end{definition}
\begin{remark}
Observe the definition is legitimate, because \emph{rs} coordinates are unique up to a constant of modulus 1.
\end{remark}

\subsubsection{} We close the discussion on Deligne pairings by studying to what extend the truncated hyperbolic metrics are good approximations of the hyperbolic metric.

Let $X$ be a cusp compactification of some $\Gamma\backslash\HH$ for a fuchsian group $\Gamma\subset\PSL_{2}(\RR)$. Following the notations settled in \textsection\ref{subsubsec:discussion-fuchsian}, we define
\begin{displaymath}
	m:=\prod_{m_{i}<\infty}m_{i},
\end{displaymath}
with the convention that $m=1$ whenever $m_{i}=\infty$ for all $i$, and the $\QQ$-divisor
\begin{displaymath}
	D=\sum_{i}\left(1-\frac{1}{m_{i}}\right)p_{i}.
\end{displaymath}
The $m$-th tensor power of the hyperbolic metric induces a singular hermitian metric on the line bundle
\begin{displaymath}
	\omega_{X}^{m}(mD)=\left(\omega_{X}(D)\right)^{m}.
\end{displaymath}
We still use the index $\hyp$ to refer to this metric. Given $\varepsilon>0$ small enough, we have a truncated hyperbolic metric $\|\cdot\|_{\hyp,\varepsilon}$ on $\omega_{X}$ and truncated trivial metrics on the sheaves $\OO(p_{i})$. Recall these last metrics are constructed after introduction of \emph{rs} coordinates. We indicate with an index $\varepsilon$ the tensor product of these truncated metrics on $\omega_{X}^{m}(mD)$.
\begin{proposition}\label{prop:convergence_deligne_hyp}
i. The hermitian metric $\|\cdot\|_{\hyp}$ on $\omega_{X}^{m}(mD)$ is $L_{1}^{2}$.

ii. The sequence of metrics $\|\cdot\|_{\varepsilon}$ on $\omega_{X}^{m}(mD)$ converges to $\|\cdot\|_{\hyp}$ in $L_{1}^{2}$ as $\varepsilon\to 0$.

iii. We have the convergence of hermitian line bundles
\begin{displaymath}
	\langle\omega_{X}^{m}(mD)_{\varepsilon},\omega_{X}^{m}(mD)_{\varepsilon}\rangle
	\overset{\varepsilon\to 0}{\longrightarrow}
	\langle\omega_{X}^{m}(mD)_{\hyp},\omega_{X}^{m}(mD)_{\hyp}\rangle.
\end{displaymath}
\end{proposition}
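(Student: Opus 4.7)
The plan is to reduce all three statements to local computations in rs coordinates around the finitely many singular points $p_i$: away from these points both metrics are smooth and, for $\varepsilon$ small enough, equal, so the regularity in (i) and the convergence in (ii) are automatic there. Statement (iii) will then be immediate from (ii) by applying Lemma \ref{lemma:computation_star-products}(i) to the diagonal case $L_0 = L_1 = \omega_X^m(mD)$, so the real content lies in (i) and (ii).

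For (i), at a cusp $p_i$ I would use the local trivialization $(dz/z)^m$ of $\omega_X^m(mD)$, the coefficient of $p_i$ in $mD$ being $m$. Combining $\|d\tau\|^2_{\hyp} = 2(\Imag\tau)^2$ with $z = e^{2\pi i\tau}$ gives $\|dz\|^2_{\hyp} = 2|z|^2(\log|z|)^2$, whence $\log\|(dz/z)^m\|^2_{\hyp} = m\log 2 + 2m\log|\log|z||$; this function is locally integrable and its gradient has magnitude $\sim m/(|z|\cdot|\log|z||)$, whose squared $L^2$ norm near $0$ is controlled by the convergent integral $\int_0^{r_0} dr/(r(\log r)^2)$. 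At an elliptic fixed point $p_i$ of order $m_i < \infty$, the trivialization $(dw)^m/w^{m-m/m_i}$ is a genuine frame, and $\|dw\|^2_{\hyp} = (m_i^2/2)|w|^{2-2/m_i}(1-|w|^{2/m_i})^2$ makes the powers of $|w|$ cancel, leaving
\begin{displaymath}
	\|(dw)^m/w^{m-m/m_i}\|^2_{\hyp} = (m_i^2/2)^m(1-|w|^{2/m_i})^{2m},
\end{displaymath}
which is continuous up to $w=0$ with gradient of order $|w|^{2/m_i - 1}$, hence $L^2$ locally since $m_i \geq 2$. Thus (i).

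For (ii), the key observation is that the truncated tangent metric on $T_X$ and the truncated trivial metric on $\OO(p_i)$ are designed so that their tensor contributions to $\omega_X^m(mD)$ recombine nicely on the disks. At a cusp, on $\{|z|\leq\varepsilon\}$ the frozen values $\|dz\|^2_\varepsilon = 2\varepsilon^2(\log\varepsilon^{-1})^2$ and $\|1/z\|^2_\varepsilon = \varepsilon^{-2}$ multiply to $\|(dz/z)^m\|^2_\varepsilon = 2^m(\log\varepsilon^{-1})^{2m}$, constant on the disk; at an elliptic point the analogous frozen value is $(m_i^2/2)^m(1-\varepsilon^{2/m_i})^{2m}$. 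Outside the disks the two metrics agree, so $\log(\|\cdot\|_\varepsilon/\|\cdot\|_{\hyp})$ is supported on $\cup_i D_{i,\varepsilon}$. At cusps this log equals $2m\log(\log\varepsilon^{-1}/|\log|z||)$, and the substitution $u = \log(1/|z|)$ shows both its $L^2$ norm and, via the same integral $-2\pi m^2/\log\varepsilon$ as in (i), the $L^2$ norm of its gradient tend to $0$ with $\varepsilon$. At elliptic points $\log((1-\varepsilon^{2/m_i})/(1-|w|^{2/m_i})) = O(\varepsilon^{2/m_i})$ on the disk and the gradient integral $\int |w|^{4/m_i - 2}\,dA$ is easily bounded. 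This yields the $L_1^2$ convergence of (ii), and hence (iii) by the lemma quoted above.

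The only technical point worth flagging is the cancellation of singular factors in (ii): without the rescaling built into the truncated trivial metric on $\OO(p_i)$, the truncated hyperbolic metric alone on $\omega_X^m$ would not even be $L_1^2$ on $\omega_X^m(mD)$. Once this cancellation is observed, the cusp case is the most delicate because of the nested logarithms $\log|\log|z||$, but the convergence of $\int_0^\varepsilon dr/(r(\log r)^2) = -1/\log\varepsilon$ keeps everything under control and drives both the $L^2$ and the Dirichlet energies to zero as $\varepsilon \to 0$.
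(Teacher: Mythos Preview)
Your proposal is correct and follows exactly the route the paper indicates: the paper simply remarks that (i) ``has already been observed'', leaves (ii) as ``an easy computation'', and deduces (iii) from Lemma~\ref{lemma:computation_star-products}~\emph{i}. You have supplied precisely the local computations in \emph{rs} coordinates that the paper omits, including the key cancellation of singular powers when the truncated trivial metric on $\OO(p_i)$ is combined with the truncated hyperbolic metric on $\omega_X$; your invocation of Lemma~\ref{lemma:computation_star-products}~\emph{i} for (iii) is identical to the paper's.
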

\begin{proof}
The first property has already been observed. The second property is an easy computation left as an exercise. The last item is an application of Lemma \ref{lemma:computation_star-products} \emph{i}.
\end{proof}

\subsection{Riemann--Roch isometry and truncated metrics}\label{subsection:Deligne-isom}
The Riemann--Roch isometry refines the arithmetic Riemann--Roch theorem of Gillet--Soul\'e in the case of arithmetic surfaces \cite{Deligne, GS, Soule}. It is stated as an isometry between hermitian line bundles, whose arithmetic Chern classes represent both sides of the arithmetic Grothendieck--Riemann--Roch theorem. The isometry is canonical in that it commutes with base change, and is unique, up to sign, with this property (at least for the trivial line bundle; for more general line bundles, one needs to impose extra functorialities). We next recall the statement in the situation that concerns us, and derive consequences for convergent families of $L_{1}^{2}$ metrics.

\subsubsection{}\label{subsec:prelim-L2} Let $X$ be a compact Riemann surface, equivalently seen as a connected smooth complex projective curve. We suppose that the tangent bundle $T_{X}$ comes equipped with a smooth hermitian metric. Its dual bundle $\omega_{X}$ will then be endowed with the dual metric. The determinant of the cohomology of the trivial sheaf, namely
\begin{displaymath}
	\lambda(\OO_{X})=\det H^{0}(X,\OO_{X})\otimes\det H^{1}(X,\OO_{X})^{-1}
\end{displaymath}	
carries the Quillen metric. It is build up from the $L^{2}$ metric on the cohomology spaces, given by Hodge theory up to normalization, and the regularized determinant of the $\cpd$ laplacian acting on smooth complex functions on $X$.\footnote{On a Riemann surface, the definition of the Quillen metric actually involves $\det^{\prime}\Delta^{0,1}_{\cpd}$. However, since we are in complex dimension 1, we have $\det^{\prime}\Delta^{0,1}_{\cpd}=\det^{\prime}\Delta^{0,0}_{\cpd}$.} Let us review these elements. The $L^{2}$ metric on $H^{0}(X,\OO_{X})=\CC$ is expressed in terms of the volume:
\begin{displaymath}
	\|1\|_{L^{2}}^{2}=\int_{X}\omega,
\end{displaymath}
where $\omega$ is the normalized K\"ahler form, locally given in terms of trivializing sections $\theta$ of $\omega_{X}$ by
\begin{displaymath}
	\omega=\frac{i}{2\pi}\frac{\theta\wedge\ov{\theta}}{\|\theta\|^{2}}.
\end{displaymath}
The $L^{2}$ metric on $H^{1}(X,\OO_{X})$ is such that the canonical Serre duality isomorphism
\begin{displaymath}
	H^{1}(X,\OO_{X})\overset{\sim}{\longrightarrow} H^{0}(X,\omega_{X})^{\vee}
\end{displaymath}
becomes an isometry, where the hermitian structure on $H^{0}(X,\omega_{X})$ is the usual pairing
\begin{displaymath}
	\langle\alpha,\beta\rangle_{L^{2}}=\frac{i}{2\pi}\int_{X}\alpha\wedge\ov{\beta}.
\end{displaymath}
The hermitian metric deduced on $\lambda(\OO_{X})$ by dual, tensor product and determinant operations, will be denoted $\|\cdot\|_{L^{2}}$. The $\cpd$ laplacian on functions is the differential operator $\Delta_{\cpd}=\Delta_{\cpd}^{0,0}=\cpd^{\ast}\cpd$
\begin{displaymath}
	\mathcal{C}^{\infty}(X,\OO_{X})\overset{\cpd}{\rightarrow}\mathcal{C}^{\infty}(X,\Omega_{X}^{0,1})
	\overset{\cpd^{\ast}}{\rightarrow}\mathcal{C}^{\infty}(X,\OO_{X}),
\end{displaymath}
where $\Omega_{X}^{0,1}$ is the sheaf of smooth differential forms of type $(0,1)$. We recall that $\cpd^{\ast}$ is the formal adjoint of $\cpd$ with respect to the $L^{2}$ hermitian structures on these functionals spaces. The operator $\Delta_{\cpd}$ can be related to the Laplace--Beltrami operator $\Delta_{d}$ by the K\"ahler identity
\begin{displaymath}
	\Delta_{\cpd}=\frac{1}{2}\Delta_{d}.
\end{displaymath}
The zeta regularized determinant $\detp\Delta_{\cpd}$ can be defined and we have the identity
\begin{equation}\label{eq:13}
	\detp\Delta_{\cpd}=2^{(g+2)/3}\detp\Delta_{d},
\end{equation}
where $g$ is the genus of $X$. This relation will be useful later, when we combine Riemann--Roch isometry with 
Mayer--Vietoris type formulas, that are stated in terms of Laplace--Beltrami operators. The Quillen metric on $\lambda(\OO_{X})$ is a normalization of the $L^{2}$ metric given by
\begin{displaymath}
	\|\cdot\|_{Q}=(\detp\Delta_{\cpd})^{-1/2}\|\cdot\|_{L^{2}}.
\end{displaymath}

Let us introduce the hermitian complex line $\OO(C(g))$, whose underlying complex line is $\CC$, and whose metric is the renormalization of the absolute value $C(g)|\cdot|$, where
\begin{displaymath}
	C(g)=\exp\left((2g-2)\left(\frac{\zeta^{\prime}(-1)}{\zeta(-1)}+\frac{1}{2}\right)\right).
\end{displaymath}
With these notations, Deligne's Riemann--Roch isometry provides a canonical isometry of hermitian line bundles
\begin{equation}\label{eq:14}
	(\lambda(\OO_{X}),\|\cdot\|_{Q})^{\otimes 12}\overset{\sim}{\longrightarrow}\langle\ov{\omega}_{X},\ov{\omega}_{X}\rangle\otimes\OO(C(g)).
\end{equation}
Up to an unknown topological constant, the theorem is stated in this form by Deligne \cite{Deligne}. To pin down the exact value of the constant, one can appeal to the theorem of Gillet--Soul\'e \cite{GS}. An explanation for that is also exposed in \cite{Soule}.

\subsubsection{} Let $\|\cdot\|$ be a $L_{1}^{2}$ hermitian metric on $\omega_{X}$. For instance, we may think of the truncated hyperbolic metrics. We are interested in giving a sense to the Quillen metric in this case, and if possible to the $L^{2}$ metric and the regularized determinant of the laplacian, for which a Riemann--Roch type isometry is true. For our later purposes, we don't even need to give a meaning to the laplacian and study its spectral resolution. It will be enough the following indirect approach.
\begin{lemma}\label{lemma:convergence_quillen}
Let $\|\cdot\|$ be a $L_{1}^{2}$ hermitian metric and $\|\cdot\|_{n}$ a sequence of smooth hermitian metrics, on $\omega_{X}$, converging to $\|\cdot\|$ in $L_{1}^{2}$. Let us denote by $\|\cdot\|_{Q,n}$, $\|\cdot\|_{L^{2},n}$ and $\detp\Delta_{\cpd,n}$ the corresponding Quillen metric, $L^{2}$ metric and determinant of laplacian.

i. The sequence $\|\cdot\|_{Q,n}$ converges to a metric on $\lambda(\OO_{X})$. The limit only depends on $\|\cdot\|$.

ii. If $\|\cdot\|$ is bounded and $\|\cdot\|_{n}$ is uniformly bounded, then $\|\cdot\|_{L^{2},n}$ and $\detp\Delta_{\cpd,n}$ converge, and the limits only depend on $\|\cdot\|$.
\end{lemma}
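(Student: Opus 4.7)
The natural strategy is to transport the assertion from the analytic side (Quillen metric, determinant of laplacian) to the arithmetic/metric-geometric side (Deligne--Bost pairing, volume integral), where $L_{1}^{2}$ continuity has already been established.

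For part (i), I would apply Deligne's Riemann--Roch isometry \eqref{eq:14}: for each smooth metric $\|\cdot\|_{n}$ on $\omega_{X}$, this provides a canonical isometry
\begin{displaymath}
	(\lambda(\OO_{X}),\|\cdot\|_{Q,n})^{\otimes 12}\overset{\sim}{\longrightarrow}\langle\ov{\omega}_{X,n},\ov{\omega}_{X,n}\rangle\otimes\OO(C(g)),
\end{displaymath}
where the topological factor $C(g)$ does not depend on the metric. Lemma \ref{lemma:computation_star-products}(i), whose application in the autopairing case is recorded in Proposition \ref{prop:convergence_deligne_hyp}(iii), guarantees the continuity of the Deligne--Bost pairing with respect to $L_{1}^{2}$ convergence. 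Hence $\langle\ov{\omega}_{X,n},\ov{\omega}_{X,n}\rangle$ converges in $\lambda(\OO_{X})^{\otimes 12}$ to the Deligne--Bost pairing of the limit metric $\|\cdot\|$, from which it follows that $\|\cdot\|_{Q,n}^{\otimes 12}$, and therefore $\|\cdot\|_{Q,n}$, converges to a limit that is intrinsically determined by $\|\cdot\|$ alone (since the Deligne--Bost pairing depends only on $\|\cdot\|$).

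For part (ii), I would separately address the $L^{2}$ metric and the determinant. The $L^{2}$ metric on $H^{1}(X,\OO_{X})$ is defined via Serre duality against $H^{0}(X,\omega_{X})$, using the intrinsic pairing $\tfrac{i}{2\pi}\int_{X}\alpha\wedge\ov{\beta}$, which is independent of the hermitian structure on $\omega_{X}$. So the only dependence on $\|\cdot\|_{n}$ comes from the $H^{0}$ factor, namely the volume $\int_{X}\omega_{n}$ with $\omega_{n}=\tfrac{i}{2\pi}\theta\wedge\ov{\theta}/\|\theta\|_{n}^{2}$. Fixing a smooth reference metric $\|\cdot\|_{0}$ and writing $\|\cdot\|_{n}=e^{-f_{n}/2}\|\cdot\|_{0}$, $\|\cdot\|=e^{-f/2}\|\cdot\|_{0}$, the hypothesis of $L_{1}^{2}$ convergence gives $f_{n}\to f$ in $L^{2}$, so a subsequence converges almost everywhere. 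The uniform boundedness of $\|\cdot\|_{n}$ and boundedness of $\|\cdot\|$ provide a common essential upper bound on $|f_{n}|$, hence the integrable majorant needed for the dominated convergence theorem, yielding $\int_{X}e^{f_{n}}\omega_{0}\to\int_{X}e^{f}\omega_{0}$. Uniqueness of the limit turns subsequential convergence into convergence of the full sequence, and the limit depends only on $\|\cdot\|$.

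Finally, for any fixed nonzero element $s\in\lambda(\OO_{X})$, the defining relation $\detp\Delta_{\cpd,n}=(\|s\|_{L^{2},n}/\|s\|_{Q,n})^{2}$ displays the determinant as a ratio of two positive quantities both known to converge (by part (i) and the preceding step) to strictly positive limits depending only on $\|\cdot\|$. This forces convergence of $\detp\Delta_{\cpd,n}$ to a limit depending only on $\|\cdot\|$. The main (and essentially only) subtle point is the dominated convergence step: without the boundedness hypothesis of (ii), $L_{1}^{2}$ convergence of $f_{n}$ does not suffice to control $\int_{X}e^{f_{n}}\omega_{0}$ in the limit, which is precisely why the convergence of the $L^{2}$ metric and the analytic torsion individually (as opposed to the Quillen combination) requires the extra assumption.
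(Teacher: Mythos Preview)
Your proposal is correct and follows essentially the same approach as the paper: part (i) via the Riemann--Roch isometry \eqref{eq:14} combined with the $L_{1}^{2}$ continuity of the Deligne--Bost pairing (Lemma \ref{lemma:computation_star-products}(i)), and part (ii) by reducing to convergence of the $L^{2}$ metric and then recovering the determinant as a ratio. The paper's proof is terser---it leaves the $L^{2}$ convergence as an elementary exercise---while you spell out the dominated convergence argument and correctly identify why the boundedness hypothesis is needed; one small quibble is that Proposition \ref{prop:convergence_deligne_hyp}(iii) treats only the specific hyperbolic case, so the relevant general input for part (i) is Lemma \ref{lemma:computation_star-products}(i) alone.
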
 
\begin{proof}
We fix a reference smooth hermitian metric on $\omega_{X}$, $\|\cdot\|_{0}$. We write $\|\cdot\|=e^{-f/2}\|\cdot\|_{0}$ and $\|\cdot\|_{n}=e^{-f_{n}/2}\|\cdot\|_{n}$. The convergence hypothesis is that $f_{n}$ converges to $f$ in $L_{1}^{2}$.

The first assertion is an immediate consequence of Lemma \ref{lemma:computation_star-products} together with the Riemann--Roch isometry \eqref{eq:14}. For the second assertion, it is enough to show the convergence of the $L^{2}$ metric, with limit only depending on $\|\cdot\|$. This is elementary and we leave the details to the reader.\end{proof}

\begin{remark}
Given a $L_{1}^{2}$ metric on a hermitian line bundle, there always exist sequences of smooth metrics converging to it in $L_{1}^{2}$. If moreover the limit $L_{1}^{2}$ metric is continuous, a convolution argument shows the existence of a uniformly converging sequence of smooth metrics, converging in $L_{1}^{2}$ as well. In this last case, the $L^{2}$ metric is given by the same rule as in the smooth situation.
\end{remark}

\begin{notation}\label{notation:truncated_quillen}
With the hypothesis of the lemma, we write $\|\cdot\|_{Q}$, $\|\cdot\|_{L^{2}}$ and $\detp\Delta_{\cpd}$ for the respective limits. If needed, we indicate the metric in subindex position, to avoid confusions. Observe that $\detp\Delta_{\cpd}$ is a strictly positive real number.
\end{notation}

\subsubsection{} Let again $X$ be a cusp uniformization of a quotient $\Gamma\backslash\HH$, for some fuchsian group $\Gamma\subset\PSL_{2}(\RR)$. Denote its cusps and elliptic fixed points by $p_{1},\ldots,p_{n}$, with multiplicities $m_{1},\ldots,m_{n}\leq\infty$. For a small $\varepsilon>0$, we defined in Example \ref{example:truncated_hyperbolic} a truncated hyperbolic metric on $\omega_{X}$, $\|\cdot\|_{\hyp,\varepsilon}$. This is an example of $L_{1}^{2}$ and continuous hermitian metric, for which the conclusions of Lemma \ref{lemma:convergence_quillen} hold. Therefore, we have well-defined Quillen and $L^{2}$ metrics, as well as a determinant of laplacian. The notations will be $\|\cdot\|_{Q,\varepsilon}$, $\|\cdot\|_{L^{2},\varepsilon}$ and $\detp\Delta_{\cpd,\varepsilon}$. With these notations, we automatically have a Riemann--Roch type isometry
\begin{equation}\label{eq:16}
	(\lambda(\OO_{X}),\|\cdot\|_{Q,\varepsilon})^{\otimes 12}\overset{\sim}{\longrightarrow}\langle\ov{\omega}_{X,\hyp,\varepsilon},
	\ov{\omega}_{X,\hyp,\varepsilon}\rangle\otimes\OO(C(g)).
\end{equation}
We wish to let the parameter $\varepsilon$ tend to 0, and derive the behavior of $\detp\Delta_{\cpd,\varepsilon}$. For this we take into account propositions \ref{prop:appearance_wolpert_metric_1}--\ref{prop:appearance_wolpert_metric_2}. We find a canonical isomorphism
\begin{equation}\label{eq:15}
	\begin{split}
	\langle\omega_{X}^{m}(mD)_{\varepsilon},\omega_{X}^{m}(mD)_{\varepsilon}\rangle
	\overset{\sim}{\longrightarrow}&\langle\omega_{X,\varepsilon}^{m},\omega_{X,\varepsilon}^{m}\rangle\\
	&\otimes\bigotimes_{i}(\omega_{X,p_{i}},\|\cdot\|_{W,p_{i}})^{\otimes m^{2}( 1-m_{i}^{-2})}\\
	&\hspace{0.3cm}\otimes (\CC,m^{2}R(\varepsilon)|\cdot|),
	\end{split}
\end{equation}
where we used the notation $D=\sum_{i}(1-m_{i}^{-1})p_{i}$ and we put
\begin{displaymath}
	\begin{split}
	R(\varepsilon):=&2^{c-\sum_{m_{i}<\infty}(1-m_{i}^{-1})}\prod_{m_{i}<\infty}m_{i}^{2m^{2}(1-m_{i}^{-1})}\\
	&\varepsilon^{\sum_{i}(1-m_{i}^{-1})^{2}}(\log\varepsilon^{-1})^{2c}\prod_{m_{i}<\infty}(1-\varepsilon^{2/m_{i}})^{2(1-m_{i}^{-1})}.
	\end{split}
\end{displaymath}
Here $c$ denotes the number of cusps $c:=\sharp\lbrace m_{i}=\infty\rbrace$.
\begin{proposition}\label{prop:naive_laplacian}
Let the notations be as above, so that $\det\Delta^{\prime}_{\cpd,\hyp,\varepsilon}$ is the limit laplacian attached to the $L_{1}^{2}$ truncated hyperbolic metric.

i. The quantity
\begin{displaymath}
	\frac{\detp\Delta_{\cpd,\hyp,\varepsilon}}{\varepsilon^{\frac{1}{6}\sum_{i}(1-m_{i}^{-1})^{2}}(\log\varepsilon^{-1})^{\frac{c}{3}}}
\end{displaymath}
has a finite limit as $\varepsilon\to 0$. Let it be $\deta\Delta_{\hyp}$. It is a strictly positive real number.

ii. The Quillen type metric defined by
\begin{displaymath}
	\|\cdot\|_{Q,\hyp,\ast}=(\deta\Delta_{\hyp})^{-1/2}\|\cdot\|_{L^{2},\hyp}
\end{displaymath}
is such that Deligne's isomorphism induces a canonical isometry
\begin{displaymath}
	\begin{split}
	\lambda(\OO_{X})^{\otimes 12m^{2}}\otimes
	\bigotimes_{i}(\omega_{X,p_{i}},\|\cdot\|_{W,p_{i}})^{\otimes m^{2} ( 1-m_{i}^{-2})}
	\overset{\sim}{\longrightarrow}&\langle\omega_{X}^{m}(mD)_{\hyp},\omega_{X}^{m}(mD)_{\hyp}\rangle\\
	&\otimes\OO(m^{2}C^{\ast}(g)),
	\end{split}
\end{displaymath}
where $\OO(m^{2}C^{\ast}(g))=(\CC,m^{2}C^{\ast}(g)|\cdot|)$ and
\begin{equation}\label{eq:top-ctt}
	C^{\ast}(g)=2^{-c+\sum_{m_{i}<\infty}(1-m_{i}^{-1}) }
	\exp\left((2g-2)\left(\frac{\zeta^{\prime}(-1)}{\zeta(-1)}+\frac{1}{2}\right)\right)\prod_{m_{i}<\infty}m_{i}^{2(1-m_{i}^{-1})}.
\end{equation}
\end{proposition}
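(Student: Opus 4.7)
The strategy is to transport the Riemann--Roch isometry \eqref{eq:16}, valid for the truncated hyperbolic metric, through the decomposition \eqref{eq:15}, and pass to the limit $\varepsilon\to 0$ using Lemma \ref{lemma:convergence_quillen} and Proposition \ref{prop:convergence_deligne_hyp}. The divergence rate of $\detp\Delta_{\cpd,\hyp,\varepsilon}$ will be forced to match the explicit divergent scalar $R(\varepsilon)$ appearing in \eqref{eq:15}, and the constant $m^{2}C^{\ast}(g)$ can be read off by comparing the remaining finite parts of both sides.

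I would first raise \eqref{eq:16} to the $m^{2}$-th tensor power. Bilinearity of the Deligne pairing yields a canonical isometry
\begin{displaymath}
(\lambda(\OO_{X}),\|\cdot\|_{Q,\varepsilon})^{\otimes 12m^{2}}\simeq\langle\omega_{X,\hyp,\varepsilon}^{m},\omega_{X,\hyp,\varepsilon}^{m}\rangle\otimes\OO(C(g)^{m^{2}}).
\end{displaymath}
Substituting the inverse of \eqref{eq:15} and transferring the Wolpert factors to the cohomological side, one obtains, for every $\varepsilon>0$, a canonical isometry
\begin{equation*}
\begin{split}
(\lambda(\OO_{X}),\|\cdot\|_{Q,\varepsilon})^{\otimes 12m^{2}}&\otimes\bigotimes_{i}(\omega_{X,p_{i}},\|\cdot\|_{W,p_{i}})^{\otimes m^{2}(1-m_{i}^{-2})}\\
&\simeq\langle\omega_{X}^{m}(mD)_{\varepsilon},\omega_{X}^{m}(mD)_{\varepsilon}\rangle\otimes\OO\!\left(\frac{C(g)^{m^{2}}}{m^{2}R(\varepsilon)}\right),
\end{split}
\end{equation*}
whose underlying $\QQ$-line bundle isomorphism is Deligne's and is therefore $\varepsilon$-independent.

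Next I would examine the limit $\varepsilon\to 0$ of each factor. The Wolpert line bundles are $\varepsilon$-independent by construction. The $L^{2}$ component of $\|\cdot\|_{Q,\varepsilon}$ converges: on $H^{0}(X,\OO_{X})=\CC$ the norm $\|1\|_{L^{2},\varepsilon}^{2}$ equals $(2\pi)^{-1}\vol(X,\|\cdot\|_{\hyp,\varepsilon})$, which tends to a finite limit by finiteness of the hyperbolic volume, while the Serre-duality piece on $H^{0}(X,\omega_{X})$ is intrinsic, independent of the chosen metric on $\omega_{X}$. The Deligne pairing on the right-hand side converges by Proposition \ref{prop:convergence_deligne_hyp}(iii). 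Hence the only remaining $\varepsilon$-dependent quantities are $\detp\Delta_{\cpd,\hyp,\varepsilon}$ on the left and $R(\varepsilon)$ on the right. Equating norms of a fixed generator under the isometry, one concludes that $\detp\Delta_{\cpd,\hyp,\varepsilon}$ is asymptotic to a positive constant times an explicit power of $R(\varepsilon)$. Since $(1-\varepsilon^{2/m_{i}})^{2(1-m_{i}^{-1})}\to 1$, the divergent part of $R(\varepsilon)$ matches exactly the factor $\varepsilon^{\frac{1}{6}\sum_{i}(1-m_{i}^{-1})^{2}}(\log\varepsilon^{-1})^{c/3}$, which gives item (i) together with the existence and positivity of $\deta\Delta_{\hyp}$.

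Item (ii) is then a tautological repackaging. The normalised Quillen metric $\|\cdot\|_{Q,\hyp,\ast}$ is defined precisely so that renormalising both sides of the displayed isometry by the inverse of the divergence factor produces a convergent family of isometries; the limit is the announced canonical isometry, with Deligne's isomorphism as the underlying $\QQ$-line bundle map. The main obstacle will be the arithmetic bookkeeping required to recognise the limit constant as $m^{2}C^{\ast}(g)$: one has to match the non-divergent part of $R(\varepsilon)$, namely the factors $2^{c-\sum(1-m_{i}^{-1})}$ and $\prod_{m_{i}<\infty} m_{i}^{2m^{2}(1-m_{i}^{-1})}$, against $C(g)^{m^{2}}$ from \eqref{eq:14} and the $2\pi$-normalisation of the $L^{2}$ volume, in order to recover exactly \eqref{eq:top-ctt}. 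This is purely computational and requires no further analytic input.
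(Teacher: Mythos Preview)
Your proposal is correct and follows essentially the same approach as the paper: combine the Riemann--Roch isometry \eqref{eq:16} with the decomposition \eqref{eq:15}, use the convergence of the $L^{2}$ metric and Proposition \ref{prop:convergence_deligne_hyp}(iii), and read off the divergence rate of $\detp\Delta_{\cpd,\hyp,\varepsilon}$ from $R(\varepsilon)$. One small imprecision: the ``$2\pi$-normalisation of the $L^{2}$ volume'' plays no role in producing $C^{\ast}(g)$, since $\|\cdot\|_{L^{2},\varepsilon}\to\|\cdot\|_{L^{2},\hyp}$ with no residual constant; the constant $C^{\ast}(g)$ arises purely from matching $C(g)^{m^{2}}$ against the non-divergent part of $R(\varepsilon)$.
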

\begin{proof}
One easily sees that the $L^{2}$ metric with respect to the hyperbolic metric is well-defined, and the sequence $\|\cdot\|_{L^{2},\varepsilon}$ converges to $\|\cdot\|_{L^{2},\hyp}$. Then the proof is a consequence of the isometries \eqref{eq:16} and \eqref{eq:15}, and the convergence stated in Proposition \ref{prop:convergence_deligne_hyp} \emph{iii}.
\end{proof}
 We record the notation introduced in the statement of the theorem.
 \begin{notation}\label{notation:naive_det}
We define $\deta\Delta_{\hyp}$ as the finite limit
\begin{displaymath}
	\deta\Delta_{\hyp}:=\lim_{\varepsilon\to 0} \frac{\detp\Delta_{\cpd,\hyp,\varepsilon}}{\varepsilon^{\frac{1}{6}\sum_{i}(1-m_{i}^{-1})^{2}}(\log\varepsilon^{-1})^{\frac{c}{3}}}.
\end{displaymath}
We call it the \emph{naive determinant} of $\Delta_{\hyp}$. Observe this is a strictly positive real number.
\end{notation}
The next task will be to give a spectral interpretation of $\deta\Delta_{\hyp}$. For this, we will use the Mayer--Vietoris type formula for determinants of laplacians, and also the Selberg trace formula.

\section{Towards a spectral interpretation of the naive determinant}\label{section:spectral}
Let $X$ be a cusp compactification of a Riemann surface $\Gamma\backslash\HH$, for some fuchsian group $\Gamma\subset\PSL_{2}(\RR)$, and follow the conventions in \textsection\ref{subsubsec:discussion-fuchsian}. In the previous section we discussed the construction of a naive regularized determinant of the $\cpd$ hyperbolic laplacian, that we denoted by $\deta\Delta_{\cpd,\hyp}$. Defining a Quillen metric on $\lambda(\OO_{X})$ by means of $\deta\Delta_{\cpd,\hyp}$ and the $L^{2}$ metric (that is well-defined in this situation), one then has a Deligne type isometry (Proposition \ref{prop:naive_laplacian}). However a spectral interpretation of $\deta\Delta_{\cpd,\hyp}$ is missing. This is the question we proceed to address. The main tool will be the Mayer--Vietoris formula for determinants of laplacians (Theorem \ref{thm:MV-smooth}).

\subsection{Smoothening truncated metrics. Anomaly formulas.} The Mayer--Vietoris formula of Burghelea--Friedlander--Kappeler does not apply to truncated hyperbolic metrics, because these are only piecewise smooth. To circumvent this technical detail, we may smoothen the truncated metrics. 

\subsubsection{} Let $\varepsilon>0$ and consider \emph{rs} coordinates $\zeta_{i}$ at the points $p_{i}$ containing the closed disks $\ov{D}(0,\varepsilon)$ in their images. We suppose these disks define disjoint neighborhoods of the $p_{i}$ in $X$. We denote by $V_{\varepsilon}$ the union of the open disks $D(0,\varepsilon)$ in coordinates $\zeta_{i}$, and by $\ov{V}_{\varepsilon}$ its closure. With these notations, $V_{\varepsilon/2}$ is relatively compact in $V_{\varepsilon}$, namely $\ov{V}_{\varepsilon/2}\subset V_{\varepsilon}$. We consider the truncated hyperbolic metric $\|\cdot\|_{\hyp,\varepsilon}$ of Example \ref{example:truncated_hyperbolic}. 
\begin{lemma}\label{lemma:smoothening}
There exists a sequence $\|\cdot\|_{\varepsilon,k}$ of smooth hermitian metrics on $\omega_{X}$ with the following properties:

i. $\|\cdot\|_{\varepsilon,k}$ converges uniformly and in $L_{1}^{2}$ to $\|\cdot\|_{\hyp,\varepsilon}$;

ii. the metric $\|\cdot\|_{\varepsilon,k}$ coincides with $\|\cdot\|_{\hyp,\varepsilon}$ on an open neighborhood of $\ov{V}_{\varepsilon/2}$.
\end{lemma}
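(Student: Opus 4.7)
The plan is to mollify the truncated hyperbolic metric only near its locus of non-smoothness, which consists of the circles $\Sigma_\varepsilon := \bigsqcup_i \{|\zeta_i|=\varepsilon\}$, and to arrange that the mollification region is disjoint from a neighborhood of $\overline{V}_{\varepsilon/2}$. Fix an auxiliary smooth hermitian metric $\|\cdot\|_0$ on $\omega_X$, and write $\|\cdot\|_{\hyp,\varepsilon} = e^{-\phi_\varepsilon/2}\|\cdot\|_0$ for a function $\phi_\varepsilon$ on $X$. From the explicit formulas in Example \ref{example:truncated_hyperbolic}, $\phi_\varepsilon$ is smooth on $X\setminus\Sigma_\varepsilon$, continuous on $X$, and Lipschitz in each \emph{rs} coordinate across $\Sigma_\varepsilon$; in particular $\phi_\varepsilon$ belongs to $W^{1,\infty}(X)$.

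Choose an annular neighborhood $W$ of $\Sigma_\varepsilon$ of the form $W=\bigsqcup_i\{\tfrac{3}{4}\varepsilon<|\zeta_i|<\tfrac{5}{4}\varepsilon\}$, so that $W$ is disjoint from $\overline{V}_{\varepsilon/2}$, and pick a smooth cutoff $\chi\in\mathcal{C}_0^\infty(W)$ with $\chi\equiv 1$ on an open neighborhood $W'\Subset W$ of $\Sigma_\varepsilon$. Let $\rho_k$ be a standard mollifier on $\CC$, supported in the disk of radius $1/k$, and for $k$ large enough form the convolution $\phi_\varepsilon\ast\rho_k$ inside each \emph{rs} coordinate chart covering $W$ (which makes sense since the support of $\rho_k$ stays inside the chart). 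Define
\begin{displaymath}
	\phi_{\varepsilon,k}=(1-\chi)\,\phi_\varepsilon+\chi\,(\phi_\varepsilon\ast\rho_k),
\end{displaymath}
and set $\|\cdot\|_{\varepsilon,k}=e^{-\phi_{\varepsilon,k}/2}\|\cdot\|_0$. On $\{\chi=1\}\supset W'$ the function $\phi_{\varepsilon,k}$ equals the mollification $\phi_\varepsilon\ast\rho_k$ and is therefore smooth; on $\{\chi=0\}$ it coincides with $\phi_\varepsilon$, which is smooth there since $\{\chi=0\}$ is disjoint from $\Sigma_\varepsilon$; in the intermediate region it is a smooth combination of smooth functions. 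Hence $\|\cdot\|_{\varepsilon,k}$ is a smooth hermitian metric on $\omega_X$.

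Property \emph{ii} holds by construction: since $\chi$ is supported in $W\subset X\setminus\overline{V}_{\varepsilon/2}$, there is an open neighborhood of $\overline{V}_{\varepsilon/2}$ on which $\chi\equiv 0$ and therefore $\phi_{\varepsilon,k}\equiv\phi_\varepsilon$. For property \emph{i}, only the behavior on $W$ matters, since outside $W$ the metrics agree. On $W$, the continuity of $\phi_\varepsilon$ combined with standard mollifier estimates gives $\phi_\varepsilon\ast\rho_k\to\phi_\varepsilon$ uniformly. Moreover, since $\phi_\varepsilon\in W^{1,\infty}(W)\subset W^{1,2}(W)$, the classical convergence of mollifications in Sobolev spaces yields $\phi_\varepsilon\ast\rho_k\to\phi_\varepsilon$ in $L^2_1(W)$. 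Combining these with the smooth cutoff $\chi$ shows $\phi_{\varepsilon,k}\to\phi_\varepsilon$ uniformly and in $L^2_1(X)$, which translates into the required convergence $\|\cdot\|_{\varepsilon,k}\to\|\cdot\|_{\hyp,\varepsilon}$.

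The only mild subtlety—and essentially the sole obstacle worth flagging—is verifying the $L^2_1$ convergence of the mollified piece, since $\phi_\varepsilon$ is not globally $C^1$ across $\Sigma_\varepsilon$; but the Lipschitz regularity of $\phi_\varepsilon$ on each coordinate chart is precisely enough to invoke the standard $W^{1,p}$ mollification theorem, and the cutoff $\chi$ respects this. All remaining steps are routine.
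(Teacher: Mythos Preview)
Your proposal is correct and is precisely the ``standard convolution argument'' that the paper invokes without detail; the paper's own proof consists of a single sentence deferring the routine mollification to the reader, and you have supplied exactly that argument. The choice of cutoff supported in an annulus around $\Sigma_\varepsilon$ disjoint from $\overline{V}_{\varepsilon/2}$, together with the observation that $\phi_\varepsilon$ is Lipschitz (hence in $W^{1,2}$) so that mollifications converge in $L^2_1$, is the expected implementation.
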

\begin{proof}
The proof relies on a standard convolution argument. The details are left as a routine exercise. 
\end{proof}
\begin{remark}
With respect to the simplified strategy presented in the introduction \textsection\ref{subsec:strategy}, the picture to keep in mind is the figure below. 
On the disks defined by $|z|\leq\varepsilon/2$ we put the frozen hyperbolic metric at height $\varepsilon$. The areas $\varepsilon/2<|z|<\varepsilon$ are ``transition" regions, that smoothly interpolate between the frozen metric and the hyperbolic metric. This smoothens the jumps occurring in the simplified strategy.
\begin{figure}[h]\label{figure2}
\begin{center}
\includegraphics[scale=0.4]{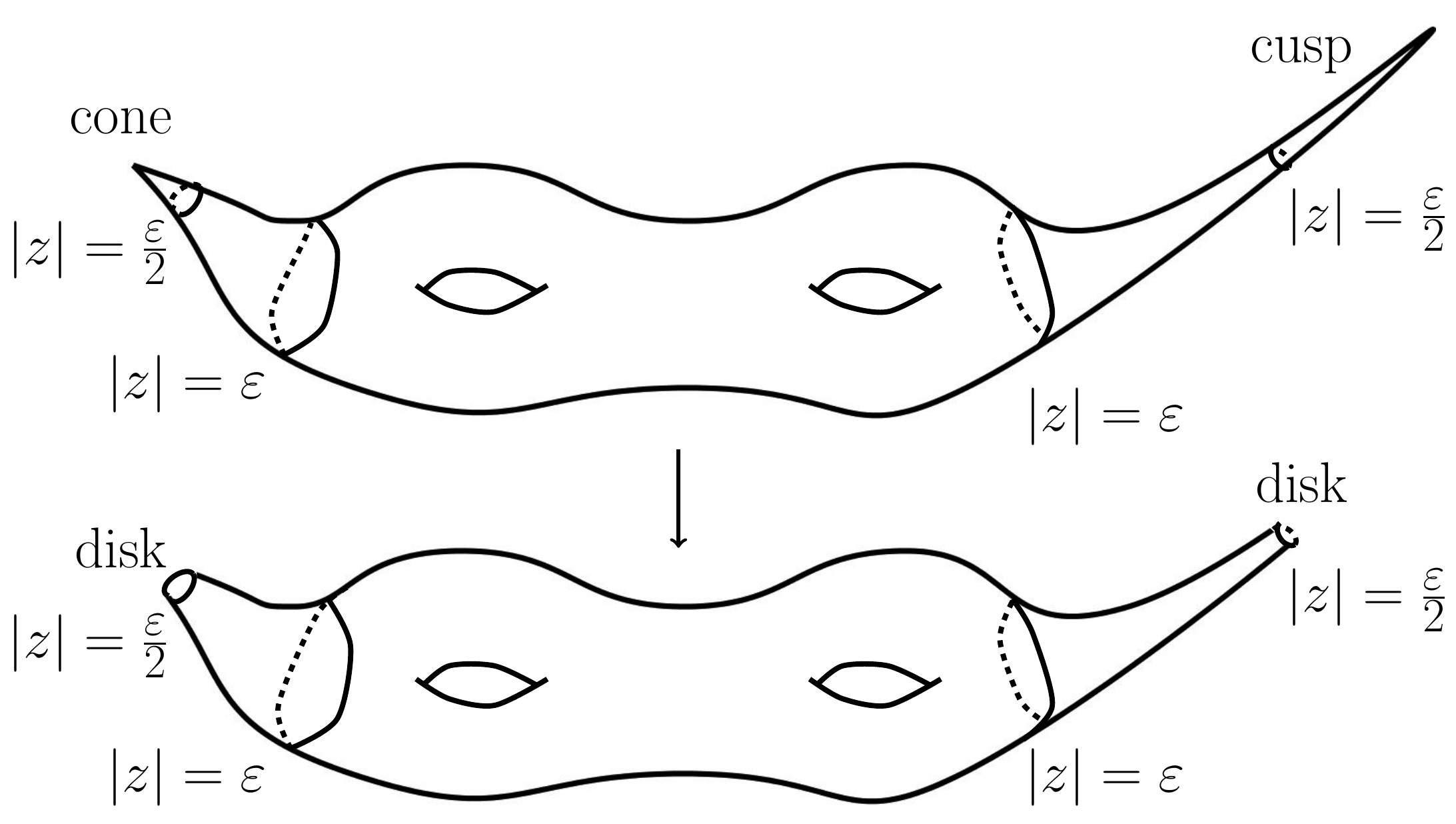}
\end{center}\caption{Regions in the smoothening of truncated metrics.}
\end{figure}
\end{remark}

\subsubsection{} Let us fix a sequence $\|\cdot\|_{\varepsilon,k}$ as in Lemma \ref{lemma:smoothening}. We define smooth functions $\varphi_{\varepsilon,k}$ on $X\setminus\lbrace p_{1},\ldots,p_{n}\rbrace$ by
\begin{displaymath}
	\|\cdot\|_{\varepsilon,k}=e^{-\varphi_{\varepsilon,k}}\|\cdot\|_{\hyp}.
\end{displaymath}
The convergence of $\|\cdot\|_{\varepsilon,k}$ to $\|\cdot\|_{\hyp,\varepsilon}$ then implies the local uniform and $L_{1\loc}^{2}$ convergence of the functions $\varphi_{\varepsilon,k}$ to the function $\varphi_{\varepsilon}$ defined by
\begin{displaymath}
	\|\cdot\|_{\hyp,\varepsilon}=e^{-\varphi_{\varepsilon}}\|\cdot\|_{\hyp}.
\end{displaymath}
In particular, this convergence is uniform and in $L_{1}^{2}$ on any relatively compact open subset of $X\setminus\lbrace p_{1},\ldots, p_{n}\rbrace$.

We put $M=X\setminus V_{\varepsilon/2}$, considered as a Riemann surface with boundary. We want to compare the determinants $\detp\Delta_{M,\hyp}$ and $\detp\Delta_{M,\varepsilon,k}$ (for $\|\cdot\|_{\varepsilon,k}$), with Dirichlet boundary conditions. This is achieved through the anomaly formula for determinants of scalar laplacians on riemannian surfaces with boundary. To be conformal with the literature we quote below, we introduce the following notations:
\begin{enumerate}
	\item[\textbullet] $\nabla$ is the riemannian gradient with respect to $ds_{\hyp}^{2}$ on $M$;
	\item[\textbullet] $d\vol$ is the riemannian volume with respect to $ds_{\hyp}^{2}$ on $M$;
	\item[\textbullet] $K\equiv -1$ is the gaussian curvature of $ds_{\hyp}^{2}$ on $M$;
	\item[\textbullet] $d\ell$ is the length element on $\pd M$ with respect to the metric induced by $ds_{\hyp}^{2}$;
	\item[\textbullet] $\pd_{n}$ is the derivative with respect to the exterior unit normal vector on $\pd M$, with respect to $ds_{\hyp}^{2}$;
	\item[\textbullet] $k_{g}$ is the geodesic curvature of $\pd M$.
\end{enumerate}
\begin{proposition}\label{prop:anomaly-boundary}
Let the notations be as above.

i. We have an equality of real numbers
\begin{equation}\label{eq:19}
	\begin{split}
	\log\detp\Delta_{M,\varepsilon,k}=&\log\detp\Delta_{M,\hyp}-\frac{1}{6\pi}\Big [\frac{1}{2}\int_{M}\|\nabla\varphi_{\varepsilon,k}\|^{2}d\vol
	+\int_{M}K\varphi_{\varepsilon,k}d\vol
	+\int_{\pd M}k_{g}\varphi_{\varepsilon,k}d\ell\Big]\\
	&
	-\frac{1}{4\pi}\int_{\pd M}\pd_{n}\varphi_{\varepsilon,k}d\ell.
	\end{split}
\end{equation}

ii. As $k\to\infty$, the sequence $\detp\Delta_{M,\varepsilon,k}$ converges to a positive real number $\detp\Delta_{M,\varepsilon}$ satisfying
\begin{displaymath}
	\begin{split}
		\log\detp\Delta_{M,\varepsilon}=\log\detp\Delta_{M,\hyp}+(\frac{1}{6}\log 2+\frac{1}{2})\sum_{i}(1-m_{i}^{-1}).
	\end{split}
\end{displaymath}
\end{proposition}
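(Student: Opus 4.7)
For part (i), the formula \eqref{eq:19} is the standard Polyakov--Alvarez conformal anomaly formula for the zeta-regularized determinant of the Dirichlet Laplace--Beltrami operator on a compact Riemann surface with boundary. Since $M = X \setminus V_{\varepsilon/2}$ avoids the singular points $p_i$, the hyperbolic metric restricts to a smooth Riemannian metric on $M$; the metric $\|\cdot\|_{\varepsilon,k}$ is smooth by construction; and $\varphi_{\varepsilon,k}$ is smooth on $M$. One then considers the interpolating family $g_s = e^{-2s\varphi_{\varepsilon,k}} ds_{\hyp}^{2}$ for $s \in [0,1]$, differentiates $\log\detp\Delta_{M,s}$ via the heat-trace variational formula, and identifies the finite part at $t=0$ via Seeley's heat-kernel expansion for the Dirichlet Laplacian with smooth boundary. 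The coefficients of this expansion generate, beyond the bulk term in the Gauss curvature, precisely the geodesic curvature and normal derivative contributions on $\partial M$. Integrating in $s \in [0,1]$ yields \eqref{eq:19}.

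For part (ii), the key observation is Lemma \ref{lemma:smoothening} \emph{ii}: the functions $\varphi_{\varepsilon,k}$ coincide with $\varphi_\varepsilon$ on an open neighborhood of $\partial M$. Hence the two boundary integrals in \eqref{eq:19} are in fact independent of $k$ and equal their counterparts with $\varphi_\varepsilon$. For the two bulk integrals, the $L_1^2$ convergence of $\varphi_{\varepsilon,k}$ to $\varphi_\varepsilon$ on $M$ (which does not meet the $p_i$, so the convergence discussion preceding the lemma applies) yields $L^1$ convergence of $\|\nabla\varphi_{\varepsilon,k}\|^2$ to $\|\nabla\varphi_\varepsilon\|^2$, while uniform convergence handles the term with $K\equiv -1$. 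This already gives the existence of the finite limit $\detp\Delta_{M,\varepsilon}$ and an anomaly-type identity with $\varphi_\varepsilon$ in place of $\varphi_{\varepsilon,k}$.

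The remaining task is to evaluate that limit identity explicitly. Since $\varphi_\varepsilon$ vanishes identically on $M \cap \{|\zeta_i|\geq \varepsilon\}$ for each $i$, all integrals localize to the annuli $A_i := \{\varepsilon/2 \leq |\zeta_i|\leq \varepsilon\}$ together with their inner boundary circles $\{|\zeta_i| = \varepsilon/2\}$. In the \emph{rs} coordinate $\zeta_i$ the limit function $\varphi_\varepsilon$ admits an explicit expression in each case: for a cusp one finds $\varphi_\varepsilon = \log(\varepsilon\log\varepsilon^{-1}) - \log(|\zeta_i|\log|\zeta_i|^{-1})$, and analogously for an elliptic point of order $m_i$ using the model in \textsection\ref{subsec:model-cone}. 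A direct computation of the hyperbolic squared gradient of $\varphi_\varepsilon$ on $A_i$, of $\int_{A_i}\varphi_\varepsilon\, d\vol$, of the hyperbolic geodesic curvature of $\{|\zeta_i| = \varepsilon/2\}$, and of the outward normal derivative $\partial_n \varphi_\varepsilon$ on that circle, then yields, after assembling the prefactors in \eqref{eq:19}, a contribution of $(\tfrac{1}{6}\log 2 + \tfrac{1}{2})(1 - m_i^{-1})$ per point. Summing over $i$ gives the stated formula.

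The main technical burden is the last bookkeeping step: computing and combining four local integrals (bulk $|\nabla\varphi_\varepsilon|^2$, bulk $K\varphi_\varepsilon$, boundary $k_g\varphi_\varepsilon$, boundary $\partial_n\varphi_\varepsilon$) in \emph{rs} coordinates for both the cusp and the cone model, and verifying that the result assembles into the universal coefficient $(\tfrac{1}{6}\log 2+\tfrac{1}{2})(1-m_i^{-1})$, with the cusp case recovered as the formal limit $m_i\to\infty$. This is a routine but slightly delicate calculation that relies on the explicit form of the \emph{rs} coordinates fixed in \textsection\ref{subsubsec:normal-metric}.
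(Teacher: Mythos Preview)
Your proposal is correct and follows essentially the same approach as the paper: part (i) is the Polyakov--Alvarez anomaly formula (which the paper simply cites from the literature, while you sketch its derivation), and part (ii) proceeds exactly as you describe, first freezing the boundary integrals via the agreement of $\|\cdot\|_{\varepsilon,k}$ with $\|\cdot\|_{\hyp,\varepsilon}$ near $\partial M$, then passing to the limit in the bulk integrals by uniform and $L_{1}^{2}$ convergence, and finally evaluating the resulting expression with $\varphi_{\varepsilon}$ by a direct local computation in \emph{rs} coordinates. The paper in fact leaves that last computation to the reader, so your outline of it is a welcome expansion rather than a deviation; one small stylistic remark is that the convergence of $\int_{M}\|\nabla\varphi_{\varepsilon,k}\|^{2}d\vol$ is most cleanly seen, as the paper notes, by rewriting it as $2i\int_{M}\partial\varphi_{\varepsilon,k}\wedge\overline{\partial}\varphi_{\varepsilon,k}$, so that $L_{1}^{2}$ convergence (i.e.\ $L^{2}$ convergence of first derivatives) applies directly.
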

\begin{proof}
The first property is a restatement of \cite[Eq.~(1.17)]{POS}. For the convergence of $\detp\Delta_{M,\varepsilon,k}$ we first observe that
\begin{displaymath}
	\|\nabla\varphi_{\varepsilon,k}\|^{2}d\vol=2i\pd\varphi_{\varepsilon,k}\wedge\cpd\varphi_{\varepsilon,k}.
\end{displaymath}
and also the equality on $\pd M=\pd V_{\varepsilon/2}$
\begin{displaymath}
	\varphi_{\varepsilon,k}=\varphi_{\varepsilon},\quad \pd_{n}\varphi_{\varepsilon,k}=\pd_{n}\varphi_{\varepsilon}
\end{displaymath}
since $\|\cdot\|_{\varepsilon,k}$ coincides with $\|\cdot\|_{\varepsilon}$ on an open neighborhood of $\ov{V}_{\varepsilon/2}$. Then the uniform and $L_{1}^{2}$ convergence of $\varphi_{\varepsilon,k}$ to $\varphi_{\varepsilon}$ ensures the convergence of \eqref{eq:19} to the finite quantity
\begin{displaymath}
	\begin{split}
	\log\detp\Delta_{M,\varepsilon}:=&\log\detp\Delta_{M,\hyp}-\frac{1}{6\pi}\Big [i\int_{M}\pd\varphi_{\varepsilon}\wedge\cpd\varphi_{\varepsilon}
	+\int_{M}K\varphi_{\varepsilon}d\vol_{\hyp}
	+\int_{\pd M}k_{g}\varphi_{\varepsilon}d\ell\Big]\\
	&
	-\frac{1}{4\pi}\int_{\pd M}\pd_{n}\varphi_{\varepsilon}d\ell.
	\end{split}
\end{displaymath}
We thus have to explicitly compute the integrals in these expression. This is a routine computation, whose details we leave to the reader.
\end{proof}
\subsection{Mayer--Vietoris and truncated metrics}\label{subsec:MV-trunated}
The notations of the preceding sections are still in force. In particular we consider the sequence of smooth hermitian metrics $\|\cdot\|_{\varepsilon,k}$ on $\omega_{X}$, coinciding with $\|\cdot\|_{\hyp,\varepsilon}$ on a neighborhood of $\ov{V}_{\varepsilon/2}$. Let us denote by $\Delta_{\cpd, \varepsilon,k}$ the corresponding $\cpd$ laplacian. Because the sequence of metrics $\|\cdot\|_{\varepsilon,k}$ converges to $\|\cdot\|_{\hyp,\varepsilon}$ uniformly and in $L_{1}^{2}$, we can apply Lemma \ref{lemma:convergence_quillen} and derive that the limit
\begin{equation}\label{eq:22}
	\detp\Delta_{\cpd,\hyp,\varepsilon}:=\lim_{k\to\infty}\detp\Delta_{\cpd,\varepsilon,k}
\end{equation}
exists and is a real positive number. If $M=X\setminus V_{\varepsilon/2}$ and $\Delta_{M,\varepsilon,k}$ is the scalar laplacian on $M$ attached to $\|\cdot\|_{\varepsilon,k}$ (with Dirichlet boundary condition), by Proposition \ref{prop:anomaly-boundary} we find
\begin{equation}\label{eq:23}
	\lim_{k\to\infty}\detp\Delta_{M,\varepsilon,k}
	=\exp\left((\frac{1}{6}\log 2 +\frac{1}{2})\sum_{i}(1-m_{i}^{-1})\right)\detp\Delta_{M,\hyp}.
\end{equation}
Beware that $\Delta_{\cpd,\varepsilon,k}$ is a laplacian on the whole of $X$, while $\Delta_{M,\varepsilon,k}$ is a laplacian on $M$, so they are not the same. We follow a similar notation for $\Delta_{\ov{V}_{\varepsilon/2},\varepsilon,k}$. By construction $\|\cdot\|_{\varepsilon,k}$ coincides with $\|\cdot\|_{\hyp,\varepsilon}$ on a neighborhood of $\ov{V}_{\varepsilon/2}$, and the metric $\|\cdot\|_{\hyp,\varepsilon}$ is smooth on $V_{\varepsilon}$ (it is a flat metric). Thus
\begin{equation}\label{eq:20}
	\detp\Delta_{\ov{V}_{\varepsilon/2},\varepsilon,k}=\detp\Delta_{\ov{V}_{\varepsilon/2},\hyp,\varepsilon}.
\end{equation}
After these observations, we can formulate the next lemma.
\begin{lemma}\label{lemma:prelim_MV}
We have the equality
\begin{displaymath}
	\frac{\detp\Delta_{\cpd,\hyp,\varepsilon}}{\detp\Delta_{M,\hyp}\detp\Delta_{\ov{V}_{\varepsilon/2},\hyp,\varepsilon}}
	=C\frac{\vol(X,ds_{\hyp,\varepsilon}^{2})}{\ell(\pd V_{\varepsilon/2},ds_{\hyp}^{2})}\detp R_{\hyp},
\end{displaymath}
where $ds_{\hyp,\varepsilon}^{2}$ is the continuous riemannian metric associated to $\|\cdot\|_{\hyp,\varepsilon}$ and $C$ is the constant
\begin{equation}\label{eq:24}
	C=2^{(g+2)/3}\exp\left((\frac{1}{6}\log 2+\frac{1}{2})\sum_{i}(1-m_{i}^{-1})\right).
\end{equation}
\end{lemma}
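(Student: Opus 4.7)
\medskip

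\noindent\textbf{Proof plan.} The natural strategy is to run Mayer--Vietoris (Theorem \ref{thm:MV-smooth}) for the smooth metric $\|\cdot\|_{\varepsilon,k}$ produced by Lemma \ref{lemma:smoothening}, and then let $k\to\infty$, controlling each factor separately. First I would apply the Burghelea--Friedlander--Kappeler formula to the smooth riemannian surface $(X, g_{\varepsilon,k})$ with $\Sigma=\pd V_{\varepsilon/2}$, $X_{0}=M$ and $X_{1},\ldots,X_{n}$ the connected components of $\ov{V}_{\varepsilon/2}$. This yields
\begin{displaymath}
	\frac{\detp\Delta_{X,\varepsilon,k}}{\det\Delta_{M,\varepsilon,k}\cdot\det\Delta_{\ov{V}_{\varepsilon/2},\varepsilon,k}}
	=\frac{\vol(X,g_{\varepsilon,k})}{\ell(\pd V_{\varepsilon/2},g_{\varepsilon,k})}\detp R_{\varepsilon,k},
\end{displaymath}
where $\Delta_{X,\varepsilon,k}$ is the Laplace--Beltrami operator on $X$ for $g_{\varepsilon,k}$.

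\medskip

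\noindent The heart of the proof consists in passing to the limit $k\to\infty$, term by term. For the numerator on the left, I would invoke the K\"ahler identity \eqref{eq:13}, available in the smooth case, to write $\detp\Delta_{X,\varepsilon,k}=2^{-(g+2)/3}\detp\Delta_{\cpd,\varepsilon,k}$; then by \eqref{eq:22} (i.e.~Lemma \ref{lemma:convergence_quillen}) the right--hand side converges to $2^{-(g+2)/3}\detp\Delta_{\cpd,\hyp,\varepsilon}$. For the first factor of the denominator, the anomaly formula \eqref{eq:23} (Proposition \ref{prop:anomaly-boundary} \emph{ii}) directly gives the limit
\begin{displaymath}
	\lim_{k\to\infty}\det\Delta_{M,\varepsilon,k}=\exp\!\left((\tfrac{1}{6}\log 2+\tfrac{1}{2})\sum_{i}(1-m_{i}^{-1})\right)\detp\Delta_{M,\hyp}.
\end{displaymath}
The second factor is independent of $k$ by \eqref{eq:20}: since $\|\cdot\|_{\varepsilon,k}$ agrees with $\|\cdot\|_{\hyp,\varepsilon}$ on a neighborhood of $\ov{V}_{\varepsilon/2}$, the Dirichlet Laplace--Beltrami operator on the disks is literally the one attached to the frozen flat metric, so $\det\Delta_{\ov{V}_{\varepsilon/2},\varepsilon,k}=\detp\Delta_{\ov{V}_{\varepsilon/2},\hyp,\varepsilon}$ for every $k$. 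The volume converges, $\vol(X,g_{\varepsilon,k})\to\vol(X,ds_{\hyp,\varepsilon}^{2})$, by uniform convergence of the metrics.

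\medskip

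\noindent The only subtle term is the right--hand factor $\detp R_{\varepsilon,k}/\ell(\pd V_{\varepsilon/2},g_{\varepsilon,k})$. Here I would appeal to the conformal invariance built into Proposition \ref{prop:DN_invariant}: the metrics $g_{\varepsilon,k}$ and $ds_{\hyp}^{2}$ are conformal, and $ds_{\hyp}^{2}$ is smooth near $\pd V_{\varepsilon/2}$ since the singular points $p_{i}$ lie in the interior of $V_{\varepsilon/2}$, so the ratio is independent of the representative of the conformal class:
\begin{displaymath}
	\frac{\detp R_{\varepsilon,k}}{\ell(\pd V_{\varepsilon/2},g_{\varepsilon,k})}
	=\frac{\detp R_{\hyp}}{\ell(\pd V_{\varepsilon/2},ds_{\hyp}^{2})}.
\end{displaymath}
This rewrites the right--hand side of the Mayer--Vietoris identity as $\vol(X,g_{\varepsilon,k})\cdot\detp R_{\hyp}/\ell(\pd V_{\varepsilon/2},ds_{\hyp}^{2})$, whose limit is the geometric factor appearing in the statement.

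\medskip

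\noindent Assembling the four limits and solving for $\detp\Delta_{\cpd,\hyp,\varepsilon}/(\detp\Delta_{M,\hyp}\detp\Delta_{\ov{V}_{\varepsilon/2},\hyp,\varepsilon})$ produces the stated identity, with the multiplicative constant
\begin{displaymath}
	C=2^{(g+2)/3}\cdot\exp\!\left((\tfrac{1}{6}\log 2+\tfrac{1}{2})\sum_{i}(1-m_{i}^{-1})\right),
\end{displaymath}
matching \eqref{eq:24}. The conceptually delicate point is the bookkeeping of normalizations: one has to keep firmly in mind that the Mayer--Vietoris formula is stated for Laplace--Beltrami operators (hence the factor $2^{(g+2)/3}$ from the K\"ahler identity appears only on the \emph{global} operator, while on $M$ and $\ov{V}_{\varepsilon/2}$ we stay in the Laplace--Beltrami framework throughout), and that the invariance of the boundary contribution really is a \emph{conformal} statement, which is what allows the geometric data of $ds_{\hyp}^{2}$ to replace those of the auxiliary smoothing $g_{\varepsilon,k}$ in the final formula.
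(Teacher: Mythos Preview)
Your proof is correct and follows essentially the same approach as the paper: apply the smooth Mayer--Vietoris formula to $(X,g_{\varepsilon,k})$ cut along $\pd V_{\varepsilon/2}$, then pass to the limit $k\to\infty$ using the K\"ahler identity \eqref{eq:13} and the convergences \eqref{eq:22}--\eqref{eq:20}, the anomaly computation of Proposition \ref{prop:anomaly-boundary}, and the conformal invariance of $\detp R/\ell$ from Proposition \ref{prop:DN_invariant}. Your write-up is in fact more detailed than the paper's own terse proof, and your closing remark on the bookkeeping of normalizations correctly isolates the only point where one might slip.
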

\begin{proof}
We apply the Mayer--Vietoris formula (Theorem \ref{thm:MV-smooth}) to the smooth metrics $\|\cdot\|_{\varepsilon,k}$, after cutting the surface $X$ along $\pd V_{\varepsilon/2}$. The result reads
\begin{equation}\label{eq:21}
	\frac{\detp\Delta_{X,\varepsilon,k}}{\detp\Delta_{M,\varepsilon,k}\detp\Delta_{\ov{V}_{\varepsilon/2},\varepsilon,k}}
	=\frac{\vol(X,ds_{\varepsilon,k}^{2})}{\ell(\pd V_{\varepsilon/2},ds_{\varepsilon,k}^{2})}\detp R_{\varepsilon,k},
\end{equation}
with self-explanatory notations for $ds^{2}_{\varepsilon,k}$, and $R_{\varepsilon,k}$. For the left hand side of \eqref{eq:21}, we take into account the K\"ahler identity \eqref{eq:13} and the preceding observations \eqref{eq:22}--\eqref{eq:20}. For the right hand side, we observe the volume of $X$ with respect to $ds_{\hyp,\varepsilon}^{2}$ is well-defined and
\begin{displaymath}
	\lim_{k\to\infty}\vol(X,ds^{2}_{\hyp,\varepsilon,k})=\vol(X,ds^{2}_{\hyp,\varepsilon}).
\end{displaymath}
Besides, by Proposition \ref{prop:DN_invariant}
\begin{displaymath}
	\frac{\detp R_{\hyp}}{\ell(\pd V_{\varepsilon/2},ds_{\hyp}^{2})}
	=\frac{\detp R_{\varepsilon,k}}{\ell(\pd V_{\varepsilon/2},ds_{\varepsilon,k}^{2})}.
\end{displaymath}
All these ingredients establish the lemma.
\end{proof}
We now decompose $\ov{V}_{\varepsilon/2}$ into connected components $X_{i}$ corresponding to the points $p_{i}$, $i=1,\ldots,n$. We follow the notations of \textsection \ref{subsec:case_singular_hyp}.
\begin{theorem}\label{prop:Mayer--Vietoris}
The equality
\begin{displaymath}
	\detp(\Delta_{\cpd,\hyp,\varepsilon})=C\frac{\vol(X,ds_{\hyp,\varepsilon}^{2})}{\vol(X,ds_{\hyp}^{2})}\frac{\detp(\Delta_{\hyp},\Delta_{\hyp,\cusp})\detp\Delta_{\ov{V}_{\varepsilon/2},\hyp,\varepsilon}}{
	\prod_{m_{i}<\infty}\detp\Delta_{\hyp,i}}
\end{displaymath}
holds, where $C$ is the constant \eqref{eq:24}.
\end{theorem}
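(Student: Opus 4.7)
The plan is to divide the two Mayer--Vietoris formulas already at our disposal: the ``smooth'' one for the $\varepsilon$-truncated metric given in Lemma \ref{lemma:prelim_MV}, and the ``singular hyperbolic'' one given by Theorem \ref{thm:MV_hyp}. Both are applied to the same decomposition of $X$, namely cut along $\Sigma=\pd V_{\varepsilon/2}$, with exterior piece $M=X\setminus V_{\varepsilon/2}$ and interior pieces the connected components $X_{i}$ of $\ov{V}_{\varepsilon/2}$. We take the $X_{i}$ in Theorem \ref{thm:MV_hyp} to be exactly these connected components, so that $\Delta_{\hyp,0}$ coincides with the hyperbolic Dirichlet laplacian on $M$, while the inner laplacians $\Delta_{\hyp,i}$ (for $m_{i}<\infty$) are the hyperbolic cone laplacians appearing in the statement of the theorem.

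The crucial observations are the three cancellations that occur when one divides the two identities. First, by Proposition \ref{prop:DN_invariant}, the normalized determinant of the Dirichlet-to-Neumann jump operator depends only on the conformal class of the metric in a neighborhood of $\Sigma$; since $\|\cdot\|_{\hyp,\varepsilon}$ is constructed to agree with $\|\cdot\|_{\hyp}$ outside $V_{\varepsilon}$ and in particular on a neighborhood of $\pd V_{\varepsilon/2}$, the jump operator $R_{\hyp}$ in Lemma \ref{lemma:prelim_MV} and the jump operator in Theorem \ref{thm:MV_hyp} are the same operator. Second, for the same reason the boundary length $\ell(\pd V_{\varepsilon/2},ds^{2}_{\hyp})$ is identical in both formulas. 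Third, the exterior Dirichlet determinant $\detp\Delta_{M,\hyp}$ of Lemma \ref{lemma:prelim_MV} is literally equal to $\detp\Delta_{\hyp,0}$ of Theorem \ref{thm:MV_hyp}, since they are the same operator on the same region with the same boundary condition.

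Dividing Lemma \ref{lemma:prelim_MV} by Theorem \ref{thm:MV_hyp}, the factors $\detp R_{\hyp}$ and $\ell(\Sigma,ds_{\hyp}^{2})$ cancel between the right-hand sides, while $\detp\Delta_{M,\hyp}=\detp\Delta_{\hyp,0}$ cancels between the left-hand sides. The volumes do \emph{not} cancel, because they are computed with respect to different metrics: $\vol(X,ds_{\hyp,\varepsilon}^{2})$ on the smoothened side and $\vol(X,ds_{\hyp}^{2})$ on the hyperbolic side. What survives is the ratio $\vol(X,ds_{\hyp,\varepsilon}^{2})/\vol(X,ds_{\hyp}^{2})$ together with the constant $C$ of \eqref{eq:24}. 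Solving for $\detp\Delta_{\cpd,\hyp,\varepsilon}$ yields precisely the desired identity.

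The only point that requires genuine justification has already been addressed in preceding results: the equality of the two Dirichlet-to-Neumann determinants (Proposition \ref{prop:DN_invariant}) and the validity of the singular Mayer--Vietoris formula including the cone contributions and the relative determinant for the cusp piece (Theorem \ref{thm:MV_hyp}, which extends Carron's compact--non-compact surgery to our orbifold setting). Granting these, the proof is a one-line algebraic manipulation of the two Mayer--Vietoris identities; no further analytic input is needed at this stage.
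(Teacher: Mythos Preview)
Your proposal is correct and follows exactly the paper's approach: the paper's proof is the one-line remark that with $X_{0}=M$ and $\Sigma=\pd V_{\varepsilon/2}$, the result is obtained by combining Theorem \ref{thm:MV_hyp} with Lemma \ref{lemma:prelim_MV}. Your expanded account of the three cancellations (jump operator, boundary length, exterior Dirichlet determinant) makes explicit precisely what that combination amounts to.
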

\begin{proof}
With the notations of \textsection \ref{subsec:case_singular_hyp} we have $X_{0}=M$ and $\Sigma=\pd V_{\varepsilon/2}$. The assertion is derived by a combination of Theorem \ref{thm:MV_hyp} and Lemma \ref{lemma:prelim_MV}.
\end{proof}

\section{Evaluation of the determinant of the pseudo-laplacian on a model cusp}
\label{section:evaluation-determinants}
We arrive at the technical core of the present work. We accomplish the task of asymptotically evaluating the determinant of the pseudo-laplacian on a model cusp (Theorem \ref{thm:det-pseudo-laplacian}). Recall that following Colin de 
Verdi\`ere, we introduced the pseudo-laplacian operator on the model cusp $\C_{a}$ (review \textsection\ref{subsec:model-cusp}). We used the notation $\Delta_{\ps}$ for this operator. If $\Delta_{D}$ is the hyperbolic laplacian on $\C_{a}$ with Dirichlet boundary condition, the very definitions of the pseudo-laplacian and the relative determinants show that
\begin{displaymath}
	\det(\Delta_{D},\Delta_{a})=\det\Delta_{\ps},
\end{displaymath}
of course if it makes sense. From the study of the eigenvalue problem for $\Delta_{\ps}$ discussed in \textsection\ref{subsubsec:eigen_problem_cusp}, we will be able to derive an integral representation for the associated spectral zeta function, establish the holomorphic continuation in a neighborhood of $s=0$, and evaluate $\det\Delta_{\ps}$. This will prove Theorem \ref{thm:det-pseudo-laplacian} (see \textsection\ref{subsubsec:pseudo-laplacian-conclusion} below). More precisely, we will show:
\begin{theorem}\label{theorem:zeta-pseudo-laplacian}
The spectral zeta function $\zeta_{\ps}(s)$ of the pseudo-laplacian $\Delta_{\ps}$ converges absolutely
and locally uniformly for $\Real(s)>1$, and admits a meromorphic continuation to the half-plane $\Real(s)>-\sigma_{0}$, for some $\sigma_{0}>0$. Furthermore, the derivative of $\zeta_{\ps}(s)$ at $s=0$ satisfies
the equality
\begin{displaymath}
	\zeta_{\ps}'(0)=4\pi\zeta(-1)a+\zeta(0)\log(a)+o(1),
\end{displaymath}
as $a\to +\infty$.
\end{theorem}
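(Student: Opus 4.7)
The plan is to convert the spectral sum into a contour integral involving the modified Bessel function $K_\nu(2\pi k a)$, then extract the meromorphic continuation and the value at $s=0$ via the uniform (Debye) asymptotic expansion of Bessel functions. By \textsection\ref{subsubsec:eigen_problem_cusp}, every eigenvalue of $\Delta_{\ps}$ has the form $\lambda_{k,j} = 1/4 + r_{k,j}^2$ with multiplicity $2$, where $k\ge 1$ and $r_{k,j}>0$ enumerates the (real simple) zeros of $y\mapsto K_{iy}(2\pi k a)$. A Weyl-type upper bound on the density of the $r_{k,j}$, obtained from the oscillatory behavior of $K_{iy}$ for $y\gg 2\pi k a$, gives absolute and locally uniform convergence of $\zeta_{\ps}(s)=2\sum_{k\ge 1}\sum_j(1/4+r_{k,j}^2)^{-s}$ on $\Real(s)>1$.

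The next step is to represent the inner sum by the argument principle. Viewing $\nu\mapsto K_\nu(2\pi k a)$ as an even entire function whose only zeros are $\nu=\pm i r_{k,j}$, and noting that $\lambda(\nu):=1/4-\nu^2$ satisfies $\lambda(\pm ir)=1/4+r^2$, one writes
\[
\sum_j (1/4+r_{k,j}^2)^{-s}=\frac{1}{2\pi i}\oint_{\gamma}(1/4-\nu^2)^{-s}\,\frac{d}{d\nu}\log K_\nu(2\pi k a)\,d\nu,
\]
for a contour $\gamma$ encircling the zeros on the imaginary axis. Deforming $\gamma$ to a Hankel-type contour wrapping the branch cut of $(1/4-\nu^2)^{-s}$ along $|\Real\nu|\ge 1/2$ and using the standard connection $K_\nu=K_{-\nu}$ together with the analytic continuation of $\log K_\nu$ reduces this to an integral of the form
\[
\frac{\sin(\pi s)}{\pi}\int_{1/2}^{\infty}(t^2-1/4)^{-s}\,\frac{d}{dt}\log\bigl[K_t(2\pi k a)\,\text{(modification)}\bigr]\,dt,
\]
well-defined initially for $1/2<\Real(s)<1$ and to be continued meromorphically.

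The heart of the argument is the use of the Debye uniform asymptotic expansion
\[
K_\nu(\nu z)\sim\sqrt{\tfrac{\pi}{2\nu}}\,\frac{e^{-\nu\eta(z)}}{(1+z^2)^{1/4}}\sum_{m\ge 0}(-1)^m\frac{u_m(p(z))}{\nu^m},\qquad \nu\to\infty,
\]
substituted with $\nu=t$ and $z=2\pi k a/t$. Subtracting from $\log K_t(2\pi k a)$ a finite truncation $\Phi_N(t,k,a)$ of this expansion produces an integrand that is integrable and holomorphic in a half-plane $\Real(s)>-\sigma_N$. The subtracted pieces can be summed over $k$ in closed form in terms of Hurwitz/Riemann zeta values, furnishing explicit meromorphic continuation and giving the pole structure of $\zeta_{\ps}(s)$ together with $\zeta_{\ps}(0)$ and $\zeta'_{\ps}(0)$. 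Specializing $s=0$ and tracking the $a\to+\infty$ behavior, the leading Debye term contributes the linear Casimir-type piece, which after zeta regularization becomes $4\pi\zeta(-1)a$; the next correction, originating from the $u_0$ prefactor $(1+z^2)^{-1/4}$ and from $\log t$ terms, yields $\zeta(0)\log a$. All remaining contributions collapse into $o(1)$ as $a\to+\infty$.

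The main obstacle will be the rigorous control of the remainders in the Debye expansion uniformly in both $t$ and $k$, so as to justify interchanging the $k$-summation, the $t$-integration, and the analytic continuation in $s$. The physics references \cite{Barvinsky, Bordag, Flachi, Fucci} perform analogous manipulations but sidestep exactly this point, often implicitly assuming termwise validity in a neighborhood of the "turning region" $t\sim 2\pi k a$, where the Debye expansion fails. We will replace that heuristic step by a careful quantitative Olver-type remainder estimate, splitting the integration range into the exponentially decaying regime $t\lesssim 2\pi k a$, the transition window treated by Airy-function bounds, and the oscillatory regime $t\gtrsim 2\pi k a$ handled by repeated integration by parts. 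This uniform control is what will permit the asymptotic identity for $\zeta'_{\ps}(0)$ claimed in the theorem, from which Theorem~\ref{thm:det-pseudo-laplacian} follows via $\log\det(\Delta_D,\Delta_a)=\log\det\Delta_{\ps}=-\zeta'_{\ps}(0)$.
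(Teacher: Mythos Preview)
Your overall strategy---contour integral representation of the spectral zeta function in terms of $\partial_t\log K_t(2\pi k a)$, followed by subtraction of Debye asymptotics and summation over $k$ via Riemann zeta values---is the same as the paper's. However, your implementation diverges from the paper at precisely the point you flag as the main obstacle, and the paper's resolution is worth knowing.

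You propose to split the $t$-integration at the turning point $t\sim 2\pi k a$ and to handle the transition window with Airy-type bounds. The paper instead cuts at $t=k^{\delta}$ for a small fixed $0<\delta<1/8$, producing two regimes in which \emph{different} Bessel asymptotics apply: on $[1/2,k^{\delta}]$ the large-argument expansion of $K_{\nu}(z)$ (Proposition~\ref{prop:asymptotics-bessel-1}) is used, together with a Taylor argument at $t=1/2$ (Corollary~\ref{cor:nightmare-bessel}); on $[k^{\delta},\infty)$ the Debye large-order expansion (Proposition~\ref{prop:asymptotics-bessel-2}) applies directly because $\nu=t\ge k^{\delta}\to\infty$. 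The point is that the paper's cut lies far below any turning region, so no Airy analysis is needed at all. This buys considerable simplification; your three-zone scheme with Airy control is in principle workable, but it is a genuinely harder route and the paper explicitly warns that the $k$-dependence of the cut is ``one of the delicate and crucial points''.

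Two specific items your sketch underspecifies. First, the ``(modification)'' in your integral formula is left undefined; in the paper this is the explicit odd-function subtraction $f_{k}(t)=\partial_t\log K_t(2\pi k a)-2t\,\partial_t|_{t=1/2}\log K_t(2\pi k a)$ (equation~\eqref{def:26a}), without which the deformation of the contour to the real axis fails at the branch points $t=\pm 1/2$. Second, your stated initial validity range $1/2<\Real(s)<1$ is off: after the deformation the integrals $I_k(s)$ live in $1<\Real(s)<2$ (equation~\eqref{eq:26}), matching the half-plane of absolute convergence of $\zeta_{\ps}$. These are fixable, but they are exactly the kind of details that the physics references gloss over and that the paper takes pains to pin down.
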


The proof of Theorem \ref{theorem:zeta-pseudo-laplacian} is long and technical, and we divide it in several steps \textsection\ref{subsubsec:pseudo-laplacian-1}--\textsection\ref{subsubsec:pseudo-laplacian-conclusion} to follow. Here is a guide to these paragraphs. In the first step \textsection\ref{subsubsec:pseudo-laplacian-1}, we discuss an integral representation for $\zeta_{\ps}(s)$ in terms of modified Bessel functions $K_{\nu}(y)$. 
This representation will be used to establish the meromorphic continuation and to evaluate the spectral zeta function at $s=0$. This occupies paragraphs \textsection\ref{subsubsec:pseudo-laplacian-2} to \textsection\ref{subsubsec:pseudo-laplacian-4}. We conclude with the proof of Theorem \ref{theorem:zeta-pseudo-laplacian} in \textsection\ref{subsubsec:pseudo-laplacian-conclusion}. The method of proof finds its inspiration in the physics literature (see for instance \cite{Barvinsky, Bordag, Flachi, Fucci}) which lacks of mathematical rigor.

\subsection{The spectral zeta function of $\Delta_{\ps}$: an integral representation}\label{subsubsec:pseudo-laplacian-1} In \textsection\ref{subsubsec:pseudo-laplacian}--\textsection\ref{subsubsec:eigen_problem_cusp}, we saw that $\Delta_{\ps}$ has discrete spectrum $\lbrace\lambda_{j}\rbrace_{j}\subset (1/4,+\infty)$, and the eigenvalues have multiplicity $2$. These eigenvalues are determined by the infinite set of equations
\begin{displaymath}
	K_{\nu-1/2}(2\pi |k| a)=0,\quad k\in\mathbb{Z}\setminus\lbrace 0\rbrace.
\end{displaymath}
In terms of $\nu$, the eigenvalues are given by $\lambda=\nu(1-\nu)$. We also saw that the zeros of $K_{\nu-1/2}(2\pi|k|a)$ in $\nu$ are simple. Therefore, it is enough to solve the equation for $k\geq 1$ and count the solutions $\lambda=\nu(1-\nu)$ twice. By \cite[Sec.~4]{CdVII}, the eigenvalue counting function $N$ for $\Delta_{\ps}$ satiefies the Weyl type bound
\begin{displaymath}
	N(\lambda)\leq C\lambda,
\end{displaymath}
for some real positive constant $C>0$. The proof of \emph{loc.~cit.}~actually covers the case of compact Riemann surfaces without boundary, but it can be adapted to the boundary case, like $\C_{a}$, after imposing Dirichlet boundary conditions. From the Weyl type bound, one sees that the spectral zeta function
of $\Delta_{\ps}$ defined by
\begin{displaymath}
	\zeta_{\ps}(s)=\sum_{j}\frac{1}{\lambda_{j}^{s}}
\end{displaymath}
is absolutely and locally uniformly convergent for $s\in\CC$ with $\Real(s)>1$. 
Hence, 
in this region, we can write
\begin{displaymath}
	\zeta_{\ps}(s)=2\sum_{k\geq 1}\sum_{j}\frac{1}{(1/4+r_{k,j}^{2})^{s}},
\end{displaymath}
where, for fixed $k$, $\nu_j=1/2+ir_{k,j}$ with $r_{k,j}>0$ are the zeros of $K_{\nu-1/2}(2\pi k a)$ as a function of $\nu$. Observe that the factor $2$ in this new expression for $\zeta_{\ps}(s)$ takes into account the multiplicity of the spectrum. We will establish the meromorphic continuation and holomorphicity of $\zeta_{\ps}(s)$ at $s=0$. By the inverse Mellin transform, this is equivalent to the usual necessary asymptotics for $\tr(e^{-t\Delta_{\ps}})$ as $t\to 0$.

By the residue theorem and since the zeros of $K_{s-1/2}(2\pi k a)$ are simple, 
for $s\in\CC$ with $\Real(s)>1$, we can write $\zeta_{\ps}(s)$ as an absolutely 
convergent sum of path integrals
\begin{displaymath}
	\zeta_{\ps}(s)
	=2\sum_{k\geq 1}\frac{1}{2\pi i}\int_{\gamma_{\theta}} (1/4+r^{2})^{-s}\frac{\partial}{\partial r}
	\log K_{ir}(2\pi k a) du.
\end{displaymath}
Some comments are in order. First of all, because we showed that the eigenvalues are $>1/4$, we can take the path
\begin{displaymath}
	\gamma_{\theta}=[+\infty e^{i\theta},0]\cup [0,+\infty e^{-i\theta}],
\end{displaymath}
for any angle $0<\theta<\pi/2$. Second, we need to fix the principal branch of the logarithm to define $(1/4+r^{2})^{-s}$. We can rotate the picture by making the change of variabels $t=ir$, so that
\begin{displaymath}
	\zeta_{\ps}(s)=2\sum_{k\geq 1}\frac{1}{2\pi i}\int_{i\gamma_{\theta}}(1/4-t^{2})^{-s}\frac{\partial}{\partial t}\log K_{t}(2\pi k a)dt.
\end{displaymath}
Here, we used the symmetry  $K_{-t}=K_{t}$ of the modified Bessel function. We would now like to move the path of integration $i\gamma_{\theta}$ to the real axis, by letting the angle $\theta\to\pi/2$. In this process, one encounters convergence problems at $t=1/2$. To fix this, we regularize by substracting a suitable quantity (actually zero!). Let us define the function
\begin{equation}\label{def:26a}
	f_{k}(t)=\frac{\partial}{\partial t}\log K_{t}(2\pi k a)-2t\frac{\partial}{\partial t}\mid_{t=1/2}\log K_{t}(2\pi k a).
\end{equation}
The function $f_{k}(t)$ is an odd function in $t$, since $K_{t}$ is even in $t$. Moreover, for $s\in\CC$ with $\Real(s)>1$, we have
\begin{displaymath}
	\int_{i\gamma_{\theta}}(1/4-t^{2})^{-s}tdt=0,
\end{displaymath}
as we see by computing a primitive. Therefore, we obtain
\begin{displaymath}
	\int_{i\gamma_{\theta}}(1/4-t^{2})^{-s}f_{k}(t)dt=\int_{i\gamma_{\theta}}(1/4-t^{2})^{-s}\frac{\partial}{\partial t}\log K_{t}(2\pi k a)dt.
\end{displaymath}
The advantage is that now we can deform the path to the real axis. Indeed, the function $f_{k}(t)$ is analytic in $t$ and $f_{k}(1/2)=f_{k}(-1/2)$. One then easily sees that, for $s\in\CC$ with $1<\Real(s)<2$, we have
\begin{displaymath}
	\begin{split}
	\int_{i\gamma_{\theta}}(1/4-t^{2})^{-s}f_{k}(t)dt&=\lim_{\theta\to\pi/2}\int_{i\gamma_{\theta}}(1/4-t^{2})^{-s}f_{k}(t)dt\\
	&=\int_{-\infty}^{+\infty}(1/4-t^{2})^{-s}f_{k}(t)dt.
	\end{split}
\end{displaymath}
In the last expression, one has to be careful with the branches of the logarithm, because $1/4-t^{2}$ vanishes at $t=\pm 1/2$. Using that $f_{k}(t)$ is odd, one finds that
\begin{displaymath}
	\int_{-\infty}^{+\infty}(1/4-t^{2})^{-s}f_{k}(t)dt=\int_{-\infty}^{-1/2}(1/4-t^{2})^{-s}f_{k}(t)dt +\int_{1/2}^{+\infty}(1/4-t^{2})^{-s}f_{k}(t)dt.
\end{displaymath}
In the first integral on the right hand side, one has
\begin{displaymath}
	(1/4-t^{2})^{-s}=e^{-si\pi}(t^{2}-1/4)^{-s},\quad t>1/2,
\end{displaymath}
and in the second integral
\begin{displaymath}
	(1/4-t^{2})^{-s}=e^{si\pi}(t^{2}-1/4)^{-s},\quad t>1/2.
\end{displaymath}
By the property of $f_{k}(t)$ being odd, we arrive at
\begin{displaymath}
	\int_{-\infty}^{+\infty}(1/4-t^{2})^{-s}f_{k}(t)dt=2i\sin(\pi s)\int_{1/2}^{+\infty}(t^{2}-1/4)^{-s}f_{k}(t)dt.
\end{displaymath}
Hence, for $s\in\CC$ with $1<\Real(s)<2$, we obtain
\begin{equation}\label{eq:26}
	\zeta_{\ps}(s)=\sum_{k\geq 1}2\frac{\sin(\pi s)}{\pi}\int_{1/2}^{+\infty}(t^{2}-1/4)^{-s}f_{k}(t)dt.
\end{equation}
We stress that the region $1<\Real(s)<2$ is contained in the region of absolute convergence of the spectral zeta function. To conclude with this representation, it will be important to have a more precise understanding of $f_{k}(t)$. For this we will employ the special value computation
\begin{displaymath}
	\frac{\partial}{\partial t}\mid_{t=1/2}\log K_{t}(z)=\mathbb{E}_{1}(2z)e^{2z},
\end{displaymath}
where $\mathbb{E}_{1}(z)$ is the exponential integral
\begin{displaymath}
	\mathbb{E}_{1}(z)=\int_{z}^{+\infty}\frac{e^{-t}}{t}dt.
\end{displaymath}
Thus, for $s\in\CC$ with $1<\Real(s)<2$, the zeta function $\zeta_{\ps}(s)$ is an absolutely convergent sum of integrals
\begin{equation}\label{eq:27}
	I_{k}(s):=2\frac{\sin(\pi s)}{\pi}\int_{1/2}^{+\infty}(t^{2}-1/4)^{-s}\left\lbrace \frac{\partial}{\partial t}\log K_{t}(2\pi k a)-2t\,\mathbb{E}_{1}(4\pi k a)e^{4\pi k a}\right\rbrace dt.
\end{equation}
In the sequel, we will study the integrals $I_{k}(s)$. It will be necessary to divide the interval $(1/2,+\infty)$. More precisely, introducing a small parameter $0<\delta< 1/8$, the division will be
\begin{displaymath}
	(1/2,+\infty)=(1/2,k^{\delta}]\cup [k^{\delta},+\infty).
\end{displaymath}
Accordingly, for $k\in\NN_{>0}$, we have $I_{k}(s)=L_{k}(s)+M_{k}(s)$, with
\begin{align}
	&L_{k}(s):=2\frac{\sin(\pi s)}{\pi}\int_{1/2}^{k^{\delta}}(t^{2}-1/4)^{-s}f_{k}(t)dt,\label{eq:28}\\
	&M_{k}(s):=2\frac{\sin(\pi s)}{\pi}\int_{k^{\delta}}^{+\infty}(t^{2}-1/4)^{-s} f_{k}(t)dt.\label{eq:29}
\end{align}
This division corresponds to different asymptotic behaviours of the modified Bessel functions. The dependence of the interval on $k$ is a source of difficulties.\footnote{The physics literature neglects this feature. However, this is one of the delicate and crucial points of the computation.}

\subsection{Study of the integrals $L_{k}(s)$}\label{subsubsec:pseudo-laplacian-2} Recall the definition \eqref{eq:28} of the integral $L_{k}(s)$, corresponding to the interval $(1/2,k^{\delta}]$, for some fixed parameter $0<\delta< 1/8$. We observe that the integrals $L_{k}(s)$ are valid in the region $\Real(s)<2$. The aim of this paragraph is the proof of the following theorem.

\begin{theorem}\label{thm:L-integral-cusp}
The infinite sum $\zeta_{L}(s)=\sum_{k}L_{k}(s)$ is absolutely and locally uniformly convergent for 
$s\in\CC$ with $-3<\Real(s)<2$. Furthermore, we have
\begin{displaymath}
	\zeta_{L}'(0)=O\left(\frac{1}{a^{2}}\right)\quad\text{and}\quad\lim_{a\to+\infty}\zeta_{L}'(0)=0.
\end{displaymath}
\end{theorem}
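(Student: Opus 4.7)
The plan is to exploit that on the interval $t\in[1/2,k^{\delta}]$ with $\delta<1/8$, the order $t$ of the Bessel function $K_t(2\pi ka)$ is negligible compared to its argument $2\pi ka$, so the classical large-argument asymptotic expansion
\[
K_t(z)\sim\sqrt{\tfrac{\pi}{2z}}\,e^{-z}\sum_{j\geq 0}\frac{a_j(t)}{z^j},\qquad a_j(t)=\frac{\prod_{\ell=1}^{j}(4t^{2}-(2\ell-1)^{2})}{8^{j}\,j!},
\]
applies with effective polynomial remainders. Differentiating $\log K_t(z)$ in $t$ yields $\pd_t\log K_t(z)\sim\sum_{j\geq 1}b_j(t)/z^{j}$ with $b_j$ odd polynomials, while repeated integration by parts in $\mathbb{E}_{1}(2z)=\int_{2z}^{\infty}e^{-u}/u\,du$ gives $2t\,\mathbb{E}_{1}(2z)\,e^{2z}\sim\sum_{j\geq 1}(2t\alpha_j)/z^{j}$ with $\alpha_j=(-1)^{j-1}(j-1)!/2^{j}$. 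The identity $\mathbb{E}_{1}(2z)e^{2z}=\pd_t\log K_t(z)|_{t=1/2}$ forces $b_j(1/2)=\alpha_j$ for every $j$, so the $z^{-j}$ coefficient of $f_k(t)$ is the odd polynomial $b_j(t)-2t\,b_j(1/2)$; by oddness together with the matching at $t=1/2$ it vanishes at $t=\pm 1/2$ and is therefore divisible by $4t^{2}-1$. An explicit computation using $b_1(t)=t$ and $b_2(t)=-t/2$ shows that the $1/z$ and $1/z^{2}$ coefficients of $f_k$ are identically zero, so
\[
f_k(t) = -\frac{t(4t^{2}-1)}{24\,(2\pi ka)^{3}}+\sum_{j\geq 4}\frac{(4t^{2}-1)\,\widetilde{q}_j(t)}{(2\pi ka)^{j}},
\]
with $\widetilde{q}_j$ an odd polynomial of degree at most $2j-3$.

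For absolute and locally uniform convergence of $\sum_k L_k(s)$ in a strip $-3+\eta\le\Real(s)\le 2-\eta$, one truncates the expansion at a finite order $N$,
\[
f_k(t)=\sum_{j=3}^{N}\frac{(4t^{2}-1)\widetilde{q}_j(t)}{(2\pi ka)^{j}}+R_{N,k}(t),\qquad |R_{N,k}(t)|\leq \frac{C_N(1+t)^{2N+1}}{(ka)^{N+1}},
\]
with the remainder bound uniform on $[1/2,k^{\delta}]$ by a routine refinement of the standard Debye error estimate. Writing $(4t^{2}-1)=4(t^{2}-1/4)$, each head integral $\int_{1/2}^{k^{\delta}}(t^{2}-1/4)^{1-s}\widetilde{q}_j(t)\,dt$ is an elementary polynomial integral, holomorphic in $s$ for $\Real(s)<j+1/2$, and bounded brutally by $C_j\,k^{2\delta(j-\Real s)}$. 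Putting everything back into \eqref{eq:28}, the $j$-th head contributes to $|L_k(s)|$ a term bounded by $C_{j,s}\,|\sin(\pi s)|\,a^{-j}\,k^{2\delta(j-\Real s)-j}$. With $\delta<1/8$ and $\Real(s)>-3$ the exponent is strictly less than $-1$ already for $j=3$, so Weierstrass' criterion gives absolute and locally uniform convergence; the remainder term $R_{N,k}(t)$ is handled by the same method with $N$ chosen sufficiently large. Letting $\eta\to 0$ yields convergence throughout $-3<\Real(s)<2$, and the limit is holomorphic in $s$.

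For the derivative at the origin, the factor $\sin(\pi s)/\pi$ has a simple zero at $s=0$, hence
\[
L_k'(0)=2\int_{1/2}^{k^{\delta}}f_k(t)\,dt,
\]
and the leading-order bound $|f_k(t)|\le C\,t(4t^{2}-1)/(ka)^{3}$ on the interval immediately gives
\[
|L_k'(0)|\;\leq\;\frac{C'}{k^{3}a^{3}}\int_{1/2}^{k^{\delta}}t(4t^{2}-1)\,dt\;\leq\;\frac{C''\,k^{4\delta-3}}{a^{3}}.
\]
Since $4\delta-3<-5/2<-1$, the series $\sum_k k^{4\delta-3}$ converges, so $\zeta_L'(0)=O(1/a^{3})$, which is \emph{a fortiori} $O(1/a^{2})$ and tends to $0$ as $a\to+\infty$. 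The principal technical obstacle is the effective polynomial-in-$t$ remainder bound for the large-argument Bessel expansion, uniform on the moving interval $[1/2,k^{\delta}]$: it is what legitimizes the interchange of sum and integral and secures convergence all the way down to $\Real(s)>-3$.
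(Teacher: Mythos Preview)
Your approach is correct and takes a somewhat different route from the paper's. The paper integrates by parts: it writes $L_k(s)$ as a boundary term $A_k(s)$ plus an integral $B_k(s)$ involving the primitive $F_k(t)=\int_{1/2}^t f_k$, and the key estimate (Corollary~\ref{cor:nightmare-bessel}) is the factorization $F_k(t)=(t^2-1/4)^2 R(t)$ with $R(t)=O(k^{4\delta-2}a^{-2})$, obtained by Taylor-expanding $F_k$ at $t=1/2$ and bounding the second derivative of $\log K_t(2\pi ka)$ via Cauchy's integral formula on a small disk in the $t$-plane. Your argument bypasses the integration by parts by establishing directly that the $z^{-1}$ and $z^{-2}$ coefficients in the large-argument expansion of $f_k$ vanish identically, so that $f_k(t)$ itself carries a factor $(t^2-1/4)$ and starts at order $(2\pi ka)^{-3}$. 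This is sharper---you obtain $\zeta_L'(0)=O(a^{-3})$ rather than $O(a^{-2})$---and it explains structurally why the primitive $F_k$ has a double zero at $t=\pm 1/2$.

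The step you yourself flag as the principal obstacle, the uniform remainder bound $|R_{N,k}(t)|\leq C_N(1+t)^{2N+1}/(ka)^{N+1}$ on the moving interval, is indeed the crux and is not quite ``a routine refinement of the standard Debye error estimate'': Debye's estimates concern large order, whereas here the argument is large; and Proposition~\ref{prop:asymptotics-bessel-1} bounds $\rho_\ell$ pointwise but says nothing about its $t$-derivative. The paper's device---Cauchy's integral formula on a disk of radius $1/4$ around $\xi\in[1/2,k^\delta]$, using holomorphy of $K_\nu(z)$ in $\nu$ together with the fact that the bound on $\rho_\ell$ extends to a complex neighborhood of the real axis---is exactly what you need to make this step rigorous; once that is in place your argument goes through.
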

The proof invokes several properties of the modified Bessel function $K_{\nu}(z)$ and the exponential integral $\mathbb{E}_{1}(z)$, that we next summarize.
\begin{proposition}\label{prop:asymptotics-bessel-1}
	\begin{enumerate}
		\item For complex $\nu$ and real $z$, the Bessel function $K_{\nu}(z)$ satisfies:
		\begin{displaymath}
			K_{\nu}(z)=K_{1/2}(z)\left(\sum_{m=0}^{\ell-1}\frac{a_{m}(\nu)}{z^{m}}+\rho_{\ell}(\nu,z)\frac{a_{\ell}(\nu)}{z^{\ell}}\right),
		\end{displaymath}
		where $a_{0}\equiv 1$ and
		\begin{displaymath}
			a_{m}(\nu)=\frac{1}{m!8^{m}}\prod_{j=1}^{m}(4\nu^{2}-(2j-1)^{2}),
		\end{displaymath}
		for $m\geq 1$. Furthermore, we have
		\begin{displaymath}
			|\rho_{\ell}(\nu,z)|\leq 2\exp(|\nu^{2}-1/4|/|z|).
		\end{displaymath}
		\item For real $z$, the exponential integral $\mathbb{E}_{1}(z)$ admits the asymptotics
		\begin{equation}\label{eq:30}
			\mathbb{E}_{1}(z)e^{z}\sim\frac{1}{z}\sum_{m=0}^{\infty}(-1)^{m}\frac{m!}{z^{m}},
		\end{equation}
		as $z\to +\infty$.
	\end{enumerate}
\end{proposition}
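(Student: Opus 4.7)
The plan is to handle the two assertions separately; both are classical results in the asymptotic theory of special functions, and the proof strategy combines integral representations with Taylor expansion and elementary remainder bounds.

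For part (2), I would iterate integration by parts on $\mathbb{E}_{1}(z)=\int_{z}^{\infty} e^{-t}/t\, dt$, integrating $e^{-t}\,dt$ and differentiating the power of $1/t$. After $\ell$ steps this yields
\begin{displaymath}
	\mathbb{E}_{1}(z)e^{z}=\frac{1}{z}\sum_{m=0}^{\ell-1}(-1)^{m}\frac{m!}{z^{m}}+(-1)^{\ell}\ell!\,e^{z}\int_{z}^{\infty}\frac{e^{-t}}{t^{\ell+1}}\,dt,
\end{displaymath}
and the remainder is bounded in absolute value by $\ell!/z^{\ell+1}$, which is exactly the Poincar\'e-type asymptotic \eqref{eq:30} claimed.

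For part (1), the input is the classical integral representation
\begin{displaymath}
	K_{\nu}(z)=\sqrt{\frac{\pi}{2z}}\,\frac{e^{-z}}{\Gamma(\nu+1/2)}\int_{0}^{\infty} e^{-s}\, s^{\nu-1/2}\left(1+\frac{s}{2z}\right)^{\nu-1/2}ds,
\end{displaymath}
valid for $\Real(\nu)>-1/2$ and $\Real(z)>0$, together with the exact identity $K_{1/2}(z)=\sqrt{\pi/(2z)}\,e^{-z}$. I would Taylor expand $(1+s/(2z))^{\nu-1/2}$ to order $\ell-1$ with integral remainder and integrate term by term against $e^{-s}s^{\nu-1/2}$. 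Using $\int_{0}^{\infty} e^{-s}s^{m+\nu-1/2}\,ds=\Gamma(m+\nu+1/2)$ together with the Pochhammer identity $(\nu+1/2)_{m}\,\Gamma(\nu+1/2)=\Gamma(m+\nu+1/2)$ recovers exactly the coefficients $a_{m}(\nu)=\prod_{j=1}^{m}(4\nu^{2}-(2j-1)^{2})/(m!\,8^{m})$ in the finite truncation. The remainder term is then controlled via a standard majorization of the integral Taylor remainder combined with properties of the Gamma function.

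The main obstacle is obtaining the error estimate in precisely the stated form, with the constant $2$ and the exponent $|\nu^{2}-1/4|/|z|$, rather than a merely qualitative $O(|z|^{-\ell})$ bound. This sharp form is classical: it is essentially due to Watson and is reproduced in \cite{GR07}, with a self-contained modern treatment in Olver's monograph on asymptotics and special functions. For the applications that follow, $\nu$ will be pure imaginary, so $|\nu^{2}-1/4|=\nu^{2}+1/4$ is real and positive, which simplifies some of the auxiliary estimates. In summary, my proof would mainly consist in citing these references and adapting their formulation to match the precise statement of the proposition.
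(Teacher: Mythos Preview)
Your proposal is correct and aligns with the paper's approach: the paper's own proof consists of the single sentence ``These are classical facts \cite{Olver}.'' Your sketch of the standard arguments (integration by parts for $\mathbb{E}_{1}$, the Schl\"afli--Watson integral representation for $K_{\nu}$) and your deferral to Olver for the sharp remainder bound is exactly in this spirit, only more detailed. One small correction to your aside: in the subsequent applications (Corollary~\ref{cor:nightmare-bessel}) the parameter $\nu$ is taken real in $[1/2,k^{\delta}]$ or complex in a small disk around such a real point, not pure imaginary as you state.
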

\begin{proof}
These are classical facts \cite{Olver}.
\end{proof}
Now, we define the function
\begin{equation}\label{eq:31}
		F_{k}(t):=\log K_{t}(2\pi k a)-\log K_{1/2}(2\pi k a)
		-(t^{2}-1/4)\,\mathbb{E}_{1}(4\pi k a)e^{4\pi k a}.
\end{equation}
The previous proposition will be applied to obtain the following estimate.
\begin{corollary}\label{cor:nightmare-bessel}
On the interval $[1/2,k^{\delta}]$ the function $F_{k}(t)$, given by \eqref{eq:31}, can be written as
\begin{displaymath}
		F_{k}(t)=(t^{2}-1/4)^{2}R(t),
\end{displaymath}
where $R$ is analytic in $t$ and, on $[1/2,k^{\delta}]$, we have
\begin{displaymath}
	R(t)=O\left(\frac{1}{k^{2-4\delta}a^{2}}\right),
\end{displaymath}
with an implicit constant independent of $k$.
\end{corollary}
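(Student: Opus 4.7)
The strategy exploits that $K_\nu(z)$ is an even function of $\nu$, so $\log K_t(z) = G(t^2, z)$ for some $G$ real analytic in its first variable. Combining the chain rule with the given identity $\partial_t \log K_t|_{t=1/2} = \mathbb{E}_1(2z) e^{2z}$ yields $\partial_w G(1/4, z) = \mathbb{E}_1(2z) e^{2z}$. Taylor's formula with integral remainder in $w$ around $w = 1/4$ then produces, in one stroke,
\[
F_k(t) = G(t^2, z) - G(1/4, z) - (t^2 - 1/4)\, \partial_w G(1/4, z) = (t^2 - 1/4)^2 \int_0^1 (1-u)\, \partial_w^2 G\bigl(1/4 + u(t^2 - 1/4), z\bigr)\, du,
\]
which exhibits both the factorization $F_k(t) = (t^2 - 1/4)^2 R(t)$ and the analyticity of $R$ as an integral of an analytic integrand in $t^2$.

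It remains to estimate $|R(t)| \leq \tfrac{1}{2} \sup_{w \in [1/4,\, t^2]} |\partial_w^2 G(w, z)|$ with $z = 2\pi k a$. I would invoke Proposition \ref{prop:asymptotics-bessel-1} with $\ell = 4$, writing
\[
\Phi(w, z) := K_{\sqrt{w}}(z)/K_{1/2}(z) = 1 + c_1(w)/z + c_2(w)/z^2 + c_3(w)/z^3 + \rho_4(w, z)\, c_4(w)/z^4,
\]
where $c_m(w) := a_m(\sqrt{w}) = \prod_{j=1}^m (w - (2j-1)^2/4)/(m!\, 2^m)$ is a polynomial of degree $m$ in $w$ vanishing at $w = 1/4$. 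On the range $w \in [1/4,\, k^{2\delta}+1/4]$ one has $|c_m(w)/z^m| \lesssim k^{m(2\delta - 1)}/a^m$, which is small since $\delta < 1/8$, so $\Phi$ is bounded away from $0$ and $\partial_w^2 G = \Phi''/\Phi - (\Phi'/\Phi)^2$ can be expanded directly in powers of $1/z$. A short computation using $c_1'(w) = 1/2$ and $c_2''(w) = 1/4$ shows that both quotients contribute $1/(4z^2)$ at order $1/z^2$ and therefore cancel; the $1/z^3$ terms combine into a polynomial in $w$ of controlled size, ultimately yielding $|\partial_w^2 G(w, z)| = O(1/z^3) = O(1/(k^3 a^3))$ uniformly on the range.

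The principal technical difficulty lies in handling the remainder $\rho_4(w, z)$, for which Proposition \ref{prop:asymptotics-bessel-1} supplies only the pointwise bound $|\rho_\ell(\nu, z)| \leq 2\exp(|\nu^2 - 1/4|/z)$ with no a priori control of its $w$-derivatives. To bypass this I would invoke a Cauchy-type estimate: from the spectral analysis in \textsection\ref{subsubsec:eigen_problem_cusp} the zeros of $K_{\sqrt{w}}(z)$ in $w$ all lie on the ray $w \leq 0$, so $\rho_4(\cdot, z)$ is analytic on a complex disk of radius $1/8$, say, around every $w \in [1/4,\, k^{2\delta}+1/4]$. The proof of the pointwise bound extends, with essentially the same argument, uniformly to this complex disk, and Cauchy's inequality then yields the required uniform bounds on $\partial_w \rho_4$ and $\partial_w^2 \rho_4$. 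Their contribution to $\partial_w^2 G$ is of order $O(k^{8\delta - 4}/a^4)$, comfortably subsumed by the $O(1/z^3)$ leading estimate. Assembling all pieces gives $|R(t)| = O(1/(k^3 a^3))$, a fortiori $O(1/(k^{2-4\delta} a^2))$, as claimed.
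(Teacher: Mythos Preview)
Your approach is essentially the same as the paper's: both exploit the evenness of $K_\nu$ in $\nu$ to pass to $w=t^2$, apply Taylor's theorem at $w=1/4$, and then estimate $\partial_w^2 G$ via Bessel asymptotics together with a Cauchy-type argument to control derivatives of the remainder. The key cancellation you observe at order $1/z^2$ in $\Phi''/\Phi-(\Phi'/\Phi)^2$ is exactly the paper's cancellation of the leading $1/(2\pi ka)$ terms in $h_k''(\xi^2)$. The paper's execution is a little more economical: it applies Cauchy's integral formula directly to $K_\nu$ on a disk of radius $1/4$ in the $\nu$-variable and expands only to order $\ell=2$, thereby avoiding any separate analysis of the remainder's derivatives.

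There is, however, a small logical slip in your analyticity argument. You justify that $\rho_4(\cdot,z)$ is analytic on a complex disk by pointing to the location of the zeros of $K_{\sqrt{w}}(z)$; but those zeros are irrelevant here. The potential singularities of $\rho_4$ come from the zeros of $c_4(w)$, namely $w=1/4,\,9/4,\,25/4,\,49/4$, all of which lie in the interval $[1/4,k^{2\delta}]$ for large $k$. (It is true that $\rho_4$ has removable singularities there, because the Bessel expansion terminates exactly for half-integer $\nu$, but this is not the reason you give.) The clean fix is to work throughout with the product $\widetilde{\rho}_4:=\rho_4\,c_4$, which is manifestly entire in $w$ since $\widetilde{\rho}_4=z^4\bigl(\Phi-\sum_{m=0}^{3}c_m/z^m\bigr)$. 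The bound $|\rho_4|\le 2\exp(|w-1/4|/z)$ from Proposition~\ref{prop:asymptotics-bessel-1} (valid for complex $\nu$) gives $|\widetilde{\rho}_4|\le 2|c_4|\exp(|w-1/4|/z)=O(k^{8\delta})$ wherever $c_4\neq 0$, hence everywhere by continuity, and Cauchy's inequality on a disk of radius $1/8$ then delivers the required bounds on $\partial_w\widetilde{\rho}_4$ and $\partial_w^2\widetilde{\rho}_4$.
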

\begin{proof}
The function $F_{k}$ has been defined so that $F_{k}(\pm 1/2)=F_{k}^{\prime}(\pm 1/2)=0$. It is an even entire function of $t$, so of the form $F_{k}(t)=h_{k}(t^{2})$, where $h_k$ is entire and $h_k(1/4)=h'_k(1/4)=0$. We consider the Taylor expansion of $h_{k}(u)$ at $u=1/4$, and derive
\begin{equation}\label{eq:34}
	F_{k}(t)=h_{k}(t^{2})=\frac{1}{2}(t^{2}-1/4)^{2}h''(\xi^{2}),
\end{equation}
for some $\xi\in [1/2, k^{\delta}]$. The derivatives of $F_{k}$ and $h_{k}$ satisfy the relation
\begin{equation*}
	h''_{k}(t^{2})=\frac{F_{k}''(t)}{4t^{2}}-\frac{F_{k}'(t)}{4t^{3}}.
\end{equation*}
Taking into account the definition \eqref{eq:31} of $F_{k}(t)$, we simplify to
\begin{displaymath}
	h''_{k}(t^{2})=\frac{1}{4t^{2}}\frac{\partial^{2}}{\partial t^{2}}\log K_{t}(2\pi k a)-\frac{1}{4t^{3}}\frac{\partial}{\partial t}\log K_{t}(2\pi k a).
\end{displaymath}
Therefore, we write \eqref{eq:34} as
\begin{equation}\label{eq:37}
	F_{k}(t)=(t^{2}-1/4)^{2}\left(\frac{1}{8\xi^{2}}\frac{\partial^{2}}{\partial t^{2}}\mid_{t=\xi}\log K_{t}(2\pi k a)-\frac{1}{8\xi^{3}}\frac{\partial}{\partial t}\mid_{t=\xi}\log K_{t}(2\pi k a) \right).
\end{equation}
We now need to estimate the derivatives in this expression, and for this it is enough to estimate the expressions
\begin{displaymath}
	\frac{1}{K_{t}(2\pi k a)}\frac{\partial}{\partial t} K_{t}(2\pi k a),\quad \frac{1}{K_{t}(2\pi k a)}\frac{\partial^{2}}{\partial t^{2}} K_{t}(2\pi k a),
\end{displaymath}
on the interval $[1/2,k^{\delta}]$. We deal with the first one, and leave the second one to the reader. Let $\xi\in [1/2, k^{\delta}]$ and $D_{\xi}\subset\CC$ be the disk centered at $\xi$ and of radius $1/4$. Then, because $K_{\nu}(z)$ is entire in $\nu$ for $z>0$, we have
\begin{displaymath}
	\frac{\partial}{\partial t}\mid_{t=\xi}\log K_{t}(2\pi k a)=\frac{1}{2\pi i K_{\xi}(2\pi k a)}\int_{\partial D_{\xi}}\frac{K_{\nu}(2\pi k a)}{(\nu-\xi)^{2}}d\nu.
\end{displaymath}
We now use the asymptotics for the Bessel functions to the second order (Proposition \ref{prop:asymptotics-bessel-1}), to derive
\begin{equation}\label{eq:35}
	\begin{split}
	\frac{\partial}{\partial t}\mid_{t=\xi}\log K_{t}(2\pi k a)=&\frac{1}{2\pi i}\frac{K_{1/2}(2\pi k a)}{K_{\xi}(2\pi k a)}
	\\
	&
	\cdot\int_{\partial D_{\xi}}\frac{1}{(\nu-\xi)^{2}}\left(1+\frac{a_{1}(\nu)}{2\pi k a}+\rho_{2}(\nu,2\pi k a)\frac{a_{2}(\nu)}{(2\pi k a)^{2}}\right)d\nu\\
	=&
	\frac{K_{1/2}(2\pi k a)}{K_{\xi}(2\pi ka)}\left(\frac{\xi}{2\pi k a}+O\left(\frac{1}{k^{2-4\delta} a^{2}}\right)\right).
	\end{split}
\end{equation}
We used the expressions of $a_1$ and $a_2$ and the estimate for the remainder $\rho_{2}$ provided by Proposition \ref{prop:asymptotics-bessel-1}. In particular we see that the $O$ term has an implicit constant independent of $k$. The corresponding result for the second derivative is
\begin{equation}\label{eq:36}
	\frac{\partial^{2}}{\partial t^{2}}\mid_{t=\xi}\log K_{t}(2\pi k a)=\frac{K_{1/2}(2\pi k a)}{K_{\xi}(2\pi ka)}\left(\frac{1}{2\pi k a}+O\left(\frac{1}{k^{2-4\delta} a^{2}}\right)\right).
\end{equation}
Inserting \eqref{eq:35}--\eqref{eq:36} into \eqref{eq:37}, one finds
\begin{displaymath}
	F_{k}(t)=\frac{K_{1/2}(2\pi k a)}{K_{\xi}(2\pi ka)}\,O\left(\frac{1}{k^{2-4\delta} a^{2}}\right),
\end{displaymath}
with implicit constant independent of $k$. To conclude, the asymptotics of the Bessel functions show that the quotient $K_{1/2}(2\pi k a)/K_{\xi}(2\pi ka)$ is bounded on $[1/2,k^{\delta}]$, uniformly in $k$.
\end{proof}

\begin{proof}[Proof of Theorem \ref{thm:L-integral-cusp}]
The integral $L_{k}(s)$ ($k\in\NN_{>0}$) defines a holomorphic function for $s\in\CC$ with $\Real(s)<2$, and integration by parts gives the value
\begin{displaymath}
		L_{k}(s)=2\frac{\sin(\pi s)}{\pi}(k^{2\delta}-1/4)^{-s} F_{k}(k^{\delta})
		+2\frac{\sin(\pi s)}{\pi}\int_{1/2}^{k^{\delta}}2st(t^{2}-1/4)^{-s-1}F_{k}(t)dt,
\end{displaymath}
valid only in the region $\Real(s)<1$. The first term in this expression will be denoted by $A_{k}(s)$ and the second one by $B_{k}(s)$. Observe that $A_{k}(s)$ is holomorphic on the whole complex $s$-plane. Moreover, by Corollary \ref{cor:nightmare-bessel}, we see
\begin{equation}\label{eq:32}
	\begin{split}
		A_{k}(s)=2\frac{\sin(\pi s)}{\pi}\frac{1}{(k^{2\delta}-1/4)^{s-2}} O\left(\frac{1}{k^{2-4\delta}a^{2}}\right).
	\end{split}
\end{equation}
It follows that the sum $\sum_{k}A_{k}(s)$ is absolutely and locally uniformly convergent for $\Real(s)>4\delta-1/2\delta$. Since from the beginning we are supposing $\delta<1/8$, the sum is absolutely and locally uniformly convergent for $\Real(s)>-3$, where it defines a holomorphic function. Moreover, it is clear that
\begin{equation}\label{eq:33}
	\frac{\partial}{\partial s}\mid_{s=0}\sum_{k}A_{k}(s)=O\left(\frac{1}{a^{2}}\right).
\end{equation}
For the integral $B_{k}(s)$, again by Corollary \ref{cor:nightmare-bessel}, we have
\begin{displaymath}
	B_{k}(s)=2\frac{\sin(\pi s)}{\pi}\int_{1/2}^{k^{\delta}}2st(t^{2}-1/4)^{-s+1} R(t)dt.
\end{displaymath}
This defines a holomorphic function for $\Real(s)<2$. Moreover, by the estimate on $R(t)$ ensured by the corollary, we have the bound
\begin{displaymath}
	|B_{k}(s)|\ll\frac{|s\sin(\pi s)|}{(2-\Real(s))}\frac{1}{(k^{2\delta}-1/4)^{\Real(s)-1}}\frac{1}{k^{2-4\delta}a^{2}},
\end{displaymath}
with an implicit constant independent of $k$. This implies the absolute and locally uniform convergence of the sum $\sum_{k} B_{k}(s)$ for $-3<\Real(s)<2$.
It is also clear that
\begin{displaymath}
	\frac{\partial}{\partial s}\mid_{s=0}\sum_{k}B_{k}(s)=O(1/a^{2}).
\end{displaymath}
This completes the proof of the theorem.
\end{proof}
\begin{remark}
The delicate point in the proof of Theorem \ref{thm:L-integral-cusp} is Corollary \ref{cor:nightmare-bessel}. The method we followed circumvents the lack of precise asymptotics with remainder for the derivatives of the Bessel functions. 
\end{remark}

\subsection{Study of the integrals $M_{k}(s)$. First part}\label{subsubsec:M-int-1}
Recall the definition \eqref{eq:29} of the integral $M_{k}(s)$ 
corresponding to the interval $[k^{\delta},+\infty)$. For $s\in\CC$ with $1<\Real(s)<2$,
we perform the splitting
\begin{displaymath}
	M_{k}(s)=\widetilde{M}_{k}(s)+R_{k}(s)
\end{displaymath}
with
\begin{align}
	\widetilde{M}_{k}(s)&:=2\frac{\sin(\pi s)}{\pi}\int_{k^{\delta}}^{\infty}(t^{2}-1/4)^{-s}\frac{\partial}{\partial t}\log K_{t}(2\pi k a) dt,\label{eq:38}\\
	R_{k}(s)&:=-2\frac{\sin(\pi s)}{\pi}\int_{k^{\delta}}^{\infty}(t^{2}-1/4)^{-s}2t\,\mathbb{E}_{1}(4\pi k a)e^{4\pi k a}dt,
	\label{eq:39}
\end{align}
for some fixed parameter $0<\delta< 1/8$. 
\begin{proposition}
The infinite sum $\zeta_{R}(s)=\sum_{k}R_{k}(s)$ is absolutely and locally uniformly convergent for $\Real(s)>1$, and admits a meromorphic function to the half-plane $\Real(s)>-3$ with a unique pole at $s=1$. Moreover,
we have 
\begin{displaymath}
	\zeta_{R}'(0)=O\left(\frac{1}{a}\right)\quad\text{and}\quad\lim_{a\to +\infty}\zeta_{R}'(0)=0.
\end{displaymath}
\end{proposition}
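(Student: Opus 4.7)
The starting point is to carry out the $t$-integral in the definition \eqref{eq:39} explicitly: since $\mathbb{E}_1(4\pi ka)e^{4\pi ka}$ is independent of $t$, for $\Real(s) > 1$ we have
\begin{displaymath}
\int_{k^{\delta}}^{\infty}(t^{2}-1/4)^{-s}\,2t\,dt = \frac{(k^{2\delta}-1/4)^{1-s}}{s-1},
\end{displaymath}
hence
\begin{displaymath}
R_{k}(s) = -\frac{2\sin(\pi s)}{\pi(s-1)}\,\mathbb{E}_1(4\pi ka)\,e^{4\pi ka}\,(k^{2\delta}-1/4)^{1-s}.
\end{displaymath}
The asymptotic \eqref{eq:30} applied to first order gives $\mathbb{E}_1(4\pi ka)e^{4\pi ka} = (4\pi ka)^{-1} + r(4\pi ka)$ with $r(z)=O(1/z^{2})$. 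This induces a splitting $\zeta_{R}(s) = \zeta_{R}^{(0)}(s) + \widetilde{\zeta}_{R}(s)$, where
\begin{displaymath}
\zeta_{R}^{(0)}(s) := -\frac{\sin(\pi s)}{2\pi^{2}(s-1)\,a}\sum_{k\geq 1}\frac{(k^{2\delta}-1/4)^{1-s}}{k}.
\end{displaymath}
The bound $|R_{k}(s)| = O\bigl(|\sin(\pi s)/(s-1)|\,k^{2\delta(1-\Real(s))-1}/a\bigr)$ immediately yields the absolute and locally uniform convergence of $\sum_k R_k(s)$ for $\Real(s) > 1$.

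For the remainder $\widetilde{\zeta}_{R}(s)$ each summand is $O\bigl(|\sin(\pi s)/(s-1)|\,k^{2\delta(1-\Real(s))-2}/a^{2}\bigr)$ with constant uniform in $k$. Because $\delta < 1/8$, the $k$-sum converges absolutely and locally uniformly on the half-plane $\Real(s) > 1 - \tfrac{1}{2\delta}$, which strictly contains $\Real(s) > -3$. The factor $\sin(\pi s)$ then forces $\widetilde{\zeta}_{R}(0) = 0$ and $\widetilde{\zeta}_{R}'(0) = O(1/a^{2})$. The analysis is thereby reduced to $\zeta_{R}^{(0)}$.

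For $\zeta_{R}^{(0)}$ I would use the binomial expansion $(k^{2\delta}-1/4)^{1-s} = k^{2\delta(1-s)}\sum_{j\geq 0}\binom{1-s}{j}(-1/4)^{j}k^{-2\delta j}$, truncate at $j = J-1$ with $J$ chosen so that $2\delta J > 4$, and estimate the remainder uniformly in $k$. Summing term by term then gives
\begin{displaymath}
\sum_{k\geq 1}\frac{(k^{2\delta}-1/4)^{1-s}}{k} = \sum_{j=0}^{J-1}\binom{1-s}{j}(-1/4)^{j}\,\zeta\!\left(1-2\delta(1-s-j)\right) + H(s),
\end{displaymath}
with $H$ holomorphic on $\Real(s) > 1 - J \supseteq \Real(s) > -3$. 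Each Riemann zeta summand has its unique simple pole precisely at $s = 1-j$: the simple zeros of $\sin(\pi s)$ at $s = 0, -1, -2, \ldots$ exactly cancel the poles coming from $j = 1, 2, \ldots$, while at $s = 1$ the zero of $\sin(\pi s)$ is absorbed by the factor $1/(s-1)$, so the $j=0$ pole of $\zeta$ at $s=1$ survives and produces the single simple pole of $\zeta_{R}$ at $s=1$ claimed in the proposition.

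To compute $\zeta_{R}'(0)$ I would Laurent-expand the bracketed sum $G(s)$ at $s=0$. The only pole comes from the $j=1$ term: using $\zeta(1+2\delta s) = \tfrac{1}{2\delta s}+\gamma+O(s)$, it contributes $-\tfrac{1-s}{4}\bigl(\tfrac{1}{2\delta s}+\gamma+O(s)\bigr) = -\tfrac{1}{8\delta s} + \tfrac{1}{8\delta} - \tfrac{\gamma}{4} + O(s)$, while all remaining $j$ and $H(s)$ are regular at $0$. Writing $G(s) = G_{-1}/s + G_{0} + O(s)$ with $G_{-1} = -1/(8\delta)$ and $G_{0}$ a finite constant independent of $a$, the factor $\sin(\pi s) = \pi s + O(s^{3})$ absorbs the pole, and a direct expansion of $-\sin(\pi s)G(s)/(2\pi^{2}(s-1)a)$ yields
\begin{displaymath}
{\zeta_{R}^{(0)}}'(0) = \frac{G_{-1}+G_{0}}{2\pi\,a}.
\end{displaymath}
Combined with $\widetilde{\zeta}_{R}'(0) = O(1/a^{2})$, we conclude $\zeta_{R}'(0) = O(1/a)$ and $\zeta_{R}'(0)\to 0$ as $a\to+\infty$. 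The main obstacle is the bookkeeping around the binomial remainder: one must check both that $J$ can be chosen large enough, relative to the constraint $\delta < 1/8$, so that $H(s)$ extends holomorphically across the entire strip $-3 < \Real(s) \leq 1$, and that the $\sin(\pi s)$ zeros at negative integers cancel the Riemann zeta poles \emph{exactly} (an elementary residue check, but one which must be handled cleanly order by order).
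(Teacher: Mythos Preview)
Your argument is correct and is exactly the ``easy application of the asymptotics for the exponential integral $\mathbb{E}_{1}(z)$'' that the paper gestures at; you have simply filled in the details the paper leaves to the reader. One small remark: the condition $2\delta J > 4$ you impose on the truncation is stronger than needed (any $J \geq 5$ already gives $H$ holomorphic on $\Real(s) > 1-J \supset \{\Real(s) > -3\}$, with no interaction between $J$ and $\delta$), so the ``main obstacle'' you flag at the end is in fact no obstacle at all.
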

\begin{proof}
The proof is an easy application of the asymptotics for the exponential integral $\mathbb{E}_{1}(z)$ (Proposition \ref{prop:asymptotics-bessel-1}). We leave the details to the reader.
\end{proof}
To deal with the infinite sum $\sum_{k}\widetilde{M}_{k}(s)$, we will appeal to another asymptotic for the Bessel function $K_{t}(2\pi k a)$, that will hold on the interval $[k^{\delta},+\infty]$. 
\begin{proposition}\label{prop:asymptotics-bessel-2}
For real $\nu, z>0$, we have
\begin{equation}\label{eq:40}
	K_{\nu}(\nu z)=\sqrt{\frac{\pi}{2 \nu}} \frac{e^{-\nu \eta}}{(1+z^2)^{1/4}}
	\left( \sum\limits_{n=0}^{\ell-1}(-1)^n \frac{u_n(\tau)}{\nu^n}+\frac{\rho_{\ell}(\nu, z)}{\nu^{\ell}}\right),
\end{equation}
where the functions $\tau$ and $\eta$ are given in terms of $z$ by
\begin{displaymath}
	\tau=\tau(z)=\frac{1}{\sqrt{1+z^2}},
\end{displaymath}
and
\begin{displaymath}
	\eta=\eta(z)=\log \left(\frac{z}{1+\sqrt{1+z^2}}\right) +\sqrt{1+z^2}.
\end{displaymath}
The $u_n(\tau)$ are polynomials in $\tau$ of degree $3 n$, given by $u_0(\tau)=1$ and 
\begin{align*}
	u_{n+1}(\tau)=\frac{1}{2} \tau^2 (1-\tau^2) u'_{n}(\tau)+\frac{1}{8}  
\int\limits_{0}^{\tau}(1-5 x^2) u_n(x) dx..
\end{align*}
In particular, we have
\begin{displaymath}
	u_1(\tau)=(3 \tau-5 \tau^3)/24, \qquad u_2(\tau)=(81 \tau^2-462 \tau^4 +385 \tau^6)/1152.
\end{displaymath}
Finally, the remainder term $\rho_{\ell}(\nu, z)$ is uniformly bounded, and for fixed $\nu$ it satisfies
\begin{displaymath}
	\rho_{\ell}(\nu, z)=O\left(\frac{1}{z^{\ell}}\right).
\end{displaymath}
\end{proposition}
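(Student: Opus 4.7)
The statement is Olver's classical uniform asymptotic expansion for the modified Bessel function of the second kind with large order; the standard route is the Liouville--Green (\emph{i.e.}~WKB) method applied to the modified Bessel differential equation
\[
\frac{d^{2}w}{dz^{2}}+\frac{1}{z}\frac{dw}{dz}-\nu^{2}\left(1+\frac{1}{z^{2}}\right)w=0,
\]
satisfied by $w(z)=K_{\nu}(\nu z)$. First I would normalize the equation by putting $W=z^{1/2}w$, which removes the first-order term, and then introduce the change of independent variable $\xi=\eta(z)$ with $d\xi/dz=\sqrt{1+z^{2}}/z$. The closed-form expression for $\eta(z)$ given in the statement is then obtained by direct integration, and the rescaled variable $\tau=d z/d\xi/\sqrt{1+z^2}$ which also appears coincides with $1/\sqrt{1+z^{2}}$.

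Next, in the transformed variables the equation takes the Liouville--Green form $d^{2}W/d\xi^{2}=\nu^{2}W+\phi(\xi)W$, for a function $\phi$ that is smooth and bounded along the positive axis. One looks for a formal solution of the recessive branch in the shape
\[
W(\xi)=e^{-\nu\xi}\sum_{n\geq 0}(-1)^{n}\frac{A_{n}(\xi)}{\nu^{n}}.
\]
Substitution and identification of coefficients in $1/\nu$ give the recursion $2 A_{n+1}'=-\phi\,A_{n}-A_{n}''$, with $A_{0}$ constant. Rewriting this recursion in the variable $\tau$ and setting $A_{0}=1$ yields exactly the stated recursion for the polynomials $u_{n}(\tau)$; a direct computation recovers $u_{1}$ and $u_{2}$. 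Undoing the Liouville substitutions $W=z^{1/2}w$ and $\xi=\eta(z)$, and absorbing the Jacobian $(1+z^{2})^{-1/4}$, produces the explicit prefactor $\sqrt{\pi/(2\nu)}\,e^{-\nu\eta}(1+z^{2})^{-1/4}$, up to an overall constant.

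To fix the constant and identify the formal series with the actual $K_{\nu}(\nu z)$, I would match the known large-argument asymptotics $K_{\nu}(x)\sim \sqrt{\pi/(2x)}\,e^{-x}$ as $x\to+\infty$ with the asymptotics of the prefactor: as $z\to+\infty$, one has $\eta(z)\sim z$ and $(1+z^{2})^{1/4}\sim z^{1/2}$, so the leading term of the expansion matches $K_{\nu}(\nu z)$ precisely. Since the recessive solution of a second-order ODE is determined up to a scalar by its subdominant behavior at infinity, this identifies the formal series with $K_{\nu}(\nu z)$.

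The main obstacle is the control of the remainder $\rho_{\ell}(\nu,z)$. For this I would follow Olver's integral equation method: insert the truncated sum into the ODE, rewrite the equation for the residual as a Volterra integral equation, and solve it by successive approximations, estimating the iterates in terms of the total variation of the function $\mathscr{F}(\xi)=\int u_{0}\phi\,d\xi/2$ along the relevant integration contour. The convergence of the Neumann series together with the monotonicity of this variation along $[\eta(z),+\infty)$ yields a bound uniform in both $\nu$ and $z$, giving the claimed uniform boundedness of $\rho_{\ell}$; the sharper estimate $\rho_{\ell}(\nu,z)=O(z^{-\ell})$ for fixed $\nu$ then follows by inspecting the decay of the variation function as $z\to+\infty$. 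Since this is a classical computation carried out in full detail in Olver's standard reference \cite{Olver}, in the written-up version I would either invoke it directly or reproduce only the minimal analysis needed for the uniform bound.
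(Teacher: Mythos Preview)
Your proposal is correct and aligns with the paper's own treatment: the paper simply cites Olver for this classical uniform asymptotic expansion, and your sketch is precisely an outline of Olver's Liouville--Green derivation, culminating in the same reference. There is nothing to add.
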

\begin{proof}
The statements summarize classical bounds for modified Bessel functions of large order, see for instance \cite{Olver}.
\end{proof}
\begin{remark}
To apply Proposition \ref{prop:asymptotics-bessel-2}, we write
\begin{displaymath}
	K_{t}(2\pi k a)=K_{t}(t\frac{2\pi k a}{t}).
\end{displaymath}
Then we restrict the parameter $t$ to $[k^{\delta},+\infty)$, so that in this interval $t\geq k^{\delta}$ and $k^{\delta}\to +\infty$ when $k\to +\infty$. Therefore, for the final summation and analytic continuation, the asymptotics \eqref{eq:40} can be used. This is one of the interests of cutting the interval of integration at $k^{\delta}$.
\end{remark}
With the proposition at hand, and looking at the logarithm of the Bessel asymptotics, one can guess a suitable correction term that will be needed to turn the sum $\sum_{k}\widetilde{M}_{k}(s)$ into an analytic function in a neighborhood of $s=0$, while keeping the absolute convergence on the region $1<\Real(s)<2$. Let us elaborate on this important manipulation. We follow the notations of Proposition \ref{prop:asymptotics-bessel-2}, hence $\tau=\tau(2\pi k a/t)$ and $\nu=t$, and we put
\begin{displaymath}
	\mathcal{E}_{k}(t):=\log\left( \sqrt{\frac{\pi}{2 t}} \frac{e^{-t \eta}}{(1+(2\pi k a/t)^2)^{1/4}}\right)+\frac{U_{1}(\tau)}{t},
\end{displaymath}
where $U_{1}$ is the polynomial
\begin{displaymath}
	U_{1}(\tau)=-(3\tau-5\tau^{3})/24.
\end{displaymath}
We then define the ``error integral"
\begin{equation}\label{eq:41}
	E_{k}(s):=\int_{k^{\delta}}^{\infty}(t^{2}-1/4)^{-s}\frac{\partial}{\partial t}\mathcal{E}_{k}(t)dt.
\end{equation}
One easily checks the convergence on $1<\Real(s)<2$. We modify $\widetilde{M}_{k}(s)$ by substracting $E_{k}(s)$:
\begin{displaymath}
	\begin{split}
		M^{\ast}_{k}(s):=&\widetilde{M}_{k}(s)-E_{k}(s)\\
		=&2\frac{\sin(\pi s)}{\pi}\int_{k^{\delta}}^{\infty}(t^{2}-1/4)^{-s}\frac{\partial}{\partial t}\left(\log K_{t}(2\pi k a)-\mathcal{E}_{k}(t)\right)dt.
	\end{split}
\end{displaymath}
The integrals $M_{k}^{\ast}(s)$ won't contribute to the final computations, as we next explain.

\begin{theorem}\label{thm:Mast-integral-cusp}
The infinite sum $\zeta_{M^{\ast}}(s)=\sum_{k} M_{k}^{\ast}(s)$ is absolutely and locally uniformly convergent for
$s\in\CC$ with $1<\Real(s)<2$, and admits a meromorphic continuation to $\Real(s)>-\sigma_{0}$, for some $\sigma_{0}>0$. Furthermore, $\zeta_{M^{\ast}}(s)$ is holomorphic at $s=0$ and
\begin{displaymath}
	\lim_{a\to +\infty}\zeta_{M^{\ast}}'(0)=0.
\end{displaymath}
\end{theorem}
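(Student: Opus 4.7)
The plan is as follows. By the construction of $\mathcal{E}_k(t)$, the difference $\Phi_k(t) := \log K_t(2\pi ka)-\mathcal{E}_k(t)$ is $O(1/t^2)$ uniformly in $k$ on $[k^\delta,\infty)$, and vanishes as $t\to\infty$. A formal termwise differentiation of $\sum_k M_k^*(s)$ at $s=0$ would therefore give
$$\zeta_{M^*}'(0)=2\sum_{k\geq 1}\int_{k^\delta}^{\infty}\Phi_k'(t)\,dt=-2\sum_{k\geq 1}\Phi_k(k^\delta).$$
The right-hand side is easily seen to be $O(1/a^2)$ for $a$ large, thanks to the rapid decay of $K_t(2\pi ka)$ hidden in $\Phi_k$. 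The real work of the theorem is to legitimize this formal computation via a precise analysis of the meromorphic continuation of $\zeta_{M^*}(s)$.

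First I would extend the uniform asymptotic expansion of Proposition \ref{prop:asymptotics-bessel-2} to high order: applying it with $\ell=J+1$ and Taylor-expanding the logarithm, one obtains
$$\Phi_k(t)=\sum_{j=2}^{J}\frac{Q_j(\tau)}{t^{j}}+R_{J,k}(t),\qquad \tau=\tau(2\pi ka/t),$$
where the $Q_j$ are polynomials in $\tau$ built out of the $u_n$'s, and $|R_{J,k}(t)|\ll t^{-J-1}$ uniformly in $k$ on $[k^\delta,\infty)$. Inspection of the recursion in Proposition \ref{prop:asymptotics-bessel-2} shows that $u_n(\tau)$ vanishes at $\tau=0$ to order $n$, so each $Q_j(\tau)$ vanishes there to order at least $j$. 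Since $\tau\leq t/(2\pi ka)$ whenever $\tau$ is small, this yields the improved bound $|Q_j(\tau)|/t^j\ll (ka)^{-j}$ in that regime, complementing the obvious $|Q_j(\tau)|/t^j\ll t^{-j}$ in the opposite one; this is the structural fact responsible for the $O(1/a^2)$ behaviour at the end.

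Splitting accordingly, $M_k^*(s)=\sum_{j=2}^J A_{k,j}(s)+B_{J,k}(s)$. Integration by parts in $B_{J,k}(s)$ and the uniform bound on $R_{J,k}$ imply, for $J$ large enough, absolute and locally uniform convergence of $\sum_k B_{J,k}(s)$ on some half-plane $\Real(s)>-\sigma_0$, with vanishing at $s=0$ coming from the $\sin(\pi s)$ prefactor. For each explicit term $A_{k,j}(s)$, the substitution $z=2\pi ka/t$ converts the integral into $(ka)^{-(j-1)}$ times an integral over $z\in(0, 2\pi a k^{1-\delta}]$ of an elementary function of $z$ and $s$. Writing this as $\int_0^\infty-\int_{2\pi a k^{1-\delta}}^\infty$ and summing over $k$ produces, for the full-range piece, shifted Riemann zetas $\zeta(2s+j-1)$ multiplied by Mellin transforms in $s$, together with a tail correction amenable to a parallel treatment. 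All of these admit meromorphic continuation to $\Real(s)>-\sigma_0$; any pole at $s=0$ of the Dirichlet factors is absorbed by $\sin(\pi s)$, which yields holomorphicity at $s=0$. Once this is in place, the termwise differentiation indicated above is justified, giving the formula and the asymptotic $\zeta_{M^*}'(0)=O(1/a^2)$.

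The main obstacle is the bookkeeping in the meromorphic continuation of each $\sum_k A_{k,j}(s)$. The $k$-dependent lower limit $t=k^\delta$ produces boundary contributions that are not themselves pure Dirichlet series and must be controlled by a separate zeta regularization; and the precise matching of poles and finite parts between these boundary terms, the bulk shifted zetas, and the $\sin(\pi s)$ factor is intricate, even though each individual step is elementary. Extracting the exact $a\to+\infty$ behaviour of the finite part at $s=0$ then requires tracking negative powers of $a$ (coming from the Mellin transforms) and verifying that no logarithmic-in-$a$ residue survives the cancellation.
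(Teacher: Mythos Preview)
Your proposal is correct and follows essentially the same strategy as the paper: expand $\Phi_k(t)$ via the high-order uniform Bessel asymptotics of Proposition~\ref{prop:asymptotics-bessel-2}, use that the resulting polynomials $Q_j(\tau)$ (the paper's $U_n(\tau)$) are divisible by $\tau^j$, split into polynomial and remainder pieces, and bound each after integration by parts. The only notable differences are in execution: the paper changes variables by $t=ku$ rather than your $z=2\pi ka/t$, and it handles the borderline case $m=n=2$ (the $\tau^2/t^2$ monomial) separately, by the same hypergeometric-evaluation technique as in Proposition~\ref{prop:E2-integral}, rather than absorbing it into a uniform Mellin/Dirichlet scheme as you outline; your heuristic identification $\zeta_{M^\ast}'(0)=-2\sum_k\Phi_k(k^\delta)$ is implicit in the paper's integration by parts but not stated in that compact form.
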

\begin{proof}
For a well-chosen integer $\ell\geq 3$, to be adjusted below, and as a consequence of Proposition \ref{prop:asymptotics-bessel-2}, we expand
\begin{equation}\label{eq:expansion-log-bessel}
	\log K_{t}(2\pi k a)-\mathcal{E}_{k}(t)=\sum_{n=2}^{\ell-1}\frac{U_{n}(\tau)}{t^{n}}+\frac{\widetilde{\rho}_{\ell}(t,2\pi ka /t)}{t^{\ell}}.
\end{equation}
The $U_{n}(\tau)$ are explicitly computable polynomials and the remainder term $\widetilde{\rho}_{\ell}$ is uniformly bounded. For fixed $t$, we have
\begin{equation}\label{eq:pain-3}
	\widetilde{\rho}_{\ell}(t,2\pi ka /t)=O(1/(2\pi k a/t)^{\ell})=O(1/a^{\ell}).
\end{equation}
We split the integral according to the expansion \eqref{eq:expansion-log-bessel}. We start by studying the contribution of the remainder term. After integration by parts, we are left with the expression
\begin{displaymath}
		N_{k}(s):=
		2\frac{\sin(\pi s)}{\pi}\frac{1}{(k^{2\delta}-1/4)^{s}}\frac{\widetilde{\rho}_{\ell}(k^{\delta},2\pi a k^{1-\delta})}{k^{\ell\delta}}
		+2\frac{\sin(\pi s)}{\pi}\int_{k^{\delta}}^{\infty}2st(t^{2}-1/4)^{-s-1}\frac{\widetilde{\rho}_{\ell}}{t^{\ell}}dt,
\end{displaymath}
which is valid as long as $\Real(s)>0$. We now choose the order $\ell$ of the expansion so that $\ell\delta>1$. Because $\widetilde{\rho}_{\ell}$ is uniformly bounded, we then deduce that the function
\begin{displaymath}
	f(a,s):=\sum_{k\geq 1}\frac{1}{(k^{2\delta}-1/4)^{s}}\frac{\widetilde{\rho}_{\ell}}{k^{\ell\delta}}
\end{displaymath}
is absolutely and locally uniformly convergent if $\Real(s)>-\sigma_{0}$, for some $\sigma_{0}>0$. Besides, for fixed $k$, \eqref{eq:pain-3} guarantees that the summands individually converge to 0 as $a\to +\infty$. By the dominate convergence theorem, we then deduce that
\begin{displaymath}
	\lim_{a\to +\infty}f(a,s)=0.
\end{displaymath}
This is valid locally uniformly for $\Real(s)>-\sigma_{0}$. Therefore, the same is true for the derivatives, being a family of holomorphic functions in $s$. This implies
\begin{equation}\label{eq:pain-1}
	\lim_{a\to +\infty} \frac{\partial}{\partial s}\mid_{s=0} 2\frac{\sin(\pi s)}{\pi}f(a,s)=0.
\end{equation}
For the integrals, we effect the change of variables $t=ku$. They become
\begin{displaymath}
	\int_{k^{\delta-1}}^{\infty}2s(ku)((ku)^{2}-1/4)^{-s-1}\frac{\widetilde{\rho}_{\ell}}{k^{\ell -1}u^{\ell}}du.
\end{displaymath}
On the interval of integration, we have uniform bounds
\begin{displaymath}
	\left |\frac{\widetilde{\rho}_{\ell}}{k^{\ell -1}u^{\ell}}\right |\ll_{a} \frac{1}{k^{\ell -1+\ell(\delta-1)}}.
\end{displaymath}
We need to adjust $\ell$ so that
\begin{displaymath}
	\ell -1 +\ell(\delta-1)>1.
\end{displaymath}
For this we need $\ell>2/\delta$. This covers the range $\ell>1/\delta$, hence guarantees \eqref{eq:pain-1}. Under this assumption, we see that
\begin{displaymath}
	g(a,s):=\sum_{k\geq 1}\int_{k^{\delta -1}}^{\infty}2s(ku)((ku)^{2}-1/4)^{-s-1}\frac{\widetilde{\rho}_{\ell}}{k^{\ell -1}u^{\ell}}du
\end{displaymath}
is absolutely and locally uniformly convergent for $\Real(s)>-\sigma_{0}$ and some $\sigma_{0}>0$. For fixed $k$ and $u$, the remainder term satisfies $\widetilde{\rho}_{\ell}=O(1/a^{\ell})$, and hence the integrands converge to 0 pointwise. By the dominate convergence theorem we see that, for fixed $s\in\CC$ with $\Real(s)>-\sigma_{0}$,
we have
\begin{displaymath}
	\lim_{a\to +\infty}g(a,s)=0.
\end{displaymath}
It is no longer clear that this assertion holds locally uniform in $s$. Nevertheless, by the uniform bound for $\widetilde{\rho}_{\ell}$, it is true that locally uniformly in $s$, the functions $g(a,s)$ are bounded independently of $a$. Since the $g(a,s)$ are holomorphic in $\Real(s)>-\sigma_{0}$, by the Cauchy integral representation and the dominate convergence theorem, all the derivatives of $g(a,s)$ converge to 0 as $a\to +\infty$, as well. We conclude that
\begin{equation}\label{eq:pain-2}
	\lim_{a\to +\infty}\frac{\partial}{\partial s}\mid_{s=0}2\frac{\sin(\pi s)}{\pi}g(a,s)=0.
\end{equation}
Taking into account \eqref{eq:pain-1}--\eqref{eq:pain-2}, we infer that the infinite sum 
\begin{displaymath}
\zeta_{N}(s):=\sum_{k\geq 1} N_{k}(s)
\end{displaymath}
is absolutely convergent for $1<\Real(s)<2$, admits a holomorphic continuation to $\Real(s)>-\sigma_{0}$, and 
satisfies
\begin{displaymath}
	\lim_{a\to +\infty}\zeta_{N}'(0)=0.
\end{displaymath}
For the polynomial terms, it is easily seen that $U_{n}(\tau)$ is divisible by $\tau^{n}$. It is then enough to deal with expressions of the form $\tau^{m}/t^{n}$, for $m\geq n\geq 2$. We distinguish two cases. The first and easiest is when $m\geq 3$. These are treated in an analogous manner as the remainder term. The argument is simpler since we have an explicit expression, contrary to the remainder $\widetilde{\rho}_{\ell}$. Second, the more cumbersome case $m=n=2$. The strategy in this case is similar to Proposition \ref{prop:E2-integral} below (that treats $U_{1}(\tau)$), and we leave the details to the reader.
\end{proof}

\subsection{Study of the integrals $M_{k}(s)$. Second part}\label{subsubsec:pseudo-laplacian-4}
The rest of the section focuses on the study of the infinite sum
\begin{displaymath}
	\zeta_{E}(s):=\sum_{k\geq 1} E_{k}(s)
\end{displaymath}
with $ E_{k}(s)$ given by \eqref{eq:41}.
This zeta function encapsulates the whole determinant of the pseudo-lapla\-cian.
\begin{theorem}\label{thm:E-integral-cusp}
The zeta function $\zeta_{E}(s)$ is absolutely and uniformly convergent for $1<\Real(s)<2$, and admits a meromorphic continuation to $\CC$. Furthermore, $\zeta_{E}(s)$ is holomorphic at $s=0$ and
\begin{displaymath}
	\zeta_{E}'(0)=4\pi\zeta(-1)a+\zeta(0)\log(a)+O\left(\frac{1}{a}\right),
\end{displaymath}
as $a\to+\infty$. Here, $\zeta(s)$ denotes the Riemann zeta function.
\end{theorem}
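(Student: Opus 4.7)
The plan is to decompose $\mathcal{E}_k(t)$ according to the natural terms in the Bessel asymptotic. Writing $z = 2\pi k a/t$ and $\tau = 1/\sqrt{1 + z^2}$, one has
\begin{displaymath}
	\mathcal{E}_k(t) = \tfrac{1}{2}\log(\pi/2) - \tfrac{1}{2}\log t - t\,\eta(z) - \tfrac{1}{4}\log(1 + z^2) + U_1(\tau)/t.
\end{displaymath}
Since the constant is killed by $\partial_t$, this yields a splitting $\zeta_E(s) = \zeta_E^{(1)}(s) + \zeta_E^{(2)}(s) + \zeta_E^{(3)}(s) + \zeta_E^{(4)}(s)$, and I would treat each summand separately, establishing its meromorphic continuation to a neighbourhood of $s=0$, verifying holomorphy there, and computing the derivative.

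The main asymptotic contribution $4\pi\zeta(-1)a + \zeta(0)\log a$ is to come entirely from $-t\eta(z)$. A direct computation using $\eta'(z) = \sqrt{1+z^2}/z$ gives $\partial_t(-t\eta(z)) = \log\bigl((1 + \sqrt{1+z^2})/z\bigr)$. Substituting $t = 2\pi k a\, u$ (so $z = 1/u$) separates the dependence on $k$ and $a$, and reduces matters to the integral
\begin{displaymath}
	\int_{k^{\delta-1}/(2\pi a)}^{\infty} \bigl((2\pi k a\, u)^2 - 1/4\bigr)^{-s}\,\log\bigl(u + \sqrt{u^2+1}\bigr)\, du.
\end{displaymath}
Expanding $((2\pi kau)^2 - 1/4)^{-s}$ as a binomial series in $1/(2\pi kau)^2$ and summing term-by-term over $k$ produces Riemann zeta values at shifts $2s - 1, 2s + 1, \ldots$; multiplication by the external factor $2\sin(\pi s)/\pi$ then yields a meromorphic function of $s$, holomorphic at $s=0$, whose derivative there is computed from $\zeta(-1) = -1/12$ (coming from the pole of $\zeta(2s-1)$ at $s=1$ crossed with the zero of $\sin(\pi s)$) and $\zeta(0) = -1/2$. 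The endpoint $k^{\delta - 1}/(2\pi a)$ is controlled to contribute only $O(1/a)$.

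The three remaining pieces should only produce $O(1/a)$ errors. The $-\tfrac{1}{2}\log t$ contribution differentiates to $-1/(2t)$, giving an integral independent of $k$ and $a$; summing over $k$ introduces only $k^{\delta}$-type endpoint factors that, combined with the $\sin(\pi s)/\pi$ prefactor, produce no $a$-dependence. The pieces $-\tfrac{1}{4}\log(1 + z^2)$ and $U_1(\tau)/t$ have integrands decaying like $z^{-2}$ as $z \to \infty$; under the same substitution $t = 2\pi k a\, u$, the remaining $u$-integrals are locally uniformly bounded in $s$ and go to $0$ as $a \to \infty$ by dominated convergence. A Cauchy integral argument, as in the proof of Theorem \ref{thm:Mast-integral-cusp}, then transfers this convergence from the value to the $s$-derivative at $s=0$.

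The principal obstacle is the $k$-dependent lower endpoint $k^\delta$. It produces boundary terms under integration by parts, and obstructs a naive interchange of the sum over $k$ with the analytic continuation in $s$. The condition $\delta < 1/8$ fixed from the outset is precisely what places the boundary contributions in the strip of absolute convergence. A secondary delicate issue is keeping the main order $a$, the subleading $\log a$, and the error $O(1/a)$ all honestly tracked through the same computation; the $\sin(\pi s)/\pi$ prefactor together with the pole structure of $\zeta(s)$ at $s = 1$ and its value at $s = 0$ are the key tools.
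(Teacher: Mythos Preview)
Your decomposition into four pieces according to the individual terms of $\mathcal{E}_k(t)$ breaks down at the level of the $k$-summation. The pieces coming from $-\tfrac{1}{2}\log t$ and from $-\tfrac{1}{4}\log(1+z^2)$ are \emph{not} individually summable over $k$ in the strip $1<\Real(s)<2$. Indeed, $\partial_t(-\tfrac{1}{2}\log t)=-1/(2t)$ gives $\int_{k^\delta}^\infty(t^2-1/4)^{-s}\,t^{-1}\,dt\sim k^{-2\delta s}/(2s)$, and $\sum_{k\geq 1}k^{-2\delta s}$ converges only for $\Real(s)>1/(2\delta)>4$. The piece from $-\tfrac{1}{4}\log(1+z^2)$ carries the same divergent term with the opposite sign: for $t\ll 2\pi ka$ one has $\partial_t\bigl(-\tfrac{1}{4}\log(1+z^2)\bigr)\approx 1/(2t)$. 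Only their \emph{sum} has the decaying integrand $-\tfrac{1}{2}u/(u^2+(2\pi a)^2)$ (after $t=ku$), which is what the paper calls $\partial_u\mathcal{E}_0$. This is why the paper works with a three-term splitting, not four.

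This convergence failure is the source of your misattribution of the $\zeta(0)\log a$ term. In the paper's decomposition, the $-t\eta$ piece (your main piece, the paper's $\mathcal{E}_1$) yields only $4\pi\zeta(-1)a+O(1/a)$ (Proposition~\ref{prop:E1-integral}); the $\zeta(0)\log a$ comes entirely from the combined $\mathcal{E}_0$ piece (Proposition~\ref{prop:E0-integral}). Your claim that $-\tfrac{1}{4}\log(1+z^2)$ contributes only $O(1/a)$ via dominated convergence is therefore wrong: once the divergent $1/(2t)$ part is properly cancelled against $-\tfrac{1}{2}\log t$, what remains is \emph{not} $O(1/a)$ but rather carries the full $\zeta(0)\log a$. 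The fix is to merge these two terms from the outset and then proceed with the binomial/hypergeometric expansion as in the paper; your treatment of the $-t\eta$ and $U_1(\tau)/t$ pieces is otherwise along the right lines.
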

We structure the proof in three contributions, corresponding to a suitable decomposition of $E_{k}(s)$ as a sum of three integrals. First, in the definition \eqref{eq:41} of $E_{k}(s)$, we effect the change of variables $t=ku$. Then, after a straight-forward computation, one can write $E_{k}(s)=E_{0,k}(s)+E_{1,k}(s)+E_{2,k}(s)$, where the integrals $E_{\ell,k}(s)$ ($\ell=0,1,2$) are defined as
\begin{align}\label{def_Eellk}
	E_{\ell,k}(s)=2\frac{\sin(\pi s)}{\pi}\int_{k^{\delta-1}}^{\infty}((ku)^{2}-1/4)^{-s}\frac{\partial}{\partial u}\mathcal{E}_{\ell}(u,k)du,
\end{align}
and the integrands are explicitly given by the following formulas
\begin{align*}
	\frac{\partial}{\partial u}\mathcal{E}_0(u,k)&:= -\frac{1}{2}\frac{u}{u^2+ (2 \pi a)^2},\\
	\frac{\partial}{\partial u} \mathcal{E}_1(u,k)&:=k \log\left(\frac{u+\sqrt{u^2+(2 \pi a)^2} }{2 \pi a}\right),\\
	\frac{\partial}{\partial u} \mathcal{E}_2(u,k)&:=\frac{1}{k} 
\frac{u\left(u^2+ (2 \pi a)^2\right)^{-3/2}}{24}\left( 13
-15 u \left(u^2+ (2 \pi a)^2\right)^{-1}
\right).
\end{align*}
The three ``error integrals" given in \eqref{def_Eellk} are still absolutely and locally uniformly convergent 
for $s\in\CC$ with $1<\Real(s)<2$.  

\begin{proposition}\label{prop:E0-integral}
The infinite sum $\zeta_{E,0}(s)=\sum_{k} E_{0,k}(s)$ (with $E_{0,k}(s)$ given by \eqref{def_Eellk}) is absolutely and locally uniformly convergent 
for $s\in\CC$ with $1<\Real(s)<2$, and admits a meromorphic continuation to $\CC$. Furthermore, $\zeta_{E,0}(s)$ is holomorphic at $s=0$ and
\begin{displaymath}
	\zeta_{E,0}'(0)=\zeta(0)\log(a)+O\left(\frac{1}{a^{2}}\right),
\end{displaymath}
as $a\to +\infty$.
\end{proposition}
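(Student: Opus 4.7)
The plan is to reduce $E_{0,k}(s)$ to a classical Mellin-type integral by two changes of variables, isolate a main term by completing the integral to $(0,\infty)$, and show the resulting correction and the binomial error are each $O(a^{-2})$. Substituting $t=ku$ and then $v=t^{2}-\tfrac{1}{4}$ yields
\[
E_{0,k}(s)\;=\;-\frac{\sin(\pi s)}{2\pi}\int_{k^{2\delta}-1/4}^{\infty}\frac{v^{-s}}{v+c_{k}}\,dv,\qquad c_{k}:=(2\pi ak)^{2}+\tfrac{1}{4},
\]
and applying the identity $\int_{0}^{\infty}v^{-s}(v+c)^{-1}\,dv=\pi c^{-s}/\sin(\pi s)$ (valid for $0<\Real(s)<1$) gives the decomposition
\[
E_{0,k}(s)\;=\;-\tfrac{1}{2}\,c_{k}^{-s}\;+\;\frac{\sin(\pi s)}{2\pi}\int_{0}^{k^{2\delta}-1/4}\frac{v^{-s}}{v+c_{k}}\,dv.
\]

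For the main term I expand $c_{k}^{-s}=(2\pi ak)^{-2s}\bigl(1+(16\pi^{2}a^{2}k^{2})^{-1}\bigr)^{-s}$ by the binomial series to any order $J\geq 1$:
\[
\sum_{k\geq 1}c_{k}^{-s}\;=\;(2\pi a)^{-2s}\zeta(2s)+\sum_{j=1}^{J}\binom{-s}{j}\frac{(2\pi a)^{-2s-2j}}{4^{j}}\zeta(2s+2j)+R_{J}(s,a),
\]
with $R_{J}$ holomorphic in $\Real(s)>-J-\tfrac{1}{2}$ and of size $O(a^{-2J-2})$ uniformly on a neighborhood of $s=0$. Letting $J\to\infty$ yields the meromorphic continuation of $\sum_{k}c_{k}^{-s}$ to all of $\CC$ and its holomorphy at $s=0$. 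Using $\zeta(0)=-\tfrac{1}{2}$ and the special value $\zeta'(0)=-\tfrac{1}{2}\log(2\pi)$, differentiating at $s=0$ gives
\[
-\tfrac{1}{2}\frac{d}{ds}\bigg|_{s=0}\sum_{k\geq 1}c_{k}^{-s}\;=\;-\tfrac{1}{2}\bigl(\log(2\pi a)+2\zeta'(0)\bigr)+O(a^{-2})\;=\;\zeta(0)\log a+O(a^{-2}),
\]
precisely the asymptotic claimed in the proposition.

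For the correction integral, the geometric expansion of $(v+c_{k})^{-1}$ on $[0,k^{2\delta}-1/4]$ (valid as soon as $a$ is large enough that $(k^{2\delta}-1/4)/c_{k}<1$ for all $k$) produces the explicit representation
\[
\frac{\sin(\pi s)}{2\pi}\sum_{n\geq 0}\frac{(-1)^{n}(k^{2\delta}-1/4)^{n+1-s}}{(n+1-s)\,c_{k}^{n+1}},
\]
which is entire in $s$ because $\sin(\pi s)$ annihilates the simple poles at positive integers, and absolutely summable over $k$ since $c_{k}\sim k^{2}$ and $\delta<1/8$. Only the derivative of $\sin(\pi s)$ contributes at $s=0$, yielding $\sum_{k}\tfrac{1}{2}\log\!\bigl(1+(k^{2\delta}-1/4)/c_{k}\bigr)$. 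The elementary inequality $\log(1+x)\leq x$ bounds this above by $\zeta(2-2\delta)/(8\pi^{2}a^{2})=O(a^{-2})$, so combining with the main term yields the proposition.

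The main technical obstacle is to propagate the $O(a^{-2})$ error uniformly in $s$ through the two infinite summations; one must control both the binomial and the geometric tails uniformly in $k$ and $s$ in order to differentiate term by term at $s=0$. The crucial numerical input making the asymptotic come out cleanly is $\zeta'(0)=-\tfrac{1}{2}\log(2\pi)$, which turns $\log(2\pi a)+2\zeta'(0)$ into $\log a$ and eliminates what would otherwise be a spurious additive constant.
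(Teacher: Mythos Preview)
Your argument is correct and arrives at the same main term and error as the paper, but the route is genuinely different. The paper keeps the variable $u$ and applies the binomial expansion to the factor $((ku)^{2}-1/4)^{-s}$ itself; this produces a double sum in $k$ and $j$ whose inner integrals $\int_{k^{\delta-1}}^{\infty}u^{1-2s-2j}(u^{2}+(2\pi a)^{2})^{-1}\,du$ are then evaluated in closed form via hypergeometric functions, and the main piece $-\tfrac{1}{2}(2\pi a)^{-2s}\zeta(2s)$ emerges from the non-hypergeometric part of that evaluation. Your substitution $v=t^{2}-1/4$ collapses the problem to the classical Mellin integral $\int_{0}^{\infty}v^{-s}(v+c_{k})^{-1}\,dv=\pi c_{k}^{-s}/\sin(\pi s)$, so the main term $-\tfrac{1}{2}\sum_{k}c_{k}^{-s}$ appears immediately and the only binomial expansion needed is the trivial one for $c_{k}^{-s}$; the boundary correction is handled by a geometric series rather than hypergeometric identities. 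The advantage of your approach is that it is shorter and avoids special-function machinery; the paper's expansion, on the other hand, is set up so that the very same template (binomial in $((ku)^{2}-1/4)^{-s}$, then hypergeometric evaluation) carries over verbatim to the harder pieces $\zeta_{E,1}$ and $\zeta_{E,2}$, where no single clean substitution is available.

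One small point worth tightening: your Mellin decomposition is derived on the strip $0<\Real(s)<1$, whereas the sum $\sum_{k}E_{0,k}(s)$ is initially given on $1<\Real(s)<2$. You should remark that each $E_{0,k}(s)$ is individually entire (your geometric-series formula shows this), that your two summed pieces converge locally uniformly on an overlapping strip (the main term for $\Real(s)>1/2$, the correction for $\Real(s)>1-\tfrac{1}{2\delta}$, which covers a neighbourhood of $0$ since $\delta<1/8$), and that this suffices to identify the resulting function with the continuation of $\zeta_{E,0}$. This is the uniformity concern you flag in your last paragraph, and it is easily dispatched along these lines.
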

\begin{proof}
For $\Real(s)>1$, we have
\begin{displaymath}
	\zeta_{E,0}(s)=-\frac{\sin(\pi s)}{\pi}\sum_{k=1}^{\infty}\int\limits_{k^{\sigma-1}}^{\infty}
\left(k^2u^2-\frac{1}{4}\right)^{-s}
 \frac{u}{u^2+ (2\pi a)^2} du.
\end{displaymath}
For the range of validity of $u$, we can apply a binomial expansion.
We begin by writing
\begin{displaymath}
\left(k^2u^2-\frac{1}{4}\right)^{-s}=\sum\limits_{j=0}^{\infty}
\frac{(s)_j}{j! }\frac{1}{4^j}\frac{1}{k^{2s+2j}}\frac{1}{u^{2s+2j}},
\end{displaymath}
where $(s)_{j}=\Gamma(s+j)/\Gamma(s)=s(s+1)\ldots (s+j-1)$ is the standard notation for the Pochhammer symbol. This yields the expansion
\begin{displaymath}
	\zeta_{E,0}(s)=-\frac{\sin(\pi s)}{\pi}
\sum_{k=1}^{\infty}  \sum_{j=0}^{\infty}\frac{(s)_j}{j! }\frac{1}{4^j}
\frac{1}{k^{2s+2j}}
\int_{k^{\sigma-1}}^{\infty}
\frac{1}{u^{2s+2j-1}}
 \frac{1}{u^2+ (2\pi a)^2} du.
\end{displaymath}
The exchange of integral and sum is justified with ease. The integral in the above expression can be written in terms 
of hypergeometric functions:
\begin{align*}
\int\limits_{k^{\delta-1}}^{\infty}
&\frac{1}{u^{2s+2j-1}}\frac{1}{u^2+ (2\pi a)^2} du
=\\
&
\frac{1}{2(s+j-1)}  \frac{1}{(2\pi a)^{2}} \frac{1}{k^{2(\delta-1)(s+j-1)}}
 \cdot F(1,1-s-j;2-s-j;-(2\pi a)^{-2}k^{2\delta-2})\\
&-\frac{\pi}{2(-1)^{j}\sin(\pi s)}\frac{1}{(2\pi a)^{2(s+j)}}.
\end{align*}
We thus obtain 
\begin{equation}\label{eq:43}
	\begin{split}
	\zeta_{E,0}(s)
	=&-\frac{1}{(2\pi a)^{2}}\frac{\sin(\pi s)}{2\pi}\sum\limits_{k=1}^{\infty} \sum\limits_{j=0}^{\infty}\frac{(s)_j}{j! }\frac{1}{4^j}
	\frac{1}{(s+j-1)}\frac{1}{k^{2\delta(s+j)-2(\delta-1)}}\\
	&\hspace{3cm}\cdot F(1,1-s-j;2-s-j;-(2\pi a)^{-2}k^{2\delta-2})\\
	&-\sum\limits_{k=1}^{\infty} \sum\limits_{j=0}^{\infty}\frac{(-1)^{j}(s)_j}{j! }\frac{1}{2^{2j+1}}
	\frac{1}{k^{2(s+j)}}\frac{1}{(2\pi a)^{2(s+j)}}.
	\end{split}
\end{equation}
From this expansion, we see that $\zeta_{E,0}(s)$ is absolutely convergent for $1<\Real(s)<2$ and admits a meromorphic continuation to $\CC$. One can also deduce the following Laurent expansion at $s=0$:
\begin{displaymath}
	\begin{split}
		\zeta_{E,0}(s)
		&=-\frac{1}{2}\frac{\zeta(2s)}{(2\pi a)^{2s}}+f_{1}(a)+f_{2}(a)s+O(s^{2})\\
		&=(\zeta(0)^{2}+f_{1}(a))+(\zeta(0)\log(a)+f_{2}(a))s+O(s^{2}),
	\end{split}
\end{displaymath}
where $f_{1}(a)$ and $f_{2}(a)$ are $O(1/a^{2})$, as $a\to +\infty$. This concludes the proof.
\end{proof}

\begin{proposition}\label{prop:E1-integral}
The infinite sum $\zeta_{E,1}(s)=\sum_{k} E_{1,k}(s)$  (with $E_{1,k}(s)$ given by \eqref{def_Eellk})
is absolutely and locally uniformly convergent for $1<\Real(s)<2$, and admits a meromorphic continuation to $\CC$. Furthermore, $\zeta_{E,1}(s)$ is holomorphic at $s=0$ and
\begin{displaymath}
	\zeta_{E,1}'(0)=4\pi\zeta(-1) a+O\left(\frac{1}{a}\right),
\end{displaymath}
as $a\to +\infty$.
\end{proposition}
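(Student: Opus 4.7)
The plan is to mimic the strategy used for $\zeta_{E,0}(s)$ in Proposition \ref{prop:E0-integral}: expand the factor $((ku)^2-1/4)^{-s}$ as a binomial series on the range of integration, interchange sum and integral, and reduce the problem to evaluating $a$-dependent integrals whose sum over $k$ produces Riemann zeta values. Since $u \geq k^{\delta-1}$ and $k \geq 1$, one has $ku \geq k^{\delta} \geq 1$, so the expansion
\begin{displaymath}
	((ku)^2 - 1/4)^{-s} = \sum_{j=0}^{\infty} \frac{(s)_j}{j!\, 4^j}\, (ku)^{-2s-2j}
\end{displaymath}
is valid and we obtain
\begin{displaymath}
	E_{1,k}(s) = \frac{2\sin(\pi s)}{\pi}\sum_{j=0}^{\infty}\frac{(s)_j}{j!\, 4^j}\, k^{1-2s-2j} I_{j,k}(s), \qquad I_{j,k}(s) := \int_{k^{\delta-1}}^{\infty}\frac{\arcsinh(u/(2\pi a))}{u^{2s+2j}}\, du.
\end{displaymath}

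Next, I would rescale $u = 2\pi a v$ to get $I_{j,k}(s) = (2\pi a)^{1-2s-2j} J_{j,k}(s)$ with $J_{j,k}(s) = \int_{\lambda_k}^{\infty}\arcsinh(v)\, v^{-2s-2j}\,dv$ and $\lambda_k = k^{\delta-1}/(2\pi a) \to 0$. An integration by parts ($U=\arcsinh(v)$, $dV = v^{-2s-2j}dv$), valid for $\Real(s) > 1/2$, yields
\begin{displaymath}
	J_{j,k}(s) = \frac{\arcsinh(\lambda_k)\, \lambda_k^{1-2s-2j}}{2s+2j-1} + \frac{1}{2s+2j-1}\int_{\lambda_k}^{\infty}\frac{v^{1-2s-2j}}{\sqrt{1+v^2}}\, dv,
\end{displaymath}
and the tail integral can be extended to $[0,\infty)$ modulo a piece analytic in $s$, with the full integral evaluating to a ratio of Gamma functions via the beta integral $\int_0^{\pi/2} \sin^{1-2s-2j}\theta \cos^{2s+2j-2}\theta\, d\theta$. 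Summation over $k$ then produces $\zeta(2s+2j-1)$ for the principal piece, which furnishes the meromorphic continuation to all of $\CC$. The derivative at $s=0$ is extracted using $\frac{d}{ds}|_{s=0}\sin(\pi s)/\pi = 1$ and the fact that $(s)_j$ contributes a factor $s$ for $j \geq 1$: only the $j=0$ term survives in the leading contribution, giving
\begin{displaymath}
	\zeta_{E,1}'(0) = 2\cdot (2\pi a)\cdot \zeta(-1)\cdot \frac{\Gamma(1)\Gamma(-1/2)}{2\sqrt{\pi}(-1)} + O(1/a) = 4\pi\zeta(-1)a + O(1/a),
\end{displaymath}
after a short computation of the Gamma factors.

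The main obstacle is the $k$-dependence of the lower limit $\lambda_k = k^{\delta-1}/(2\pi a)$, which prevents a direct identification of $\sum_k k^{1-2s}J_{0,k}(s)$ with a product of a Riemann zeta function and a single Mellin transform. I would handle this by splitting $J_{j,k}(s)$ into its $k$-independent limiting value $J_{j,\infty}(s) := \int_0^{\infty}\arcsinh(v)\,v^{-2s-2j}dv$ (properly regularized via the Gamma function identity obtained above) and a boundary correction $\int_0^{\lambda_k}\arcsinh(v)\,v^{-2s-2j}\,dv$; the former sums to $\zeta(2s+2j-1)$ times an explicit factor, delivering both the meromorphic continuation and the leading $4\pi\zeta(-1)a$ term, while the latter, estimated via $\arcsinh(v)\sim v$ near $0$, produces a sum of the form $\sum_k k^{(2\delta-1) - 2\delta s}/a^{2}$ (times a harmless factor), absolutely convergent for $\delta < 1/2$ and contributing only at $O(1/a)$ to the derivative at $s=0$. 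A careful accounting of the $j \geq 1$ terms, together with verification that the corresponding series extends meromorphically to all of $\CC$ and contributes $O(1/a)$, completes the proof.
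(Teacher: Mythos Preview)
Your strategy coincides with the paper's: binomial expansion of $((ku)^{2}-1/4)^{-s}$, explicit evaluation of the resulting $\arcsinh$ integrals, and extraction of a leading piece proportional to $\zeta(2s-1)(2\pi a)^{1-2s}$ which delivers $4\pi\zeta(-1)a$, with boundary terms that are $O(1/a)$. The paper evaluates the integrals directly in terms of hypergeometric functions, while you rescale, integrate by parts, and reduce to a beta integral; these are equivalent computations of the same quantity, and your final Gamma-function calculation for the main term is correct and agrees with the paper's (after the reflection formula).

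There is, however, a genuine error in your treatment of the boundary correction. You claim that $\sum_{k\geq 1} k^{(2\delta-1)-2\delta s}$ is ``absolutely convergent for $\delta<1/2$''. This is false: at $s=0$ the exponent is $2\delta-1>-1$ for every $\delta\in(0,1/8)$, so the series diverges. (Your power of $a$ is also off by one: the boundary piece carries $(2\pi a)^{(1-2s)-(2-2s)}=(2\pi a)^{-1}$, not $a^{-2}$, though this does not affect the $O(1/a)$ claim.) The correct handling is to recognise this Dirichlet series as $\zeta(1-2\delta+2\delta s)$ and invoke its meromorphic continuation; it is holomorphic at $s=0$, so after multiplication by $\sin(\pi s)/\pi$ the whole boundary piece vanishes at $s=0$ and its $s$-derivative there is a constant multiple of $\zeta(1-2\delta)/(2\pi a)=O(1/a)$. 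The paper's boundary terms (those involving $F(1/2,1-s-j;2-s-j;\cdot)$ and the $\arcsinh$ Taylor series) exhibit exactly the same feature: they produce $k$-sums with exponent $1-2\delta$ at $s=0$, convergent only for $\Real(s)>1$, and are continued meromorphically via the Riemann zeta function rather than summed absolutely. Once you replace your convergence claim by this analytic continuation, your argument goes through and matches the paper's.
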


\begin{proof}
The main lines of the proof go as for Proposition \ref{prop:E0-integral}. We detail the important points. One writes
\begin{displaymath}
	\frac{\partial}{\partial u}\mathcal{E}_{1}(u,k)=k\arcsinh\left(\frac{u}{2\pi a}\right).
\end{displaymath}
Then, by a binomial expansion, one gets
\begin{displaymath}
	\zeta_{E,1}(s)=\frac{2\sin(\pi s)}{\pi}
	\sum_{k=1}^{\infty}  \sum\limits_{j=0}^{\infty}\frac{(s)_j}{j! }\frac{1}{4^j}
	\frac{1}{k^{2s+2j-1}}
	\int\limits_{k^{\delta-1}}^{\infty}
	\frac{1}{u^{2s+2j}}
	 \arcsinh(\frac{u}{2\pi a})du.
\end{displaymath}	
After a lengthy computation, one obtains a suitable expression for the $\arcsinh$ integrals:
\begin{align*}
\int\limits_{k^{\delta-1}}^{\infty}
&\frac{1}{u^{2s+2j}} \arcsinh(\frac{u}{2\pi a})du
=\\
&\frac{\pi}{(-1)^j\sin(\pi s)}\frac{\Gamma(s+j+\frac{1}{2})}{\sqrt{\pi}\,\Gamma(s+j)(1-2s-2j)^2 }
\frac{1}{(2\pi a)^{2s+2j-1}}\\
&-\frac{1}{2\pi a} 
\frac{1}{2(2s+2j-1)(s+j-1)}
\cdot\frac{F\left(\frac{1}{2},1-s-j;2-s-j;-\frac{1}{(2\pi a)^2 k^{2(1-\delta)} }\right)}{k^{(\delta-1)(2s+2j-2)}}\\
&-\frac{1}{2\pi a}
\frac{1}{2s+2j-1}\frac{1}{k^{(\delta-1)(2s+2j-1)}}
\cdot
\sum\limits_{r=0}^{\infty}\frac{(-1)^r (2r)!}{2^{2r}(2r!)^2 (2 r+1)} 
\frac{1}{(2\pi a)^{2r}} \frac{1}{k^{(1-\delta)(2r+1)}}.
\end{align*}
We insert the values of the integrals in the expression for the zeta function, after observing that we can exchange integration and sum. We find
\begin{displaymath}
\begin{split}
\zeta_{E,1}(s)
=&
 \sum\limits_{j=0}^{\infty}\frac{(-1)^j (s)_j}{j! }\frac{1}{2^{2j-1}}
\frac{\Gamma(s+j+\frac{1}{2})}{\sqrt{\pi}\,\Gamma(s+j)(1-2s-2j)^2 }
\frac{\zeta(2s+2j-1)}{(2\pi a)^{2s+2j-1}}\\
&-\frac{1}{2\pi a}\frac{2\sin(\pi s)}{\pi}
\sum_{k=1}^{\infty}  \sum\limits_{j=0}^{\infty}\frac{(s)_j}{j! }\frac{1}{4^j}
\frac{1}{2s+2j-1}\\
&\hspace{3.5cm}\cdot\sum\limits_{r=0}^{\infty}\frac{(-1)^r (2r)!}{2^{2r}(2r!)^2 (2 r+1)}
\cdot\frac{1}{(2\pi a)^{2r}}\frac{1}{k^{\delta(2s+2j-1)+(1-\delta)(2r+1)}} \\
&-\frac{1}{2\pi a}\frac{2\sin(\pi s)}{\pi}
\sum_{k=1}^{\infty}  \sum\limits_{j=0}^{\infty}\frac{(s)_j}{j! }\frac{1}{4^j}
\frac{F\left(\frac{1}{2},1-s-j;2-s-j;-\frac{1}{(2\pi a)^2 k^{2(1-\delta)} }\right)}{2(2s+2j-1)(s+j-1)}\\
&\hspace{3.5cm}\cdot\frac{1}{k^{\delta(2s+2j-2)+1}}.
\end{split}
\end{displaymath}
From this expansion, one can conclude the absolute convergence on $1<\Real(s)<2$, the meromorphic continuation to $\CC$, and deduce the following Laurent expansion at $s=0$:
\begin{align*}
\zeta_{E,1}(s)&=\frac{2\Gamma(s+\frac{1}{2})}{\sqrt{\pi}\Gamma(s)(1-2s)^2 }\frac{\zeta(2s-1)}{a^{2s-1}}+
f(a)\cdot s +O(s^2)\\
&=(4 \zeta(-1)\pi  a +f(a) )s +O(s^2),
\end{align*}
with $f(a)=O(1/a)$, as $a\to +\infty$, as was to be shown.
\end{proof}

\begin{proposition}\label{prop:E2-integral}
The infinite sum $\zeta_{E,2}(s)=\sum_{k}E_{2,k}(s)$ (with $E_{2,k}(s)$ given by \eqref{def_Eellk}) is absolutely and locally uniformly convergent for $1<\Real(s)<2$, and admits a meromorphic continuation to $\CC$. Furthermore, $\zeta_{E,2}(s)$ is holomorphic at $s=0$ and
\begin{displaymath}
	\zeta_{E,2}'(0)=O\left(\frac{1}{a}\right),
\end{displaymath}
as $a\to+\infty$.
\end{proposition}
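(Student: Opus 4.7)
\textbf{Proof plan for Proposition \ref{prop:E2-integral}.} The approach mirrors precisely the one used for propositions \ref{prop:E0-integral} and \ref{prop:E1-integral}, the only novelty being that the extra factor $1/k$ in $\partial_u\mathcal{E}_2(u,k)$ and the stronger decay $(u^2+(2\pi a)^2)^{-3/2}$ and $(u^2+(2\pi a)^2)^{-5/2}$ at infinity make every contribution an order smaller in $a$. On the interval of integration we have $ku\geq k^\delta$, so for $k$ sufficiently large we can apply the binomial expansion
\begin{displaymath}
((ku)^2-1/4)^{-s}=\sum_{j=0}^{\infty}\frac{(s)_j}{j!\,4^j}\frac{1}{(ku)^{2s+2j}},
\end{displaymath}
substitute into \eqref{def_Eellk}, and interchange the order of summation and integration (justified by absolute convergence for $1<\Real(s)<2$) to obtain
\begin{displaymath}
\zeta_{E,2}(s)=\frac{2\sin(\pi s)}{24\pi}\sum_{k\geq 1}\sum_{j\geq 0}\frac{(s)_j}{j!\,4^j\,k^{2s+2j+1}}\Bigl[13\,I_1^{(j)}(s,k)-15\,I_2^{(j)}(s,k)\Bigr],
\end{displaymath}
where $I_1^{(j)}(s,k)=\int_{k^{\delta-1}}^{\infty}u^{1-2s-2j}(u^2+(2\pi a)^2)^{-3/2}du$ and $I_2^{(j)}(s,k)=\int_{k^{\delta-1}}^{\infty}u^{2-2s-2j}(u^2+(2\pi a)^2)^{-5/2}du$.

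The next step is to evaluate each integral in closed form. We split $\int_{k^{\delta-1}}^{\infty}=\int_0^{\infty}-\int_0^{k^{\delta-1}}$. The complete integrals on $(0,\infty)$ are beta integrals,
\begin{displaymath}
\int_0^{\infty}\frac{u^{2\alpha-1}}{(u^2+(2\pi a)^2)^{\beta}}du=\frac{(2\pi a)^{2\alpha-2\beta}}{2}B(\alpha,\beta-\alpha),
\end{displaymath}
producing a main series
\begin{displaymath}
\zeta_{E,2}^{\mathrm{main}}(s)=\frac{\sin(\pi s)}{24\pi}\sum_{j\geq 0}\frac{(s)_j}{j!\,4^j}\zeta(2s+2j+1)\Bigl[13\,(2\pi a)^{-1-2s-2j}B(1-s-j,\tfrac{1}{2}+s+j)-15\,(2\pi a)^{-2-2s-2j}B(\tfrac{3}{2}-s-j,1+s+j)\Bigr].
\end{displaymath}
The truncated tails $\int_0^{k^{\delta-1}}$ are Gauss hypergeometric functions $F(\tfrac{1}{2},\ldots;\ldots;-(2\pi a)^{-2}k^{2\delta-2})$ evaluated at small arguments; they stay uniformly bounded and yield an additional Dirichlet series in $k$ that converges absolutely for $\Real(s)>-\sigma_0$ for some $\sigma_0>0$, by the same mechanism exploited in the proof of Theorem \ref{thm:L-integral-cusp} (the exponent of $k$ picks up a factor $(\delta-1)(2s+2j-2)$ from the lower limit, pushing the abscissa of convergence well to the left of the origin).

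Meromorphic continuation to $\CC$ is then automatic: the only potential singularities arise from the poles of $\zeta(2s+2j+1)$ at $s=-j$, and these are all cancelled by the prefactor $\sin(\pi s)/\pi$, which has a simple zero at every integer. Hence $\zeta_{E,2}(s)$ is entire, and in particular holomorphic at $s=0$.

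For the derivative at $s=0$, the main obstacle is the bookkeeping needed to verify that no anomalous terms appear: one must carefully expand the product $\tfrac{2\sin(\pi s)}{\pi}\zeta(2s+2j+1)$ as $1+2\gamma s+O(s^2)$ for $j=0$ (where the pole–zero cancellation occurs), and as $2s\,\zeta(2j+1)+O(s^2)$ for $j\geq 1$, and then combine with the Taylor expansions of the beta factors and of $(2\pi a)^{-1-2s-2j}$, $(2\pi a)^{-2-2s-2j}$. The decisive point is that every summand carries a prefactor of $(2\pi a)^{-1-2j}$ or $(2\pi a)^{-2-2j}$, so every contribution to $\zeta_{E,2}'(0)$ is of size at most $O(a^{-1})$ (even the logarithmic $\log(2\pi a)\cdot a^{-1}$ contributions produced by differentiating $(2\pi a)^{-1-2s}$ at $s=0$ are $o(1)$ as $a\to\infty$ and, together with the bounded contributions from the hypergeometric tails, fit into an overall $O(1/a)$ bound uniformly in $j$). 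Summing the resulting dominated geometric-type series in $j$ and $k$ yields $\zeta_{E,2}'(0)=O(1/a)$ as $a\to+\infty$, completing the proof.
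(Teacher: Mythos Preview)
Your approach is exactly the paper's: binomial-expand $((ku)^2-1/4)^{-s}$ and evaluate the resulting integrals $\int_{k^{\delta-1}}^\infty u^{\alpha}(u^2+(2\pi a)^2)^{-\beta}\,du$ in closed form. The single identity the paper records,
\[
\int_0^u \frac{t^{\mu-1}}{(t+1)^\nu}\,dt=\frac{u^{\mu-\nu}}{\mu-\nu}F(\nu,\nu-\mu;\nu-\mu+1;-u^{-1})+\frac{\Gamma(\mu)\Gamma(\nu-\mu)}{\Gamma(\nu)},
\]
is, after the substitution $t=u^2/(2\pi a)^2$, precisely your splitting into a complete beta integral plus a hypergeometric tail. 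So the two arguments coincide in substance.

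Three technical points need tightening. First, your splitting $\int_{k^{\delta-1}}^\infty=\int_0^\infty-\int_0^{k^{\delta-1}}$ is not literally valid in the strip $1<\Real(s)<2$: for $j=0$ the complete integral $\int_0^\infty u^{1-2s}(u^2+(2\pi a)^2)^{-3/2}\,du$ diverges at $u=0$ once $\Real(s)>1$. The identity must be established first for $-1/2<\Real(s)<1$ and then continued; this is implicit in the paper's formula as well, but you should say so. Second, the claim that $\zeta_{E,2}$ is \emph{entire} is too strong: the Gamma factors in $B(1-s-j,\tfrac12+s+j)$ produce poles at negative half-integers that $\sin(\pi s)$ does not kill. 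The proposition only asserts meromorphic continuation and holomorphy at $s=0$, and your argument does yield that.

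Third, and most substantively, there is a slip in the final paragraph. You correctly note that differentiating $(2\pi a)^{-1-2s}$ at $s=0$ produces a factor $\log(2\pi a)$, but then assert these contributions ``fit into an overall $O(1/a)$ bound''. They do not: carrying out the $j=0$ expansion of your main series gives a term proportional to $(2\pi a)^{-1}(\gamma-\log(4\pi a))$, which is $O((\log a)/a)$, not $O(1/a)$. The hypergeometric tail is $O(a^{-3})$ and cannot cancel it. This is still $o(1)$, which is all that Theorem~\ref{thm:E-integral-cusp} and ultimately Theorem~\ref{theorem:zeta-pseudo-laplacian} require, so nothing downstream is affected; but the stated $O(1/a)$ is sharper than what your computation (or, it appears, the paper's) actually delivers.
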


\begin{proof}
The proof is similar to the proof propositions \ref{prop:E0-integral}--\ref{prop:E1-integral}. For the convenience of the reader, we give the relevant input in the computation, namely an explicit integral evaluation under a suitable form:
\begin{displaymath}
\int\limits_{0}^{u}\frac{t^{\mu-1}}{(t+1)^{\nu}}dt=\frac{u^{\mu-\nu}}{\mu-\nu} 
 F(\nu,\nu-\mu;-\mu+\nu+1;-u^{-1})
+
\frac{\Gamma(\mu)\Gamma(\nu-\mu)}{\Gamma(\nu)},
\end{displaymath}
where $u,\nu>0$, and $\mu$ is complex with $\Real(\mu)>0$.
\end{proof}
\begin{proof}[Proof of Theorem \ref{thm:E-integral-cusp}]
The theorem is the conjunction of propositions \ref{prop:E0-integral}--\ref{prop:E2-integral}.
\end{proof}

\subsection{Conclusion: proofs of theorems \ref{thm:det-pseudo-laplacian} and \ref{theorem:zeta-pseudo-laplacian}}\label{subsubsec:pseudo-laplacian-conclusion}
After the long discussion \textsection\ref{subsubsec:pseudo-laplacian-1}--\textsection\ref{subsubsec:pseudo-laplacian-4}, we conclude that the spectral zeta function $\zeta_{\ps}(s)$ of the pseudo-laplacian $\Delta_{\ps}$ on the model cusp $\C_{a}$, for $a>0$, is absolutely and locally uniformly convergent 
for $\Real(s)>1$, admits a meromorphic continuation to $\Real(s)>-\sigma_{0}$ for some $\sigma_{0}>0$, and is holomorphic at $s=0$. Moreover, its derivative at $s=0$ satisfies the equality
\begin{displaymath}
	\zeta_{\ps}'(0)=4\pi\zeta(-1)a+\zeta(0)\log(a)+o(1),
\end{displaymath}
as $a\to +\infty$.
Equivalently, we have
\begin{displaymath}
	\log\det(\Delta_{\ps})=-4\pi\zeta(-1)a-\zeta(0)\log(a)+o(1),
\end{displaymath}
as $a\to +\infty$.
This concludes the proof of Theorem \ref{theorem:zeta-pseudo-laplacian} and hence of Theorem \ref{thm:det-pseudo-laplacian}.

\section{Evaluation of the determinant of the Dirichlet laplacian on a model cone}\label{subsection:determinant-cone}
We proceed to study the spectral zeta function for the model cone \textsection\ref{subsec:model-cone}, abutting to the computation of the zeta regularized determinant (Theorem \ref{thm:det_cone}). The strategy and methods are analogous to the case of the model cusp treated in Section \ref{section:evaluation-determinants}. Therefore, we won't provide full details of the computations. Nevertheless, we summarize the steps where some care is needed.

\subsection{The spectral zeta function of $\Delta_{D}$}
We parametrize the model hyperbolic cone of angle $2\pi/\omega$ by $\E_{\eta}=(0,\eta]\times [0,2\pi]$, with coordinates $(\rho,\theta)$, so that $\rho\to 0$ corresponds to the appex of the cone. We discussed the eigenvalue problem for $\Delta_{D}$ with Dirichlet boundary condition, reducing to the implicit equation
\begin{equation}\label{eq:eigen-cone}
	P_{1/2+ir}^{-|k|\omega}(\cosh(\eta))=0.
\end{equation}
By Lemma \ref{lemma:min-eigen-cone}, the spectrum of $\Delta_{D}$ is contained in $(1/4;+\infty)$. By Proposition \ref{prop:eigen-cone}, it can be divided in two packages: i) simple eigenvalues $\lambda_{0,n}$, solutions to \eqref{eq:eigen-cone} with $k=0$; ii) eigenvalues $\lambda_{k,n}$, of multiplicity 2, solutions to \eqref{eq:eigen-cone} with $k>0$. 

Proceeding exactly as in \textsection\ref{subsubsec:pseudo-laplacian-1},
the spectral zeta function $\zeta_{\omega}(s)$ for $\Delta_{D}$
can be represented in the region $1<\Real(s)<2$ as an absolutely and locally uniformly convergent series
\begin{displaymath}
\zeta_{\omega}(s)=\frac{\sin(\pi s)}{\pi}\sum_{k\geq 0} d_{k}\int_{1/2}^{\infty}(t^{2}-1/4)^{-s} f_{k}(t) dt,
\end{displaymath}
where $d_{0}=1$, $d_{k}=2$ for $k\geq 1$, and the function $f_k$
is defined (analogously to \eqref{def:26a} in \textsection\ref{subsubsec:pseudo-laplacian-1}) by
\begin{equation}\label{eq:fk-cone}
	f_{k}(t)=\frac{\partial}{\partial t}\log P_{-1/2+t}^{-k\omega}(\cosh(\eta))-2t\frac{\partial}{\partial t}\mid_{t=1/2}\log P_{-1/2+t}^{-k\omega}(\cosh(\eta)).
\end{equation}
Again, it will be necessary to divide the interval $(1/2,+\infty)$. More precisely, 
introducing a small parameter $0<\delta\ll 1$, we
 use the splitting
\begin{align*}
	\int_{1/2}^{\infty}=\int_{1/2}^{1}+\int_{1}^{\infty},\text{ if } &k=0;\\
	\int_{1/2}^{\infty}=\int_{1/2}^{k^{\delta}}+\int_{k^{\delta}}^{\infty},\text{ if } &k\geq 1.
\end{align*}
Accordingly, we obtain a decomposition of $\zeta_{\omega}(s)=\zeta_{L}(s)+\zeta_{M}(s)$, valid on the region $1<\Real(s)<2$.

\subsection{Negligible contributions}
We begin by isolating the negligible contributions (as $\eta\to 0$) to the spectral zeta function.
\begin{proposition}\label{prop:cone-zeta-L}
The infinite sum $\zeta_{L}(s)$ is absolutely and locally uniformly convergent for $1<\Real(s)<2$, 
and admits a meromorphic continuation to $\Real(s)>-\sigma_{0}$, 
for some $\sigma_{0}>0$. Furthermore, $\zeta_{L}(s)$ is holomorphic at $s=0$ and 
\begin{displaymath}
	\lim_{\eta\to 0}\zeta_{L}'(0)=0.
\end{displaymath}
\end{proposition}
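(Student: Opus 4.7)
The plan is to adapt the proof of Theorem \ref{thm:L-integral-cusp}, with the modified Bessel functions $K_{t}(2\pi ka)$ replaced by the Legendre functions $P_{-1/2+t}^{-k\omega}(\cosh\eta)$. I would split $\zeta_{L}(s)=L_{0}(s)+\sum_{k\geq 1}2L_{k}(s)$ and treat each piece separately. For each $k\geq 1$, introduce the auxiliary function
\begin{equation*}
F_{k}(t):=\log P_{-1/2+t}^{-k\omega}(\cosh\eta)-\log P_{0}^{-k\omega}(\cosh\eta)-(t^{2}-1/4)\,\phi_{k}(\eta),
\end{equation*}
where $\phi_{k}(\eta):=\tfrac{\partial}{\partial t}\big|_{t=1/2}\log P_{-1/2+t}^{-k\omega}(\cosh\eta)$. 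By construction $F_{k}$ is even in $t$ and vanishes to second order at $t=\pm 1/2$, so Taylor's theorem yields $F_{k}(t)=(t^{2}-1/4)^{2}R_{k}(t)$ with $R_{k}$ analytic. An integration by parts then recasts $L_{k}(s)$ in the form
\begin{equation*}
L_{k}(s)=2\frac{\sin(\pi s)}{\pi}(k^{2\delta}-1/4)^{-s}F_{k}(k^{\delta})+2\frac{\sin(\pi s)}{\pi}\int_{1/2}^{k^{\delta}}2st(t^{2}-1/4)^{-s-1}F_{k}(t)\,dt,
\end{equation*}
a priori holomorphic for $\Real(s)<1$.

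The heart of the argument is a uniform estimate on $R_{k}(t)$, playing the role of Corollary \ref{cor:nightmare-bessel}. I would express $R_{k}$ through second $t$-derivatives of $\log P_{-1/2+t}^{-k\omega}(\cosh\eta)$ via Cauchy's integral formula on a disk of fixed radius around a point $\xi\in[1/2,k^{\delta}]$, and then exploit the hypergeometric representation
\begin{equation*}
P_{-1/2+t}^{-k\omega}(\cosh\eta)=\frac{(\tanh(\eta/2))^{k\omega}}{\Gamma(1+k\omega)}\,F\!\left(\tfrac{1}{2}-t,\tfrac{1}{2}+t;\,1+k\omega;\,-\sinh^{2}(\eta/2)\right)
\end{equation*}
to concentrate the entire $t$-dependence into the hypergeometric factor. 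Writing $F$ as a power series in $\sinh^{2}(\eta/2)$, the coefficients are $(1/2-t)_{n}(1/2+t)_{n}/[(1+k\omega)_{n}\,n!]$: the Pochhammer symbols in the numerator grow polynomially in $|t|\leq k^{\delta}$, while the denominator supplies decay of order $(k\omega)^{-n}$. A term-by-term majorization should yield a bound of the shape $R_{k}(t)=O(\eta^{2}\,k^{c\delta-2})$ uniformly on $[1/2,k^{\delta}]$ for some explicit $c>0$. Together with the boundary term $F_{k}(k^{\delta})$, this is enough to deliver absolute and locally uniform convergence of $\sum_{k\geq 1}L_{k}(s)$ on a half-plane $\Real(s)>-\sigma_{0}$ (once $\delta$ is chosen small enough), holomorphy at $s=0$, and a derivative at $s=0$ of size $O(\eta^{2})$, which tends to $0$ as $\eta\to 0$.

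The contribution of $k=0$ is handled in parallel: the same construction applies, and the hypergeometric expansion together with $P_{\nu}(1)=1$ gives $P_{-1/2+t}(\cosh\eta)=1+(t^{2}-1/4)\sinh^{2}(\eta/2)+O(\eta^{4})$, so that $F_{0}(t)=O(\eta^{4})$ uniformly on $[1/2,1]$, $L_{0}(s)$ is entire, and $L_{0}^{\prime}(0)\to 0$. The principal obstacle is the uniform estimate above: unlike the Bessel setting, where Olver's asymptotics supply a ready-made expansion with remainder, here one must simultaneously track the regime $\eta\to 0$ and the summation over $k\to\infty$ on a varying interval $[1/2,k^{\delta}]$, and achieve a delicate balance between the $\eta$-decay coming from $\sinh^{2}(\eta/2)$ and the $k$-decay afforded by the factors $1/(1+k\omega)_{n}$. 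Making this balance explicit is what constrains the admissible range of $\delta$, and is the technical crux of the whole argument.
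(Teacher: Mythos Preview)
Your proposal is correct and follows essentially the same route as the paper: reduce the Legendre function to its hypergeometric representation (your formula and the paper's coincide, since $\tfrac{1-\cosh\eta}{2}=-\sinh^{2}(\eta/2)$ and $\left(\tfrac{\cosh\eta-1}{\cosh\eta+1}\right)^{k\omega/2}=(\tanh(\eta/2))^{k\omega}$), build the even primitive $F_{k}$ vanishing to second order at $\pm 1/2$, integrate by parts, and then estimate the remainder uniformly on $[1/2,k^{\delta}]$.

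The one point worth flagging is the uniform remainder bound you call ``the technical crux''. The paper does not re-derive it by a term-by-term majorization; instead it invokes a ready-made asymptotic expansion with remainder for $F(\tfrac12+t,\tfrac12-t;k\omega+1;\xi)$ due to Wagner, valid precisely in the regime $t^{2}=o(k)$ that the choice of interval $[1/2,k^{\delta}]$ enforces. This gives
\[
F\bigl(\tfrac12+t,\tfrac12-t;k\omega+1;\xi\bigr)=\sum_{n=0}^{\ell-1}\frac{(\tfrac12+t)_{n}(\tfrac12-t)_{n}}{(k\omega+1)_{n}}\frac{\xi^{n}}{n!}+\rho_{\ell}(t,k)\frac{(\tfrac12+t)_{\ell}(\tfrac12-t)_{\ell}}{(k\omega+1)^{\ell}}\xi^{\ell}
\]
with $\rho_{\ell}$ uniformly bounded on $[1/2,k^{\delta}]$, and hence the remainder is $O(k^{-(\ell-2\ell\delta)})$. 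This plays exactly the role that Olver's expansion (Proposition~\ref{prop:asymptotics-bessel-1}) played in the cusp case, and replaces the Cauchy-integral argument you outline. Your direct approach would also work, but Wagner's result is the clean substitute for Olver here.
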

\begin{proof}
We invoke the following expression for the Legendre function in terms of the hypergeometric function:
\begin{displaymath}
	\begin{split}
	P_{-1/2+t}^{-k\omega}(\cosh(\eta))=\frac{1}{\Gamma(kw+1)}&\left(\frac{\cosh(\eta)-1}{\cosh(\eta)+1}\right)^{k\omega/2}
	 F\left(\frac{1}{2}+t,\frac{1}{2}-t;k\omega+1;\frac{1-\cosh(\eta)}{2}\right).
	\end{split}
\end{displaymath}
The quantity $\xi:=(1-\cosh(\eta))/2$ tends to 0 as $\eta$ converges to 0, and hence we can use the series representation for the hypergeometric function:
\begin{displaymath}
	F(1/2+t,1/2-t;k\omega+1;\xi)
	=\sum_{n\geq 0}\frac{(1/2+t)_{n} (1/2-t)_{n}}{(k\omega+1)_{n}}\frac{\xi^{n}}{n!}.
\end{displaymath}
From this one deduce the special value
\begin{displaymath}
	\frac{\partial}{\partial t}\mid_{t=1/2} P_{-1/2+t}^{-k\omega}(\cosh(\eta))=-\frac{\xi}{k\omega+1}F(1,1;k\omega+2;\xi).
\end{displaymath}
As in \textsection\ref{subsubsec:pseudo-laplacian-2} and the proof of Theorem \ref{thm:L-integral-cusp}, one proceeds by integration by parts. As a primitive for $f_{k}(t)$, one takes
\begin{displaymath}
	F_{k}(t)=\log F(1/2+t,1/2-t;k\omega+1;\xi)+
	(t^{2}-1/2)\frac{\xi}{k\omega+1}F(1,1;k\omega+2;\xi).
\end{displaymath}
Now, the analogue role to the asymptotics of Proposition \ref{prop:asymptotics-bessel-1} is provided by \cite{Wagner}. For instance, if $k\geq 1$, in the interval $[1/2,k^{\delta}]$ we have $t^{2}=o(k)$ uniformly in $k$, by the choice of small $\delta$. Then \emph{loc.~cit.}~applies and produces an approximation with remainder
\begin{displaymath}
	F(1/2+t,1/2-t;k\omega+1;\xi)=
	\sum_{n=0}^{\ell-1}\frac{(1/2+t)_{n} (1/2-t)_{n}}{(k\omega+1)_{n}}\frac{\xi^{n}}{n!}
	+\rho_{\ell}(t,k)\frac{(1/2+t)_{\ell} (1/2-t)_{\ell}}{(k\omega+1)^{\ell}}\xi^{\ell},
\end{displaymath}
where $\rho_{\ell}(t,k)$ is uniformly bounded on $[1/2,k^{\delta}]$ (in the numerator of the remainder, $\ell$ is indeed an exponent). Also, observe that in this interval
\begin{displaymath}
	\frac{(1/2+t)_{\ell}(1/2-t)_{\ell}}{(k\omega+1)^{\ell}}=O\left(\frac{1}{k^{\ell-2\ell\delta}}\right).
\end{displaymath}
A similar computation as in the proof of Theorem \ref{thm:L-integral-cusp} allows to conclude. The details are left to the reader.
\end{proof}
Next, we decompose the function $\zeta_{M}(s)$ into 
\begin{equation}\label{splitdef_widetildem}
\zeta_{M}(s)=\zeta_{\widetilde{M}}(s)+\zeta_{R}(s),
\end{equation}
corresponding to the two terms defining the function $f_{k}$ in \eqref{eq:fk-cone}. 
\begin{proposition}
The infinite sum $\zeta_{R}(s)$ is absolutely and locally uniformly convergent for $1<\Real(s)<2$,
and admits a meromorphic continuation to $\CC$.
Furthermore, $\zeta_{R}(s)$ is holomorphic at $s=0$ and 
\begin{displaymath}
	\lim_{\eta\to 0}\zeta_{R}^{\prime}(0)=0.
\end{displaymath}
\end{proposition}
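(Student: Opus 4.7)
The plan is to obtain $\zeta_R(s)$ in closed form, isolate the factor $\sinh^2(\eta/2)$, then establish meromorphic continuation and control $\zeta_R'(0)$ uniformly in $\eta$ via comparison with Hurwitz-type zeta functions, in the spirit of Section~\ref{section:evaluation-determinants}.

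First I would perform the $t$-integration. By \eqref{splitdef_widetildem},
\begin{displaymath}
\zeta_R(s) \;=\; -\frac{2\sin(\pi s)}{\pi}\sum_{k\geq 0} d_k\, c_k(\eta)\int_{a_k}^{\infty}(t^2-1/4)^{-s}\,t\, dt,
\end{displaymath}
with $a_0=1$, $a_k=k^{\delta}$ for $k\geq 1$, and $c_k(\eta):=\frac{\partial}{\partial t}|_{t=1/2}\log P_{-1/2+t}^{-k\omega}(\cosh\eta)$. The inner integral is elementary, equal to $(a_k^2-1/4)^{1-s}/(2(s-1))$ for $\mathrm{Re}(s)>1$. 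Using the hypergeometric representation of the Legendre function recalled in the proof of Proposition~\ref{prop:cone-zeta-L} together with the derivative computation given there, one obtains the clean factorisation
\begin{displaymath}
c_k(\eta)\;=\;\frac{\sinh^2(\eta/2)}{k\omega+1}\, F\!\left(1,1;\,k\omega+2;\,-\sinh^2(\eta/2)\right).
\end{displaymath}
The hypergeometric factor is bounded by $1$ uniformly in $k\geq 0$ and in $\eta$ small, since the series is alternating with terms decreasing in absolute value. This isolates the essential factor $\sinh^2(\eta/2)$ in every coefficient of the $k$-sum.

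Next I would establish the meromorphic continuation. Setting $\widetilde c_k(\eta):=F(1,1;k\omega+2;-\sinh^2(\eta/2))$,
\begin{displaymath}
\zeta_R(s)=-\frac{\sin(\pi s)}{\pi(s-1)}\,\sinh^2(\eta/2)\, S(s;\eta),\quad S(s;\eta):=\sum_{k\geq 0}\frac{d_k\,\widetilde c_k(\eta)}{k\omega+1}(a_k^2-1/4)^{1-s}.
\end{displaymath}
Absolute convergence for $1<\mathrm{Re}(s)<2$ follows from $|\widetilde c_k|\leq 1$, $1/(k\omega+1)\ll 1/k$ and $(a_k^2-1/4)^{1-s}\ll k^{2\delta(1-\mathrm{Re}(s))}$. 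For the continuation, I would expand $\widetilde c_k(\eta)$ as a power series in $-\sinh^2(\eta/2)$, the factor $1/(k\omega+1)$ in inverse powers of $k$, and $(k^{2\delta}-1/4)^{1-s}$ binomially in $k^{-2\delta}$. The rearranged sums are combinations of Hurwitz-type values $\zeta(\alpha+2\delta(s-1),1+\omega^{-1})$ multiplied by polynomials in $\sinh^2(\eta/2)$, plus a tail absolutely convergent in a half-plane extending past $s=0$. The leading Hurwitz factor has its only pole at $s=1$, which is cancelled by $\sin(\pi s)/(s-1)$; all subsequent Hurwitz factors have their poles at $s<0$. Hence $S(s;\eta)$ is holomorphic at $s=0$ and $\zeta_R$ admits a meromorphic continuation to $\mathbb{C}$ holomorphic at the origin.

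Finally, from $\sin(\pi s)/(\pi(s-1))=-s+O(s^2)$ at $s=0$ one obtains $\zeta_R'(0)=\sinh^2(\eta/2)\cdot S(0;\eta)$. Because $|\widetilde c_k(\eta)|\leq 1$ is uniform in $\eta\in(0,\eta_0]$ and the Hurwitz values appearing in the continuation of $S(s;\eta)$ at $s=0$ are finite and bounded independently of $\eta$, we have $S(0;\eta)=O(1)$, hence $\zeta_R'(0)=\sinh^2(\eta/2)\cdot O(1)\to 0$ as $\eta\to 0$. The main obstacle is not computational but the bookkeeping of the uniform (in $\eta$) meromorphic continuation of $S(s;\eta)$: one must verify that the coefficients arising in the asymptotic expansion of $\widetilde c_k(\eta)/(k\omega+1)$ in inverse powers of $k$ remain bounded (and are in fact analytic in $\eta$) as $\eta\to 0$, so that each constituent Hurwitz-type sum separately admits an $\eta$-independent bound near $s=0$; this parallels the analysis of $\zeta_E$ carried out in Section~\ref{section:evaluation-determinants}.
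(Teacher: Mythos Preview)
Your approach is correct and essentially identical to the paper's own proof. Both compute the elementary $t$-integral $\int_{a_k}^\infty (t^2-1/4)^{-s}\,2t\,dt=(a_k^2-1/4)^{1-s}/(s-1)$, insert the value $c_k(\eta)=-\frac{\xi}{k\omega+1}F(1,1;k\omega+2;\xi)$ with $\xi=-\sinh^2(\eta/2)$ (exactly the special value recorded in the proof of Proposition~\ref{prop:cone-zeta-L}), and then invoke the power-series expansion of the hypergeometric factor to obtain the meromorphic continuation and the vanishing of $\zeta_R'(0)$ as $\eta\to 0$; the paper simply says ``one concludes by considering the series expansion of the hypergeometric function'' and leaves the details to the reader, whereas you spell them out. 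One small imprecision: the sums that arise after the binomial and hypergeometric expansions are not literally Hurwitz values $\zeta(\alpha,1+\omega^{-1})$ but rather Riemann zeta values $\zeta(m+1+2\delta(s-1))$ (after also expanding $1/(k\omega+1)$ in inverse powers of $k$), or Lerch-type sums if you keep that factor intact---this does not affect the argument, but the bookkeeping you flag at the end should be phrased accordingly.
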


\begin{proof}
For instance, for $k\geq 1$, we have to treat the integral
\begin{displaymath}
	\begin{split}
	\frac{\sin(\pi s)}{\pi}\frac{F(1,1;k\omega+2;\xi)}{k\omega+1}&\int_{k^{\delta}}^{\infty}(t^{2}-1/4)^{-s} 2t \frac{\xi}{k\omega+1} dt=\\
	&\frac{\sin(\pi s)\xi}{\pi(s-1)}\frac{F(1,1;k\omega+1;\xi)}{k\omega+1}\frac{1}{(k^{2\delta}-1/4)^{s-1}}.
	\end{split}
\end{displaymath}
We proceed similarly for the integral with $k=0$. One concludes by considering the series expansion of the hypergeometric function. The details are left to the reader.
\end{proof}
\subsection{The main contribution}
The main contribution in the computation of the determinant of the Dirchlet laplacian on the hyperbolic cone will all come from 
the function (defined in \eqref{splitdef_widetildem})
\begin{displaymath}
	\begin{split}
		\zeta_{\widetilde{M}}(s)=&\frac{\sin(\pi s)}{\pi}\int_{1}^{\infty}(t^{2}-1/4)^{-s}\frac{\partial}{\partial t}\log P_{-1/2+t}^{0}(\cosh(\eta)) dt\\
		&+2\frac{\sin(\pi s)}{\pi}\sum_{k\geq 1}\int_{k^{\delta}}^{\infty}(t^{2}-1/4)^{-s}\frac{\partial}{\partial t}\log P_{-1/2+t}^{-k\omega}(\cosh(\eta))dt.
	\end{split}
\end{displaymath}
It will be convenient to treat the two lines separately, hence we further split 
\begin{equation}\label{splitdef_widetilde_mzeroandone}
\zeta_{\widetilde{M}}(s)=\zeta_{\widetilde{M}_{0}}(s)+\zeta_{\widetilde{M}_{1}}(s),
\end{equation}
accordingly (corresponding to $k=0$ and $\sum_{k\geq 1}$). 

We begin by considering the function $\zeta_{\widetilde{M}_{1}}(s)$. The strategy is parallel to the cusp case. We first seek asymptotics that play the role of Proposition \ref{prop:asymptotics-bessel-2}. The result we need is an adaptation of \cite{Khus}.  Following the method of \emph{loc.~cit.}~one obtains the asymptotics, which are \emph{uniform} in $u>0$:
\begin{displaymath}
P^{- k\omega}_{-\frac{1}{2}+k\omega u}(\cosh(\eta)) \sim
\frac{1}{\Gamma(k\omega+1)}
\frac{( 1-u^2 v^{2} )^{1/4}}{ (1-u^2)^{1/4}} e^{k\omega\cdot  S_{-1}(v)}
\sum\limits_{n=0}^{\infty}\frac{\psi_n(v)}{(k\omega)^{n}}.
\end{displaymath}
Let us clarify the notations and make some comments afterwards. First, we have set
\begin{displaymath}
	v=\frac{\cosh(\eta)}{\sqrt{1+u^2 \sinh(\eta)^2}}.
\end{displaymath}
The function $S_{-1}(v)$ is defined as
\begin{displaymath}
	S_{-1}(v)=
	\frac{1}{2} \log\left(\frac{1-v}{(1+v)(u^2-1)}\right)-u \log(u+1)
	+ u\log\left(u\cosh(\eta)+  v^{-1}\cosh(\eta)  \right).
\end{displaymath}
The notation $S_{-1}$ has been chosen in order to ease the comparison with \cite{Khus}. The $\psi_{n}(v)$ are functions obeying a recurrence scheme. Here we content ourselves by giving the first ones and quoting relevant properties for our aim. The first is $\psi_{0}\equiv 1$, and the second one
\begin{displaymath}
	\psi_{1}(v)=\frac{1}{8(1-u^2)}\cdot \left(\frac{5 u^{2} v^{3} -3u^{2} v-2u^{2}}{3}+\left(1-v\right)\right).
\end{displaymath}
The functions $\psi_{n}$ are uniformly bounded. Furthermore, for fixed $u$, as $\eta\to 0$ (and hence $v\to 1$), they converge to 0 for any $n\geq 1$. Finally, despite the shape of the formulas we wrote, that contain denominators $1-u^{2}$, the very definition of $v$ makes them well-defined for $u>0$. Because of the uniformity of the expansion in $u$ and the properties of the functions $\psi_{n}$, we conclude that we have an expansion of the form
\begin{equation}\label{eq:asym-P}
	P^{- k\omega}_{-\frac{1}{2}+k\omega u}(\cosh(\eta)) =
	\frac{1}{\Gamma(k\omega+1)}
	\frac{( 1-u^2 v^{2} )^{1/4}}{ (1-u^2)^{1/4}} e^{k\omega\cdot  S_{-1}(v)}
	\cdot\left(\sum_{n=0}^{\ell-1}\frac{\psi_{n}(v)}{(k\omega)^{n}}+\frac{\rho_{\ell}(u,\eta)}{(k\omega)^{\ell}}\right),
\end{equation}
where $\rho_{\ell}$ is bounded and, for fixed $u$, $\rho_{2}(u,\eta)\to 0$ as $\eta\to 0$. The reader will realize that this expansion is very similar to the one stated and used for Bessel functions in \textsection\ref{subsubsec:M-int-1}.

To use the asymptotics \eqref{eq:asym-P}, we effect the change of variables $t=uk\omega$ in the integrals. Similar to the cusp case, we guess a splitting 
\begin{displaymath}
\zeta_{\widetilde{M}_{1}}(s)=\zeta_{M^{\ast}}(s)+\zeta_{E}(s). 
\end{displaymath}
The function $\zeta_{E}(s)$ is defined analogously to \textsection\ref{subsubsec:pseudo-laplacian-4}. Using similar notations, one can write $E_{k}(s)=E_{0,k}(s)+E_{1,k}(s)+E_{2,k}(s)$,
where the integrals $E_{\ell,k}(s)$ ($\ell=0,1,2$) now involve the 
following integrands (cf.~\eqref{def_Eellk})
\begin{align*}
	\frac{\partial}{\partial u} \mathcal{E}_0(u,k)&:= -\frac{1}{2}\frac{u}{u^2 +\sinh(\eta)^{-2}},\\
	\frac{\partial}{\partial u} \mathcal{E}_1(u,k)&:=k \omega \cdot\left( -\log(u+1)+ \log\left(u\cosh(\eta)   +\sqrt{1+u^2 \sinh(\eta)^2}  \right)\right),\\
	\frac{\partial}{\partial u}\mathcal{E}_2(u,k)&:=\frac{1}{k\omega}(A_{1}(u,k)+A_{2}(u,k)
	+A_{3}(u,k))
\end{align*}
with
\begin{align*}
A_1(u,k)&:=\frac{\bigl(  5-\cosh(\eta)^2\bigr)\cosh(\eta)\sinh(\eta)^2 }
{8}  \frac{ u}{(1+u^2\sinh(\eta)^2)^{5/2}},\\
A_2(u,k)&:=
-\frac{\cosh(\eta)\sinh(\eta)^4}{8}  \frac{ u^3}{(1+u^2\sinh(\eta)^2)^{5/2}},\\ 
A_3(u,k)&:=
\frac{u}{24(u^2-1)^2 }
\left(
2
+\frac{3\cosh(\eta)^5 }{(1+u^2\sinh(\eta)^2)^{5/2}}
-\frac{5\cosh(\eta)^{3}}{ (1+u^2\sinh(\eta)^2)^{3/2}}
\right).
\end{align*}
The integrals defining $\zeta_{M^{\ast}}(s)$ are the rest and, by the following proposition, they don't contribute to the value at $s=0$, as $\eta\to 0$.
\begin{proposition}
The infinite sum $\zeta_{M^{\ast}}(s)$ is absolutely and locally uniformly convergent for $1<\Real(s)<2$, 
and admits a meromorphic continuation to $\Real(s)>-\sigma_{0}$, 
for some $\sigma_{0}>0$. 
Furthermore, $\zeta_{M^{\ast}}(s)$ is holomorphic at $s=0$ and 
\begin{displaymath}
	\lim_{\eta\to 0}\zeta_{M^{\ast}}^{\prime}(0)=0.
\end{displaymath}
\end{proposition}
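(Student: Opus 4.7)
The plan is to parallel the proof of Theorem \ref{thm:Mast-integral-cusp} in the cusp case, replacing the uniform asymptotic expansion for modified Bessel functions of Proposition \ref{prop:asymptotics-bessel-2} by the uniform asymptotic expansion \eqref{eq:asym-P} for the associated Legendre functions of large degree. After the change of variables $t=k\omega u$ that produced the integrands $\partial_{u}\mathcal{E}_{0}$, $\partial_{u}\mathcal{E}_{1}$, $\partial_{u}\mathcal{E}_{2}$, the integrand defining the integral that computes $\zeta_{M^{\ast}}(s)$ is
\begin{displaymath}
	\frac{\partial}{\partial u}\left(\log P^{-k\omega}_{-1/2+k\omega u}(\cosh(\eta))-\mathcal{E}_{0}(u,k)-\mathcal{E}_{1}(u,k)-\mathcal{E}_{2}(u,k)\right).
\end{displaymath}
The first task is therefore to expand this integrand, to the order needed, using \eqref{eq:asym-P}. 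Writing $\log(1+x)=x-x^{2}/2+\cdots$ and inserting the expansion in powers of $(k\omega)^{-1}$, one obtains
\begin{equation}\label{eq:prop-expansion}
	\log P^{-k\omega}_{-1/2+k\omega u}(\cosh(\eta))-\mathcal{E}_{0}(u,k)-\mathcal{E}_{1}(u,k)-\mathcal{E}_{2}(u,k)=\sum_{n=3}^{\ell-1}\frac{\Psi_{n}(v,u)}{(k\omega)^{n}}+\frac{\widetilde{\rho}_{\ell}(u,\eta)}{(k\omega)^{\ell}},
\end{equation}
where each $\Psi_{n}$ is a polynomial in $\psi_{1}(v),\ldots,\psi_{n}(v)$ (with $u$-dependent coefficients coming from the prefactor in \eqref{eq:asym-P}), and $\widetilde{\rho}_{\ell}$ is bounded uniformly in $u$ and $\eta$. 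Crucially, $\psi_{j}(1)=0$ for every $j\geq 1$, hence $\Psi_{n}(v,u)\to 0$ pointwise as $\eta\to 0$ (since then $v\to 1$). Moreover, following the remark after \eqref{eq:asym-P}, for fixed $u$ the remainder $\widetilde{\rho}_{\ell}(u,\eta)$ also tends to $0$ as $\eta\to 0$.

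With \eqref{eq:prop-expansion} at hand, I would split $\zeta_{M^{\ast}}(s)$ accordingly. For the contribution of the remainder term, after integration by parts in the variable $u$ (to get rid of $\partial_{u}$, producing a boundary term at $u=k^{\delta-1}$ and an interior integral), one mimics \emph{mutatis mutandis} the estimate following \eqref{eq:pain-3} in the proof of Theorem \ref{thm:Mast-integral-cusp}: picking $\ell$ large enough so that $\ell\delta>1$ ensures absolute and locally uniform convergence of the resulting series on a strip $\Real(s)>-\sigma_{0}$, and the dominated convergence theorem then yields the vanishing of the $s$-derivative at $s=0$ as $\eta\to 0$ (the uniform bound on $\widetilde{\rho}_{\ell}$ provides the majorant, and pointwise vanishing of $\widetilde{\rho}_{\ell}(u,\eta)$ gives the limit). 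The polynomial terms $\Psi_{n}(v,u)/(k\omega)^{n}$ with $n\geq 3$ are treated by the same binomial-expansion/hypergeometric technique used for propositions \ref{prop:E0-integral}--\ref{prop:E2-integral} in the cusp case: after expanding $((k\omega u)^{2}-1/4)^{-s}$ binomially on $u\geq k^{\delta-1}$, the integrals reduce to explicit beta/hypergeometric evaluations in $u$ whose resulting series in $k$ converge absolutely for $1<\Real(s)<2$, continue meromorphically past $s=0$, and whose derivatives at $s=0$ are controlled by the pointwise vanishing $\Psi_{n}(1,u)=0$ combined with the uniform boundedness of $\psi_{j}(v)$ on the relevant range.

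The main obstacle, as in the cusp case, is not any single estimate but rather the bookkeeping: one must verify that the prefactor $(1-u^{2}v^{2})^{1/4}/(1-u^{2})^{1/4}$ in \eqref{eq:asym-P} is analytic in $u>0$ (the formal singularity at $u=1$ is spurious, just as pointed out in the excerpt), and that after differentiating in $u$ to produce the integrand of $\zeta_{M^{\ast}}$, the resulting functions $\Psi_{n}$ still inherit both the pointwise vanishing as $\eta\to 0$ and growth bounds in $u$ compatible with the cut-off $u\geq k^{\delta-1}$. Once these verifications are carried out, the convergence statement and the claim $\lim_{\eta\to 0}\zeta_{M^{\ast}}^{\prime}(0)=0$ follow from the two-step scheme (remainder plus polynomial terms) exactly as in the proofs of Theorem \ref{thm:Mast-integral-cusp} and propositions \ref{prop:E0-integral}--\ref{prop:E2-integral}, to which I would refer the reader for the technical details.
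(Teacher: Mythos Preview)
Your proposal is correct and follows essentially the same approach as the paper, which simply states that the proof is analogous to that of Theorem \ref{thm:Mast-integral-cusp}, using integration by parts and the properties of the remainder in \eqref{eq:asym-P}. You have effectively written out what this analogy entails. One minor slip: in your expansion \eqref{eq:prop-expansion} the sum should start at $n=2$ rather than $n=3$, since $\mathcal{E}_{2}(u,k)$ only captures the $(k\omega)^{-1}$ term (just as in the cusp case the expansion \eqref{eq:expansion-log-bessel} starts at $n=2$); this does not affect the argument.
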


\begin{proof}
The proof is analogous to the proof of Theorem \ref{thm:Mast-integral-cusp}, using integration by parts and then taking into account the shape of the properties of the remainder \eqref{eq:asym-P}.
\end{proof} 

We can finally focus on $\zeta_{E}(s)$, that again we separate into $\zeta_{E_{0}}(s)$, $\zeta_{E_{1}}(s)$, and $\zeta_{E_{2}}(s)$, according to the $\E_{j}$.
\begin{proposition}
The infinite sum $\zeta_{E_{0}}(s)$ is absolutely and locally uniformly convergent for $1<\Real(s)<2$, and admits a meromorphic continuation to $\CC$. Furthermore, $\zeta_{E_{0}}(s)$
is holomorphic at $s=0$ and
\begin{displaymath}
	\zeta^{\prime}_{E_{0}}(0)=\zeta(0)\log(\omega)-\zeta^{\prime}(0)-\zeta(0)\log(\eta)+o(1),
\end{displaymath}
as $\eta\to 0$.
\end{proposition}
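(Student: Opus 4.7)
The plan is to mimic, line by line, the proof of Proposition \ref{prop:E0-integral}, with the substitution $2\pi a\mapsto \sinh(\eta)^{-1}$ and an extra factor of $\omega$ arising from the change of variables $t=k\omega u$ in the spectral parameter. Explicitly, for $\Real(s)>1$ I would write
\[
\zeta_{E_{0}}(s)=-\frac{\sin(\pi s)}{\pi}\sum_{k\geq 1}\int_{k^{\delta-1}}^{\infty}\bigl((k\omega u)^{2}-\tfrac{1}{4}\bigr)^{-s}\frac{u\,du}{u^{2}+\sinh(\eta)^{-2}},
\]
then apply the binomial expansion $((k\omega u)^{2}-1/4)^{-s}=\sum_{j\geq 0}\tfrac{(s)_{j}}{j!\,4^{j}}(k\omega u)^{-2s-2j}$ and exchange summation with integration (justified by absolute convergence in $1<\Real(s)<2$). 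This reduces the computation to a double series whose inner integrals are of the form
\[
\int_{k^{\delta-1}}^{\infty}\frac{du}{u^{2s+2j-1}(u^{2}+\sinh(\eta)^{-2})},
\]
evaluated in closed form by the exact same hypergeometric identity used in \eqref{eq:43}, with $A=\sinh(\eta)^{-1}$ in place of $2\pi a$.

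The $\int_{0}^{\infty}$ piece of each of these integrals (a beta integral) contributes, after summation over $k$ and $j$, the function
\[
-\frac{1}{2}\sum_{j\geq 0}\frac{(-1)^{j}(s)_{j}}{j!\,4^{j}}\,\omega^{-2(s+j)}\sinh(\eta)^{2(s+j)}\,\zeta(2s+2j),
\]
which is meromorphic on $\CC$, holomorphic at $s=0$, and whose $j=0$ summand $-\tfrac{1}{2}\zeta(2s)(\sinh(\eta)/\omega)^{2s}$ carries the full leading asymptotic. The complementary ``hypergeometric remainder'', coming from the $\int_{0}^{k^{\delta-1}}$ piece, is a series whose generic term involves $\sinh(\eta)^{2}\,k^{2(\delta-1)(1-s-j)}\,F(1,1-s-j;2-s-j;-\sinh(\eta)^{2}k^{2\delta-2})$. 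Since $\delta<1$ and $\sinh(\eta)$ is small, the argument of $F$ is bounded uniformly in $k$ by $O(\sinh(\eta)^{2})$, so $F=1+O(\sinh(\eta)^{2})$; combined with the prefactor $\sinh(\eta)^{2}$, the entire remainder contributes $O(\sinh(\eta)^{2})=O(\eta^{2})$ to $\zeta'_{E_{0}}(0)$ as $\eta\to 0$.

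Differentiating the $j=0$ main term at $s=0$ gives
\[
\Bigl[\tfrac{d}{ds}\bigl(-\tfrac{1}{2}\zeta(2s)(\sinh(\eta)/\omega)^{2s}\bigr)\Bigr]_{s=0}=-\zeta'(0)-\zeta(0)\log\bigl(\sinh(\eta)/\omega\bigr)=\zeta(0)\log(\omega)-\zeta'(0)-\zeta(0)\log(\sinh(\eta)).
\]
Using $\sinh(\eta)=\eta+O(\eta^{3})$ so that $\log\sinh(\eta)=\log(\eta)+O(\eta^{2})$, and observing that the $j\geq 1$ terms of the main sum have derivative at $s=0$ equal to $-\frac{(-1)^{j}}{2j\cdot 4^{j}}\omega^{-2j}\sinh(\eta)^{2j}\zeta(2j)=O(\eta^{2})$, I obtain $\zeta'_{E_{0}}(0)=\zeta(0)\log(\omega)-\zeta'(0)-\zeta(0)\log(\eta)+o(1)$ as $\eta\to 0$. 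The only delicate point is the closed-form evaluation of the inner integrals together with the uniform bound on the hypergeometric remainder; once these are in hand, the rest is a direct transcription of the cusp argument in Proposition \ref{prop:E0-integral}.
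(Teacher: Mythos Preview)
Your approach is correct and is exactly what the paper does: it explicitly says the proof ``goes along the lines of the proof of Proposition~\ref{prop:E0-integral}, starting with the binomial expansion of the term $(k^{2}\omega^{2}u^{2}-1/4)^{-s}$,'' which is precisely the substitution $2\pi a\mapsto\sinh(\eta)^{-1}$ together with the extra $\omega$ from $t=k\omega u$ that you describe. One small slip: after the change of variables $t=k\omega u$ with lower limit $t=k^{\delta}$, the new lower limit is $u=k^{\delta-1}/\omega$, not $k^{\delta-1}$ (compare the lower limits $k^{\delta-1}\omega^{-1}$ appearing in the $\Xi_{\ell}$ and $\Omega_{\ell}$ formulae later in the section); this is harmless since $\omega$ is a fixed integer and the hypergeometric remainder is $O(\sinh(\eta)^{2})$ regardless, but you should state it correctly.
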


\begin{proof}
The prove goes along the lines of the proof of Proposition \ref{prop:E0-integral},
starting with the binomial expansion of the term $(k^{2}\omega^{2}u^{2}-1/4)^{-s}$.
\end{proof}

The treat the ``error integral" $\zeta_{E_{1}}(s)$ is much harder; we obtain the following:
\begin{proposition}
The infinite sum $\zeta_{E_{1}}(s)$ is absolutely and locally uniformly convergent for $1<\Real(s)<2$, and admits a meromorphic continuation to $\Real(s)>-1/2$. Furthermore, $\zeta_{E_{1}}(s)$ is holomorphic at $s=0$ and
\begin{displaymath}
\zeta_{E_{1}}^{\prime}(0)=2\omega\bigl(-\zeta(-1) -\zeta^{\prime}(-1)+\zeta(-1)\log(2\omega)\bigr)
+\frac{\omega}{6}\log(\eta)
+o(1),
\end{displaymath}
as $\eta\to 0$.
\end{proposition}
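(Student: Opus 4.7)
The plan is to mimic, step by step, the proof of Proposition \ref{prop:E1-integral} from the cusp case. After the change of variables $t = k\omega u$, I would expand $((k\omega u)^{2}-1/4)^{-s}$ by the binomial series
\begin{displaymath}
((k\omega u)^{2}-1/4)^{-s} = \sum_{j\geq 0}\frac{(s)_{j}}{j!\,4^{j}}\frac{1}{(k\omega u)^{2s+2j}},
\end{displaymath}
valid on $u\geq k^{\delta-1}$ for $k$ large, and push the sum inside the integral. This reduces the problem to evaluating, for each $j$, the family of integrals
\begin{displaymath}
J_{k,j}(s):=\int_{k^{\delta-1}}^{\infty}u^{-2s-2j}\,\frac{\partial}{\partial u}\mathcal{E}_{1}(u,k)\,du
=k\omega\int_{k^{\delta-1}}^{\infty}\frac{\log(u\cosh\eta+\sqrt{1+u^{2}\sinh^{2}\eta})-\log(u+1)}{u^{2s+2j}}\,du.
\end{displaymath}

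Next, to evaluate $J_{k,j}(s)$, I would first perform the rescaling $x=u\sinh\eta$ in the first log-term, turning it into $\log\cosh\eta+\log(x\coth\eta+\sqrt{1+x^{2}})$ plus elementary remainders; the $\log\cosh\eta$ factor is a constant and gives an explicit $\Gamma$-ratio times a power of $\sinh\eta$, while the $\log(x\coth\eta+\sqrt{1+x^{2}})$ piece admits an antiderivative analogous to that of $\arcsinh$ exploited in the cusp proof. Integration by parts on each piece, together with the identity
\begin{displaymath}
\int_{k^{\delta-1}}^{\infty}u^{-2s-2j}\log(u+1)\,du = -\frac{k^{(1-\delta)(2s+2j-1)}}{2s+2j-1}\log(1+k^{\delta-1})+\frac{1}{2s+2j-1}\int_{k^{\delta-1}}^{\infty}\frac{u^{1-2s-2j}}{u+1}\,du,
\end{displaymath}
expresses everything in terms of hypergeometric functions at $-k^{2\delta-2}$ and $-k^{2\delta-2}\sinh^{2}\eta$, together with boundary $\Gamma$-ratios of the form $\Gamma(s+j+1/2)/(\sqrt{\pi}\,\Gamma(s+j)(1-2s-2j)^{2})$. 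Summation over $k$ then collapses the $\Gamma$-ratio pieces to $\zeta(2s+2j-1)$, exactly as in the cusp case; the endpoint and hypergeometric contributions are absolutely convergent for $\Real(s)>-\sigma_{0}$ with $\sigma_{0}>0$ (the restriction $\Real(s)>-1/2$ comes precisely from the $\zeta(2s-1)$ appearing in the $j=0$ main term).

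The main obstacle, and where this argument departs from the cusp case, lies in extracting the exact constant and the $\frac{\omega}{6}\log(\eta)$ factor. Unlike the cusp integrand $\mathrm{arcsinh}(u/2\pi a)$, which has a single scale $2\pi a$, the cone integrand mixes two scales $\cosh\eta$ and $\sinh\eta$, and I must verify that after rescaling only $\log\sinh\eta\sim\log\eta$ survives in the limit, with the $\cosh\eta$ corrections contributing $o(1)$ to $\zeta_{E_{1}}'(0)$. Concretely, the leading term of the sum reads
\begin{displaymath}
\frac{2\Gamma(s+\tfrac{1}{2})}{\sqrt{\pi}\,\Gamma(s)(1-2s)^{2}}\cdot\frac{\zeta(2s-1)}{\omega^{2s-1}\sinh(\eta)^{2s-1}} + g(\eta)\cdot s + O(s^{2}),
\end{displaymath}
where $g(\eta)=O(1)$ uniformly, and differentiating at $s=0$, using $\zeta(-1)=-1/12$, $\sinh\eta\sim\eta$, and the standard value of $\zeta'(-1)$, produces the stated formula
\begin{displaymath}
\zeta_{E_{1}}'(0)=2\omega\bigl(-\zeta(-1)-\zeta'(-1)+\zeta(-1)\log(2\omega)\bigr)+\frac{\omega}{6}\log(\eta)+o(1).
\end{displaymath}
The $\log(2\omega)$ arises from combining the scaling factor $\omega^{2s-1}$ with the factor of $2$ coming from $\sinh(\eta)\sim\eta/2\cdot 2$ and from the $\log(u+1)$ term evaluated via its hypergeometric expansion at $u=1$; tracking this combinatorial bookkeeping carefully across the two log-branches is the true technical difficulty of the proof.
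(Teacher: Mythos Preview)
Your proposal contains a genuine gap: the term you identify as the ``leading term'' does not produce the stated result. The expression
\begin{displaymath}
\frac{2\Gamma(s+\tfrac{1}{2})}{\sqrt{\pi}\,\Gamma(s)(1-2s)^{2}}\cdot\frac{\zeta(2s-1)}{\omega^{2s-1}\sinh(\eta)^{2s-1}}
\end{displaymath}
has Taylor expansion $2\omega\,\zeta(-1)\sinh(\eta)\cdot s+O(s^{2})$ at $s=0$, so its derivative at $s=0$ is $2\omega\,\zeta(-1)\sinh(\eta)\to 0$ as $\eta\to 0$. This is $o(1)$, not $\frac{\omega}{6}\log\eta$ plus a nonzero constant. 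The analogy with the cusp case breaks down precisely because the limiting regimes are opposite: in the cusp integrand $\arcsinh(u/2\pi a)$ the scale $2\pi a\to\infty$, whereas here $\sinh\eta\to 0$, so the same $\Gamma$-ratio mechanism produces a vanishing contribution rather than a divergent one. Relatedly, your rescaling $x=u\sinh\eta$ gives $\log(x\coth\eta+\sqrt{1+x^{2}})$, but $\coth\eta\to\infty$ as $\eta\to 0$, so this is not close to $\arcsinh(x)$ and there is no useful antiderivative of arcsinh type.

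The paper's proof proceeds quite differently: it splits the integration domain at $u=1$ and decomposes the integrand $\log(u\cosh\eta+\sqrt{1+u^{2}\sinh^{2}\eta})-\log(u+1)$ into six pieces $\Xi_{0},\ldots,\Xi_{5}$, three on $[k^{\delta-1}\omega^{-1},1]$ and three on $[1,\infty)$. The bounded-interval pieces all contribute $o(1)$. On $[1,\infty)$ the logarithm is rewritten as $[\log u-\log(u+1)]+\log\cosh\eta+\log\bigl(1+\sqrt{1+u^{2}\sinh^{2}\eta}/(u\cosh\eta)\bigr)$. The first bracket ($\Xi_{3}$) is independent of $\eta$ and its integral is computed exactly via the digamma function, yielding all the constant terms involving $\zeta(-1),\zeta'(-1),\log(2\omega)$. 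The $\log\cosh\eta$ piece ($\Xi_{4}$) is $o(1)$. The remaining piece ($\Xi_{5}$), after a hypergeometric analysis, produces the $\frac{\omega}{6}\log\eta$ term together with a constant in $1/\omega$ that cancels against the $1/\omega$ term from $\Xi_{3}$. None of these contributions arises from a term of the shape you wrote; the actual source of both the $\log\eta$ and the constants lies in the large-$u$ behaviour on $[1,\infty)$, which your single rescaling cannot isolate.
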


\begin{proof}
After a binomial expansion of the term $(k^{2}\omega^{2}u^{2}-1/4)^{-s}$, we split $\zeta_{E_{1}}(s)$ into six contributions, whose expressions are valid in $\Real(s)>1$, namely 
$\zeta_{E_{1}}(s)=\sum_{\ell=1}^{6} \Xi_{\ell}(s)$ with 
\begin{align*}
\Xi_{0}(s)
&:=\frac{2\,\sin(\pi s)}{\pi}
\sum_{k=1}^{\infty}  \sum\limits_{j=0}^{\infty}\frac{(s)_j}{j! }\frac{1}{4^j}
\frac{1}{k^{2s+2j-1}}
\frac{1}{\omega^{2s+2j-1}}
\int\limits_{k^{\delta-1}\omega^{-1}}^{1}
\frac{\log\left(\sqrt{1+u^2 \sinh(\eta)^2}\right)}{u^{2s+2j}}du,\\
\Xi_{1}(s)
&:=-\frac{2\,\sin(\pi s)}{\pi}
\sum_{k=1}^{\infty}  \sum\limits_{j=0}^{\infty}\frac{(s)_j}{j! }\frac{1}{4^j}
\frac{1}{k^{2s+2j-1}}
\frac{1}{\omega^{2s+2j-1}}
\int\limits_{k^{\delta-1}\omega^{-1}}^{1}
\frac{\log(u+1) }{u^{2s+2j}}du,\\
\Xi_{2}(s)
&:=\frac{2\,\sin(\pi s)}{\pi}
\sum_{k=1}^{\infty}  \sum\limits_{j=0}^{\infty}\frac{(s)_j}{j! }\frac{1}{4^j}
\frac{1}{k^{2s+2j-1}}
\frac{1}{\omega^{2s+2j-1}}
\int\limits_{k^{\delta-1}\omega^{-1}}^{1}
\frac{1}{u^{2s+2j}} \log\left(1 +\frac{u\cosh(\eta)}{\sqrt{1+u^2 \sinh(\eta)^2}} \right)du,
\end{align*}
and 
\begin{align*}
\Xi_{3}(s)
&:=\frac{2\,\sin(\pi s)}{\pi}
\sum_{k=1}^{\infty}  \sum\limits_{j=0}^{\infty}\frac{(s)_j}{j! }\frac{1}{4^j}
\frac{1}{k^{2s+2j-1}}
\frac{1}{\omega^{2s+2j-1}}
\int\limits_{1}^{\infty}
\frac{\log(u)-\log(u+1) }{u^{2s+2j}}du,\\
\Xi_{4}(s)
&:=\frac{2\,\sin(\pi s)}{\pi}
\sum_{k=1}^{\infty}  \sum\limits_{j=0}^{\infty}\frac{(s)_j}{j! }\frac{1}{4^j}
\frac{1}{k^{2s+2j-1}}
\frac{1}{\omega^{2s+2j-1}}
\int\limits_{1}^{\infty}
\frac{\log\left(\cosh(\eta)\right)}{u^{2s+2j}}du,\\
\Xi_{5}(s)
&:=\frac{2\,\sin(\pi s)}{\pi}
\sum_{k=1}^{\infty}  \sum\limits_{j=0}^{\infty}\frac{(s)_j}{j! }\frac{1}{4^j}
\frac{1}{k^{2s+2j-1}}
\frac{1}{\omega^{2s+2j-1}}
\int\limits_{1}^{\infty}
\frac{1}{u^{2s+2j}} \log\left(1 +\frac{\sqrt{1+u^2 \sinh(\eta)^2}}{u\cosh(\eta)} \right)du.
\end{align*}

First, one treats the integrals on bounded intervals. For this, we expand the logarithms in power series, which is justified in the intervals of integration and because $0<\sinh(\eta)\ll 1$ for $0<\eta\ll 1$. Similarly, we handle the square root in $\Xi_{2}$ (which is expanded only after the $\log$). Next, one computes
\begin{align*}
\int\limits_{k^{\delta-1}\omega^{-1}}^{1}
\frac{u^{r}}{u^{2s+2j}}du
=-\frac{1}{2s+2j-r-1}+\frac{k^{(1-\delta) (2s+ 2j-r-1)} \omega^{2s+ 2j-r-1}}{2s+2j-r-1},
\end{align*}
which is legitimate for $\Real(s)>1$. One plugs this expression (with appropriate powers of $r$) into the representation of $\Xi_{0}$, $\Xi_{1}$, and $\Xi_{2}$ in sums of integrals of power series. 
One obtains explicit expressions such as
\begin{align*}
\Xi_{0}(s)
=&-\frac{\sin(\pi s)}{\pi}
\sum_{k=1}^{\infty}  \sum\limits_{j=0}^{\infty}\frac{(s)_j}{j! }\frac{1}{4^j}
\frac{1}{k^{2s+2j-1}}
\frac{1}{\omega^{2s+2j-1}}
\sum\limits_{r=1}^{\infty} \frac{(-1)^{r+1}}{r}
\frac{\sinh(\epsilon)^{2r}}{2s+2j-2r-1}
\notag\\
&+
\frac{\sin(\pi s)}{\pi}
\sum_{k=1}^{\infty}  \sum\limits_{j=0}^{\infty}\frac{(s)_j}{j! }\frac{1}{4^j}
\sum\limits_{r=1}^{\infty} \frac{(-1)^{r+1}}{r}\frac{\sinh(\epsilon)^{2r}}{\omega^{2r}}
\frac{k^{-2r}k^{-\delta(2s+ 2j-2r-1)}}{2s+2j-2r-1}.
\end{align*}
In this whole procedure, one first needs to restrict to $1<\Real(s)<3/2$, then one observes that the final expressions are absolutely summable for $\Real(s)>1$ and have a meromorphic continuation to $\Real(s)>-1/2$. One can then easily derive the Laurent expansion of $\Xi_{0}(s)$ at $s=0$ alone. For the rest, the sum $\Xi_{1}(s)+\Xi_{2}(s)$ is best understood, and one can exhibit the Laurent expansion at $s=0$, too. The result is of the form
\begin{displaymath}
	\Xi_{0}(s)+\Xi_{1}(s)+\Xi_{2}(s)=f(\eta)+g(\eta)s +O(s^{2}),
\end{displaymath}
where $f$ and $g$ are explicit functions converging to 0 as $\eta\to 0$. We thus conclude that the integrals over the bounded intervals won't contribute.

Second, we elaborate on the integrals on the unbounded interval $[1,+\infty)$. For $\Xi_{3}$, after a change of variables $u=t^{-1}$, one has
\begin{displaymath}
	\int\limits_{1}^{\infty}
\frac{\log(u)-\log(u+1) }{u^{2s+2j}}du=- \frac{\log(2)}{2s+2j-1}+ \frac{\psi(s+j+1/2)-\psi(s+j)}{2(2s+2j-1)},
\end{displaymath}
where $\psi(s)=\Gamma'(s)/\Gamma(s)$ is the digamma function. Then we insert this relation into $\Xi_{3}(s)$. By recalling that the $\psi$ function has poles at $0,-1,-2,\ldots$, one can then justify that $\Xi_{3}(s)$ has a meromorphic continuation to $\Real(s)>-1/2$. For the Laurent expansion at $s=0$, we first obtain
\begin{align*}
\Xi_{3}(s)= 
&-\frac{2\sin(\pi s)}{\pi}
\frac{\zeta(2s-1)}{\omega^{2s-1}}
 \frac{\log(2)}{2s-1} 
 +
\frac{\sin(\pi s)}{\pi}
\frac{\zeta(2s-1)}{\omega^{2s-1}}
\frac{\psi(s+1/2)-\psi(s)}{2s-1}\\
&
 -\frac{s\sin(\pi s)}{2\pi}
\frac{\zeta(2s+1)}{\omega^{2s+1}}
 \frac{\log(2)}{2s+1}
 +
\frac{s\sin(\pi s)}{4\pi}
\frac{\zeta(2s+1)}{\omega^{2s+1}}
\frac{\psi(s+3/2)-\psi(s+1)}{2s+1}+O(s^2).
\end{align*}
From this expression it is a matter of further expanding at $s=0$, to finally find
\begin{displaymath}
\begin{split}
\Xi_{3}(s)=-\zeta(-1)\omega +
\Bigl(&2\omega\bigl(\zeta(-1)\log(4)-\zeta(-1) -\zeta^{\prime}(-1)\bigr)\\
&+\frac{1}{\omega}\bigl(\zeta(0)^2+\zeta(0)\log(2)\bigr)
+2\omega \log(\omega)\zeta(-1)
\Bigr) \cdot s
+O(s^2).
\end{split}
\end{displaymath}
The sum $\Xi_{4}(s)$ is easier to deal with, and we just reproduce the result: it has a meromorphic continuation to $\Real(s)>-1/2$ and a Laurent expansion at $s=0$ of the form
\begin{displaymath}
	\Xi_{4}(s)=f(\eta)s+O(s^{2}),
\end{displaymath}
with some explicit function $f(\eta)=o(1)$ as $\eta\to 0$. The last efforts are for $\Xi_{5}(s)$, that after some work (expansion in power series, computation of the integral in terms of an hypergeometric function, and use of functional identities for hypergeometric functions) is seen to have a meromorphic continuation to the region $\Real(s)>-1/2$.
Further, in a neighborhood of $s=0$, we have
\begin{displaymath}
\Xi_{5}(s)
=\left(\frac{\zeta(0)+\log(2)}{2\omega }+
\frac{\log(2)}{6 }\omega 
+\frac{\log(\eta)}{6}\omega
+f(\eta)\right) s
+
O(s^{2}),
\end{displaymath}
with some explicit function $f(\eta)=o(1)$ as $\eta\to 0$. Finally, adding up all the values, 
we can deduce the assertion.
\end{proof}

The last ``error integral" is $\zeta_{E_{2}}(s)$, for which we obtain the following:
\begin{proposition}
The  infinite sum $\zeta_{E_{2}}(s)$ is absolutely and locally uniformly convergent for $1<\Real(s)<2$,
and admits a meromorphic to $\CC$. Futhermore, $\zeta_{E_{2}}^{\prime}(0)$ is holomorphic at $s=0$ and
\begin{displaymath}
	\zeta_{E_{2}}^{\prime}(0)=-\frac{\zeta(-1)}{\omega}(5+2 \gamma-\log(4)-2 \log(\omega))+\frac{1}{6\omega} \log(\eta)+o(1),
\end{displaymath}
as $\eta\to 0$.
\end{proposition}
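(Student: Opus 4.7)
The plan is to mirror the three-part strategy used for the preceding proposition on $\zeta_{E_1}(s)$ and for Proposition \ref{prop:E2-integral} in the cusp case. First I would decompose
\[
\zeta_{E_2}(s)=\Xi_1(s)+\Xi_2(s)+\Xi_3(s),
\]
where $\Xi_\ell(s)=\sum_{k\geq 1}2\tfrac{\sin(\pi s)}{\pi}\int_{k^{\delta-1}\omega^{-1}}^{\infty}(k^2\omega^2u^2-1/4)^{-s}\,\tfrac{A_\ell(u,k)}{k\omega}\,du$, and in each term expand $(k^2\omega^2u^2-1/4)^{-s}$ by the binomial theorem. For $1<\Real(s)<2$ this allows the exchange of the $k$-sum and the $u$-integral, producing series of the form $\sum_{j\geq 0}\frac{(s)_j}{j!\,4^j}\frac{\zeta(2s+2j+c)}{\omega^{2s+2j+c}}\,J_{\ell,j}(\eta)$, where $J_{\ell,j}(\eta)$ is a one-variable Mellin-type integral.

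Second, for each $\Xi_\ell$ I would split the range of integration as $\int_{k^{\delta-1}\omega^{-1}}^{1}+\int_1^{\infty}$. On the bounded segment, the radicals $(1+u^2\sinh(\eta)^2)^{\pm 3/2,\pm 5/2}$ are expanded as convergent power series in $\sinh(\eta)^2 u^2$; integration term by term, followed by resummation, yields an expression which, as in the treatment of $\Xi_0,\Xi_1,\Xi_2$ for $\zeta_{E_1}(s)$, converges to $0$ as $\eta\to 0$ up to $o(1)$. On the unbounded segment the integrals become Mellin transforms that evaluate in closed form in terms of beta/hypergeometric values, producing the gamma and digamma data $\psi(s),\psi(s+1/2),\psi(s+3/2)$ at $s=0$. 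In the Laurent expansion these introduce $\gamma$ via $\psi(1)=-\gamma$ and $\log 2$ via $\psi(1/2)-\psi(1)=-2\log 2$, which accounts for the combination $5+2\gamma-\log 4-2\log\omega$ in the final formula (the factor $\log\omega$ coming from $\omega^{-2s}$ at $s=0$).

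The main obstacle will be the contribution of $A_3$, which carries the factor $1/(u^2-1)^2$. A direct evaluation at $u=1$ using $1+\sinh(\eta)^2=\cosh(\eta)^2$ shows that the bracket $2+3\cosh(\eta)^5(1+u^2\sinh(\eta)^2)^{-5/2}-5\cosh(\eta)^3(1+u^2\sinh(\eta)^2)^{-3/2}$ vanishes to order two in $(u-1)$, so $A_3$ is in fact regular across $u=1$; but making this regularity manifest while preserving the ability to meromorphically continue in $s$ requires a careful cancellation. My plan is to perform a two-term Taylor expansion of the bracket in $(u^2-1)$ at $u=1$, subtract it to obtain a manifestly integrable remainder (treated as above), and add the counterterm back, evaluating $\int u^{-2s-2j-1}(u^2-1)^{-1}du$-type integrals by beta-function identities with an explicit regularization. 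It is precisely in this step that the $\log\eta$ contribution will appear, because the counterterm carries a pole at $s=0$ whose residue involves $\log\sinh(\eta)\sim\log\eta$, with coefficient $1/(6\omega)$ after all combinatorial factors are assembled.

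Finally, I would collect the Laurent expansions of $\Xi_1,\Xi_2,\Xi_3$ at $s=0$, add them up, and discard the $o(1)$ remainders to produce the asserted formula. The meromorphic continuation to all of $\mathbb{C}$ follows from the explicit closed-form expressions in terms of Riemann zeta values and hypergeometric functions obtained along the way, and the holomorphy at $s=0$ is then checked by inspection once the cancellations among the three $\Xi_\ell$ have been carried out.
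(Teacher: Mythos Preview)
Your overall decomposition and binomial-expansion strategy is close to the paper's, but there is a genuine gap in the identification of where the $\log\eta$ term arises, and the mechanism you propose for it is vacuous.

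In the paper's treatment one splits $\zeta_{E_2}$ into four pieces $\Omega_1,\Omega_2,\Omega_3,\Omega_4$: the first two are the integrals of $A_1,A_2$ over the \emph{full} range $[k^{\delta-1}\omega^{-1},\infty)$, evaluated via the closed formula
\[
\int_{\alpha}^{\infty}\frac{x^{\mu-1}}{(1+\beta x)^{\nu}}dx
=-\alpha^{\mu}\Gamma(\mu)\,\frac{F(\nu,\mu;\mu+1;-\alpha\beta)}{\Gamma(\mu+1)}
+\beta^{-\mu}\,\frac{\Gamma(\nu-\mu)\Gamma(\mu)}{\Gamma(\nu)}
\]
with $\beta=\sinh(\eta)^{2}$. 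The $\log\eta$ contribution, together with the $\gamma$ and $\log\omega$ terms, comes entirely from $\Omega_1+\Omega_2$: the factor $\beta^{-\mu}=\sinh(\eta)^{-2\mu(s)}$, differentiated in $s$ at $s=0$, yields $\log\sinh(\eta)\sim\log\eta$ with the coefficients $\tfrac{1}{3\omega}$ and $-\tfrac{1}{6\omega}$, summing to $\tfrac{1}{6\omega}$. By contrast $\Omega_3$ and $\Omega_4$ (the bounded and unbounded $A_3$ pieces) are both $o(1)$ as $\eta\to 0$; on the bounded segment this is transparent from the representation
\[
A_{3}(u,k)=\frac{u}{24}\sum_{r\geq 0}\frac{\bigl(3(\tfrac{5}{2})_{r+2}-5(\tfrac{3}{2})_{r+2}\bigr)(-1)^{r}}{(r+2)!}\,\tanh(\eta)^{2r+4}(u^{2}-1)^{r},
\]
which already carries a prefactor $\tanh(\eta)^{4}$.

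Your proposal inverts this picture. You plan to extract $\log\eta$ from an $A_3$ counterterm obtained by subtracting the two-term Taylor expansion of the bracket at $u=1$; but you yourself checked that the bracket vanishes to second order there, so that Taylor polynomial is identically zero and no counterterm survives. Meanwhile, you describe the unbounded $A_1,A_2$ Mellin integrals as producing only $\eta$-independent digamma data $\psi(s),\psi(s+1/2),\psi(s+3/2)$, which overlooks their genuine dependence on the scale $\sinh(\eta)$. If you follow your narrative literally---treating those Mellin values as $\eta$-free and searching for $\log\eta$ in the (vacuous) $A_3$ counterterm---you will not find the term at all. The fix is to keep the $\sinh(\eta)$-dependence in the $A_1,A_2$ integrals on the unbounded range (or, equivalently, evaluate them on the full range as the paper does) and to recognise that $A_3$ contributes only $o(1)$.
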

\begin{proof}
Again, after a binomial expansion, we can split $\zeta_{E_{2}}(s)$ into the sum of four contributions, namely $\zeta_{E_{2}}(s)=\sum_{\ell=1}^{4} \Omega_{\ell}(s)$ with 
\begin{align*}
&\Omega_{1}(s):=
\frac{2\,\sin(\pi s)}{\pi}
\sum_{k=1}^{\infty}  \sum\limits_{j=0}^{\infty}\frac{(s)_j}{j! }\frac{1}{4^j}
\frac{1}{k^{2s+2j+1}}\frac{1}{\omega^{2s+2j+1}}
\int\limits_{k^{\delta-1}\omega^{-1}}^{\infty}
\frac{1}{u^{2s+2j}}A_1(u,k)du,\\
&\Omega_{2}(s):=
\frac{2\,\sin(\pi s)}{\pi}
\sum_{k=1}^{\infty}  \sum\limits_{j=0}^{\infty}\frac{(s)_j}{j! }\frac{1}{4^j}
\frac{1}{k^{2s+2j+1}}\frac{1}{\omega^{2s+2j+1}}
\int\limits_{k^{\delta-1}\omega^{-1}}^{\infty}
\frac{1}{u^{2s+2j}}A_2(u,k)du,
\end{align*}
and 
\begin{align*}
&\Omega_{3}(s):=
\frac{2\,\sin(\pi s)}{\pi}
\sum_{k=1}^{\infty}  \sum\limits_{j=0}^{\infty}\frac{(s)_j}{j! }\frac{1}{4^j}
\frac{1}{k^{2s+2j+1}}\frac{1}{\omega^{2s+2j+1}}
\int\limits_{k^{\delta-1}\omega^{-1}}^{1}
\frac{1}{u^{2s+2j}}A_{3}(u,k)du,\\
&\Omega_{4}(s):=
\frac{2\,\sin(\pi s)}{\pi}
\sum_{k=1}^{\infty}  \sum\limits_{j=0}^{\infty}\frac{(s)_j}{j! }\frac{1}{4^j}
\frac{1}{k^{2s+2j+1}}\frac{1}{\omega^{2s+2j+1}}
\int\limits_{1}^{\infty}
\frac{1}{u^{2s+2j}}A_{3}(u,k)du.
\end{align*}

The first two functions, $\Omega_{1}(s)$ and $\Omega_{2}(s)$, can be treated similarly. One 
makes use of the formula
\begin{displaymath}
	\int\limits_{\alpha}^{\infty}
	\frac{x^{\mu-1}}{(1+\beta x)^{\nu}}dx=-\alpha^{\mu}\Gamma(\mu) \frac{ F(\nu,\mu;\mu+1;-\alpha\beta)}{\Gamma(\mu+1)}
+\beta^{-\mu} \frac{\Gamma(\nu-\mu)\Gamma(\mu)}{\Gamma(\nu)},
\end{displaymath}
for suitable exponents $\mu$ and $\nu$. It will apply to our situation. Plugging in our data, 
in particular letting $\alpha:=k^{2(\delta-1)}\omega^{-2}$ and $\beta:=\sinh(\eta)^{2}$, 
and taking infinite sums, we obtain the meromorphic continuation of $\Omega_{1}(s)$, 
and a Laurent expansion at $s=0$ of the form
\begin{align*}
	\Omega_{1}(s)
&=\frac{1}{6\omega}+f_1(\eta)+
\left(\frac{\gamma}{3\omega}+\frac{1}{3\omega}-\frac{\log 2}{3\omega}-\frac{\log(\omega)}{3\omega}
+\frac{\log(\eta)}{3\omega}
+g_1(\eta)\right)s+O(s^2),
\end{align*}
for suitable functions $f_1(\eta)$ and $g_1(\eta)$, which are $o(1)$ as $\eta\to 0$;
here, $\gamma$ denotes the Euler constant. 
Proceeding similarly for $\Omega_{2}(s)$, we obtain a Laurent expansion at $s=0$ of the form
\begin{align*}
\Omega_{2}(s)
&=-\frac{1}{12 \omega}   +f_2(\eta)+
\left(-\frac{\gamma}{6 \omega}+\frac{1}{12 \omega}+\frac{\log 2}{6 \omega}
+\frac{\log(\omega)}{6\omega}
-\frac{\log(\eta)}{6\omega}
+g_2(\eta)\right)s+O(s^2),
\end{align*}
for suitable functions $f_2(\eta)$ and $g_2(\eta)$, which are $o(1)$ as $\eta\to 0$.

For $\Omega_{3}(s)$, on its interval of integration, we can use the representation
\begin{align*}
A_{3}(u,k)=
\frac{u}{24  }
\sum\limits_{r=0}^{\infty}
\frac{(3(\frac{5}{2})_{r+2} -5(\frac{3}{2})_{r+2}) (-1)^{r}}{(r+2)!}\tanh(\eta)^{2r+4}(u^2-1)^{r}.
\end{align*}
One thus reduces to the evaluation of integrals of the form
\begin{displaymath}
	\int_{\alpha}^{1}\frac{(x-1)^{\mu}}{x^{\nu}}dx,
\end{displaymath}
for some $\alpha>0$. Again, this can be evaluated in terms of hypergeometric functions. After these operations, one obtains a lengthy expression for $\Omega_{3}(s)$ that we won't write here. As before, we can deduce from this expression the meromorphic continuation of $\Omega_{3}(s)$ and its Laurent expansion at $s=0$. Because of the powers in $\tanh(\eta)$ (see the expression for $A_{3}(u,k)$), at $s=0$, one obtains
\begin{displaymath}
	\Omega_{3}(s)=f_3(\eta)+g_3(\eta)s+O(s^{2}),
\end{displaymath}
for suitable functions $f_3(\eta)$ and $g_3(\eta)$, which are $o(1)$ as $\eta\to 0$. Finally, for $\Omega_{4}(s)$, after an expansion in power series, to get rid of the denominators of $A_{3}(u,k)$, we arrive at an expression
\begin{displaymath}
\Omega_{4}(s)=
\frac{2\,\sin(\pi s)}{\pi}
  \sum\limits_{j=0}^{\infty}\frac{(s)_j}{j! }\frac{1}{4^j}
\frac{\zeta(2s+2j+1)}{\omega^{2s+2j+1}}
\left(C_1(s)+C_2(s)+C_3(s)+C_4(s)\right),
\end{displaymath}
where
\begin{align*}
C_1(s)&:=\frac{1}{16\cosh(\eta)^4}
\sum\limits_{r=0}^{\infty}
\frac{(\frac{5}{2})_{r+2} }{r^2+3r+2}\cosh(\eta)^{-2r}\frac{\Gamma(s+j+\frac{7}{2})}{\Gamma(s+j+r+\frac{9}{2})},\\
C_2(s)&:=-\frac{5}{48\cosh(\eta)^4}
\sum\limits_{r=0}^{\infty}
\frac{(\frac{3}{2})_{r+2}}{r^2+3r+2}\cosh(\eta)^{-2r}
\frac{\Gamma(s+j+\frac{5}{2})}{\Gamma(s+j+r+\frac{7}{2})},\\
C_3(s)&:=\frac{1}{24}\left(\frac{1}{2}+\frac{3}{2s+2j+3}+(s+j)(\psi(s+j+1)-\psi(s+j+3/2))\right),\\
C_4(s)&:=-\frac{15}{48\cosh(\eta)^2}\frac{1}{2 s+2j+5}.
\end{align*}
The meromorphic continuation already follows. For the Laurent expansion, one collects the terms that eventually contribute, that is for $j=0$:
\begin{align*}
&D_{1}(s):=
\frac{2\sin(\pi s)}{\pi}\frac{\zeta(2s+1)}{\omega^{2s+1}} 
\frac{\Gamma(s+\frac{7}{2})}{16\cosh(\eta)^4}
\sum\limits_{r=0}^{\infty}
\frac{(\frac{5}{2})_{r+2} }{r^2+3r+2}\cosh(\eta)^{-2r}\frac{1}{\Gamma(s+r+\frac{9}{2})},
\\
&D_{2}(s):=-
\frac{2\sin(\pi s)}{\pi}\frac{\zeta(2s+1)}{\omega^{2s+1}} 
\frac{5\Gamma(s+\frac{5}{2}) }{48\cosh(\eta)^4}
\sum\limits_{r=0}^{\infty}
\frac{(\frac{3}{2})_{r+2}}{r^2+3r+2}\cosh(\eta)^{-2r}
\frac{1}{\Gamma(s+r+\frac{7}{2})},
\\
&D_{3}(s):=
\frac{2\sin(\pi s)}{\pi}\frac{\zeta(2s+1)}{\omega^{2s+1}} 
\frac{1}{24}\left(\frac{1}{2}+\frac{3}{2s+3}+(s)(\psi(s+1)-\psi(s+3/2))\right),
\\
&D_{4}(s):=
-\frac{2\sin(\pi s)}{\pi}\frac{\zeta(2s+1)}{\omega^{2s+1}} 
\frac{15}{48\cosh(\eta)^2}\frac{1}{2 s+5}.
\end{align*}
Just to give an idea of the terms obtained, let us describe the Laurent expansion of $D_{1}(s)$ at $s=0$:
\begin{align*}
D_{1}(s)&=\frac{5}{32 \omega}+f(\eta)+
\left(\frac{75 \gamma-46+30 \log(2)-75 \log(\omega)}{240 \omega}+g(\eta)\right) s+O(s^2),
\end{align*}
for suitable functions $f(\eta)$ and $g(\eta)$, which are $o(1)$ as $\eta\to 0$.
The miracle is that, after adding all the contributions, the coefficients that remain are $o(1)$, as $\eta\to 0$, \emph{i.e.}, at $s=0$, we 
finally obtain
\begin{displaymath}
	\Omega_{4}(s)=f_4(\eta)+g_4(\eta)s+O(s^{2}),
\end{displaymath}
for suitable functions $f_4(\eta)$ and $g_4(\eta)$ converging to 0 as $\eta\to 0$. The conclusion is now a matter of adding up the contributions of all the $\Omega_{\ell}(s)$.
\end{proof}

At this point, we still have to consider the function $\zeta_{\widetilde{M}_{0}}(s)$. This was defined in 
\eqref{splitdef_widetilde_mzeroandone} as the contribution of the integrals for $k=0$, on the interval $[1,+\infty)$.
\begin{proposition}\label{prop:zeta-cone-M0}
The infinite sume $\zeta_{\widetilde{M}_{0}}(s)$ is absolutely and locally uniformly convergent for $1<\Real(s)<2$, and admits a meromorphic continuation to $\CC$. Furthermore, $\zeta_{\widetilde{M}_{0}}(s)$ is holomorphic at $s=0$ and
\begin{displaymath}
	\zeta_{\widetilde{M}_{0}}^{\prime}(0)=\zeta(0)\log(\eta)+\zeta^{\prime}(0)-  \zeta(0)^2+ o(1),
\end{displaymath}
as $\eta\to 0$.
\end{proposition}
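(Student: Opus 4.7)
The plan follows the pattern of Propositions \ref{prop:E0-integral}--\ref{prop:E2-integral}, adapted to the $k=0$ case where the relevant special function is the Legendre function $P_{-1/2+t}^{0}(\cosh(\eta))$. The key analytic input is the uniform asymptotic
\begin{displaymath}
P_{-1/2+t}^{0}(\cosh(\eta)) = \sqrt{\frac{\eta}{\sinh(\eta)}}\,I_{0}(t\eta)\bigl(1+\text{corrections}\bigr),
\end{displaymath}
valid uniformly in $t\geq 1$ as $\eta\to 0$, which can be refined via uniform expansions of Legendre functions in terms of modified Bessel functions. This plays for the $k=0$ contribution the role that the Khusnutdinov-type expansion played for $k\geq 1$.

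First, I would integrate by parts in the defining integral. The boundary term at infinity vanishes for $\Real(s)>1/2$ since $\log P_{-1/2+t}^{0}(\cosh(\eta)) = O(t)$ while $(t^{2}-1/4)^{-s}$ decays like $t^{-2s}$, and the boundary term at $t=1$ equals $-(3/4)^{-s}\log P_{1/2}^{0}(\cosh(\eta)) = O(\eta^{2})$, contributing only $o(1)$ to the derivative at $s=0$. Substituting the Bessel asymptotic, splitting off the $t$-independent prefactor $\tfrac{1}{2}\log(\eta/\sinh(\eta)) = O(\eta^{2})$, and performing the change of variable $u = t\eta$, one arrives at the main contribution
\begin{displaymath}
\frac{2s\sin(\pi s)}{\pi}\,\eta^{2s}\int_{\eta}^{\infty}u(u^{2}-\eta^{2}/4)^{-s-1}\log I_{0}(u)\,du,
\end{displaymath}
in which the prefactor $\eta^{2s} = 1 + 2s\log(\eta) + O(s^{2})$ is the source of the $\zeta(0)\log(\eta)$ term in $\zeta_{\widetilde{M}_{0}}'(0)$. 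Taking $\eta\to 0$ under the integral reduces the problem to computing the Mellin-type quantity $M(s) := \int_{0}^{\infty}u^{-2s-1}\log I_{0}(u)\,du$, convergent for $1/2<\Real(s)<1$. Its meromorphic continuation is obtained by splitting $\int_{0}^{1}+\int_{1}^{\infty}$, using the small-$u$ expansion $\log I_{0}(u) = u^{2}/4 + O(u^{4})$ and the large-$u$ asymptotic $\log I_{0}(u) = u - \tfrac{1}{2}\log(2\pi u) + O(1/u)$; the $\log u$ term produces a double pole at $s=0$ whose interaction with the simple zero of $\sin(\pi s)/\pi$ and with the $\eta^{2s}$ factor yields the claimed Laurent expansion.

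The main obstacle is two-fold. First, one must justify that the remainder in the uniform Bessel asymptotic for $P_{-1/2+t}^{0}(\cosh(\eta))$, together with the sub-leading corrections from the binomial expansion of $(u^{2}-\eta^{2}/4)^{-s-1}$ and from the lower limit $u=\eta$, contribute only $o(1)$ to $\zeta_{\widetilde{M}_{0}}'(0)$ as $\eta\to 0$; this requires estimates in the spirit of those in Theorem \ref{thm:Mast-integral-cusp}. Second, one must carefully track the interaction of the double-pole structure of $M(s)$ at $s=0$ with the simple zero of $\sin(\pi s)/\pi$ and with the $\eta^{2s}$ prefactor, to recover the precise constant $\zeta'(0) - \zeta(0)^{2}$ without spurious terms.
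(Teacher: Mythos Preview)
Your approach is workable but takes a genuinely different route from the paper. The paper does \emph{not} use the uniform Bessel approximation $P_{-1/2+t}^{0}(\cosh\eta)\sim\sqrt{\eta/\sinh\eta}\,I_{0}(t\eta)$. Instead it invokes only the ordinary large-$t$ asymptotic (Erd\'elyi, vol.~I, p.~77):
\begin{displaymath}
	P_{-1/2+t}^{0}(\cosh\eta)=\frac{1}{\sqrt{\pi}}(1-e^{-2\eta})^{-1/2}t^{-1/2}\bigl(e^{(t-1/2)\eta}+ie^{-(t+1/2)\eta}\bigr)\bigl[1+O(t^{-1})\bigr],
\end{displaymath}
which immediately suggests the elementary regularizer
\begin{displaymath}
	E(t,\eta)=-\tfrac{1}{2}\log\pi-\tfrac{1}{2}\log(1-e^{-2\eta})-\tfrac{1}{2}\log t+(t-\tfrac{1}{2})\eta.
\end{displaymath}
The split $\zeta_{\widetilde{M}_{0}}=\zeta_{\widetilde{M}_{0},1}+\zeta_{\widetilde{M}_{0},2}$ with $\zeta_{\widetilde{M}_{0},1}$ carrying $\partial_{t}(\log P-E)=O(t^{-2})$ makes the first piece holomorphic for $\Real(s)>-1/2$ and evaluable at $s=0$ by the fundamental theorem of calculus; the second piece involves only $\partial_{t}E=-\tfrac{1}{2t}+\eta$ and reduces to explicit hypergeometric integrals. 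No integration by parts, no change of variable $u=t\eta$, no double poles, and no uniform asymptotics are needed---the point is that since there is no sum over $k$ here, a mere large-$t$ expansion suffices.

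Your Bessel route trades this simplicity for a Mellin integral $M(s)=\int_{0}^{\infty}u^{-2s-1}\log I_{0}(u)\,du$ with a genuine double pole at $s=0$, whose Laurent coefficients must be extracted and matched against the $\eta^{2s}$ prefactor. This is doable, but the second obstacle you flag is sharper than you may expect: a direct computation from the large-$u$ expansion $\log I_{0}(u)=u-\tfrac{1}{2}\log(2\pi u)+O(u^{-1})$ gives principal part $-\tfrac{1}{8s^{2}}-\tfrac{\log(2\pi)}{4s}$, which after multiplication by $\tfrac{2s\sin(\pi s)}{\pi}\eta^{2s}$ yields $\zeta(0)\log\eta+\zeta'(0)$ but not the additional $-\zeta(0)^{2}$ in the stated formula. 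Whether that extra $-\tfrac{1}{4}$ is recovered from subleading Bessel corrections, or arises elsewhere, you would need to track carefully; the paper's elementary regularizer sidesteps this entirely because $E(t,\eta)$ already encodes the exact $-\tfrac{1}{2}\log t$ term for all $t\geq 1$, not just asymptotically.
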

\begin{proof}
We express $P_{-1/2+t}^{0}(\cosh(\eta))$ in terms of the hypergeometric function
\begin{displaymath}
	P_{-1/2+t}^{0}(\cosh(\eta))=F\left(\frac{1}{2}+t,\frac{1}{2}-t;1;\frac{1-\cosh(\eta)}{2}\right).
\end{displaymath}
Then we appeal to the asymptotics in \cite[eq.~(17), p.~77]{Erdelyi}, giving for $t\to +\infty$
\begin{align*}
F\left(\frac{1}{2}+t,\frac{1}{2}- t;1;\frac{1-\cosh(\eta)}{2}\right)&=
\frac{1}{\sqrt{\pi}}
(1-e^{-2\eta})^{-1/2}t^{-1/2}\\
&\times\bigl(e^{(t-1/2)\eta} +e^{ i \pi(1/2)} e^{-(t+1/2)\eta} \bigr)
\bigl[1+O(\vert t^{-1}\vert)].
\end{align*}
From the resulting asymptotics for $\log P_{-1/2+t}^{0}(\cosh(\eta))$ as $t \to +\infty$, we infer that 
we need to substract the regularizing term
\begin{displaymath}
	E(t,\eta):=-\frac{1}{2}\log(\pi)-\frac{1}{2}\log(1-e^{-2\eta})-\frac{1}{2}\log(t)
+\left(t-\frac{1}{2}\right)\eta.
\end{displaymath}
By the asymptotic expansion with remainder $O(t^{-1})$ and because $P_{-1/2+t}^{0}(\cosh(\eta))$ is entire in $t$, the expression
\begin{align*}
	\zeta_{\widetilde{M}_{0},1}(s):=
	\frac{\sin(\pi s)}{\pi}\int_{1}^{\infty}(t^{2}-1/4)^{-s}\frac{\partial}{\partial t}
	\left(\log P_{-1/2+t}^{0}(\cosh(\eta)) -E(t,\eta)\right)dt
\end{align*}
defines a holomorphic function for $\Real(s)>-1/2$. Besides, at $s=0$, the integral can be evaluated just by substituting $s=0$ and using the primitive. Notice the latter vanishes at infinity by construction. We thus obtain a Laurent expansion at $s=0$ of the form
\begin{align*} 
\zeta_{\widetilde{M}_{0},1}(s)
&=\left(-\log\bigl(P_{\frac{1}{2}}^{0}(\cosh(\eta))\bigr)+E(1,\eta)\right)  s+ O(s^2)\\
&=\left(-\frac{1}{2}\log(\eta) -\frac{1}{2}\log(2\pi)+   f(\eta)\right) s+ O(s^2),
\end{align*}
for some function $f(\eta)$, which is $o(1)$ as $\eta\to 0$. 
The integral
\begin{displaymath}
	\zeta_{\widetilde{M}_{0},2}(s):=\frac{\sin(\pi s)}{\pi}\int_{1}^{\infty}(t^{2}-1/4)^{-s}\frac{\partial}{\partial t}E(t,\eta)dt
\end{displaymath}
it easily evaluated in terms of hypergeometric series, directly exhibiting the meromorphic continuation and, at $s=0$, the expansion
\begin{displaymath}
	\zeta_{\widetilde{M}_{0},2}(s)=-(\zeta(0)^{2}+\eta)s+O(s^{2}).
\end{displaymath}
Finally adding the contributions of $\zeta_{\widetilde{M}_{0},1}(s)$ and $\zeta_{\widetilde{M}_{0},2}(s)$, we complete the proof.
\end{proof}

\subsection{Conclusion and proof of Theorem \ref{thm:det_cone}}
The claim of Theorem \ref{thm:det_cone} is a consequence of the next statement.
\begin{theorem}
The spectral zeta function $\zeta_{\omega}(s)$ of the model hyperbolic cone converges absolutely and locally uniformly for $\Real(s)>1$, and admits a meromorphic continuation to the half-plane $\Real(s)>-\sigma_{0}$, for some $\sigma_{0}>0$.  Furthermore, the derivative of $\zeta_{\omega}(s)$ at $s=0$ 
satisfies
the equality
\begin{displaymath} 
\begin{split} 
		\zeta_{\omega}^{\prime}(0)=&\left(\frac{\omega}{6}+\frac{1}{6\omega}\right)\log(\eta)
	    +\omega\left(-2\zeta^{\prime}(-1)-\frac{1}{6}\log 2+\frac{1}{6}\right)
		+\frac{1}{\omega}\left(\frac{\gamma}{6}-\frac{1}{6}\log 2+\frac{5}{12}\right)\\
		&-\frac{1}{2}\log\omega-\frac{1}{6}\omega\log\omega-\frac{1}{6}\frac{\log\omega}{\omega}-\frac{1}{4}+o(1),
\end{split}		
\end{displaymath}
as $\eta\to 0$.
\end{theorem}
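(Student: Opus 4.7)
The plan is to assemble the theorem directly from the decomposition already built up in the preceding propositions, namely
\begin{displaymath}
\zeta_{\omega}(s) = \zeta_{L}(s) + \zeta_{R}(s) + \zeta_{M^{\ast}}(s) + \zeta_{\widetilde{M}_{0}}(s) + \zeta_{E_{0}}(s) + \zeta_{E_{1}}(s) + \zeta_{E_{2}}(s),
\end{displaymath}
valid on the strip $1 < \Real(s) < 2$. Each summand has been shown to admit a meromorphic continuation to some half-plane $\Real(s) > -\sigma$ and to be holomorphic at $s=0$, with $\sigma_0 := \min\sigma$ strictly positive. Hence the absolute convergence, meromorphic continuation, and holomorphy at $s=0$ of $\zeta_{\omega}$ follow immediately by linearity. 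It remains only to evaluate $\zeta_{\omega}^{\prime}(0)$ as $\eta \to 0$.

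The second step is to add the contributions to $\zeta_{\omega}^{\prime}(0)$ coming from the propositions. Propositions on $\zeta_{L}$, $\zeta_{R}$, and $\zeta_{M^{\ast}}$ each contribute $o(1)$. The remaining four produce
\begin{align*}
\zeta_{\widetilde{M}_{0}}^{\prime}(0) &= \zeta(0)\log(\eta) + \zeta^{\prime}(0) - \zeta(0)^{2} + o(1),\\
\zeta_{E_{0}}^{\prime}(0) &= \zeta(0)\log(\omega) - \zeta^{\prime}(0) - \zeta(0)\log(\eta) + o(1),\\
\zeta_{E_{1}}^{\prime}(0) &= 2\omega\bigl(-\zeta(-1)-\zeta^{\prime}(-1)+\zeta(-1)\log(2\omega)\bigr) + \tfrac{\omega}{6}\log(\eta) + o(1),\\
\zeta_{E_{2}}^{\prime}(0) &= -\tfrac{\zeta(-1)}{\omega}\bigl(5 + 2\gamma - \log 4 - 2\log\omega\bigr) + \tfrac{1}{6\omega}\log(\eta) + o(1).
\end{align*}
I would then collect terms according to their dependence on $\eta$, $\omega$, and the arithmetic constants. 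The $\log(\eta)$ coefficients aggregate to $\zeta(0) - \zeta(0) + \tfrac{\omega}{6} + \tfrac{1}{6\omega} = \tfrac{\omega}{6} + \tfrac{1}{6\omega}$, matching the leading logarithmic divergence. The terms $\zeta^{\prime}(0)$ from $\zeta_{\widetilde{M}_{0}}$ and $\zeta_{E_{0}}$ cancel exactly, a crucial simplification.

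The third step is to substitute the special values $\zeta(0) = -\tfrac{1}{2}$ and $\zeta(-1) = -\tfrac{1}{12}$. The $\omega$-linear contribution of $\zeta_{E_{1}}^{\prime}(0)$ becomes
\begin{displaymath}
\omega\bigl(-2\zeta^{\prime}(-1) + \tfrac{1}{6} - \tfrac{1}{6}\log 2\bigr) - \tfrac{1}{6}\omega\log\omega,
\end{displaymath}
the $\omega^{-1}$ contribution of $\zeta_{E_{2}}^{\prime}(0)$ becomes
\begin{displaymath}
\tfrac{1}{\omega}\bigl(\tfrac{5}{12} + \tfrac{\gamma}{6} - \tfrac{1}{6}\log 2\bigr) - \tfrac{1}{6}\tfrac{\log\omega}{\omega},
\end{displaymath}
and $\zeta_{E_{0}}^{\prime}(0)$ yields the single term $-\tfrac{1}{2}\log\omega$. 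The constant $-\zeta(0)^{2}$ from $\zeta_{\widetilde{M}_{0}}$ contributes the $-\tfrac{1}{4}$ term. Summing everything produces precisely the formula claimed in the statement.

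The main substantive obstacle is not here but rather lies entirely upstream, in the long sequence of analytic estimates and hypergeometric/Legendre function identities used to establish the individual Laurent expansions (in particular the delicate handling of the ``error integrals'' $\zeta_{E_{1}}$ and $\zeta_{E_{2}}$, where the small parameter $\delta$ plays a subtle role in justifying both the meromorphic continuation and the vanishing of the residual $o(1)$ terms). At the level of the final theorem itself, the only real ``obstacle'' is the bookkeeping of cancellations: the $\log(\eta)$ divergences from the $k=0$ and $k \geq 1$ regimes conspire correctly, the $\zeta^{\prime}(0)$ terms cancel, and the apparently unrelated constants $\gamma$, $\log 2$, and $\zeta^{\prime}(-1)$ reassemble into the predicted combination. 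Verifying these cancellations via the explicit substitutions of $\zeta(0)$ and $\zeta(-1)$ completes the proof.
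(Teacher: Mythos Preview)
Your proposal is correct and follows essentially the same approach as the paper: the paper's proof states only that ``the result is the conjunction of propositions \ref{prop:cone-zeta-L}--\ref{prop:zeta-cone-M0}'', and you have simply spelled out the bookkeeping of that conjunction, verifying the cancellations of $\zeta'(0)$ and the $\log(\eta)$ cross-terms and substituting $\zeta(0)=-\tfrac12$, $\zeta(-1)=-\tfrac{1}{12}$ explicitly.
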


\begin{proof}
The result is the conjunction of propositions \ref{prop:cone-zeta-L}--\ref{prop:zeta-cone-M0}.
\end{proof}
%

\section{Regularized determinants and the Selberg trace formula}\label{section:selberg}
Let $X$ be a compact Riemann surface, arising as a compactification of a quotient $\Gamma\backslash\HH$, for a fuchsian group $\Gamma$. Let be $p_{1},\ldots,p_{n}$ be the set of cusps and elliptic fixed points, with multiplicities $m_{i}\leq\infty$. By $c$ we denote the number of cusps of $\Gamma\backslash\HH$. We endow $X$ with the singular Poincar\'e metric $ds_{\hyp}^{2}$ descended from $\HH$. Let $\Delta_{\hyp}$ be the Friedrichs extension of the hyperbolic laplace operator, given in coordinates on $\HH$ by
\begin{displaymath}
	\Delta_{\hyp}=-y^{2}\left(\frac{\partial^{2}}{\partial x^{2}}+\frac{\partial^{2}}{\partial y^{2}}\right).
\end{displaymath}
Fix a positive parameter $a>0$. We similarly introduce the one dimensional laplacian $-y^{2}d^{2}/dy^{2}$ on $L^{2}([a,+\infty), dy^{2}/y^{2})$, with Dirichlet boundary condition at $y=a$. If $a>0$ is big enough, the cusp $\C_{a}=\mathbb{S}^{1}\times [a,+\infty)$ isometrically embeds into a neighborhood of each cusp $p_{i}$ of $X$. Then $-y^{2}d^{2}/dy^{2}$ can be trivially extended to these neighborhoods, and further extended by $0$ to the complementary. The resulting operator was denoted $\Delta_{a}$ in \textsection\ref{subsubsec:pseudo-laplacian}. The relative determinant
\begin{displaymath}
	\det(\Delta_{\hyp},\Delta_{a})
\end{displaymath}
is then defined, as will result from the discussion below. Furthermore, this determinant can be evaluated by means of the Selberg trace formula. The method goes back to d'Hoker--Phong \cite{dHP}, Sarnak \cite{Sarnak}, and Efrat \cite{Efrat}. Although the authors don't address the possible presence of elliptic fixed points, their reasoning easily carries over. We proceed to outline the main steps, with special stress on the particularities of our example. We make no claim of originality.

\subsection{Step 1: the difference heat trace} We consider the difference of heat operators $e^{-t\Delta_{\hyp}}-e^{-t\Delta_{a}}$. One needs to justify that it is of trace class. The heat operator $e^{-t\Delta_{\hyp}}$ has an explicit kernel. If $\lbrace\lambda_{j}\rbrace$ is the pure point spectrum, and $E_{p_{j}}(z,s)$ is the analytic continuation (in $s$) of the Eisenstein series associated to a cusp $p_{j}$, then, the spectral expansion of the heat kernel is
\begin{displaymath}
	K(z,w,t)=
	\sum_{j}\varphi_{j}(z)\varphi_{j}(w) e^{-\lambda_{j}t}
	+\sum_{p_{j}\text{ cusp}}\frac{1}{4\pi}\int_{-\infty}^{+\infty}E_{p_{j}}(z,\frac{1}{2}+ir)\overline{E_{p_{j}}(z,\frac{1}{2}+ir)}e^{-(1/4+r^{2})t}dr.
\end{displaymath}
The sum converges absolutely and uniformly on compact sets disjoint from the cusps. The Maass--Selberg relations \cite[Chapter 6, (6.29)]{Iwaniec} show that $K(z,z,t)$ is not integrable with respect to the hyperbolic measure, thus confirming that $e^{-t\Delta_{\hyp}}$ alone is not of trace class. One can also read off how to ``regularize" $K(z,w,t)$, in order to obtain a trace class operator: substract the kernel of $e^{-t\Delta_{a}}$. Precisely, on a neighborhood of a cusp, we write $z=(x,y)$ and $w=(x',y')$, for the standard parametrization of $\C_{a}=\mathbb{S}^{1}\times [a,+\infty)$. On the region $\C_{a}\times\C_{a}$, the kernel of $e^{-t\Delta_{a}}$ is explicitly given by
\begin{displaymath}
	K_{a}(z,w,t)=\frac{e^{-t/4}}{\sqrt{4\pi t}}(yy')^{1/2}
	\left\lbrace e^{-\log(y/y')^{2}/4t}-e^{-(\log(yy')-\log(a^{2}))^{2}/4t}\right\rbrace.
\end{displaymath}
It is zero elsewhere on $X\times X$. Note that $K_{a}$ satisfies the Dirichlet boundary condition. From the 
Maass--Selberg relations, one concludes that $e^{-t\Delta_{\hyp}}-e^{-t\Delta_{a}}$ is of the trace class and
\begin{displaymath}
	\begin{split}
		\theta(t):=\tr(e^{-t\Delta_{\hyp}}-e^{-t\Delta_{a}})=&\int_{X}(K(z,z,t)-K_{a}(z,z,t))\frac{dx\wedge dy}{y^{2}}\\
		=&\sum_{j}e^{-t\lambda_{j}}
		-\frac{1}{4\pi}\int_{-\infty}^{+\infty}e^{-(1/4+r^{2})t}\frac{\phi'}{\phi}(\frac{1}{2}+ir)dr\\
		&+\frac{1}{4}e^{-t/4}\tr(\Phi(\frac{1}{2}))
		+c\frac{e^{-t/4}}{\sqrt{4\pi t}}\log a
		+c\frac{e^{-t/4}}{4}.
	\end{split}
\end{displaymath}
The notation $\Phi$ stands for the scattering matrix (giving the functional equation of the vector of Eisenstein series) and $\phi=\det\Phi$ is its determinant.

\subsection{Step 2: the spectral zeta function} In preparation for the definition and analytic continuation of the spectral zeta function, one needs suitable asymptotics of the difference heat trace as $t\to 0$ and $t\to +\infty$. This is a a typical application of the Selberg trace formula \cite[Thm.~10.2]{Iwaniec}. As $t\to 0$, one has
\begin{displaymath}
	\theta(t)=a_{-1}t^{-1}+bt^{-1/2}(\log t) + a_{-1/2} t^{-1/2} + a_{0} +O(t^{1/2}\log t).
\end{displaymath}
See for instance \cite[Thm.~11.1]{Iwaniec}. With respect to the usual asymptotic expansion on compact riemannian surfaces, we note the ``unusual" term $t^{-1/2}\log t$ and the rest of the form $O(t^{1/2}\log t)$. As $t\to +\infty$
\begin{displaymath}
	\theta(t)=1+O(e^{-\delta t}),
\end{displaymath}
for some real $\delta>0$. The asymptotics guarantee that the following Mellin transform is absolutely convergent for $\Real(s)>1$ and extends to a meromorphic function on $\Real(s)>-1/2$, which is holomorphic at $s=0$:
\begin{displaymath}
	\zeta_{a}(s)=\frac{1}{\Gamma(s)}\int_{0}^{+\infty} t^{s-1}\left(\theta(t)-(\dim\ker\Delta_{\hyp}-\dim\ker\Delta_{a})\right)dt.
\end{displaymath}
Observe that
\begin{displaymath}
	\dim\ker\Delta_{\hyp}-\dim\ker\Delta_{a}=1-0=1.
\end{displaymath}
For a complex parameter $s$ with $\Real(s)>1$, an explicit expression for the spectral zeta function $\zeta_{a}(s)$ is 
\begin{displaymath}
	\begin{split}
		\zeta_{a}(s)=&\sum_{\lambda_{j}\neq 0}\frac{1}{\lambda_{j}^{s}}-\frac{1}{4}\int_{-\infty}^{+\infty}(\frac{1}{4}+r^{2})^{-s}\frac{\phi'}{\phi}(\frac{1}{2}+ir)dr\\
		&+4^{s-1}\tr(\Phi(\frac{1}{2}))
		+c\log a \frac{4^{s-1/2}}{\sqrt{4\pi}}\frac{\Gamma(s+1/2)}{(s-1/2)\Gamma(s)} +c\, 4^{s-1}.
	\end{split}
\end{displaymath}
In terms of this zeta functions, the relative determinant $(\Delta_{\hyp},\Delta_{a})$ is given by
\begin{displaymath}
	\det(\Delta_{\hyp},\Delta_{a})=\exp(-\zeta^{\prime}_{a}(0)).
\end{displaymath}
\subsection{Step 3: application of the trace formula} Applying the trace formula \cite[Thm.~10.2]{Iwaniec} in the lines of \cite{Sarnak} and \cite{Efrat}, one evaluates $\det(\Delta_{\hyp},\Delta_{a})$ in terms of $Z^{\prime}(1,\Gamma)$. The result is summarized here. 
\begin{proposition}\label{prop:det-Selberg-zeta}
The relative determinant $\det(\Delta_{\hyp},\Delta_{a})$ is given by
\begin{displaymath}
	\begin{split}
		\log\det(\Delta_{\hyp},\Delta_{a})=&\log Z'(1,\Gamma)\\
		&+(2g-2+\sum_{i}(1-m_{i}^{-1}))(2\zeta'(-1)-\frac{1}{4}+\frac{1}{2}\log(2\pi))\\
		&+\sum_{m_{i}<\infty}\sum_{k=0}^{m_{i}-2}\frac{2k+1-m_{i}}{m_{i}^{2}}\log\Gamma(\frac{k+1}{m_{i}})\\
		&+\frac{1}{6}\sum_{m_{i}<\infty}\left(1-\frac{1}{m_{i}^{2}}\right)\log m_{i}\\
		&-\frac{c}{2}(\log(4/a)-1).
	\end{split}
\end{displaymath}
\end{proposition}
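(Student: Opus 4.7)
The plan is to evaluate $\log\det(\Delta_{\hyp},\Delta_{a}) = -\zeta_{a}'(0)$ by applying the Selberg trace formula to the difference heat trace $\theta(t)$ already identified in Steps 1--2, and then performing a term-by-term Mellin transform. Concretely, I would take the test function $h_{t}(r) = e^{-(1/4+r^{2})t}$, whose Selberg transform is the hyperbolic heat kernel, and insert the resulting expansion of $\theta(t)$ into $\zeta_{a}(s) = \Gamma(s)^{-1}\int_{0}^{\infty}t^{s-1}(\theta(t)-1)\,dt$, being careful to subtract off the constant term $1$ (the kernel of $\Delta_{\hyp}$ minus the kernel of $\Delta_{a}$) so that the integral converges at $t=\infty$. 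The small-$t$ asymptotics recorded in Step 2 guarantee meromorphic continuation past $s=0$, and the explicit formula for $\zeta_{a}(s)$ on $\Real(s)>1$ given there is the starting point.

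Next I would decompose $\zeta_{a}'(0)$ into five pieces corresponding to the geometric terms in the trace formula: the identity (volume) contribution, the elliptic contributions indexed by conjugacy classes of primitive elliptic elements with multiplicities $m_{i}<\infty$, the hyperbolic contribution from closed geodesics, the parabolic/cuspidal contribution, and the contribution from the scattering determinant $\phi$. The hyperbolic piece is the key: using the definition of $Z(s,\Gamma)$ as a product over primitive closed geodesics, the Mellin transform of the hyperbolic contribution to $\theta(t)$ identifies at $s=1$ with the logarithmic derivative $(Z'/Z)(s,\Gamma)$; since $Z$ has a simple zero at $s=1$, the derivative at $s=0$ of the regularized Mellin transform produces $\log Z'(1,\Gamma)$, up to the standard Euler-type constant involving the volume factor $(2g-2+\sum(1-m_{i}^{-1}))$. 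This is where the term $(2g-2+\sum(1-m_{i}^{-1}))(2\zeta'(-1)-\tfrac{1}{4}+\tfrac{1}{2}\log(2\pi))$ enters, through the identity contribution combined with the regularization of $(Z'/Z)$ at $s=1$.

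For the elliptic contributions, I would use the classical Koyama/Efrat-type computation of the Mellin transform of the elliptic orbital integral. After summing the geometric series over powers of a primitive elliptic element of order $m_{i}$, one is left with a sum of Hurwitz-type zeta values whose derivatives at $s=0$ produce $\log\Gamma$-values. The precise identity to reproduce is
\[
\sum_{k=0}^{m_{i}-2}\frac{2k+1-m_{i}}{m_{i}^{2}}\log\Gamma\!\left(\frac{k+1}{m_{i}}\right)+\frac{1}{6}\left(1-\frac{1}{m_{i}^{2}}\right)\log m_{i},
\]
which follows by a careful partial-fraction decomposition of $\sinh(\ell/2)^{-2}$ evaluated at elliptic lengths $\ell = 2\pi i k/m_{i}$, combined with Lerch's formula $\zeta'(0,a) = \log\Gamma(a) - \tfrac{1}{2}\log(2\pi)$. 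For the cuspidal piece, the contribution to $\theta(t)$ has the two pieces $c\,e^{-t/4}(4\pi t)^{-1/2}\log a$ and $c\,e^{-t/4}/4$ coming directly from the explicit kernel $K_{a}$, plus a $\phi'/\phi$ scattering term; their Mellin transforms are elementary and, after differentiation at $s=0$, assemble into the boundary term $-\tfrac{c}{2}(\log(4/a)-1)$ (the $\tfrac{1}{4}\tr\Phi(\tfrac{1}{2})$ contribution cancels against the corresponding trace formula term, or can be absorbed in the Selberg constant).

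The main obstacle, as in Sarnak and Efrat, is bookkeeping: carefully matching the normalization of $Z'(1,\Gamma)$, handling the fact that the trace formula in \cite[Thm.~10.2]{Iwaniec} is stated in a form that mixes the scattering contribution with the parabolic one, and making sure that the ``boundary correction'' arising from cutting off at height $a$ (which is not present in the classical Sarnak/Efrat setup for compact quotients) produces exactly $-\tfrac{c}{2}(\log(4/a)-1)$. The orbifold generalization of the elliptic terms, not treated in \cite{Sarnak,Efrat}, is a straightforward adaptation since each elliptic conjugacy class in $\Gamma$ is handled independently in the trace formula, and the $\log\Gamma$ contributions combine additively over the fixed points with $m_{i}<\infty$. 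No new analytic input is required beyond what is already assembled in Steps 1--2 and the standard trace formula.
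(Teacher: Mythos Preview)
Your plan is correct and follows exactly the approach the paper indicates: apply the Selberg trace formula to the difference heat trace $\theta(t)$, Mellin transform term by term, and read off the identity, hyperbolic, elliptic, and parabolic/scattering contributions in the manner of d'Hoker--Phong, Sarnak, and Efrat. The paper itself gives no more detail than this---its ``proof'' is simply the sentence ``Applying the trace formula \cite[Thm.~10.2]{Iwaniec} in the lines of \cite{Sarnak} and \cite{Efrat}, one evaluates $\det(\Delta_{\hyp},\Delta_{a})$ in terms of $Z^{\prime}(1,\Gamma)$''---so your outline is already more explicit than what appears in the text.
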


\section{Proof of Theorem \ref{theorem:main}}\label{section:proof-main}
We now go on to proof the main result of this article Theorem \ref{theorem:main}.

\subsection{} Recall we already proved a ``naive" version of the isometry we seek, that is Proposition \ref{prop:naive_laplacian}. Up to the topological constant \eqref{eq:top-ctt}, the proposition establishes an isometry with a Quillen type metric involving the naive determinant $\det\Delta_{\hyp}^{\ast}$ (Notation \ref{notation:naive_det}):
\begin{equation}\label{eq:def-naive-det-bis}
	\deta\Delta_{\hyp}:=\lim_{\varepsilon\to 0} \frac{\detp\Delta_{\cpd,\hyp,\varepsilon}}{\varepsilon^{\frac{1}{6}\sum_{i}(1-m_{i}^{-1})^{2}}(\log\varepsilon^{-1})^{\frac{c}{3}}}.
\end{equation}
This reduces our task to the evaluation of the naive determinant. The evaluation appeals to the Mayer--Vietoris formulae of Section \ref{section:Mayer--Vietoris}, through the application provided by Theorem \ref{prop:Mayer--Vietoris}:
\begin{equation}\label{eq:naive-det-bis}
	\detp(\Delta_{\cpd,\hyp,\varepsilon})=C\frac{\vol(X,ds_{\hyp,\varepsilon}^{2})}{\vol(X,ds_{\hyp}^{2})}\frac{\detp(\Delta_{\hyp},\Delta_{\hyp,\cusp})\det\Delta_{\ov{V}_{\varepsilon/2},\hyp,\varepsilon}}{
	\prod_{m_{i}<\infty}\det\Delta_{\hyp,i}},
\end{equation}
where $C$ is the constant \eqref{eq:24}. The idea is to compute the limit defining the naive determinant, by plugging into \eqref{eq:naive-det-bis} our computations for the determinant of the model hyperbolic cusp and cone and then performing the limit \eqref{eq:def-naive-det-bis}. However, we still need to evaluate $\det\Delta_{\ov{V}_{\varepsilon/2},\hyp,\varepsilon}$. This is the content of the following lemma.
\begin{lemma}\label{lemma:spreafico}
With notations as in \textsection\ref{subsec:MV-trunated}, the value of $\det\Delta_{\ov{V}_{\varepsilon/2},\hyp,\varepsilon}$ satisfies
\begin{displaymath}
	\begin{split}
	\log\det\Delta_{\ov{V}_{\varepsilon/2},\hyp,\varepsilon}=&\frac{c}{3}\log(\log(1/\varepsilon))+\frac{c}{3}\log 2
	-\sum_{m_{i}<\infty}\frac{1}{3m_{i}}\log(\varepsilon)+\frac{1}{3}\sum_{m_{i}<\infty}\log(m_{i})\\
	&-n\left(2\zeta^{\prime}(-1)+\frac{1}{2}\log(2\pi)-\frac{1}{3}\log 2+\frac{5}{12}\right)+o(1),
	\end{split}
\end{displaymath}
as $\varepsilon\to 0$.
\end{lemma}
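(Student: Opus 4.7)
The plan rests on three observations: that $\ov{V}_{\varepsilon/2}$ decomposes into a disjoint union of disks around each singular point, that on each of these disks the truncated hyperbolic metric is (by construction in Example \ref{example:truncated_hyperbolic}) the \emph{frozen} hyperbolic metric, which is a constant multiple of the standard euclidean metric in the \emph{rs} coordinate, and finally that the determinant of the Dirichlet laplacian on a euclidean disk is known in closed form by Spreafico \cite{Spreafico}.

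First I would factorise
\begin{displaymath}
	\log\det\Delta_{\ov{V}_{\varepsilon/2},\hyp,\varepsilon}=\sum_{i=1}^{n}\log\det\Delta_{X_{i},\hyp,\varepsilon},
\end{displaymath}
where $X_{i}=\zeta_{i}^{-1}(\ov{D}(0,\varepsilon/2))$. From the explicit expression of $\|\cdot\|_{\hyp,\varepsilon}$ on $|\zeta_{i}|\leq\varepsilon$, a direct integration of the length element shows that $(X_{i},\|\cdot\|_{\hyp,\varepsilon})$ is isometric to a flat euclidean disk of radius
\begin{displaymath}
	r_{i}=\begin{cases} \dfrac{1}{2\log\varepsilon^{-1}}, & m_{i}=\infty,\\[4pt]
	\dfrac{\varepsilon^{1/m_{i}}}{m_{i}(1-\varepsilon^{2/m_{i}})}, & m_{i}<\infty. \end{cases}
\end{displaymath}

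Next I would invoke Spreafico's exact evaluation of the determinant of the Dirichlet Laplace--Beltrami operator on the unit euclidean disk,
\begin{displaymath}
	\log\det\Delta_{D(0,1),D}=-2\zeta'(-1)-\tfrac{1}{2}\log(2\pi)+\tfrac{1}{3}\log 2-\tfrac{5}{12},
\end{displaymath}
together with the elementary scaling relation
\begin{displaymath}
	\log\det\Delta_{D(0,R),D}=-\tfrac{1}{3}\log R+\log\det\Delta_{D(0,1),D},
\end{displaymath}
which follows from $\zeta_{D(0,1)}(0)=1/6$. This last value is itself obtained from the heat kernel expansion on a flat surface with boundary, invoking $\int_{\partial M}k_{g}\,d\ell=2\pi\chi(M)=2\pi$ for a disk. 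The ``hard part'' here is only to match conventions between \cite{Spreafico} and the Laplace--Beltrami setting we use (normalisation of the Riemannian tensor, Dirichlet boundary condition, scaling of eigenvalues under $g\mapsto\lambda^{2}g$); once those are aligned, no deeper analytic input is needed, because the metric on each $X_{i}$ is genuinely \emph{flat}, not merely conformally flat, so there is neither conical nor cuspidal anomaly to track.

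Finally, substituting the radii $r_{i}$ and summing: each cusp contributes
\begin{displaymath}
	-\tfrac{1}{3}\log r_{i}+\log\det\Delta_{D(0,1),D}=\tfrac{1}{3}\log(\log\varepsilon^{-1})+\tfrac{1}{3}\log 2+\log\det\Delta_{D(0,1),D},
\end{displaymath}
while each elliptic fixed point of order $m_{i}<\infty$ contributes
\begin{displaymath}
	-\tfrac{1}{3m_{i}}\log\varepsilon+\tfrac{1}{3}\log m_{i}+\log\det\Delta_{D(0,1),D}+o(1),
\end{displaymath}
the $o(1)$ absorbing $-\tfrac{1}{3}\log(1-\varepsilon^{2/m_{i}})$. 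Adding the $c$ cusp terms to the $(n-c)$ elliptic terms and using that the constant $\log\det\Delta_{D(0,1),D}$ appears $n$ times yields exactly the claimed asymptotic expansion.
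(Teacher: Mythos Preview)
Your proof is correct and follows essentially the same approach as the paper: factorise over the disks, recognise each as a flat euclidean disk of an explicit radius, and apply Spreafico's formula together with the scaling law. The only cosmetic difference is that the paper determines the radii by equating riemannian volumes rather than by reading off the conformal factor directly, but the two computations are equivalent and yield the same $r_{i}$.
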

\begin{proof}
The claim follows from the formula for the determinant of the scalar laplacian on an euclidean disk of radius $r$, that we quote from \cite{Spreafico}. If $\zeta_{r}(s)$ is the spectral zeta function of the disk of radius $r$ with the euclidean metric, then we have
\begin{displaymath}
	\zeta^{\prime}_{r}(0)=\frac{1}{3}\log r-\frac{1}{3}\log 2+\frac{1}{2}\log(2\pi)+\frac{5}{12}+2\zeta^{\prime}(-1).
\end{displaymath}
We conclude by a suitable choice of $r$, obtained by computing the \emph{riemannian} volumes of the disks with the several truncated hyperbolic metrics. For instance, for an elliptic fixed point of order $m_{i}$, one equates
\begin{displaymath}
	\pi r^{2}=\int_{D(0,\varepsilon/2)}\frac{4dx\wedge dy}{m_{i}\varepsilon^{2-2/m_{i}}(1-\varepsilon^{2/m_{i}})^{2}},
\end{displaymath}
hence
\begin{displaymath}
	r=\frac{\varepsilon^{1/m_{i}}}{m_{i}(1-\varepsilon^{2/m_{i}})},\quad\log r=\frac{1}{m_{i}}\log\varepsilon-\log m_{i}+o(1).
\end{displaymath}
\end{proof}
\subsection{} Before applying \eqref{eq:naive-det-bis}, we need to be careful when choosing the parameter $a$ in Theorem \ref{thm:det-pseudo-laplacian} and $\eta$ in Theorem \ref{thm:det_cone}. For this we first stress that the disks $V_{\varepsilon/2}$ have radius $\varepsilon/2$ in the \emph{rs} coordinates. Then, for the parameter $a$ in the model cusps, the right quantity to use is
\begin{displaymath}
	a=\frac{1}{2\pi}\log(2/\varepsilon).
\end{displaymath}
For the hyperbolic cone of angle $2\pi/\omega$, the right parameter according to \eqref{eq:relation-eps-R} is 
\begin{displaymath}
	\eta=2(\varepsilon/2)^{1/\omega}.
\end{displaymath} 
This is legitimate, since we work up to $o(1)$ terms. It is now a matter of carefully plugging all the determinants into \eqref{eq:naive-det-bis}: Theorem \ref{thm:det-pseudo-laplacian}, Theorem \ref{thm:det_cone}, Proposition \ref{prop:det-Selberg-zeta} and Lemma \ref{lemma:spreafico}. Then compute the limit. Miraculously, the divergent terms as $\varepsilon\to 0$ cancel out. It remains a finite quantity. Combining this with Proposition \ref{prop:naive_laplacian}, and rearranging terms (so that the constant $C^{\ast}(g)$ therein is moved to the left hand side of the isometry), we obtain the desired isometry with Quillen metric of the form
\begin{displaymath}
	\|\cdot\|_{Q}=\|\cdot\|_{L^{2}}(C(\Gamma)Z^{\prime}(1,\Gamma))^{-1/2},
\end{displaymath}
and with the explicit constant $C(\Gamma)$ given by \eqref{eq:ctt-Gamma}. This completes the proof of the theorem.

\section{Arithmetic Riemann--Roch and the special value $Z^{\prime}(1,\PSL_{2}(\Int))$}\label{section:arithmetic}
The isometry theorem \ref{theorem:main} and the compatibility of Deligne's isomorphism with base change combine to settle a Riemann--Roch type formula for arithmetic surfaces. The result we aim to prove extends the arithmetic Riemann--Roch theorem of Gillet--Soul\'e \cite{GS} (see also \cite{Soule} for a survey), for the structure sheaf of an arithmetic surface with a \emph{smooth} K\"ahler metric at the archimedean places. The improvement concerns the smoothness hypothesis on the metric, by allowing a singular Poincar\'e type metric given by an arbitrary fuchsian uniformization of the archimedean fiber deprived of a number of \emph{cusps}. We stress that the fuchsian groups in question may have torsion, and this is the novelty with respect to previous work of the first author \cite{GFM}, whose techniques were limited to the torsion free case. The theorem is new even in the absence of cusps. Allowing cusps and elliptic fixed points at the same time provides a wealth of examples of arithmetic interest. For instance, integral models of modular curves with $\Gamma_{0}(N)$-level structure and no restriction on $N$. This is the case of the modular curve $X(1)=\PP^{1}_{\Int}$, whose complex points we interpret as $\PSL_{2}(\Int)\backslash\HH\cup\lbrace\infty\rbrace$. We will treat this example in detail, thus exhibiting the arithmetic content of the theorem. We derive from it a closed expression for the special value $Z^{\prime}(1,\PSL_{2}(\Int))$.\footnote{To our knowledge, and according to specialists, this answers a longstanding question.}

\subsection{An arithmetic Riemann--Roch theorem}
Let $K$ be a number field and $\pi\colon\XX\to\BS$ and arithmetic surface. In this article, by arithmetic surface we mean a regular and integral scheme $\XX$ of Krull dimension 2, projective and flat over $\BS=\Spec\OO_{K}$, with geometrically connected fibers. The regularity assumption can actually be weakened to $\pi$ being generically smooth, in the lines of \cite{GS}. We proceed to describe the several pieces that constitute the arithmetic Riemann--Roch formula in view.

\subsubsection{} We suppose given sections $\sigma_{1},\ldots,\sigma_{n}\colon\BS\to\XX$, and multiplicities $1\leq m_{1},\ldots,m_{n}\leq\infty$. To fix ideas, say $m_{1},\ldots, m_{c}=\infty$ and $m_{c+1},\ldots, m_{n}<\infty$. We require:
\begin{enumerate}
	\item the sections are generically disjoint; 
	\item for every complex embedding $\tau\colon K\hookrightarrow\CC$, the non-compact Riemann surface $\XX_{\tau}(\CC)\setminus\lbrace\sigma_{1}(\tau),\ldots,\sigma_{c}(\tau)\rbrace$ can be uniformized as $\Gamma_{\tau}\backslash\HH$, for some fuchsian group $\Gamma_{\tau}\subset\PSL_{2}(\RR)$. Besides, $\sigma_{1}(\tau),\ldots,\sigma_{c}(\tau)$ correspond to cusps and $\sigma_{c+1}(\tau),\ldots,\sigma_{n}(\tau)$ correspond to elliptic fixed points of respective orders $m_{c+1},\ldots,m_{n}$.
\end{enumerate}
We introduce the $\QQ$-line bundle
\begin{displaymath}
	\omega_{\XX/\BS}(D),\quad\text{with}\quad D=\sum_{i}\left(1-\frac{1}{m_{i}}\right)\sigma_{i}.
\end{displaymath}
After base change to $\CC$ through the complex embeddings $\tau\colon K\hookrightarrow\CC$, we endow $\omega_{\XX/\BS}(D)$ with the (dual) Poincar\'e metrics, coming from the fuchsian uniformizations $\Gamma_{\tau}\backslash\HH$. These data define a hermitian $\QQ$-line bundle that we denote $\omega_{\XX/\BS}(D)_{\hyp}$. The extended arithmetic intersection theories of Bost \cite{Bost} and K\"uhn \cite{Kuhn} apply and provide real numbers
\begin{equation}\label{eq:arith-int} 
	(\omega_{\XX/\BS}(D)_{\hyp},\omega_{\XX/\BS}(D)_{\hyp})\in\RR.
\end{equation}
The relation to the Deligne pairing is as follows. Let $m=\prod_{i>c}m_{i}$. The Deligne pairing $$\langle\omega_{\XX/\BS}^{m}(mD),\omega_{\XX/\BS}^{m}(mD)\rangle$$ is defined, and commutes with base change. Therefore, after base change through each $\tau\colon K\hookrightarrow\CC$, we can endow it with the Deligne--Bost norm of Definition \ref{definition:Deligne-Bost}, for the $m$-th power of the Poincar\'e metrics on $\omega_{\XX/\BS}^{m}(mD)_{\tau}$. We thus obtain a hermitian line bundle over $\Spec\OO_{K}$, hence an element in the arithmetic Picard group $\APic(\BS)$ of Arakelov geometry. Because Deligne's pairing is bilinear with respect to tensor product of line bundles, we can avoid taking the $m$-th powers and talk instead of a $\QQ$-hermitian line bundle over $\Spec\OO_{K}$, that we denote
\begin{displaymath}
	\langle\omega_{\XX/\BS}(D)_{\hyp},\omega_{\XX/\BS}(D)_{\hyp}\rangle\in\APic(\BS)\otimes_{\Int}\QQ.
\end{displaymath}
The arithmetic intersection pairing \eqref{eq:arith-int} is then expressed in terms of the Deligne pairing:
\begin{displaymath}
	(\omega_{\XX/\BS}(D)_{\hyp},\omega_{\XX/\BS}(D)_{\hyp})=\adeg\langle\omega_{\XX/\BS}(D)_{\hyp},\omega_{\XX/\BS}(D)_{\hyp}\rangle,
\end{displaymath}
where $\adeg$ is the arithmetic degree map defined on $\APic(\BS)\otimes_{\Int}\QQ$ and with real values.

\subsubsection{} Another hermitian $\QQ$-line bundle over $\Spec\OO_{K}$ we need is the \emph{psi} bundle. For every section $\sigma_{i}$, the line bundle $\psi_{i}=\sigma_{i}^{\ast}(\omega_{\XX/\BS})$ can be endowed, after every base change to $\CC$, with the Wolpert metric at $\sigma_{i}$ (Definition \ref{definition:Wolpert}). We write $\psi_{i,W}$ for the resulting hermitian line bundle, and abusively identify it with its class in $\APic(\BS)$. We define a hermitian $\QQ$-line bundle by
\begin{displaymath}
	\psi_{W}=\sum_{i}\left(1-\frac{1}{m_{i}^{2}}\right)\psi_{i,W}\in\APic(\BS)\otimes_{\Int}\QQ.
\end{displaymath}
The arithmetic degree $\adeg\psi_{W}$ is an interesting invariant that measures how far the \emph{rs} coordinates are from being formal algebraic.  

\subsubsection{} The determinant of the cohomology of the structure sheaf $\OO_{\XX}$ can be defined by the general theory of Knudsen--Mumford \cite{KM}. Precisely, $H^{0}(\XX,\OO_{\XX})$ and $H^{1}(\XX,\OO_{\XX})$ are finite type modules over the Dedekind domain $\OO_{K}$, and after taking finite projective resolutions (actually of length 2) one can define their determinants by imposing the alternate multiplicativity of the determinant functor on exact sequences. We write
\begin{displaymath}
	\det H^{\bullet}(\XX,\OO_{\XX})=\det H^{0}(\XX,\OO_{\XX})\otimes\det H^{1}(\XX,\OO_{\XX})^{-1}.
\end{displaymath}
The Knudsen--Mumford construction commutes with base change. Therefore, after base change to $\CC$, the determinant of cohomology can be endowed with the Quillen metric for the fuchsian uniformizations $\Gamma_{\tau}\backslash\HH$ (Theorem \ref{theorem:main}). We denote $\det H^{\bullet}(\XX,\OO_{\XX})_{Q}$ for the determinant of cohomology with the Quillen metrics at the archimedean places. To lighten notations, we put
\begin{displaymath}
	\adeg H^{\bullet}(\XX,\OO_{\XX})=\adeg\det H^{\bullet}(\XX,\OO_{\XX}).
\end{displaymath}

\subsubsection{} We are almost in position to state the arithmetic Riemann--Roch theorem. We still need an observation. By assumption, the sections $\sigma_{i}$ are generically disjoint. If we denote $\OO(\sigma_{i})$ the line bundles they define, the Deligne pairings
\begin{displaymath}
	\langle\OO(\sigma_{i}),\OO(\sigma_{j})\rangle\simeq\sigma_{i}^{\ast}\OO(\sigma_{j})
\end{displaymath}
for $i\neq j$ are canonically trivial after extending scalars to $\CC$. Thus it is legitimate to equip them with the trivial hermitian metric at the archimedean places. Their arithmetic degrees encode the geometric intersection numbers of $\sigma_{i}$ and $\sigma_{j}$ at finite places. Explicitly
\begin{displaymath}
	\adeg\langle\OO(\sigma_{i}),\OO(\sigma_{j})\rangle=\sum_{\mathfrak{p}}\mu_{\mathfrak{p}}\log\sharp(\OO_{K}/\mathfrak{p}),
\end{displaymath}
where the sum runs over maximal ideals of $\OO_{K}$, and if $p$ is a (closed) intersection point of $\sigma_{i}$ and $\sigma_{j}$, lying above $\mathfrak{p}$, then
\begin{equation}\label{eq:fin-int}
	\mu_{\mathfrak{p}}=\length\frac{\OO_{\XX,p}}{\OO(-\sigma_{i})_{p}+\OO(-\sigma_{j})_{p}}.
\end{equation}
Following customary notations, we denote these ``finite" arithmetic intersection numbers $(\sigma_{i},\sigma_{j})_{\fin}$.
\begin{theorem}\label{theorem:ARR}

There is an equality of real numbers
\begin{displaymath}
	\begin{split}
		12\adeg H^{\bullet}(\XX,\OO_{\XX})_{Q}-\delta+\adeg\psi_{W}=&
		(\omega_{\XX/\BS}(D)_{\hyp},\omega_{\XX/\BS}(D)_{\hyp})\\
		&-\sum_{i\neq j}\left(1-\frac{1}{m_{i}}\right)\left(1-\frac{1}{m_{j}}\right)(\sigma_{i},\sigma_{j})_{\fin},
	\end{split}
\end{displaymath}
where $\delta=\sum_{\mathfrak{p}}\delta_{\mathfrak{p}}\log\sharp(\OO_{K}/\mathfrak{p})$ and $\delta_{\mathfrak{p}}$ is the Artin conductor of $\XX$ at $\mathfrak{p}$ \cite{Saito}.
\end{theorem}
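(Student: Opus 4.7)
The plan is to promote the archimedean isometry of Theorem \ref{theorem:main} to an identity on $\Spec\OO_K$ by combining it with the algebraic content of Deligne's isomorphism, which is defined functorially over $\BS$. First I would apply Theorem \ref{theorem:main} at every complex embedding $\tau\colon K\hookrightarrow\CC$, giving at each archimedean place a canonical isometry
\begin{displaymath}
\det H^{\bullet}(\XX_{\tau},\OO)_{Q}^{\otimes 12}\otimes \psi_{W,\tau}\overset{\sim}{\longrightarrow} \langle\omega_{\XX_{\tau}}(D_{\tau})_{\hyp},\omega_{\XX_{\tau}}(D_{\tau})_{\hyp}\rangle.
\end{displaymath}
On the algebraic side, Deligne's isomorphism $\det H^{\bullet}(\XX,\OO_{\XX})^{\otimes 12}\otimes\psi \overset{\sim}{\to}\langle\omega_{\XX/\BS}(D),\omega_{\XX/\BS}(D)\rangle$ is already defined over $\BS$ and commutes with base change. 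Over the Dedekind base $\OO_K$ it is however only an isomorphism of $\QQ$-line bundles modulo torsion supported at the finite places of bad reduction, and this torsion defect is controlled by the Artin conductor of $\XX\to\BS$ in the sense of Saito. This is the integral refinement underlying Gillet--Soulé's arithmetic Riemann--Roch for arithmetic surfaces.

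Composing Deligne's algebraic isomorphism with the fibrewise archimedean isometries produces an isomorphism of hermitian $\QQ$-line bundles over $\Spec\OO_K$, which is an isometry at every archimedean place and has finite-place defect equal (as a class in $\APic(\BS)\otimes\QQ$) to the Artin conductor contribution. Taking arithmetic degrees then yields the base identity
\begin{displaymath}
12\adeg H^{\bullet}(\XX,\OO_{\XX})_{Q}+\adeg\psi_{W}-\delta = \adeg\langle\omega_{\XX/\BS}(D)_{\hyp},\omega_{\XX/\BS}(D)_{\hyp}\rangle,
\end{displaymath}
using that $\delta=\sum_{\mathfrak{p}}\delta_{\mathfrak{p}}\log\sharp(\OO_K/\mathfrak{p})$ is exactly the numerical translation of the conductor defect and that, by definition, the right-hand side is $(\omega_{\XX/\BS}(D)_{\hyp},\omega_{\XX/\BS}(D)_{\hyp})$ in the $L^{2}_{1}$-intersection formalism of \textsection\ref{subsection:Deligne-Bost}.

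To extract the cross-term correction, I would then expand the right-hand Deligne self-pairing by bilinearity,
\begin{displaymath}
\langle\omega_{\XX/\BS}(D),\omega_{\XX/\BS}(D)\rangle \simeq \langle\omega_{\XX/\BS},\omega_{\XX/\BS}\rangle\otimes \bigotimes_{i}\langle\omega_{\XX/\BS},\OO(\sigma_i)\rangle^{\otimes 2(1-1/m_i)}\otimes \bigotimes_{i,j}\langle\OO(\sigma_i),\OO(\sigma_j)\rangle^{\otimes (1-1/m_i)(1-1/m_j)},
\end{displaymath}
and observe that for $i\neq j$ the factors $\langle\OO(\sigma_i),\OO(\sigma_j)\rangle\simeq \sigma_i^{\ast}\OO(\sigma_j)$ are canonically trivial at the archimedean places; with the trivial metric chosen there, their arithmetic degree is exactly the finite intersection number $(\sigma_i,\sigma_j)_{\fin}$ of \eqref{eq:fin-int}. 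The passage between the hyperbolic metric on the total bundle and the trivial metric on these rational cross factors is unambiguous because the generically distinct sections meet only at finite places, so the archimedean $\ast$-product contribution of the cross factors vanishes. Rearranging produces the stated formula.

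The main obstacle is the careful bookkeeping at the finite places: one must match Knudsen--Mumford conventions on $\det H^{\bullet}$ with the $L^{2}_{1}$-conventions of Bost--Kühn used to define the singular self-intersection, and identify the torsion defect of Deligne's isomorphism at bad reduction with the Artin conductor as in Saito's work. Once these two normalizations are aligned, and once one verifies that the Deligne pairing of $\psi$ with the singular hyperbolic metric really coincides with $\adeg\psi_{W}$ as defined through the Wolpert metric fiberwise, the arithmetic Riemann--Roch formula follows by combining the archimedean isometry of Theorem \ref{theorem:main} with the integral structure of Deligne's construction.
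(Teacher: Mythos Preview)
Your overall strategy is the same as the paper's: combine the archimedean isometry of Theorem~\ref{theorem:main} with the integral Deligne isomorphism over $\BS$ (up to the Artin conductor), use compatibility with base change, and take arithmetic degrees. The paper's own proof is a one-line reference to exactly these ingredients.

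However, your execution has a genuine gap. The algebraic isomorphism you write down over $\BS$,
\[
\det H^{\bullet}(\XX,\OO_{\XX})^{\otimes 12}\otimes\psi \overset{\sim}{\longrightarrow}\langle\omega_{\XX/\BS}(D),\omega_{\XX/\BS}(D)\rangle,
\]
is \emph{not} valid over the integral base. It holds over the generic fibre (and over each $\XX_{\tau}(\CC)$) precisely because the sections are disjoint there and the cross pairings $\langle\OO(\sigma_i),\OO(\sigma_j)\rangle$ for $i\neq j$ are canonically trivial. Over $\BS$ the sections may meet at finite places, and the bilinear expansion of $\langle\omega_{\XX/\BS}(D),\omega_{\XX/\BS}(D)\rangle$ acquires the extra factor $\bigotimes_{i\neq j}\langle\OO(\sigma_i),\OO(\sigma_j)\rangle^{(1-1/m_i)(1-1/m_j)}$. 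Consequently your ``base identity'' (without the cross terms) is simply false in general, and you cannot then ``extract'' the cross correction from it by expanding the right-hand side; that expansion would only tell you how $(\omega_{\XX/\BS}(D)_{\hyp})^2$ decomposes, not that a subtraction is needed.

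The fix is to include the cross factor in the integral Deligne isomorphism from the outset. Then base change to $\CC$ trivializes it, Theorem~\ref{theorem:main} shows the resulting map is an isometry with the trivial metric on the cross factor, and taking arithmetic degrees places $\sum_{i\neq j}(1-1/m_i)(1-1/m_j)(\sigma_i,\sigma_j)_{\fin}$ on the left (or, after moving it, as a subtraction on the right), giving exactly the stated formula.
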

\begin{proof}
The claim is a straightforward consequence of Theorem \ref{theorem:main} and its counterpart for $\pi\colon\XX\to\BS$, $\sigma_{1},\ldots,\sigma_{n}$, together with the compatibility of Deligne's isomorphism with base change.
\end{proof}
\subsection{The special value $Z^{\prime}(1,\PSL_{2}(\Int))$}
The beauty of Theorem \ref{theorem:ARR} is that it applies to simple geometric situations where most of the terms can be explicitly evaluated. Yet, it implies highly non-trivial facts. Here we describe the case of the modular curve $X(1)$, and obtain the special value $Z^{\prime}(1,\PSL_{2}(\Int))$ in terms of $L$ functions. Recall, from the introduction, that we denote by $\chi_{i}=\left(\frac{-4}{\cdot}\right)$ the quadratic  character of $\QQ(i)$, and $\chi_{\rho}=\left(\frac{-3}{\cdot}\right)$ the quadratic character of $\QQ(\rho)$, $\rho=e^{2\pi i/3}$.

\begin{theorem}\label{thm:selberg-zeta}
The special value $Z^{\prime}(1,\PSL_{2}(\Int))$ is given by
\begin{displaymath}
	\begin{split}
	\log Z^{\prime}(1,\PSL_{2}(\Int))=&\frac{1}{4}\frac{L^{\prime}(0,\chi_{i})}{L(0,\chi_{i})}+\frac{13}{27}\frac{L^{\prime}(0,\chi_{\rho})}{L(0,\chi_{\rho})}
	+\frac{73}{72}\frac{\zeta^{\prime}(0)}{\zeta(0)}
	-\frac{37}{36}\frac{\zeta^{\prime}(-1)}{\zeta(-1)}-\frac{5}{36}\gamma\\
	&+\frac{5}{12}\log 3-\frac{167}{216}\log 2-\frac{5}{6}.
	\end{split}
\end{displaymath}
\end{theorem}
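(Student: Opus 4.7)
The plan is to specialize the arithmetic Riemann--Roch formula of Theorem \ref{theorem:ARR} to $\XX = \PP^1_{\Int}$ viewed as an integral model of the coarse modular curve $X(1)$, equipped with the three generically disjoint sections $\sigma_{\infty}, \sigma_i, \sigma_{\rho}$ corresponding to $j = \infty, 1728, 0$ and multiplicities $m_{\infty} = \infty$, $m_i = 2$, $m_{\rho} = 3$. Because $\PP^1_{\Int}$ is smooth over $\Spec\Int$ with geometrically connected fibers, the Artin conductor term $\delta$ vanishes, and every remaining quantity in Theorem \ref{theorem:ARR} can be evaluated explicitly.

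For the cohomological side, $\det H^{\bullet}(\XX,\OO_{\XX}) \simeq \Int\cdot 1$, so only the Quillen norm of $1$ at the archimedean place contributes. By the normalization of \textsection\ref{subsec:prelim-L2} and the identity $\vol_{\hyp}(\PSL_2(\Int)\backslash\HH) = \pi/3$, one obtains $\|1\|_{L^2}^2 = 1/6$, and hence
\[
12\adeg H^{\bullet}(\XX,\OO_{\XX})_Q = 6\log 6 + 6\log C(\PSL_2(\Int)) + 6\log Z'(1,\PSL_2(\Int)),
\]
where the explicit expression \eqref{eq:ctt-Gamma} specialized to $g=0$, $c=1$, $n=3$ and $(m_i) = (\infty,2,3)$ yields a closed formula for $\log C(\PSL_2(\Int))$ in terms of $\zeta'(-1)$, $\zeta'(0)$, values of $\log\Gamma$ at rational points, $\gamma$, $\log 2$, $\log 3$ and $\log\pi$.

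On the geometric side, I would handle three contributions in parallel. The finite intersections are immediate from $v_2(1728) = 6$ and $v_3(1728) = 3$: only the pair $(\sigma_i,\sigma_{\rho})$ meets, giving $(\sigma_i,\sigma_{\rho})_{\fin} = 6\log 2 + 3\log 3$, so that the weighted sum in Theorem \ref{theorem:ARR} equals $4\log 2 + 2\log 3$. The self-intersection $(\omega_{\XX/\BS}(D)_{\hyp},\omega_{\XX/\BS}(D)_{\hyp})$ was computed by Bost and K\"uhn \cite{Kuhn} via the first Kronecker limit formula, and contributes the Dirichlet $L$-value terms associated with the CM points $j = 0, 1728$. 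For the psi line bundle $\adeg\psi_W = \sum_i(1 - m_i^{-2})\adeg\psi_{i,W}$, the cuspidal term $\psi_{\infty,W}$ vanishes: the $q$-expansion principle and Tate curve theory identify the rs-coordinate $q = e^{2\pi i\tau}$ at $\infty$ with an algebraic uniformizer of $\PP^1_{\Int}$, so the Wolpert metric is compatible with an algebraic trivialization over $\Int$. The elliptic terms $\psi_{i,W}$ and $\psi_{\rho,W}$ are, through Wolpert's comparison of rs-coordinates with uniformizing differentials, up to universal rational factors the Hodge bundles of the CM elliptic curves $\CC/(\Int + \Int i)$ and $\CC/(\Int + \Int\rho)$, whose Faltings heights are evaluated by the Chowla--Selberg formula recalled in Lemma \ref{lemma:selberg-2}, producing the terms in $L'(0,\chi_i)/L(0,\chi_i)$ and $L'(0,\chi_{\rho})/L(0,\chi_{\rho})$.

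The final step is to solve Theorem \ref{theorem:ARR} for $\log Z'(1,\PSL_2(\Int))$ and collect all contributions. I expect the main obstacle to be the bookkeeping of the many constants: the lengthy expression of $C(\PSL_2(\Int))$ must combine with the Bost--K\"uhn self-intersection, the two Chowla--Selberg evaluations at $\Int[i]$ and $\Int[\rho]$, the Wolpert comparison constants, and the finite intersection terms, so that the coefficients of $\zeta'(-1)/\zeta(-1)$, $\zeta'(0)/\zeta(0)$, $\gamma$, $\log 2$ and $\log 3$ match those in the statement exactly (and, notably, the $\log\pi$ and $\log\Gamma$ terms cancel out). Once these cancellations are checked, the identity of Theorem \ref{thm:selberg-zeta} follows.
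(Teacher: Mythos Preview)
Your plan is correct and follows the paper's proof essentially line for line: specialize Theorem~\ref{theorem:ARR} to $\PP^1_{\Int}$ with the three sections, use $\delta=0$, compute $\|1\|_{L^2}^2=1/6$, take the finite intersection $(\sigma_i,\sigma_\rho)_{\fin}=\log 1728$, invoke Bost--K\"uhn for the self-intersection, use the $q$-expansion principle for $\adeg\psi_{\infty,W}=0$, and Chowla--Selberg for the elliptic $\psi$-terms.

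Two small factual slips to fix before you do the bookkeeping. First, the Bost--K\"uhn self-intersection (Lemma~\ref{lemma:selberg-3}) contributes only Riemann zeta terms, namely $-\tfrac{1}{3}(\zeta'(-1)/\zeta(-1)+\tfrac12)+\tfrac13\log(2\pi)+\tfrac16\log 2$; the Dirichlet $L$-terms come exclusively from the $\psi_W$ computation and from $C(\PSL_2(\Int))$. Second, the $\log\Gamma$ terms in $C(\PSL_2(\Int))$ do not cancel: the sum in \eqref{eq:ctt-Gamma} produces $-\tfrac18\log\pi$ from $m=2$ and $-\tfrac{2}{9}\log\Gamma(1/3)$ from $m=3$, and the latter must itself be converted via Chowla--Selberg into $h_F(E_\rho)$ (the paper does this explicitly in the final paragraph of the proof). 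Once you track that extra Chowla--Selberg substitution, the collection of constants goes through.
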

\subsubsection{} The proof is the concatenation of a series of lemmas and an application of Theorem \ref{theorem:ARR}. We introduce some notations. Recall $\PP^{1}_{\Int}\to\Spec\Int$ is the coarse moduli scheme of the Deligne--Mumford stack $\mathcal{M}_{1}\to\Spec\Int$ of generalized elliptic curves \cite{DR}. The natural map $j\colon\mathcal{M}_{1}\to\PP^{1}_{\Int}$ is an extension over $\Int$ of the $j$-invairant. We interpret $\PP^{1}_{\Int}(\CC)$ as the Riemann surface $\PSL_{2}(\Int)\backslash\HH\cup\lbrace\infty\rbrace$, while $\mathcal{M}_{1}(\CC)$ is the analytic stack $[\SL_{2}(\Int)\backslash\HH]\cup\lbrace{\infty}\rbrace$. The cusp at infinity and the elliptic fixed points $i$ (of order 2 and $j$-invariant $1728$) and $\rho=e^{2i\pi/3}$ (of order 3 and $j$-invariant 0) define integral sections $\Spec\Int\to\PP^{1}_{\Int}$. Let them be denoted by $\sigma_{\infty}$, $\sigma_{i}$, and $\sigma_{\rho}$. The cusp section $\sigma_{\infty}$ lifts to a section of $\mathcal{M}_{1}$. By contrast, the sections $\sigma_{i}$ and $\sigma_{\rho}$ don't lift to $\mathcal{M}_{1}$. The $j$-invariant is ramified along $\sigma_{i}$ and $\sigma_{\rho}$ and \'etale elsewhere \cite[Sec.~VI, Lemme 1.5]{DR}.

\begin{lemma}\label{lemma:selberg-1}
We have the following finite intersection numbers
\begin{align*}
	&(\sigma_{\infty},\sigma_{i})_{\fin}=0,\\
	&(\sigma_{\infty},\sigma_{\rho})_{\fin}=0,\\
	&(\sigma_{i},\sigma_{\rho})_{\fin}=\log(1728)=6\log 2+3\log 3.
\end{align*}
\end{lemma}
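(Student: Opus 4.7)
The plan is to exploit the explicit structure of the arithmetic surface $\XX = \PP^{1}_{\Int}$, on which the three sections are cut out globally by the values of the $j$-invariant. I shall work in the affine chart $\Spec\Int[j] \subset \PP^{1}_{\Int}$, in which the finite sections are $\sigma_{i}=V(j-1728)$ and $\sigma_{\rho}=V(j)$, while $\sigma_{\infty}$ is the complementary closed subscheme $V(1/j)\subset\Spec\Int[1/j]$.

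For the vanishing of $(\sigma_{\infty},\sigma_{i})_{\fin}$ and $(\sigma_{\infty},\sigma_{\rho})_{\fin}$, I would simply observe that $\sigma_{\infty}$ is disjoint from $\sigma_{i}$ and $\sigma_{\rho}$ as closed subschemes of $\PP^{1}_{\Int}$: indeed $\sigma_{\infty}$ is contained in the open $D(1/j)$, on which $1/j$ is a global uniformizer, while at every point of $\sigma_{i}$ (resp.\ $\sigma_{\rho}$) we have $j\equiv 1728$ (resp.\ $j\equiv 0$) in every residue field, so $1/j$ is never zero there (it is either $1/1728$ or infinite). In the language of \eqref{eq:fin-int}, there is no closed point $p$ contributing, and both intersection numbers vanish.

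For $(\sigma_{i},\sigma_{\rho})_{\fin}$ I would compute the scheme-theoretic intersection directly:
\begin{displaymath}
\sigma_{i}\cap\sigma_{\rho}=\Spec\Int[j]/(j,\,j-1728)=\Spec\Int/(1728).
\end{displaymath}
Since $1728=2^{6}\cdot 3^{3}$, this is supported at the primes $(2)$ and $(3)$. At each of these primes, the relevant local ring in \eqref{eq:fin-int} is $\Int_{(2)}/(2^{6})$ and $\Int_{(3)}/(3^{3})$, of length $6$ and $3$ respectively. Summing the contributions,
\begin{displaymath}
(\sigma_{i},\sigma_{\rho})_{\fin}=6\log 2+3\log 3=\log 1728.
\end{displaymath}

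There is no real obstacle here, as everything is encoded in the factorization of $1728$. The only mild care needed is to make sure that the normalization in \eqref{eq:fin-int} agrees with the multiplicative convention $\log\sharp(\OO_{K}/\mathfrak{p})$ used in the arithmetic degree map; this is straightforward since $K=\QQ$ and $\sharp(\Int/p\Int)=p$ for $p=2,3$.
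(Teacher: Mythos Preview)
Your proof is correct and follows essentially the same route as the paper: write down explicit defining equations for the three sections on $\PP^{1}_{\Int}$ and read off the intersection multiplicities directly. The only difference is that you work in the affine coordinate $j$, while the paper uses homogeneous coordinates $X_{0},X_{1}$ (with $\sigma_{\infty}\colon X_{1}=0$, $\sigma_{i}\colon X_{0}-1728X_{1}=0$, $\sigma_{\rho}\colon X_{0}=0$); both amount to the same computation, and your identification $\sigma_{i}\cap\sigma_{\rho}=\Spec\Int/(1728)$ is exactly what the paper's equations encode.
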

\begin{proof}
The divisors attached to the sections can be written in homogeneous coordinates $X_{0}$ and $X_{1}$:
\begin{align*}
	\infty&\colon X_{1}=0, \\
	i&\colon X_{0}-1728X_{1}=0,\\
	\rho&\colon X_{0}=0.
\end{align*}
The computation of the intersection multiplicities is readily read from the equations. Notice the first two just reflect the fact that the CM elliptic curves have potentially good reduction.
\end{proof}
\subsubsection{} Let $E_{i}$ and $E_{\rho}$ be the elliptic curves, defined over $\QQ$, having complex multiplication by $\QQ(i)$ and $\QQ(\rho)$, respectively. They have potentially good reduction. We denote by $h_{F}(E_{i})$ and $h_{F}(E_{\rho})$ their \emph{stable} Faltings height. This height is computed after an extension of the base field where $E_{i}$ and $E_{\rho}$ acquire good reduction. It is normalized so that it does not depend on the base extension. For instance, if $E_{i}$ has good reduction over $K$ and $\mathcal{E}_{i}$ is the N\'eron model over $\Spec\OO_{K}$, then
\begin{displaymath}
	h_{F}(E_{i})=\frac{1}{[K\colon\QQ]}\adeg H^{0}(\mathcal{E}_{i},\Omega^{1}_{\mathcal{E}_{i}/\OO_{K}})_{L^{2}},
\end{displaymath} 
where the $L^{2}$ metric on differential $1$-forms is given by the rule explained in \textsection\ref{subsec:prelim-L2}:
\begin{displaymath}
	\langle\alpha,\beta\rangle=\frac{i}{2\pi}\int_{E_{i}(\CC)}\alpha\wedge\ov{\beta}.
\end{displaymath}
The Faltings height of a CM elliptic curve can be explicitly evaluated by the Chowla--Selberg formula \cite{Colmez, Deligne:bourbaki}.
\begin{lemma}\label{lemma:selberg-2}
The arithmetic degree of $\psi_{W}$ is
\begin{displaymath}
	\adeg\psi_{W}=3h_{F}(E_{i})+\frac{16}{3}h_{F}(E_{\rho})-\frac{43}{18}(\sigma_{i},\sigma_{\rho})_{\fin}+\frac{25}{6}\log(4\pi).
\end{displaymath}
Consequently, by the Chowla--Selberg formula and Lemma \ref{lemma:selberg-1}, it can be written
\begin{displaymath}
	\adeg\psi_{W}=-\frac{3}{2}\frac{L^{\prime}(0,\chi_{i})}{L(0,\chi_{i})}-\frac{8}{3}\frac{L^{\prime}(0,\chi_{\rho})}{L(0,\chi_{\rho})}+\frac{25}{6}\frac{\zeta^{\prime}(0)}{\zeta(0)}
 -\frac{17}{2}\log 3	-\frac{15}{2}\log 2.
\end{displaymath}
\end{lemma}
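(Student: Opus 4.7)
The proof proceeds by decomposing the arithmetic degree according to the three sections and then computing each contribution via comparison between the Wolpert metric on the coarse space $\PP^{1}_{\Int}$ and natural algebraic trivializations coming from the moduli stack $\mathcal{M}_{1}$. From the definition of $\psi_{W}$, together with the multiplicities $m_{\infty}=\infty$, $m_{i}=2$, $m_{\rho}=3$, we get
\begin{displaymath}
	\adeg\psi_{W}=\adeg\psi_{\infty,W}+\tfrac{3}{4}\adeg\psi_{i,W}+\tfrac{8}{9}\adeg\psi_{\rho,W}.
\end{displaymath}

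The cusp contribution vanishes. Indeed, the Tate curve over $\Int[[q]]$ provides an algebraic (formal) trivialization of $\omega_{\PP^{1}_{\Int}/\Spec\Int}$ in a neighborhood of $\sigma_{\infty}$ with parameter $q$, and a direct substitution $q=e^{2\pi i\tau}$ shows that the hyperbolic tensor $|d\tau|^{2}/(\Imag\tau)^{2}$ takes the canonical rs form $|dq|^{2}/(|q|\log|q|)^{2}$. Hence the Wolpert normalization $\|dq\|_{W}=1$ agrees with the algebraic generator at every archimedean place, so $\psi_{\infty,W}$ is canonically isomorphic to the trivial hermitian line bundle on $\Spec\Int$ and $\adeg\psi_{\infty,W}=0$. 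This is the arithmetic incarnation of the $q$-expansion principle, and it is the step where we genuinely use that $\PP^{1}_{\Int}$ is an integral model of the \emph{modular} curve.

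For the elliptic fixed points one lifts to the stack $\mathcal{M}_{1}\to\PP^{1}_{\Int}$ and combines two classical ingredients: the Kodaira--Spencer isomorphism $\omega_{\mathcal{M}_{1}/\Spec\Int}\cong\lambda^{\otimes 2}$, where $\lambda=\pi_{\ast}\omega_{\mathcal{E}/\mathcal{M}_{1}}$ is the Hodge bundle, and the orbifold ramification formula $\omega_{\mathcal{M}_{1}}=\pi^{\ast}\omega_{\PP^{1}_{\Int}}\otimes\OO((m_{\ast}-1)\widetilde{\sigma}_{\ast})$ at the orbifold point of multiplicity $m_{\ast}$. Pulling back along the stack lift $\widetilde{\sigma}_{\ast}$ and using the conormal identification $\widetilde{\sigma}_{\ast}^{\ast}\OO(\widetilde{\sigma}_{\ast})\cong(\widetilde{\sigma}_{\ast}^{\ast}\omega_{\mathcal{M}_{1}})^{-1}$ gives the $\QQ$-isomorphism
\begin{displaymath}
	\sigma_{\ast}^{\ast}\omega_{\PP^{1}_{\Int}}\cong\bigl(\widetilde{\sigma}_{\ast}^{\ast}\lambda\bigr)^{\otimes 2m_{\ast}}
\end{displaymath}
of line bundles over the ring of definition of $E_{\ast}$, and the very definition of the stable Faltings height $h_{F}(E_{\ast})=\adeg(\widetilde{\sigma}_{\ast}^{\ast}\lambda,\|\cdot\|_{L^{2}})$ provides, after descent, the principal terms $\tfrac{3}{4}\cdot 4\,h_{F}(E_{i})+\tfrac{8}{9}\cdot 6\,h_{F}(E_{\rho})=3\,h_{F}(E_{i})+\tfrac{16}{3}\,h_{F}(E_{\rho})$.

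The remaining task is archimedean: measure, at each complex place, the discrepancy between $\|dz_{\mathrm{rs}}\|_{W}=1$ on $\sigma_{\ast}^{\ast}\omega_{\PP^{1}}$ and the metric inherited from $\lambda^{\otimes 2m_{\ast}}$ through the above isomorphism. A stack uniformizer at $\tau_{\ast}\in\{i,\rho\}$ is $w=(\tau-\tau_{\ast})/(\tau-\overline{\tau_{\ast}})$, and one checks directly that $\pi^{\ast}z_{\mathrm{rs}}=c_{\ast}w^{m_{\ast}}$ with $|c_{\ast}|=1$. The identification $d z_{\mathrm{rs}}\leftrightarrow (du)^{\otimes 2m_{\ast}}$ is then computed by combining the standard Kodaira--Spencer normalization of $d\tau$ (which introduces an explicit power of $2\pi i$), the $L^{2}$ normalization $\|du\|_{L^{2}}^{2}=\Imag(\tau)/\pi$, and the algebraic comparison of $z_{\mathrm{rs}}$ with the natural parameter $j-j(\tau_{\ast})$ on $\PP^{1}_{\Int}$. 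The transcendental contribution assembles into $\tfrac{25}{6}\log(4\pi)$, while the algebraic content of $c_{\ast}$ consists solely of the powers of $2$ and $3$ dictated by $j(i)=1728$ and $j(\rho)=0$; these are organized by the finite intersection $(\sigma_{i},\sigma_{\rho})_{\fin}=\log 1728=6\log 2+3\log 3$ and produce the correction $-\tfrac{43}{18}(\sigma_{i},\sigma_{\rho})_{\fin}$. The second form of the lemma then follows from the Chowla--Selberg formula \cite{Colmez, Deligne:bourbaki}, rewriting each $h_{F}(E_{\ast})$ in terms of $L'(0,\chi_{\ast})/L(0,\chi_{\ast})$, $\zeta'(0)/\zeta(0)$ and elementary logarithms. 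The main obstacle is the meticulous bookkeeping of those archimedean normalizations: the coefficients $-43/18$ and $25/6$ leave no margin for slack between the Kodaira--Spencer map, the rs coordinate, the $L^{2}$ inner product and Faltings's convention, so every $2\pi$ and every $\Imag\tau_{\ast}$ must be accounted for.
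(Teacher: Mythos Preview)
Your architecture matches the paper's: decompose $\adeg\psi_W$ with weights $1,\tfrac34,\tfrac89$, kill the cusp term via the Tate curve and the $q$-expansion principle, and at the elliptic points link $\psi_{\ast,W}$ to the Hodge bundle $\underline{\omega}$ through Kodaira--Spencer so that the stable Faltings heights $h_F(E_i),h_F(E_\rho)$ appear with coefficients $4$ and $6$. That part is fine and coincides with what the paper does.

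The gap is in your derivation of the cross-term $(\sigma_i,\sigma_\rho)_{\fin}$. Your ``orbifold ramification formula'' $\omega_{\mathcal{M}_1}=\pi^*\omega_{\PP^1_\Int}\otimes\OO((m_\ast-1)\widetilde{\sigma}_\ast)$ is written as if only one ramification point were present; the correct global identity on $\mathcal{M}_1$ is
\[
j^*\omega_{\PP^1_\Int/\Int}\;=\;\omega_{\mathcal{M}_1/\Int}\bigl(-\bar\sigma_i-2\bar\sigma_\rho\bigr),
\]
with \emph{both} elliptic points. Pulling back along $\bar\sigma_i$ (over a base $\OO_K$ where $E_i,E_\rho$ have good reduction) therefore does not give your integral isomorphism $\sigma_i^*\omega_{\PP^1_\Int}\cong(\bar\sigma_i^*\underline{\omega})^{\otimes 4}$, but rather
\[
\widetilde\sigma_i^{\,*}(\omega_{\PP^1_\Int/\Int})\;\cong\;\bar\sigma_i^{\,*}(\underline{\omega})^{\otimes 4}\otimes\bar\sigma_i^{\,*}\OO(\bar\sigma_\rho)^{\otimes(-2)},
\]
and the extra factor, endowed with the trivial archimedean metric, contributes exactly $-2(\sigma_i,\sigma_\rho)_{\fin}$ upon taking arithmetic degrees. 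The paper then checks that the archimedean norm of this \emph{integral} isomorphism is the single clean constant $(4\pi)^{-2}$ (from $\|d\tau\|=4\pi\Imag\tau$ under Kodaira--Spencer), yielding $\adeg\psi_{i,W}=4h_F(E_i)-2(\sigma_i,\sigma_\rho)_{\fin}+2\log(4\pi)$ and analogously $\adeg\psi_{\rho,W}=6h_F(E_\rho)-(\sigma_i,\sigma_\rho)_{\fin}+3\log(4\pi)$.

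Your attempt to recover $-\tfrac{43}{18}(\sigma_i,\sigma_\rho)_{\fin}$ as the ``algebraic content'' of an archimedean constant $c_\ast$ conflates finite and infinite places. The quantity $(\sigma_i,\sigma_\rho)_{\fin}=\log 1728$ is a $p$-adic length measuring precisely the failure of your local $\QQ$-isomorphism to be integral; it is not something that a complex-analytic comparison of $z_{\mathrm{rs}}$ with $j-j(\tau_\ast)$ can supply once you have already claimed an isomorphism ``over the ring of definition''. Either work with the correct global ramification divisor as above, so that the finite intersection appears structurally and the archimedean side reduces to the single factor $4\pi$, or else abandon the intermediate isomorphism entirely and compute $\adeg\psi_{\ast,W}=-\log\|dj\|_{W,\ast}$ directly from the integral generator $dj$; what you have written is an inconsistent hybrid of the two.
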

\begin{proof}
For the cusp at infinity, the algebraic theory of the Tate curve and the relation with Fourier expansions \cite{DR}, provides an isometry
\begin{displaymath}
	\sigma_{\infty}^{\ast}(\omega_{\PP^{1}_{\Int}/\Int})_{W}\overset{\sim}{\longrightarrow}(\OO_{\Int},|\cdot|),
\end{displaymath}
where $\OO_{\Int}$ is the trivial line bundle on $\Int$ and $|\cdot|$ is the absolute value on $\CC$. Indeed, the formal completion of $\PP^{1}_{\Int}$ along $\sigma_{\infty}$ is canonically isomorphic to $\Spf\Int[[q]]$, where the formal parameter $q$ is the algebraic counterpart of $e^{2\pi i z}$, $z\in\HH$. Then
\begin{displaymath}
	\sigma_{\infty}^{\ast}(\omega_{\PP^{1}_{\Int}/\Int})\overset{\sim}{\longrightarrow}\Int\,dq.
\end{displaymath}
By construction, the Wolpert metric at the cusp assigns to $dq$ norm 1, thus proving the claim. As a result,
\begin{equation}\label{eq:psi-infty}
	\adeg\psi_{\infty,W}=0.
\end{equation}
For the elliptic fixed points, we treat in detail the \emph{psi} bundle at $i$, and leave $\rho$ as an exercise. Let $K$ be a finite extension of $\QQ$ where both $E_{i}$ and $E_{\rho}$ acquire good reduction, with N\'eron models $\mathcal{E}_{i}$ and $\mathcal{E}_{\rho}$. Their classifying maps $\Spec\OO_{K}\to\mathcal{M}_{1}$ are denoted $\bar{\sigma}_{i}$ and $\bar{\sigma}_{j}$. Also, the sections $\sigma_{i}$ and $\sigma_{\rho}$ can be trivially extended to $\Spec\OO_{K}\to\PP^{1}_{\Int}$, and we indicate these extensions by $\widetilde{\sigma}_{i}$ and $\widetilde{\sigma}_{\rho}$. Notice the relations
\begin{displaymath}
	\widetilde{\sigma}_{i}=j\circ\bar{\sigma}_{i},\quad \widetilde{\sigma}_{\rho}=j\circ\bar{\sigma}_{\rho}.
\end{displaymath}
As $j$ is ramified along $\sigma_{i}$ and $\sigma_{\rho}$ of orders $2$ and $3$, respectively, and \'etale elsewhere, we find 
\begin{displaymath}
	\widetilde{\sigma}^{\ast}_{i}(\omega_{\PP^{1}_{\Int}/\Int})=\bar{\sigma}_{i}^{\ast}\,j^{\ast}(\omega_{\PP^{1}_{\Int}/\Int})
	=\bar{\sigma}_{i}^{\ast}(\omega_{\mathcal{M}_{1;\OO_{K}}/\OO_{K}}(-\bar{\sigma}_{i}-2\bar{\sigma}_{\rho})).
\end{displaymath}
The subscript $\OO_{K}$ in $\omega_{\mathcal{M}_{1;\OO_{K}}/\OO_{K}}$ indicates the base change to $\OO_{K}$. By means of the residue isomorphism, we infer a canonical identification
\begin{displaymath}
	\widetilde{\sigma}^{\ast}_{i}(\omega_{\PP^{1}_{\Int}/\Int})\overset{\sim}{\longrightarrow}\bar{\sigma}_{i}^{\ast}(\omega_{\mathcal{M}_{1}/\Int})^{\otimes 2}\otimes\bar{\sigma}_{i}^{\ast}\OO(\bar{\sigma}_{\rho})^{\otimes(-2)}.
\end{displaymath}
Because $\bar{\sigma}_{i}$ is disjoint with the cusp $\infty$, this isomorphism is the same as
\begin{equation}\label{eq:iso-wol-1}
	\widetilde{\sigma}^{\ast}_{i}(\omega_{\PP^{1}_{\Int}/\Int})\overset{\sim}{\longrightarrow}\bar{\sigma}_{i}^{\ast}(\omega_{\mathcal{M}_{1}/\Int}(\infty))^{\otimes 2}\otimes\bar{\sigma}_{i}^{\ast}\OO(\bar{\sigma}_{\rho})^{\otimes(-2)}.
\end{equation}
Let us now introduce $\underline{\omega}$ the sheaf of modular forms of weight 1 on $\mathcal{M}_{1}$. Recall that if $\pi\colon\mathcal{E}\to\mathcal{M}_{1}$ is the universal generalized elliptic curve, with unit section $e$, then
\begin{displaymath}
	\underline{\omega}=e^{\ast}(\Omega^{1}_{\mathcal{E}/\mathcal{M}_{1}}).
\end{displaymath}
Deformation theory and the theory of the Tate curve provide the canonical Kodaira--Spencer isomorphism \cite[Sec.~10.13]{Ka-Ma}:
\begin{equation}\label{eq:iso-wol-2}
	\underline{\omega}^{\otimes 2}\overset{\sim}{\longrightarrow}\omega_{\mathcal{M}_{1}/\Int}(\infty).
\end{equation}
From \eqref{eq:iso-wol-1}--\eqref{eq:iso-wol-2} we derive a canonical isomorphism
\begin{equation}\label{eq:iso-wol-3}
	\widetilde{\sigma}^{\ast}_{i}(\omega_{\PP^{1}_{\Int}/\Int})\overset{\sim}{\longrightarrow}\bar{\sigma}_{i}^{\ast}(\underline{\omega})^{\otimes 4}\otimes\bar{\sigma}_{i}^{\ast}\OO(\bar{\sigma}_{\rho})^{\otimes(-2)}.
\end{equation}
Now we consider the compatibility of \eqref{eq:iso-wol-3} with hermitian metrics. The line bundle $\underline{\omega}$ carries a family $L^{2}$ metric. It is defined on the complement of the cusp through the canonical isomorphism
\begin{displaymath}
	\underline{\omega}\mid_{\mathcal{M}_{1}\setminus\lbrace\infty\rbrace}\overset{\sim}{\longrightarrow}\pi_{\ast}(\Omega^{1}_{\mathcal{E}/\mathcal{M}_{1}})\mid_{\mathcal{M}_{1}\setminus\lbrace\infty\rbrace}.
\end{displaymath}
If we transport the family $L^{2}$ metric to $\omega_{\mathcal{M}_{1}/\Int}(\infty)$ via the Kodaira--Spencer isomorphism, its expression in the coordinate $\tau\in\HH$ is
\begin{displaymath}
	\|d\tau\|=4\pi\Imag \tau.
\end{displaymath}
Together with the definition of the Wolpert metric, one then easily checks that \eqref{eq:iso-wol-3} is a quasi-isometry of norm $1/(4\pi)$:
\begin{displaymath}
	\widetilde{\sigma}^{\ast}_{i}(\omega_{\PP^{1}_{\Int}/\Int})_{W}\overset{\sim}{\longrightarrow}\bar{\sigma}_{i}^{\ast}(\underline{\omega}_{L^{2}})^{\otimes 4}\otimes\bar{\sigma}_{i}^{\ast}\OO(\bar{\sigma}_{\rho})^{\otimes (-2)}\otimes(\OO_{K},\frac{1}{(4\pi)^{2}}\lbrace|\cdot|_{\tau}\rbrace_{\tau\colon K\hookrightarrow\CC}).
\end{displaymath}
Of course, $\bar{\sigma}_{i}^{\ast}\OO(\bar{\sigma}_{\rho})^{\otimes(-2)}$ carries the trivial metric at the archimedean places. By taking arithmetic degrees, normalizing by $[K\colon\QQ]$ and by the functoriality properties of arithmetic intersection numbers, we arrive at
\begin{equation}\label{eq:psi-i}
	\adeg\psi_{i,W}=4h_{F}(E_{i})-2(\sigma_{i},\sigma_{\rho})_{\fin}+2\log(4\pi).
\end{equation}
Similarly,
\begin{equation}\label{eq:psi-rho}
	\adeg\psi_{\rho,W}=6h_{F}(E_{\rho})-(\sigma_{i},\sigma_{\rho})_{\fin}+3\log(4\pi).
\end{equation}
Substituting \eqref{eq:psi-infty}, \eqref{eq:psi-i} and \eqref{eq:psi-rho} in the formula
\begin{displaymath}
	\adeg\psi_{W}=\adeg\psi_{\infty,W}+\frac{3}{4}\adeg\psi_{i,W}+\frac{8}{9}\adeg\psi_{\rho,W},
\end{displaymath}
and adding up, we obtain the first statement. The second assertion follows by a straight-forward computation employing the formulas 
(see, e.g., \cite{GFM16} for the general formula for the Faltings height)
\begin{displaymath}
h_{F}(E_{i})=-\frac{1}{2} \frac{L^{\prime}(\chi_i,0)}{L(\chi_i,0)},\qquad
h_{F}(E_{\rho})=-\frac{1}{2} \frac{L^{\prime}(\chi_{\rho},0)}{L(\chi_{\rho},0)}-\frac{1}{4}\log 3+\frac{1}{2}\log 2,
\end{displaymath}
and Lemma \ref{lemma:selberg-1}.
\end{proof}
\begin{lemma}\label{lemma:selberg-3}
The arithmetic self-intersection number of $\omega_{\PP^{1}_{\Int}/\Int}(D)_{\hyp}$, for $D=\sigma_{\infty}+(1/2)\sigma_{i}+(2/3)\sigma_{\rho}$, is given by
\begin{displaymath}
	(\omega_{\PP^{1}_{\Int}/\Int}(D)_{\hyp},\omega_{\PP^{1}_{\Int}/\Int}(D)_{\hyp})=-\frac{1}{3}\left(\frac{\zeta'(-1)}{\zeta(-1)}+\frac{1}{2}\right)+\frac{1}{3}\log(2\pi)+\frac{1}{6}\log 2.
\end{displaymath}
\end{lemma}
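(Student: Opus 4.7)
The claim is attributed to Bost and K\"uhn in the introduction, so the plan is to reduce the self-intersection on $\PP^{1}_{\Int}$ to the well-studied Petersson self-intersection of the Hodge bundle on the modular stack $\mathcal{M}_{1}$, where the first Kronecker limit formula yields an explicit value. The transfer is mediated by the $j$-invariant $j\colon\mathcal{M}_{1}\to\PP^{1}_{\Int}$ and the Kodaira--Spencer isomorphism already used in Lemma~\ref{lemma:selberg-2}.

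First, I would produce a canonical identification of line bundles $j^{\ast}\omega_{\PP^{1}_{\Int}/\Int}(D)\simeq\omega_{\mathcal{M}_{1}/\Int}(\bar{\sigma}_{\infty})\simeq\underline{\omega}^{\otimes 2}$. The key point is that $j$ ramifies to orders $2$ at $\bar{\sigma}_{i}$ and $3$ at $\bar{\sigma}_{\rho}$ and is \'etale elsewhere; the Riemann--Hurwitz contribution is then exactly compensated by the fractional coefficients of $D=\sigma_{\infty}+\frac{1}{2}\sigma_{i}+\frac{2}{3}\sigma_{\rho}$, leaving only the cuspidal twist $\bar{\sigma}_{\infty}$. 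At the archimedean places, the calculation $\|d\tau\|_{\mathrm{Pet}}=4\pi\,\Imag\tau$ recorded in the proof of Lemma~\ref{lemma:selberg-2} shows that, under this identification, the hyperbolic metric is a quasi-isometric avatar of the Petersson metric up to the explicit scalar $4\pi$.

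Second, I would invoke K\"uhn's evaluation in \cite{Kuhn} of the arithmetic self-intersection of $\overline{\underline{\omega}}_{\mathrm{Pet}}$ on $\mathcal{M}_{1}$. His generalized arithmetic intersection formalism accommodates the logarithmic singularities of the Petersson metric at the cusp and the conical singularities at the elliptic fixed points, and the explicit evaluation of the star product of Eisenstein-type Green currents on $\HH$ is carried out using the first Kronecker limit formula. The outcome is a closed expression in $\zeta^{\prime}(-1)/\zeta(-1)$, $\log(2\pi)$, and $\log 2$, proportional to the orbifold Euler characteristic $2g-2+\sum(1-m_{i}^{-1})=\frac{1}{6}$ of $[\PSL_{2}(\Int)\backslash\HH]$.

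Finally, I would assemble the answer from three pieces: (i) the Bost--K\"uhn value on $\mathcal{M}_{1}$; (ii) an additive $\log(4\pi)$ contribution coming from the metric rescaling, weighted by the generic degree $\frac{1}{6}$ of $\omega_{\PP^{1}_{\Int}/\Int}(D)$; and (iii) finite intersection corrections at the elliptic sections of the form $(\sigma_{i},\sigma_{\rho})_{\mathrm{fin}}=\log 1728$ from Lemma~\ref{lemma:selberg-1}, arising because the self-intersection on the coarse space $\PP^{1}_{\Int}$ differs from that on $\mathcal{M}_{1}$ precisely through the ramification of $j$. The principal obstacle, and the only real work beyond citing \cite{Kuhn}, lies in the careful bookkeeping of these correction terms: one must verify that the various $\log 2$, $\log 3$, and $\log(2\pi)$ contributions collapse to the compact expression $-\frac{1}{3}\bigl(\zeta^{\prime}(-1)/\zeta(-1)+\frac{1}{2}\bigr)+\frac{1}{3}\log(2\pi)+\frac{1}{6}\log 2$ asserted in the lemma.
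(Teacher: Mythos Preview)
Your strategy is in the same spirit as the paper's---reduce to K\"uhn's computation and track the metric rescaling---but you take an unnecessary detour through the stack $\mathcal{M}_{1}$ and, in step~(iii), anticipate finite correction terms that in fact do not arise.

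The paper's proof works entirely on $\PP^{1}_{\Int}$. The key observation is that $(\omega_{\PP^{1}_{\Int}/\Int}(D))^{\otimes 6}=\mathcal{M}_{12}(\Gamma(1))=\OO(1)$ as line bundles on $\PP^{1}_{\Int}$, not merely after pullback to the stack. K\"uhn's theorem \cite[Thm.~6.1]{Kuhn} is already stated for $\mathcal{M}_{12}(\Gamma(1))_{Pet}$ on the coarse space, giving $(\mathcal{M}_{12}(\Gamma(1))_{Pet},\mathcal{M}_{12}(\Gamma(1))_{Pet})=-12\bigl(\zeta'(-1)/\zeta(-1)+\tfrac{1}{2}\bigr)$. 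One then compares the two metrics on this bundle by the scalar $\|\cdot\|_{\hyp}=\tfrac{8}{(4\pi)^{6}}\|\cdot\|_{Pet}$, which contributes $12\log(4\pi)-6\log 2$ to the self-intersection (the degree of $\OO(1)$ being~$1$), and divides by~$36$. No pullback to $\mathcal{M}_{1}$, no projection formula for the coarse moduli map, and no finite intersection corrections at the elliptic points enter.

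Your step~(iii) is the problematic one. The quantity $(\sigma_{i},\sigma_{\rho})_{\fin}$ has no reason to appear here: once the line bundle identification is made on $\PP^{1}_{\Int}$ itself, the comparison of self-intersections under a constant metric rescaling involves only the archimedean degree of the bundle, not any finite intersection data. If you insisted on passing through the stack, you would need to relate stacky arithmetic intersection numbers to those on the coarse space, which is a separate and delicate issue---but the whole point is that this is avoidable, since both $\mathcal{M}_{12}(\Gamma(1))$ and K\"uhn's formula already live on $\PP^{1}_{\Int}$.
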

\begin{proof}
The result is a reformulation of the computation of Bost and K\"uhn \cite{Kuhn}. With the notations therein, $(\omega_{\PP^{1}_{\Int}/\Int}(D))^{\otimes 6}=\mathcal{M}_{12}(\Gamma(1))=\OO(1)$. Up to a constant, the hyperbolic metric and the Petersson metric on $\mathcal{M}_{12}(\Gamma(1))$ used in \emph{loc.~cit.}~agree:
\begin{displaymath}
	\|\cdot\|_{\hyp}=\frac{8}{(4\pi)^{6}}\|\cdot\|_{Pet}.
\end{displaymath}
With this comparison at hand and by the definition of arithmetic intersection numbers, this yields
\begin{displaymath}
	(\omega_{\PP^{1}_{\Int}/\Int}(D)_{\hyp},\omega_{\PP^{1}_{\Int}/\Int}(D)_{\hyp})=
	\frac{1}{36}\left((\mathcal{M}_{12}(\Gamma(1))_{Pet},\mathcal{M}_{12}(\Gamma(1))_{Pet})+12\log(4\pi)-6\log 2\right).
\end{displaymath}
We conclude by inserting the value given by \cite[Thm.~6.1]{Kuhn}:
\begin{displaymath}
	(\mathcal{M}_{12}(\Gamma(1))_{Pet},
	\mathcal{M}_{12}(\Gamma(1))_{Pet})=-12\left(\frac{\zeta'(-1)}{\zeta(-1)}+\frac{1}{2}\right).
\end{displaymath}
\end{proof}
\begin{proof}[Proof of Theorem \ref{thm:selberg-zeta}]
To prove the statement, we apply the arithmetic Riemann--Roch theorem \ref{theorem:ARR} to $\PP^{1}_{\Int}\to\Spec\Int$, with sections $\sigma_{\infty}$ (multiplicity $\infty$), $\sigma_{i}$ (multiplicity 2), $\sigma_{\rho}$ (multiplicity 3), and the uniformization $\PP^{1}_{\Int}(\CC)\setminus\lbrace\infty\rbrace=\PSL_{2}(\Int)\backslash\HH$. Notice that
\begin{displaymath}
	H^{0}(\PP^{1}_{\Int},\OO_{\PP^{1}_{\Int}})=\Int,\quad H^{1}(\PP^{1}_{\Int},\OO_{\PP^{1}_{\Int}})=0.
\end{displaymath}
Therefore,
\begin{displaymath}
	12\adeg H^{\bullet}(\PP^{1}_{\Int},\OO_{\PP^{1}_{\Int}})_{Q}=-12\log\|1\|_{L^{2}}+6\log\left(C(\PSL_{2}(\Int))\cdot Z^{\prime}(\PSL_{2}(\Int),1)\right).
\end{displaymath}
The square-norm of $1$ for the $L^{2}$ metric is given by the volume:
\begin{displaymath}
	\|1\|_{L^{2}}^{2}=\int_{\PSL_{2}(\Int)\backslash\HH}\omega,
\end{displaymath}
where $\omega$ is the normalized singular K\"ahler form, given in coordinate $\tau=x+iy$ of $\HH$ by
\begin{displaymath}
	\omega=\frac{i}{2\pi}\frac{d\tau\wedge d\bar{\tau}}{\|d\tau\|_{\hyp}^{2}}=\frac{1}{2\pi}\frac{dx\wedge dy}{y^{2}}.
\end{displaymath}
Hence, we obtain
\begin{displaymath}
	\|1\|^{2}_{L^{2}}=\frac{1}{6}
\end{displaymath}
and
\begin{equation}\label{eq:comp-sel-1}
	-12\log\|1\|_{L^{2}}=6\log 2 + 6\log 3.
\end{equation}
The rest of the numerical invariants in the arithmetic Riemann--Roch formula were obtained in lemmas \ref{lemma:selberg-1}--\ref{lemma:selberg-3}. The value of $\log C(\PSL_{2}(\Int))$ is computed by substituting in \eqref{eq:ctt-Gamma} the orders $2$ and $3$ of the elliptic fixed points, $g=0$, $n=3$, and $c=1$. Here,
we invoke the Chowla--Selberg formula again, in order to write:
\begin{displaymath}
	\log\Gamma(1/3)=-\frac{1}{3}h_{F}(E_{\rho})+\frac{1}{2}\frac{\zeta'(0)}{\zeta(0)}-\frac{1}{6}\log 3+\frac{1}{6}\log 2.
\end{displaymath}
After collecting all the constants and adding up, one arrives at the claimed expression.
\end{proof}


\bibliography{biblio}{}

\providecommand{\bysame}{\leavevmode\hbox to3em{\hrulefill}\thinspace}
\providecommand{\MR}{\relax\ifhmode\unskip\space\fi MR }
\providecommand{\MRhref}[2]{%
  \href{http://www.ams.org/mathscinet-getitem?mr=#1}{#2}
}
\providecommand{\href}[2]{#2}
\begin{thebibliography}{10}

\bibitem{Barvinsky}
A.~O. Barvinsky, A.~Yu. Kamenshchik, and I.~P. Karmazin, \emph{One-loop quantum
  cosmology: {$\zeta$}-function technique for the {H}artle-{H}awking wave
  function of the {U}niverse}, Ann. Physics \textbf{219} (1992), no.~2,
  201--242.

\bibitem{BF1}
J.-M. Bismut and D.~S. Freed, \emph{The analysis of elliptic families. {I}.
  {M}etrics and connections on determinant bundles}, Comm. Math. Phys.
  \textbf{106} (1986), no.~1, 159--176.

\bibitem{BF2}
\bysame, \emph{The analysis of elliptic families. {II}. {D}irac operators, eta
  invariants, and the holonomy theorem}, Comm. Math. Phys. \textbf{107} (1986),
  no.~1, 103--163.

\bibitem{BGS1}
J.-M. Bismut, H.~Gillet, and C.~Soul{\'e}, \emph{Analytic torsion and
  holomorphic determinant bundles. {I}. {B}ott-{C}hern forms and analytic
  torsion}, Comm. Math. Phys. \textbf{115} (1988), no.~1, 49--78.

\bibitem{BGS2}
\bysame, \emph{Analytic torsion and holomorphic determinant bundles. {II}.
  {D}irect images and {B}ott-{C}hern forms}, Comm. Math. Phys. \textbf{115}
  (1988), no.~1, 79--126.

\bibitem{BGS3}
\bysame, \emph{Analytic torsion and holomorphic determinant bundles. {III}.
  {Q}uillen metrics on holomorphic determinants}, Comm. Math. Phys.
  \textbf{115} (1988), no.~2, 301--351.

\bibitem{Bordag}
M.~Bordag, K.~Kirsten, and S.~Dowker, \emph{Heat-kernels and functional
  determinants on the generalized cone}, Comm. Math. Phys. \textbf{182} (1996),
  no.~2, 371--393.

\bibitem{Bost}
J.-B. Bost, \emph{Potential theory and {L}efschetz theorems for arithmetic
  surfaces}, Ann. Sci. \'Ecole Norm. Sup. (4) \textbf{32} (1999), no.~2,
  241--312.

\bibitem{Burger}
M.~Burger, \emph{Small eigenvalues of {R}iemann surfaces and graphs}, Math. Z.
  \textbf{205} (1990), no.~3, 395--420.

\bibitem{BFK}
D.~Burghelea, L.~Friedlander, and T.~Kappeler, \emph{Meyer-{V}ietoris type
  formula for determinants of elliptic differential operators}, J. Funct. Anal.
  \textbf{107} (1992), no.~1, 34--65.

\bibitem{BFL}
J.~I. Burgos~Gil, G.~Freixas~i Montplet, and R.~Li{\c{t}}canu,
  \emph{Generalized holomorphic analytic torsion}, J. Eur. Math. Soc. (JEMS)
  \textbf{16} (2014), no.~3, 463--535.

\bibitem{Carron}
G.~Carron, \emph{D\'eterminant relatif et la fonction {X}i}, Amer. J. Math.
  \textbf{124} (2002), no.~2, 307--352.

\bibitem{CdVII}
Y.~Colin~de Verdi{\`e}re, \emph{Pseudo-laplaciens. {II}}, Ann. Inst. Fourier
  (Grenoble) \textbf{33} (1983), no.~2, 87--113.

\bibitem{Colmez}
P.~Colmez, \emph{P\'eriodes des vari\'et\'es ab\'eliennes \`a multiplication
  complexe}, Ann. of Math. (2) \textbf{138} (1993), no.~3, 625--683.

\bibitem{Deligne:bourbaki}
P.~Deligne, \emph{Preuve des conjectures de {T}ate et de {S}hafarevitch
  (d'apr\`es {G}. {F}altings)}, Ast\'erisque (1985), no.~121-122, 25--41,
  Seminar Bourbaki, Vol. 1983/84.

\bibitem{Deligne}
\bysame, \emph{Le d\'eterminant de la cohomologie}, Current trends in
  arithmetical algebraic geometry ({A}rcata, {C}alif., 1985), Contemp. Math.,
  vol.~67, Amer. Math. Soc., Providence, RI, 1987, pp.~93--177.

\bibitem{DR}
P.~Deligne and M.~Rapoport, \emph{Les sch\'emas de modules de courbes
  elliptiques}, Modular functions of one variable, {II} ({P}roc. {I}nternat.
  {S}ummer {S}chool, {U}niv. {A}ntwerp, {A}ntwerp, 1972), Springer, Berlin,
  1973, pp.~143--316. Lecture Notes in Math., Vol. 349.

\bibitem{dHP}
E.~D'Hoker and D.~H. Phong, \emph{On determinants of {L}aplacians on {R}iemann
  surfaces}, Comm. Math. Phys. \textbf{104} (1986), no.~4, 537--545.

\bibitem{Efrat}
I.~Efrat, \emph{Determinants of {L}aplacians on surfaces of finite volume},
  Comm. Math. Phys. \textbf{119} (1988), no.~3, 443--451.

\bibitem{Erdelyi}
A.~Erd{\'e}lyi, W.~Magnus, F.~Oberhettinger, and F.~G. Tricomi, \emph{Higher
  transcendental functions. {V}ol. {I}}, Robert E. Krieger Publishing Co.,
  Inc., Melbourne, Fla., 1981.

\bibitem{Flachi}
A.~Flachi, A.~Knapman, W.~Naylor, and M.~Sasaki, \emph{Zeta functions in brane
  world cosmology}, Phys. Rev. D (3) \textbf{70} (2004), no.~12, 124011, 13.

\bibitem{GFM}
G.~Freixas~i Montplet, \emph{An arithmetic {R}iemann--{R}och theorem for
  pointed stable curves}, Ann. Sci. \'Ec. Norm. Sup\'er. (4) \textbf{42}
  (2009), no.~2, 335--369.

\bibitem{GFM2}
\bysame, \emph{An arithmetic {H}ilbert-{S}amuel theorem for pointed stable
  curves}, J. Eur. Math. Soc. (JEMS) \textbf{14} (2012), no.~2, 321--351.

\bibitem{GFM16}
\bysame, \emph{The {R}iemann--{R}och theorem in {A}rakelov geometry}, to appear
  in ``Algebraic Geometry and Number Theory -- CIMPA--CMI--T\"UB\.{I}TAK Summer
  School" (2016).

\bibitem{FJS}
J.~S. Friedman, J.~Jorgenson, and L.~Smajlovic, \emph{The determinant of the
  lax-phillips scattering operator}, arXiv:1603.07613 [math.NT].

\bibitem{Fucci}
G.~Fucci and K.~Kirsten, \emph{Heat kernel coefficients for {L}aplace operators
  on the spherical suspension}, Comm. Math. Phys. \textbf{314} (2012), no.~2,
  483--507.

\bibitem{GJ}
D.~Garbin and J.~Jorgenson, \emph{Spectral asymptotics on sequences of
  elliptically degenerating riemann surfaces}, arXiv:1603.01494 [math.NT].

\bibitem{GS}
H.~Gillet and C.~Soul{\'e}, \emph{An arithmetic {R}iemann--{R}och theorem},
  Invent. Math. \textbf{110} (1992), no.~3, 473--543.

\bibitem{GR07}
I.~S. Gradshteyn and I.~M. Ryzhik, \emph{Table of integrals, series, and
  products}, seventh ed., Elsevier/Academic Press, Amsterdam, 2007.

\bibitem{Iwaniec}
H.~Iwaniec, \emph{Introduction to the spectral theory of automorphic forms},
  Biblioteca de la Revista Matem\'atica Iberoamericana. [Library of the Revista
  Matem\'atica Iberoamericana], Revista Matem\'atica Iberoamericana, Madrid,
  1995.

\bibitem{Ka-Ma}
N.~M. Katz and B.~Mazur, \emph{Arithmetic moduli of elliptic curves}, Annals of
  Mathematics Studies, vol. 108, Princeton University Press, Princeton, NJ,
  1985.

\bibitem{Khus}
N.~R. Khusnutdinov, \emph{On the uniform asymptotic expansion of the {L}egendre
  functions}, J. Math. Phys. \textbf{44} (2003), no.~5, 2320--2330.

\bibitem{KM}
F.~F. Knudsen and D.~Mumford, \emph{The projectivity of the moduli space of
  stable curves. {I}. {P}reliminaries on ``det'' and ``{D}iv''}, Math. Scand.
  \textbf{39} (1976), no.~1, 19--55.

\bibitem{KR}
K.~K{\"o}hler and D.~Roessler, \emph{A fixed point formula of {L}efschetz type
  in {A}rakelov geometry. {I}. {S}tatement and proof}, Invent. Math.
  \textbf{145} (2001), no.~2, 333--396.

\bibitem{Kuhn}
U.~K{\"u}hn, \emph{Generalized arithmetic intersection numbers}, J. Reine
  Angew. Math. \textbf{534} (2001), 209--236.

\bibitem{Ma}
X.~Ma, \emph{Orbifolds and analytic torsions}, Trans. Amer. Math. Soc.
  \textbf{357} (2005), no.~6, 2205--2233 (electronic).

\bibitem{Muller:relative}
W.~M{\"u}ller, \emph{Relative zeta functions, relative determinants and
  scattering theory}, Comm. Math. Phys. \textbf{192} (1998), no.~2, 309--347.

\bibitem{Mumford}
D.~Mumford, \emph{Stability of projective varieties}, Enseignement Math. (2)
  \textbf{23} (1977), no.~1-2, 39--110.

\bibitem{Olver}
F.~W.~J. Olver, \emph{Asymptotics and special functions}, AKP Classics, A K
  Peters, Ltd., Wellesley, MA, 1997.

\bibitem{POS}
B.~Osgood, R.~Phillips, and P.~Sarnak, \emph{Extremals of determinants of
  {L}aplacians}, J. Funct. Anal. \textbf{80} (1988), no.~1, 148--211.

\bibitem{Saharian}
A.~A. Saharian, \emph{A summation formula over the zeros of the associated
  {L}egendre function with a physical application}, J. Phys. A \textbf{41}
  (2008), no.~41, 415203, 17.

\bibitem{Saito}
T.~Saito, \emph{Conductor, discriminant, and the {N}oether formula of
  arithmetic surfaces}, Duke Math. J. \textbf{57} (1988), no.~1, 151--173.

\bibitem{ST}
T.~Saito and T.~Terasoma, \emph{Determinant of period integrals}, J. Amer.
  Math. Soc. \textbf{10} (1997), no.~4, 865--937.

\bibitem{Sarnak82}
P.~Sarnak, \emph{Class numbers of indefinite binary quadratic forms}, J. Number
  Theory \textbf{15} (1982), no.~2, 229--247.

\bibitem{Sarnak}
\bysame, \emph{Determinants of {L}aplacians}, Comm. Math. Phys. \textbf{110}
  (1987), no.~1, 113--120.

\bibitem{Sch-Trap}
G.~Schumacher and S.~Trapani, \emph{Weil-{P}etersson geometry for families of
  hyperbolic conical {R}iemann surfaces}, Michigan Math. J. \textbf{60} (2011),
  no.~1, 3--33.

\bibitem{Seeley}
R.~T. Seeley, \emph{Complex powers of an elliptic operator}, Singular
  {I}ntegrals ({P}roc. {S}ympos. {P}ure {M}ath., {C}hicago, {I}ll., 1966),
  Amer. Math. Soc., Providence, R.I., 1967, pp.~288--307.

\bibitem{Soule}
C.~Soul{\'e}, \emph{G\'eom\'etrie d'{A}rakelov des surfaces arithm\'etiques},
  Ast\'erisque (1989), no.~177-178, Exp.\ No.\ 713, 327--343, S{\'e}minaire
  Bourbaki, Vol. 1988/89.

\bibitem{Spreafico}
M.~Spreafico, \emph{Zeta function and regularized determinant on a disc and on
  a cone}, J. Geom. Phys. \textbf{54} (2005), no.~3, 355--371.

\bibitem{ZT}
L.~A. Takhtajan and P.~G. Zograf, \emph{A local index theorem for families of
  {$\overline\partial$}-operators on punctured {R}iemann surfaces and a new
  {K}\"ahler metric on their moduli spaces}, Comm. Math. Phys. \textbf{137}
  (1991), no.~2, 399--426.

\bibitem{Wagner}
E.~Wagner, \emph{Asymptotische {E}ntwicklungen der {G}au\ss schen
  hypergeometrischen {F}unktion f\"ur unbeschr\"ankte {P}arameter}, Z. Anal.
  Anwendungen \textbf{9} (1990), no.~4, 351--360.

\bibitem{Weng}
L.~Weng, \emph{{$\Omega$}-admissible theory. {II}. {D}eligne pairings over
  moduli spaces of punctured {R}iemann surfaces}, Math. Ann. \textbf{320}
  (2001), no.~2, 239--283.

\bibitem{Wolpert:Selberg}
S.~A. Wolpert, \emph{Asymptotics of the spectrum and the {S}elberg zeta
  function on the space of {R}iemann surfaces}, Comm. Math. Phys. \textbf{112}
  (1987), no.~2, 283--315.

\bibitem{Wolpert:Good}
\bysame, \emph{The hyperbolic metric and the geometry of the universal curve},
  J. Differential Geom. \textbf{31} (1990), no.~2, 417--472.

\bibitem{Wolpert:cusp}
\bysame, \emph{Cusps and the family hyperbolic metric}, Duke Math. J.
  \textbf{138} (2007), no.~3, 423--443.

\end{thebibliography}
\bibliographystyle{amsplain}

\end{document}